%

%
\documentclass{amsart}
\usepackage{mathrsfs}
\usepackage[all]{xy}
\usepackage{mathabx} 

\usepackage{wasysym}
\usepackage{amssymb}

\usepackage{mathtools}
\usepackage{mathbbol}

\usepackage{ulem} 

 
\newcommand{\showcomments}{yes}

\newsavebox{\commentbox}
%
{\ifthenelse{\equal{\showcomments}{yes}}%
{\footnotemark
    \begin{lrbox}{\commentbox}
    \begin{minipage}[t]{1.25in}\raggedright\sffamily\upshape\tiny
    \footnotemark[\arabic{footnote}]}%
{\begin{lrbox}{\commentbox}}}%
{\ifthenelse{\equal{\showcomments}{yes}}%
{\end{minipage}\end{lrbox}\marginpar{\usebox{\commentbox}}}%
{\end{lrbox}}}

\usepackage{ifpdf}
\ifpdf 
  \usepackage[pdftex]{graphicx}
  \DeclareGraphicsExtensions{.pdf,.png,.jpg,.jpeg,.mps}
  \usepackage{pgf}
  \usepackage{tikz}
\else 
  \usepackage{graphicx}
  \DeclareGraphicsExtensions{.eps,.bmp}
  \DeclareGraphicsRule{.emf}{bmp}{}{}
  \DeclareGraphicsRule{.png}{bmp}{}{}
  \usepackage{pgf}
  \usepackage{pstricks}
\fi
\usepackage{epic,bez123}
\usepackage{wrapfig}

\DeclareMathAlphabet{\mathpzc}{OT1}{pzc}{m}{it}

\usepackage{ifpdf}
\ifpdf 
  \usepackage[pdftex]{graphicx}
  \DeclareGraphicsExtensions{.pdf,.png,.jpg,.jpeg,.mps}
  \usepackage{pgf}
\else 
  \usepackage{graphicx}
  \DeclareGraphicsExtensions{.eps,.bmp}
  \DeclareGraphicsRule{.emf}{bmp}{}{}
  \DeclareGraphicsRule{.png}{bmp}{}{}
  \usepackage{pgf}
  \usepackage{pstricks}
\fi
\usepackage{epic,bez123}
\usepackage{wrapfig}

\usepackage{geometry} 
\geometry{margin=1in} 

\newtheorem{thm}{Theorem}[section]
\newtheorem{prop}[thm]{Proposition}
\newtheorem{cor}[thm]{Corollary}
\newtheorem{lem}[thm]{Lemma}

\newtheorem*{claim}{Claim}

\theoremstyle{definition}
\newtheorem{defn}[thm]{Definition}

\theoremstyle{remark}
\newtheorem{quest}[thm]{Question}
\newtheorem{rem}[thm]{Remark}




\newcommand{\Gx }{\mathrm{Cay}(\Gamma, S)}

















\newcommand{\Hdim}{{\rm Hdim}}

\newcommand{\diam }[1]{{\textbf{diam}\big(#1\big)}}








\usepackage[bookmarks=true, pdfauthor={YANG Wenyuan}]{hyperref}
\usepackage{etoolbox}
\apptocmd{\sloppy}{\hbadness 10000\relax}{}{}
\apptocmd{\sloppy}{\vbadness 10000\relax}{}{}
\usepackage{enumerate}
\begin{document}

\title{Branching Random Walks on relatively hyperbolic groups}

\author{Matthieu Dussaule}
\address{Matthieu Dussaule : D\'epartement de math\'ematiques, Facult\'e des Sciences Batiment I,
2 Boulevard Lavoisier
49045 Angers cedex 01}
\email{matthieu.dussaule@hotmail.fr}

\author{Longmin Wang}
\address{Longmin Wang : School of Statistics and Data Science, LPMC, Nankai University, 94 Weijin Road, Nankai District, Tianjin, China }
\email{wanglm@nankai.edu.cn}

\author{Wenyuan Yang}
\address{Wenyuan Yang : Beijing International Center for Mathematical Research (BICMR), Beijing University, No. 5 Yiheyuan Road, Haidian District, Beijing, China}
\email{yabziz@gmail.com}

\thanks{M.D.   has received funding from the European Research Council  (ERC) under the European Union's Horizon 2020 research and innovation  program under the Grant Agreement No 759702. L.W. is supported by National Natural Science Foundation of China (12171252).  W.Y. is supported by National Key R \& D Program of China (SQ2020YFA070059).}


\subjclass[2010]{Primary 20F65, 20P05. Secondary 05C81, 20F67, 28A78}


\dedicatory{}

\keywords{Growth rate, Hausdorff dimension, Bowditch and Floyd boundaries, Parabolic gap}

\begin{abstract}
 Let $\Gamma$ be a non-elementary relatively hyperbolic group with a finite generating set.  Consider a finitely supported admissible and symmetric probability measure $\mu$ on $\Gamma$ and a probability measure $\nu$ on $\mathbb{N}$ with mean $r$.  Let $\mathrm{BRW}(\Gamma,\nu,\mu)$ be the branching random walk on $\Gamma$ with offspring distribution $\nu$ and base motion given by the random walk with step distribution $\mu$.  It is known that for $1 < r \leq R$ with $R$ the radius of convergence for the Green function of the random walk, the population of $\mathrm{BRW}(\Gamma,\nu,\mu)$ survives forever, but eventually vacates every finite subset of $\Gamma$.  We prove that in this regime, the growth rate of the trace of the branching random walk is equal to the growth rate $\omega_\Gamma(r)$ of the Green function of the underlying random walk. We also prove that the Hausdorff dimension of the limit set $\Lambda(r)$, which is the random subset of the Bowditch boundary consisting of all accumulation points of the trace of $\mathrm{BRW}(\Gamma,\nu,\mu)$, is equal to a constant times $\omega_\Gamma(r)$.
\end{abstract}
\maketitle

\tableofcontents
\section{Introduction}

\subsection{Background and motivation}
Let $(V,E)$ be a locally finite connected infinite graph. 
A Branching Markov chain (BMC) on $(V,E)$ is defined as follows.
One starts with a single particle at a fixed vertex $v_0\in V$.
For $n\geq 1$, each particle still alive at time $n$ dies and gives birth to an independent random number of offspring particles, according to a probability measure $\nu$ on $\mathbb N = \{1, 2, 3, \ldots\}$, each of them independently moving on $(V,E)$ according to an underlying Markov chain on $(V,E)$ driven by a transition kernel $p(x,y),x,y\in V$.
Sometimes in literature, the measure $\nu$ is distributed on $\mathbb Z_{\geq 0}=\{0,1,2,\ldots \}$ but
we will always assume that there is at least one offspring particle to avoid the extinction of the system.
This is not serious restriction, as conditioning on non-extinction, one can assume that $\nu$ is distributed on $\mathbb N$, see \cite[Chapter~1]{AN04}.
We will also assume that the underlying Markov chain is irreducible, i.e.\ every vertex of the graph can be visited by the Markov chain with positive probability and that it is symmetric, i.e.\ $p(x,y)=p(y,x)$ for all $x,y\in V$.
There is a large body of work dedicated to branching Markov chains on the real line and we refer to \cite{Shi} and references therein for more details.
We also refer to \cite{AN04} for a general discussion and historical perspective on branching processes.

A branching Markov chain is called recurrent if
with positive probability (and thus with probability 1), some (and thus every) vertex of the graph is visited by infinitely many particles of the BMC.
It is called transient otherwise.
Recurrence or transience of the BMC is governed by the expectation of the offspring distribution
$$\mathbf E[\nu]=\sum_{k\geq 1}k\nu(k)$$
and by the spectral radius $\rho$ of the Markov chain defined by
$$\rho=\limsup_np_n(x,y)^{1/n},$$
which is independent of $x$ and $y$, provided the underlying Markov chain is irreducible.
Here $p_n$ is the $n$-th convolution power of $p$ defined by
$$p_n(x,y)=\sum_{z_1,\ldots,z_{n-1}\in V}p(x,z_1)p(z_1,z_2)\cdots p(z_{n-1},y).$$
More precisely, if $\mathbf E[\nu]\leq \rho^{-1}$, then the branching Markov chain is transient, otherwise, it is recurrent, see \cite{BP94, GM06} and references therein.

Let us now restrict our attention to the following context.
Consider a finitely generated group $\Gamma$ endowed with a finite generating set and a probability measure $\mu$ on $\Gamma$.
For a given probability measure $\nu$ on $\mathbb N$, the \textit{branching random walk} $\mathrm{BRW}(\Gamma,\nu,\mu)$ is the branching Markov chain on the Cayley graph of $\Gamma$ driven by $\nu$ and by the $\mu$-random walk, which is the Markov chain whose transition probability is given by $p(x,y):=\mu(x^{-1}y)$.
Since $p$ is assumed to be symmetric, $\mu$ is symmetric in the sense that $\mu(x)=\mu(x^{-1})$ for every $x\in \Gamma$.
In this case, irreducibility of the random walk means that the support of $\mu$ generates $\Gamma$ as a group.
We also say that the random walk driven by $\mu$ is \textit{admissible}.

In the transient case, we introduce the trace $\mathcal{P}$ of the branching random walk, which is the set of vertices that are ever visited by $\mathrm{BRW}(\Gamma,\nu,\mu)$.
It is a random subset of $\Gamma$ and it has been a fruitful line of research to study the geometric properties of $\mathcal{P}$.
When the group $\Gamma$ is endowed with a geometric boundary $\partial \Gamma$, one can define the limit set $\Lambda$ of $\mathcal{P}$ as the closure of $\mathcal{P}$ in $\partial \Gamma$, i.e.\ $\Lambda=\mathbf{cl}(\mathcal{P})\cap \partial\Gamma$.
In hyperbolic context, the growth rate of $\mathcal{P}$ and the Hausdorff dimension of $\Lambda$ for suited distance on the boundary $\partial \Gamma$ has been related to asymptotic quantities involving $\Gamma$ and $\mu$ as we explain below.
Let us first mention that Benjamini and M\"uller \cite{BM12} studied qualitative properties of $\mathcal{P}$ and listed a certain number of conjectures. They proved in particular that $\mathcal{P}$ has exponential growth, while their method does not give quantitative results on the growth rate.
Hutchcroft proved in \cite{Hutchcroft20} that on any non-amenable group, two independent branching random walks almost surely intersect at most finitely often, which imply that $\mathcal{P}$ has infinitely many ends almost surely.
This answers some of the questions in \cite{BM12}.
Let us also mention that in a very recent work \cite{KaimanovichWoess}, Kaimanovich and Woess studied limit behaviour of branching random walks in terms of geometric features of $\Gamma$ with a very new angle.

\subsection{Earlier results on hyperbolic groups}
We introduce the Green function defined as
$$G_r(x,y)=\sum_{n\geq 0}p_n(x^{-1}y)r^n,x,y\in \Gamma.$$
Its radius of convergence $R$ is independent of $x,y$ provided that the random walk driven by $\mu$ is admissible. We have $R=\rho^{-1}$, where $\rho$ is the spectral radius introduced above.
The groups under consideration in this paper will always be non-amenable, so by a landmark result of Kesten, $R>1$, see \cite[Corollary~12.5]{Woessbook}.
For every $r\leq R$, we set
$$H_r(n)=\sum_{x\in S_n}G_r(e,x)$$
where $S_n$ is the sphere of radius $n$ centered at $e$ for the word distance
and
$$\omega_\Gamma(r)=\limsup_{n\to \infty}\frac{1}{n}\log H_r(n).$$
We call $\omega_\Gamma(r)$ the growth rate of the Green function.
It depends both on $\mu$ and on the chosen finite generating set of $\Gamma$.
We also set $\mathcal{P}_n=\mathcal{P}\cap S_n$ and $M_n=\sharp \mathcal{P}_n$.
We define the growth rate of $\mathrm{BRW}(\Gamma,\nu,\mu)$ as $\limsup \frac{1}{n}\log M_n$.

Assume that $\Gamma$ is a finitely generated free group endowed with its standard generating set and consider a branching random walk on $\Gamma$ with $\mathbf E[\nu]=r\in [1,\rho^{-1}]$.
Let $\lambda<1$ and for $x,y\in \Gamma$, set $d_\lambda(x,y)=\lambda^{n}$, where $n$ is the biggest integer such that the prefixes of length $n$ of $x$ and $y$ coincide.
Then, $d_\lambda$ extends to a distance on $\Gamma \cup \partial \Gamma$, where $\partial \Gamma$ is the set of ends of $\Gamma$.
Liggett~\cite{Liggett96} and Hueter-Lalley \cite{HL00} proved that whenever the underlying random walk is 
a symmetric, possibly anisotropic, nearest neighbor random walk on $\Gamma$, then the limit
$$\theta(r)=\lim_{n \to \infty} \left( M_n \right)^{1/n}$$
is well defined and is almost surely a constant.
Moreover, the Hausdorff dimension of the limit set $\Lambda$ in $\partial \Gamma$ is equal to $- \log_{\lambda} \theta(r)$.
Finally, the function $\theta(r)$ is continuous and strictly increasing on $[1,\rho^{-1}]$ and has critical exponent $1/2$ at $\rho^{-1}$, which means that
there exists $C>0$ such that
$$\theta(\rho^{-1})-\theta(r)\underset{r\to \rho^{-1}}{\sim}C\sqrt{\rho^{-1}-r}$$
and
$$\log \theta(\rho^{-1})-\log \theta(r)\underset{r\to \rho^{-1}}{\sim}\frac{C}{\theta(\rho^{-1})}\sqrt{\rho^{-1}-r}.$$
Furthermore, $\log \theta(r)\leq \frac{1}{2}v$, where $v=\log (2q-1)$ is the volume growth rate of the free group $\Gamma$ with $q$ generators. 

In \cite{SWX}, the authors extended these results to all hyperbolic groups and expressed $\log \theta$ as the growth rate of the Green function, i.e.
$$\theta(r)=\mathrm{e}^{\omega_\Gamma(r)}.$$
Here, $\partial \Gamma$ is the Gromov boundary of $\Gamma$ endowed with a visual distance $d_\lambda$ satisfying
$d_\lambda(\xi,\zeta)\asymp \lambda^{(\xi|\zeta)}$, where $(\cdot|\cdot)$ is the Gromov product,  see \cite[Section~2.1]{SWX} for more details.
In particular,   the Hausdorff dimension of the limit set was proven there to be   $\omega_\Gamma(r)$ for $r<\rho^{-1}$, but the critical case $r=\rho^{-1}$ remained open.
As a particular case of our work, we will show that this is still true at $r=\rho^{-1}$, see Corollary~\ref{corohausdorffdimensionhyperbolic} below.

\subsection{Main results}
Our goal in this paper is to generalize some of the aforementioned results to the class of   relatively hyperbolic groups, whose precise definition is recalled   in Section~\ref{sectionrelativelyhyperbolicgroups}.
Briefly, a finitely generated group $\Gamma$ is called relatively hyperbolic if it admits a geometrically finite action on a proper geodesic hyperbolic space $X$.
The Bowditch boundary of $\Gamma$ is then the limit set of the orbit of a fixed base point $x_0$ in the Gromov boundary of $X$.
It is unique up to $\Gamma$-equivariant homeomorphism.
We say that $\Gamma$ is non-elementary if its Bowditch boundary is infinite.
Since their introduction by Gromov,  these groups have been extensively studied by many authors and from different point of views. Besides the class of Gromov hyperbolic groups, the following   groups of geometric and algebraic interests are the main archetypal examples: 
\begin{enumerate}

    \item
    Fundamental groups of finite volume hyperbolic manifolds, and of more general finite volume Riemaniann manifolds with negatively pinched sectional curvature. 
    \item
    Infinitely ended groups, equivalently by Stalling's theorem, all non-trivial amalgamated free products and HNN extension over finite groups.

\end{enumerate}
In particular, free products are  the simplest combinatorial examples of relatively hyperbolic groups, on which branching random walks  were studied by Candellero, Gilch and M\"uller \cite{CandelleroGilchMuller}. 
We recover some of their results in this paper.

A finitely generated group $\Gamma$ endowed with a finite generating set $S$ is equipped with the \textit{word distance}.
Its volume growth rate is the growth rate of the balls for the word distance.
We again refer to Section~\ref{sectionrelativelyhyperbolicgroups} for more details.

\begin{thm}\label{Thmvolumegrowthconst}
Let $\Gamma$ be a non-elementary relatively hyperbolic group endowed with a finite generating set.
Consider a finitely supported admissible and symmetric probability measure $\mu$ on $\Gamma$ and a probability measure $\nu$ on $\mathbb{N}$ with mean $r\in [1,R]$.
Consider a branching random walk $(\Gamma,\nu,\mu)$ starting at $e$.
Then, almost surely,
$$\limsup_{n\to\infty}\frac{1}{n}\log M_n=\omega_\Gamma(r).$$
Moreover, the function $r\mapsto \omega_\Gamma(r)$ is increasing and continuous and satisfies that $\omega_\Gamma(r)\leq v/2$, where $v$ is the volume growth rate of the group for the word distance.
\end{thm}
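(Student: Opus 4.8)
The plan is to prove, almost surely, the two inequalities $\limsup_n\frac1n\log M_n\le\omega_\Gamma(r)$ and $\limsup_n\frac1n\log M_n\ge\omega_\Gamma(r)$, the latter being the substantial one, and then to prove the analytic properties of $\omega_\Gamma$ separately. Throughout I would assume $r\in(1,R]$: when $r=1$ the offspring law is $\delta_1$, $\mathrm{BRW}(\Gamma,\nu,\mu)$ is a single transient random walk, and the identity reduces to the elementary fact $\omega_\Gamma(1)=0$ (a consequence of transience and positive speed of the $\mu$-walk). The upper bound is a first-moment estimate: for offspring mean $r$ the expected number of particle-visits to $x$ equals $\sum_{k\ge0}r^kp_k(e,x)=G_r(e,x)$, so $\mathbf{P}(x\in\mathcal P)\le G_r(e,x)$ by Markov's inequality and hence $\mathbf{E}[M_n]\le H_r(n)$; since $\frac1n\log H_r(n)\le\omega_\Gamma(r)+\varepsilon$ for all large $n$, the events $\{M_n\ge e^{n(\omega_\Gamma(r)+2\varepsilon)}\}$ are summable and Borel--Cantelli gives the claim.

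For the lower bound I would first reduce to offspring laws of bounded support: truncating $\nu$ gives a stochastically smaller offspring law, hence (by the monotone coupling of Galton--Watson trees) a smaller trace and smaller $M_n$, while its mean can be made arbitrarily close to $r$; since $\omega_\Gamma$ is continuous (proved below), it suffices to treat bounded $\nu$, so in particular $\mathbf{E}[\nu^2]<\infty$. The next ingredient is the matching bound $\mathbf{P}(x\in\mathcal P)\asymp G_r(e,x)$, uniform as $x\to\infty$ (relying, at the endpoint $r=R$, on $G_R(e,e)<\infty$): writing $q_y$ for the probability that $x$ is never visited by the branching random walk started from $y$, one has the fixed-point relation $q_y=\varphi_\nu\big(\sum_z\mu(z)q_{yz}\big)$ with $q_x=0$, where $\varphi_\nu(s)=\sum_k\nu(k)s^k$; linearising at $q\equiv1$ and using $\varphi_\nu'(1)=r$ identifies the leading part of $1-q_y$ with $G_r(y,x)/G_r(e,e)$, and convexity of $\varphi_\nu$ upgrades this to two-sided estimates for $x$ far from $y$. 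Consequently $\mathbf{E}[M_n]\asymp H_r(n)$, so $\mathbf{E}[M_n]\ge e^{n(\omega_\Gamma(r)-\varepsilon)}$ for infinitely many $n$.

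The crux is the second-moment bound $\mathbf{E}[M_n^2]\le C\,\mathbf{E}[M_n]^2$ for all $n$. Decomposing $\mathbf{E}[M_n^2]=\mathbf{E}[M_n]+\sum_{x\ne y\in S_n}\mathbf{P}(x,y\in\mathcal P)$ and bounding the joint probability by a union bound over the last particle $w$ common to the lineages that first reach $x$ and $y$ (whose descendant sub-walks are conditionally independent --- this is where $\mathbf{E}[\nu^2]<\infty$ enters) yields
$$\mathbf{P}(x,y\in\mathcal P)\ \lesssim\ \frac{1}{G_r(e,e)^2}\sum_{w\in\Gamma}G_r(e,w)\,G_r(w,x)\,G_r(w,y).$$
By the Ancona-type quasi-multiplicativity of $G_r$ along geodesics, valid up to the spectral radius (from the earlier sections), this sum is governed by $w$ near a center $c$ of $\{e,x,y\}$, so $\mathbf{P}(x,y\in\mathcal P)\lesssim\mathbf{P}(x\in\mathcal P)\,\mathbf{P}(y\in\mathcal P)\cdot G_r(e,e)/G_r(e,c)$, and $G_r(e,c)^{-1}$ is at most exponential in the Gromov product $(x\,|\,y)_e$. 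Summing over $x,y\in S_n$ grouped by the value of $(x\,|\,y)_e$, the resulting series over that value converges uniformly in $n$ precisely because $\omega_\Gamma(r)>0$, equivalently because the exponential growth rate of $G_r$ along geodesics is strictly below the volume growth $v$; this gives the bound. Paley--Zygmund then yields $\mathbf{P}(M_n\ge\tfrac12\mathbf{E}[M_n])\ge c>0$ along the subsequence above, and reverse Fatou gives $\mathbf{P}\big(\limsup_n\frac1n\log M_n\ge\omega_\Gamma(r)-\varepsilon\big)\ge c>0$. To promote this to probability one, note that $\limsup_n\frac1n\log M_n$ is unchanged when the whole process is replaced by the sub-process issued from any particle (a root displacement only shifts word-spheres by a bounded amount), so the event $A_\varepsilon=\{\limsup_n\frac1n\log M_n\ge\omega_\Gamma(r)-\varepsilon\}$ holds for the whole process iff it holds for the sub-process of one of the root's children; conditioning on the offspring number gives $1-\mathbf{P}(A_\varepsilon)=\varphi_\nu(1-\mathbf{P}(A_\varepsilon))$, and since $\nu$ is supported on $\mathbb N$ with mean $r>1$ the only fixed points of $\varphi_\nu$ in $[0,1]$ are $0$ and $1$, so $\mathbf{P}(A_\varepsilon)=1$; intersecting over $\varepsilon=1/k$ completes the lower bound. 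I expect the main obstacle to be exactly this geometric input about $G_r$ at the spectral radius --- the Ancona inequalities and the control of $G_r$ near the parabolic points of the Bowditch boundary, where the word metric is distorted; this is where the relatively hyperbolic setting departs from the hyperbolic one of \cite{SWX} and where the Floyd metric and the parabolic gap condition come into play.

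Finally, the properties of $\omega_\Gamma$. Monotonicity in $r$ is immediate, since $G_r(e,x)=\sum_k p_k(e,x)r^k$ has non-negative coefficients, so $H_r(n)$ is non-decreasing in $r$ (and strictly increasing on $(1,R]$, by convexity together with $\omega_\Gamma(1)=0$ and $\omega_\Gamma(r)>0$ for $r>1$, the latter from the identity just proved and \cite{BM12}). For continuity, note that for each $n$ the map $\log r\mapsto\log H_r(n)$ is convex (logarithm of a power series with non-negative coefficients), so $\omega_\Gamma$ is convex in $\log r$; together with lower semicontinuity --- which follows from the quasi-supermultiplicativity $H_r(m+n)\gtrsim H_r(m)H_r(n)$, itself a consequence of the Ancona inequalities, letting $\omega_\Gamma(r)$ be written as a supremum of continuous functions of $r$ --- this gives continuity on all of $[1,R]$, the critical point included. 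For $\omega_\Gamma(r)\le v/2$: Cauchy--Schwarz gives $H_r(n)\le|S_n|^{1/2}\big(\sum_{x\in S_n}G_r(e,x)^2\big)^{1/2}$, and $\sum_x G_r(e,x)^2=\sum_m(m+1)p_m(e,e)r^m<\infty$ for $r<R$, so $\omega_\Gamma(r)\le\tfrac12\limsup_n\frac1n\log|S_n|=v/2$ for $r<R$; the case $r=R$ then follows by continuity.
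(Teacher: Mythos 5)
Your upper bound, your truncation/stochastic-domination reduction, your $0$--$1$ law via the fixed points of $\varphi_\nu$, and your treatment of the analytic properties of $\omega_\Gamma$ (LSC from supermultiplicativity plus convexity in $\log r$, Cauchy--Schwarz for $\omega_\Gamma\le v/2$, continuity at $R$) all match the paper's arguments and are sound. The genuine gap is in the second-moment estimate, which is the heart of the lower bound. You run the argument over all pairs $x,y\in S_n$, bound $\sum_w G_r(e,w)G_r(w,x)G_r(w,y)$ by its value at a ``center'' $c$ of $\{e,x,y\}$ via Ancona, and then sum over pairs grouped by the Gromov product. Both steps fail in a relatively hyperbolic group: the Cayley graph is not hyperbolic, the relative Ancona inequalities (Theorem 3.6 of \cite{DG21}, recalled in Section~2) hold only at points close to \emph{transition points} of geodesics, and when $[e,x]$ and $[e,y]$ separate deep inside a parabolic coset the center is not a transition point, so the factorization $G_r(e,x)\lesssim G_r(e,c)G_r(c,x)$ is unavailable. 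Worse, closing the computation requires comparing $\sum_k H_r(k)H_r(n-k)^2$ with $H_r(n)^2\asymp \mathbf{E}[M_n]^2$, i.e.\ the lower bound $H_r(k)\gtrsim e^{k\omega_\Gamma(r)}$ (purely exponential growth, equivalently submultiplicativity of $H_r$). The paper proves this only under the parabolic gap hypothesis (Lemma~\ref{LowerBDLem}), explicitly flags its unproven use as the error in \cite{CandelleroGilchMuller} (Remark~\ref{CGMGap}), and the theorem you are proving carries no such hypothesis --- so appealing to the parabolic gap, as you suggest at the end of that paragraph, does not rescue the argument.

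The missing idea is the paper's restriction to the $L$-transitional spheres $S_{n,L}$ (points $x$ with $[e,x]$ entirely $L$-transitional). On these sets the triangle geometry is hyperbolic-like (Lemmas~\ref{RelThinTriangleLem2} and~\ref{existenceofcenter}), the relative Ancona inequalities apply everywhere along $[e,x]$, and $H_{r,L}(n)$ has purely exponential growth \emph{unconditionally} (Proposition~\ref{GreenL}); the second-moment bound $\mathbf{E}[M_{n,L}^2]\le C(\mathbf{E}[M_{n,L}])^2$ then goes through (Lemma~\ref{Correlationxy}, Corollary~\ref{GreenLmoment}), and the lower bound for $M_n\ge M_{n,L}$ is recovered in the limit because $\omega_{\Gamma,L}(r)\to\omega_\Gamma(r)$ as $L\to\infty$ (Lemma~\ref{omegaL}). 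Two smaller points: your derivation of $\mathbf{P}(x\in\mathcal P)\gtrsim G_r(e,x)$ by linearising the extinction fixed-point equation is only heuristic (convexity of $\varphi_\nu$ gives the upper bound cheaply; the lower bound is obtained in the paper via Paley--Zygmund applied to $Z_x$ using the same second-moment machinery), and the supermultiplicativity $H_r(m+n)\gtrsim H_r(m)H_r(n)$ comes from the Extension Lemma~\ref{ExtensionLem} and the trivial supermultiplicativity of the Green function, not from Ancona.
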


We then investigate the limit behaviours of the branching random walk at infinity and we compute the Hausdorff dimension of the limit set of the trace in various compactifications of relatively hyperbolic groups. Introduced by Floyd \cite{Floyd}, the Floyd boundary can be constructed as a compactification for  any locally finite graph, equipped with a rescaling of the graph distance called the Floyd distance. 
Generalizing Floyd's theorem  \cite{Floyd} for geometrically finite Kleinian groups,   Gerasimov \cite{Gerasimov} proved that for any relatively hyperbolic groups, the Bowditch boundary can be realized as a quotient of  the Floyd boundary of the Cayley graph.  The Floyd distance depending on a parameter $\lambda\in (0,1)$ is then transferred to the Bowditch boundary  as soon as $\lambda\geq \lambda_0$, where $\lambda_0$ is given by \cite[Map Theorem]{Gerasimov}.
The corresponding distance on the Bowditch boundary is called the \textit{shortcut distance} and will be described in Section~\ref{SectionFloydboundary}. 

\begin{thm}\label{ThmHdimconst}
Under the assumption of Theorem \ref{Thmvolumegrowthconst}, denote by $\Lambda(r)$ the limit set of the branching random walk in the Bowditch boundary, endowed with the shortcut distance of parameter $\lambda\in [\lambda_0,1)$.
Then, almost surely,
$$\Hdim\big(\Lambda(r)\big)=\frac{\omega_\Gamma(r)}{-\log \lambda}.$$
\end{thm}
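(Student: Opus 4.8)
The plan is to compute the Hausdorff dimension of $\Lambda(r)$ by the usual two-sided estimate, combining the growth rate result of Theorem~\ref{Thmvolumegrowthconst} with the metric properties of the shortcut distance on the Bowditch boundary. Recall that the shortcut distance $\rho_\lambda$ satisfies, roughly, $\rho_\lambda(\xi,\zeta)\asymp \lambda^{(\xi|\zeta)_e}$ where $(\cdot|\cdot)_e$ is a Gromov-product-type quantity measured in the hyperbolic space $X$ on which $\Gamma$ acts geometrically finitely; moreover, for a group element $g$ with $\abs{g}_S = n$ there is a shadow-type subset (a ``cone'' or ``shadow'' $\mathcal O(g)$ at $g$) of the Bowditch boundary of shortcut-diameter comparable to $\lambda^{n}$ up to a multiplicative error controlled by the BCP-type (bounded coset penetration) geometry of relatively hyperbolic groups. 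The key dictionary is therefore: a point of $\mathcal P_n$ (i.e. a visited vertex at word-distance $n$) whose descendants escape to infinity contributes a shadow of diameter $\approx \lambda^{n}$ to a cover of $\Lambda(r)$, and conversely every point of $\Lambda(r)$ lies in such a shadow.

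For the \textbf{upper bound} $\Hdim(\Lambda(r)) \le \omega_\Gamma(r)/(-\log\lambda)$, I would fix $s > \omega_\Gamma(r)/(-\log\lambda)$, so that $s(-\log\lambda) > \omega_\Gamma(r)$, and construct for each large $n$ a cover of $\Lambda(r)$ by the shadows $\{\mathcal O(g) : g\in \mathcal P_n\}$, together with a routine argument that every accumulation point of $\mathcal P$ in the Bowditch boundary is seen in some shadow based at level exactly $n$ (this uses that the trace $\mathcal P$ is connected as a subgraph, so a geodesic ray of the trace towards a limit point meets $S_n$). Then
\[
\mathcal H^s_{\lambda^n}(\Lambda(r)) \;\lesssim\; \sum_{g\in \mathcal P_n} \big(\mathrm{diam}\,\mathcal O(g)\big)^s \;\lesssim\; M_n \,\lambda^{sn},
\]
and since $\limsup_n \frac1n\log M_n = \omega_\Gamma(r) < s(-\log\lambda)$ almost surely by Theorem~\ref{Thmvolumegrowthconst}, the right-hand side tends to $0$ along a subsequence, giving $\mathcal H^s(\Lambda(r)) = 0$ and hence $\Hdim(\Lambda(r))\le s$; letting $s\downarrow \omega_\Gamma(r)/(-\log\lambda)$ finishes this direction. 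One subtlety is that shadows in the relatively hyperbolic setting are only ``almost'' of the expected diameter near parabolic points, so I would either work with a system of shadows adapted to the partial cone structure (as developed in the study of the Floyd/shortcut metric) or restrict attention to a conical part, handling parabolic points separately by noting that the countably many parabolic points contribute nothing to Hausdorff dimension.

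For the \textbf{lower bound} $\Hdim(\Lambda(r)) \ge \omega_\Gamma(r)/(-\log\lambda)$, the standard route is a mass-distribution (Frostman) argument: one builds a random measure $\theta$ supported on $\Lambda(r)$ — for instance a harmonic-type measure obtained as a weak limit of the empirical distributions of the $n$-th generation particles of the branching random walk, suitably normalised — and shows that for every $s < \omega_\Gamma(r)/(-\log\lambda)$ there is $C>0$ with $\theta(B(\xi,\lambda^n)) \le C\lambda^{sn}$ for all $\xi$ and all $n$, i.e. $\theta$ has finite $s$-energy. The ball $B(\xi,\lambda^n)$ in the shortcut metric is comparable to a single shadow $\mathcal O(g)$ with $\abs{g}_S\approx n$, so the required estimate amounts to showing $\theta(\mathcal O(g)) \lesssim \lambda^{sn}$, which by the first-moment control on how many particles visit a given $g$ (encoded by the Green function $G_r(e,g)$ and the equality $\theta(r)=\mathrm e^{\omega_\Gamma(r)}$ implicit in Theorem~\ref{Thmvolumegrowthconst}) reduces to a comparison between $G_r(e,g)$-weighted counting and $\lambda^{sn}$; one then uses the Ancona-type inequalities / multiplicativity of the Green function along geodesics in relatively hyperbolic groups, together with $\sum_{g\in S_n}G_r(e,g) = H_r(n) \approx \mathrm e^{\omega_\Gamma(r)n}$ and $s(-\log\lambda) < \omega_\Gamma(r)$, to get the bound. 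I expect the \textbf{main obstacle} to be exactly this lower bound: making the candidate measure $\theta$ genuinely supported on $\Lambda(r)$ and simultaneously proving the uniform Frostman upper regularity requires both the probabilistic input (non-degeneracy of the limiting measure, which uses that the branching random walk is supercritical in the sense $r>1$ hence the population survives) and the relatively hyperbolic geometry (deviation inequalities for geodesics, behaviour of the Green function near parabolic subgroups, and the relation between the word metric and the shortcut metric). Once these estimates are in place, combining the two bounds yields $\Hdim(\Lambda(r)) = \omega_\Gamma(r)/(-\log\lambda)$ almost surely, and the Corollary for hyperbolic groups follows by specialising to $\lambda$ a visual parameter where the shortcut distance coincides with the visual distance.
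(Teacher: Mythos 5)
Your overall architecture (shadow covering plus the growth rate for the upper bound; a Frostman-type argument for the lower bound) matches the paper's, but the key step of your upper bound does not work as stated. You claim that every $\xi\in\Lambda(r)$ lies in a shadow $\mathcal O(g)$ with $g\in\mathcal P_n$, because the trace is connected and a genealogy line converging to $\xi$ must cross $S_n$. Crossing $S_n$ at some $g\in\mathcal P_n$ does \emph{not} put $\xi$ in the shadow of $g$: for $\xi\in\Pi_K(g)$ one needs a \emph{geodesic} $[e,\xi]$ to pass within $K$ of $g$, and the random genealogy path can cross $S_n$ arbitrarily far from $[e,\xi]$. (The analogous argument does work for the ends boundary, where lying in the same component of $B(e,n)^c$ suffices, which is why \cite{CandelleroGilchMuller} can use it; it fails for the shortcut/Floyd metric.) Filling this hole is precisely the content of Theorem~\ref{thmlogarithmictracking}: the paper proves, via the superexponential decay estimate of Lemma~\ref{SupExpDecayLem}, the projection estimates of Lemma~\ref{exponentialmoments}, and Borel--Cantelli (Lemmas~\ref{TraceAvoidTransBall}, \ref{TraceAvoidTransBall2}, \ref{LogTrackingLem}), that the trace a.s.\ comes within $\kappa\log|x|$ of every transition point $x$ on $[e,\xi]$ for conical $\xi$. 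Only then can one cover the conical part of $\Lambda(r)$ by shadows $\Pi\bigl(z,\kappa\log\frac{|z|}{1-\epsilon}\bigr)$ with $z\in\mathcal P_n$, whose diameters carry an extra polylogarithmic factor that your $\lambda^{sn}$ computation must absorb (it does, since $s(-\log\lambda)>\omega_\Gamma(r)$ strictly); parabolic points are countable and handled separately, as you anticipate.

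On the lower bound, your plan is in the right spirit but the route you propose is harder than necessary and leaves the genuinely delicate points open. The paper does not prove uniform ball regularity $\theta(B(\xi,\lambda^n))\le C\lambda^{sn}$; it uses the energy form of Frostman's lemma, showing $\int\int\overline{\delta}_e(x,y)^{-h}\,d\chi\,d\chi<\infty$ for $h<\omega_\Gamma(r)/(-\log\lambda)$, which only requires an averaged estimate. Two ingredients you do not supply are essential: (i) the empirical measures must be restricted to $L$-transitional points $\mathcal P_{n,L}$ (Definition~\ref{deftransitional}) and to an event where $M_{n,L}\asymp\mathbf E[M_{n,L}]$, because the second-moment/correlation bound (Lemma~\ref{Correlationxy}, via relative Ancona inequalities and Lemma~\ref{RelThinTriangleLem2}) and the non-degeneracy of the limit (Paley--Zygmund) are only available there, and Lemma~\ref{omegaL} is needed to recover the full exponent $\omega_\Gamma(r)$ in the limit $L\to\infty$; (ii) the critical case $r=\rho^{-1}$ cannot be treated directly (the second moments require $r<\rho^{-1}$) and is reached by stochastic domination with offspring laws of mean $r-\epsilon$ together with continuity of $\omega_\Gamma$ (Corollary~\ref{corolowersemicontinuous}). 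The paper also has to develop a compactness theory for random finite (not probability) measures to extract the limit $\chi$; a naive weak limit of normalized generation-$n$ empirical measures need not exist or be supported on $\Lambda(r)$.
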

\begin{rem}
The lower bound on the Hausdorff dimension actually holds for the whole limit set of the branching random walk in the Floyd boundary endowed with the Floyd distance, see Proposition~\ref{lowerboundHdim}.
 We can only prove the upper bound for a subset of the Floyd boundary, see Proposition~\ref{upperboundHausdorff}. In many interesting cases, the Floyd boundary is homeomorphic to the Bowditch boundary (eg. if  parabolic subgroups are amenable). 
Moreover, under the technical condition that  the volume growth rate $v_S(P)$ for every parabolic subgroup $P$ is smaller than $\omega_{\Gamma}(r)$, the above conclusion is still true for the  Floyd boundary, see Corollary~\ref{corowholeFloyd}.
In general, the identification of the Hausdorff dimension  remains open for the full limit set in the Floyd boundary.
\end{rem}


 
As above-mentioned, groups with infinitely many ends form a special class of relatively hyperbolic groups.
Such groups can be compactified with the ends boundary, introduced by Freudenthal \cite{Freudenthal}. A natural family of visual distances depending on a parameter $\lambda\in(0,1)$ on the ends boundary      will be described in Section~\ref{sectionends}.
Along the way, we prove the following Theorem, which both extends  \cite[Theorem~3.5]{CandelleroGilchMuller} to all groups with infinitely many ends and fix a gap in their proof, as we will explain in Section~\ref{sectionends}.
\begin{thm}\label{coroaccessiblegroups}
Under the assumption of Theorem \ref{Thmvolumegrowthconst}, if $\Gamma$ is  a group with infinitely many ends  and $\Lambda(r)$ is the   limit set of the branching random walk in the ends boundary, endowed with the visual distance of parameter $\lambda\in (0,1)$, then almost surely,
$$\Hdim\big(\Lambda(r)\big)=\frac{\omega_\Gamma(r)}{-\log \lambda}.$$  
\end{thm}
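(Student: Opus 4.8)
The plan is to deduce Theorem~\ref{coroaccessiblegroups} from Theorem~\ref{ThmHdimconst} by comparing the ends compactification with the Floyd/Bowditch machinery. A group $\Gamma$ with infinitely many ends is relatively hyperbolic with respect to any finite family of representatives of the one-ended (or finite) vertex groups in a Stallings--Dunwoody decomposition; by accessibility such a decomposition exists and terminates. For such $\Gamma$ the Floyd boundary $\partial_\lambda\Gamma$ is non-trivial for every $\lambda\in(0,1)$, and there is a natural $\Gamma$-equivariant continuous surjection $\partial_\lambda\Gamma \twoheadrightarrow \mathrm{Ends}(\Gamma)$ (and in fact, when the peripheral subgroups are one-ended, a quotient map from the Floyd boundary onto the Bowditch boundary that collapses each peripheral limit point; over the ends boundary the collapsing identifies an entire one-ended factor's worth of Floyd boundary to a single end). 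So the ends boundary sits between the Floyd and Bowditch pictures, and the point is that it is exactly the part of the boundary the trace $\mathcal P$ ``sees'' with positive density.

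First I would record the comparison of metrics. The visual distance $d_\lambda$ on $\mathrm{Ends}(\Gamma)$ of parameter $\lambda$ is bi-Lipschitz equivalent (after possibly adjusting $\lambda$ by a power, which only rescales Hausdorff dimension by the corresponding factor, consistently on both sides of the claimed equality) to the Floyd distance of the same parameter restricted to the preimage of the ends, and also comparable to the shortcut distance on the relevant sub-quotient of the Bowditch boundary. Concretely, two ends $\xi\ne\zeta$ are separated by a finite cut set at distance $\asymp n$ from $e$ iff the Floyd-length of any path from a ray in $\xi$ to a ray in $\zeta$ is $\asymp \sum_{k\ge n}\lambda^k \asymp \lambda^n$; this is the standard estimate and it matches the Gromov-product-type formula $d_\lambda(\xi,\zeta)\asymp\lambda^{(\xi|\zeta)}$ used on hyperbolic boundaries. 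With this in hand, the Hausdorff dimension of $\Lambda(r)$ in the ends boundary equals its dimension as a subset of the Floyd boundary, up to the universal normalisation $-\log\lambda$.

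The lower bound $\Hdim(\Lambda(r))\ge \omega_\Gamma(r)/(-\log\lambda)$ is then immediate: it is exactly Proposition~\ref{lowerboundHdim}, which gives this lower bound for the whole limit set in the Floyd boundary, and the quotient map to the ends boundary is Lipschitz (distances can only shrink under collapsing peripheral pieces), so Hausdorff dimension cannot increase; but the image under a Lipschitz map has dimension $\le$ that of the source, which is the wrong direction — so instead I use that the quotient map is a \emph{bijection} between the relevant horoball-free parts: almost surely the trace is transient, hence it visits each parabolic coset only finitely often, so $\Lambda(r)$ is contained in the conical (non-parabolic) part of the boundary, on which the Floyd-to-ends map is a bi-Lipschitz homeomorphism. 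Thus the lower bound transfers with equality of dimension. For the upper bound, I apply Proposition~\ref{upperboundHausdorff}: it yields the matching upper bound on the subset of the Floyd boundary consisting of conical limit points, and since (again by transience) $\Lambda(r)$ lies entirely in the conical part, this is exactly what is needed here — the technical hypothesis ``$v_S(P)<\omega_\Gamma(r)$'' that appears in Corollary~\ref{corowholeFloyd} is vacuous because in the ends boundary each peripheral factor is collapsed to a point, contributing nothing, or equivalently because the trace never accumulates on parabolic points.

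The main obstacle, and the place where the ``gap'' in \cite{CandelleroGilchMuller} referred to in the excerpt presumably lies, is the claim that $\Lambda(r)$ is almost surely disjoint from the parabolic (peripheral) limit points and that this is robust enough to make the Floyd-to-ends comparison an honest bi-Lipschitz equivalence on the support of the limit set. One cannot simply say ``each coset is visited finitely often, so no accumulation there'': a sequence of distinct particles could escape to infinity while drifting through infinitely many \emph{different} parabolic cosets whose limit points converge to a single parabolic point of the Bowditch boundary. Ruling this out requires a genuine estimate — a Borel--Cantelli argument using the Green-function decay $G_r(e,x)\le C\mathrm e^{-\omega_\Gamma(r)|x|}$ together with the fact that the expected number of particles in a deep horoball shrinks geometrically, faster than the number of relevant horoballs grows (this is where the bound $\omega_\Gamma(r)\le v/2$, or more precisely a comparison with parabolic growth rates, enters). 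This is exactly the content that Proposition~\ref{lowerboundHdim} and Proposition~\ref{upperboundHausdorff} are designed to supply in the Floyd setting; the work in this section is to check that their hypotheses hold automatically for groups with infinitely many ends and to push the conclusions through the quotient map to the ends boundary. Finally I would note the monotonicity/continuity of $r\mapsto\omega_\Gamma(r)$ from Theorem~\ref{Thmvolumegrowthconst} to conclude that the dimension formula holds simultaneously, and cleanly, across the whole regime $r\in[1,R]$, including the critical value $r=R$.
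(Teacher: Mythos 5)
There is a genuine gap, and it sits exactly where the paper warns one cannot go. Your strategy is to deduce the theorem from Theorem~\ref{ThmHdimconst} via a metric comparison between the ends boundary and the Floyd/Bowditch boundaries. But the only comparison available is one-sided: the identity extends to a $1$-Lipschitz map $\psi$ from the Floyd compactification to the ends compactification, i.e.\ $\delta_e(x,y)\ge \tilde\delta_e(\psi(x),\psi(y))$, and likewise the shortcut distance is only bounded \emph{below} by the visual distance. A Lipschitz map can only decrease Hausdorff dimension of images, so — as you yourself notice mid-argument — the lower bound of Proposition~\ref{lowerboundHdim} transfers in the wrong direction. Your repair, namely that $\psi$ restricts to a bi-Lipschitz homeomorphism on the conical part, is asserted without proof and is false in general: the ends boundary is typically \emph{larger} than the Bowditch boundary (a many-ended vertex group of the Stallings splitting contributes its whole ends boundary, whereas in the Bowditch boundary that coset is collapsed to a single parabolic point), and even where $\psi$ is injective nothing forces the reverse Lipschitz inequality. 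Relatedly, your claim that ``each peripheral factor is collapsed to a point in the ends boundary'' has the two boundaries backwards, and the assertion that transience forces $\Lambda(r)$ to avoid parabolic/non-conical points is neither justified nor true (a transient branching random walk vacates finite sets, not infinite parabolic cosets). The paper states explicitly that Proposition~\ref{lowerboundends} \emph{cannot} be deduced from Proposition~\ref{lowerboundHdim} for precisely these reasons.

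What the paper actually does is rerun both halves directly in the ends boundary. For the lower bound it repeats the random-measure/Frostman construction of Section~\ref{SectionlowerboundHausdorff} verbatim, replacing the Floyd-geometry separation estimate by a combinatorial one: writing points of $\mathcal P_{n,L}$ in the normal form $a_1b_1\cdots a_nb_nc$ of the amalgam (or HNN extension) coming from Stallings' theorem, $L$-transitional geodesics have syllables of length at most $D_L$, and two points whose normal forms share a common prefix of length $d_n$ satisfy $\hat\delta_e(x,y)\ge c_L\lambda^{d_n}$ because every path between them must cross a bounded neighbourhood of that prefix. This is the substitute for the shadow/Floyd estimate and is where the tree-of-spaces (or bottleneck) structure enters. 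For the upper bound no relative hyperbolicity is needed at all: any end in $\Lambda(r)$ forces the trace to visit, for each $n$, a point of $\bigcup_{0\le l\le l_0}S_{n+l}$ in the same component of $B(e,n)^c$, so $\Lambda_{\mathcal E}(r)$ is covered by $M_n$ sets of visual diameter $\le C\lambda^n$, and $\limsup\frac1n\log M_n=\omega_\Gamma(r)$ finishes the estimate. So the correct proof is a direct adaptation in the ends metric, not a transfer from Theorem~\ref{ThmHdimconst}.
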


Finally, for hyperbolic groups, the Bowditch boundary and the Gromov boundary coincide and the shortcut distance is bi-Lipschitz to the visual distance.
We thus deduce from Theorem~\ref{ThmHdimconst} the following, which  resolves \cite[Conjecture 1.4]{SWX}.
\begin{cor}\label{corohausdorffdimensionhyperbolic}
Under the assumption of Theorem \ref{Thmvolumegrowthconst}, if   $\Gamma$ is a non-elementary hyperbolic group and if $\Lambda(R)$ is the limit set of the branching random walk with mean offspring $R$ in the Gromov boundary of $\Gamma$ endowed with a visual distance of parameter $\lambda\in [\lambda_0,1)$,
then almost surely,
$$\Hdim\big(\Lambda(R)\big)=\frac{\omega_\Gamma(R)}{-\log \lambda}.$$
\end{cor}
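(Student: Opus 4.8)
The plan is to deduce the corollary directly from Theorem~\ref{ThmHdimconst} applied with $r=R$, after identifying the boundaries and metrics involved in the hyperbolic setting. First I would record that a non-elementary hyperbolic group $\Gamma$ is relatively hyperbolic with respect to the empty collection of peripheral subgroups: it acts geometrically, hence geometrically finitely, on a proper geodesic hyperbolic space quasi-isometric to it (for instance a Rips complex of its Cayley graph), as required in Section~\ref{sectionrelativelyhyperbolicgroups}. Since there are then no parabolic points, the Bowditch boundary coincides with the Gromov boundary $\partial\Gamma$, and the random limit set $\Lambda(R)\subseteq\partial_B\Gamma$ appearing in Theorem~\ref{ThmHdimconst} is exactly the closure in $\partial\Gamma$ of the trace of the branching random walk. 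Note also that $R=\rho^{-1}\in[1,R]$, so Theorem~\ref{ThmHdimconst} does apply to the mean offspring $r=R$; this is the point where the hitherto-open critical case is absorbed.

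Second, I would compare the shortcut distance to a visual distance. Because $\Gamma$ has no parabolics, Gerasimov's quotient map from the Floyd boundary onto the Bowditch boundary is a homeomorphism, so the shortcut distance of parameter $\lambda\in[\lambda_0,1)$ is just the Floyd distance of the Cayley graph. I would then invoke the standard comparison in the hyperbolic case: the Floyd length of a geodesic based at $e$ is controlled by a geometric series in $\lambda^{d(e,\cdot)}$, and the distance from $e$ to a geodesic joining two boundary points $\xi,\zeta$ equals $(\xi|\zeta)$ up to an additive constant; combining these yields that the shortcut (Floyd) distance of parameter $\lambda$ satisfies the visual estimate $d_\lambda(\xi,\zeta)\asymp\lambda^{(\xi|\zeta)}$, hence is bi-Lipschitz to a visual distance of parameter $\lambda$ on $\partial\Gamma$ in the sense of Section~\ref{SectionFloydboundary} and \cite{SWX}. (Strictly, genuine visual metrics exist only for $\lambda$ close enough to $1$ relative to the hyperbolicity constant, so ``visual distance of parameter $\lambda$'' is understood up to this bi-Lipschitz comparison, which is all that is needed below.)

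Finally, Hausdorff dimension is a bi-Lipschitz invariant, so $\Hdim\big(\Lambda(R)\big)$ measured with the visual distance of parameter $\lambda$ agrees with the value measured with the shortcut distance of parameter $\lambda$; by Theorem~\ref{ThmHdimconst} this common value is almost surely $\omega_\Gamma(R)/(-\log\lambda)$, which is the assertion, and in particular resolves \cite[Conjecture~1.4]{SWX}. I do not expect a genuine obstacle here: all of the analytic and probabilistic content — including the treatment of the boundary case $r=R$ — is already contained in Theorem~\ref{ThmHdimconst}, and the only care needed is the elementary bookkeeping that the shortcut distance specializes, in the absence of parabolics, to (a metric bi-Lipschitz to) the visual distance of the same parameter.
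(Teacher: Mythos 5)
Your proposal is correct and follows essentially the same route as the paper: identify the Bowditch (and Floyd) boundary of a hyperbolic group with its Gromov boundary, note the shortcut distance is bi-Lipschitz to the visual distance of the same parameter (the paper simply cites \cite[Proposition~6.1]{PY} for this, where you sketch the Gromov-product estimate), and apply Theorem~\ref{ThmHdimconst} at $r=R$ together with bi-Lipschitz invariance of Hausdorff dimension.
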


In the course of this investigation, we obtain much more precise information on the trace $\mathcal{P}$ of the branching random walk.
We prove that $\mathcal{P}$ almost surely tracks the logarithm neighborhood of transition points along geodesics ending at conical limit points in the limit set.
Recall that a point on a geodesic is called a transition point if it is not deep in a parabolic coset.
We refer to Section~\ref{sectionrelativelyhyperbolicgroups} for a precise definition.

\begin{thm}\label{thmlogarithmictracking}
Under the assumption of Theorem \ref{Thmvolumegrowthconst}, {there exists $0 < \kappa < \infty$ such that} 
almost surely,  for every conical point $\xi\in \Lambda(r)$, we have
$$
\limsup_{|x|\to\infty} \frac{d(x,\mathcal P)}{\log |x|}\le \kappa, 
$$
where $x$ is taken over  the set of transition points on $[e,\xi]$.
\end{thm}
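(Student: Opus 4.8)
The plan is to combine the first-moment (many-to-one) method with two facts about transition points in the Cayley graph of $\Gamma$. Write $\mathbb T$ for the genealogical tree of $\mathrm{BRW}(\Gamma,\nu,\mu)$ and, for a vertex $u$ of $\mathbb T$, let $w(u)\in\Gamma$ be its position; since every particle has at least one child, every $u$ lies on an infinite ray, so the trace $\mathcal P=\{w(u):u\in\mathbb T\}$ is the union of the trajectories along all the rays of $\mathbb T$. The two geometric inputs, both part of the transition-point theory recalled in Section~\ref{sectionrelativelyhyperbolicgroups}, are: \emph{(i) stability} --- there is $D<\infty$ such that if $x$ is a transition point of $[e,\xi]$ for a conical $\xi$ and $(y|\xi)>|x|+D$, then $x$ lies within $D$ of a transition point of $[e,y]$; and \emph{(ii) logarithmic tracking by paths} --- there is $C_0<\infty$ such that every path from $e$ to a vertex $y$ of combinatorial length $m$ passes within $C_0(1+\log m)$ of every transition point of $[e,y]$. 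Fact (ii), the relatively hyperbolic analogue of the logarithmic fellow-travelling of geodesics by paths, is the source of the logarithm in the statement: it will be applied to ancestral lines, which are paths contained in $\mathcal P$ whose length equals their depth.

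First I would reduce to a first-passage estimate. Fix a conical $\xi\in\Lambda(r)$ and a transition point $x$ on $[e,\xi]$ with $|x|=n$. Since $\xi\in\Lambda(r)$ and $\mathcal P$ is locally finite (the population vacates every finite set), there are $q\in\mathcal P$ with $(q|\xi)>n+D$; let $\ell^*=\ell^*(x)$ be the least depth of such a particle. By (i), for the minimizing $q$ the geodesic $[e,q]$ has a transition point within $D$ of $x$, and by (ii) applied to its ancestral line, $d(x,\mathcal P)\le D+C_0(1+\log\ell^*)$. So it suffices to prove that almost surely $\ell^*(x)\le n^{A}$ for a fixed $A$ and all sufficiently far transition points $x$ lying on geodesics to conical points of $\Lambda(r)$; then $d(x,\mathcal P)\le\kappa\log n$ with $\kappa=C_0A+1$, say. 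Using (ii) in the contrapositive, if all the ancestral lines reaching ``the direction of $x$'' avoid $B(x,\kappa\log n)$ then $\ell^*(x)\gtrsim n^{\kappa/C_0}$, so what must be excluded is that, for some transition $x$ with $|x|=n$, the trace reaches ``past $x$'' (i.e.\ some $q\in\mathcal P$ with $(q|\xi)>n+D$) but only after time $T:=n^{\kappa/C_0}$.

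The first-passage estimate itself goes through the many-to-one formula: the probability above is at most $\sum_{t>T}r^{t}\,\mathbf P_e[\text{the $\mu$-walk first becomes ``past $x$'' at step $t$ while avoiding }B(x,\kappa\log n)]$. On this event the walk's own trajectory still meets a bounded neighbourhood of $x$ by time $t$ (again by (ii)), so one may use the Ancona inequalities along transition points on relatively hyperbolic groups --- see the references in Section~\ref{SectionFloydboundary} --- to factor the relevant Green mass through a bounded neighbourhood of $x$: avoiding $B(x,\kappa\log n)$ then contributes a factor $n^{-c\kappa}$, while ``first after time $T$'' contributes a factor $\beta^{T}$ with $\beta\in(0,1)$ coming from the ballistic behaviour of the $r$-weighted walk near the entrance of the shadow of $x$. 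Since $T=n^{\kappa/C_0}$ grows faster than $n$, the product $n^{-c\kappa}\beta^{n^{\kappa/C_0}}$ is summable even after multiplying by the number $\le\mathrm{e}^{vn}$ of transition points at level $n$ and summing over $n$; Borel--Cantelli then gives the desired bound on $\ell^*(x)$, and hence the theorem.

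I expect the first-passage estimate of the third paragraph to be the main obstacle, in two respects. First, one must make it genuinely uniform in $x$ and must justify the ``lingering'' factor $\beta^{T}$: the trajectory is forced to reach the shadow of $x$ only after $T\approx n^{\kappa/C_0}$ steps, and converting that into a super-exponential-in-$n$ gain requires a uniform exponential tail on the time the $r$-weighted walk spends near the entrance of that shadow, which is where the Ancona inequalities really do the work; at the critical value $r=R$, where the Green function of the walk may sit on the boundary of its domain, this input has to be replaced by the finer estimates available there. Second, establishing fact (ii) itself in the relatively hyperbolic (non-hyperbolic) Cayley graph requires excluding detours around $x$ through parabolic cosets, and this is precisely the point at which one uses that $x$ is a transition point, and not an arbitrary point of the geodesic.
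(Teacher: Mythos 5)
Your overall architecture (freeze the first particle entering a cone-like region around a transition point $x$, bound the expected number of frozen particles whose ancestral line avoids $B(x,\kappa\log|x|)$, apply Borel--Cantelli over $x\in S_n$, and use the transition-point geometry to relate this to conical limit points of $\Lambda(r)$) is the same shape as the paper's proof of Lemma~\ref{LogTrackingLem}. The gap is in the first-passage estimate, exactly where you anticipated trouble. You propose that avoiding $B(x,\kappa\log n)$ costs only a polynomial factor $n^{-c\kappa}$, to be compensated by a lingering factor $\beta^{T}$ with $T=n^{\kappa/C_0}$ coming from an exponential tail in time of the $r$-weighted walk. That tail (essentially $\sum_{t>T}r^tp_t\le(r/R)^T G_R$) exists only for $r<R$ and disappears entirely at $r=R$, which is the case the theorem is really about; you flag this but offer no substitute. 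The tool that closes the argument at $r=R$ is the \emph{super-exponential} decay of the restricted Green function at a transition point, $G_R(e,z;B(y,K)^c)\le e^{-e^{\delta K}}$ (Lemma~\ref{SupExpDecayLem}, from \cite{DG21}): with $K\asymp\log n$ this already yields $e^{-n^{\delta\kappa}}$, beating the $e^{vn}$ entropy of $S_n$ with no time-lingering factor at all. Your estimate of the avoidance cost as merely polynomial is what forces you into the unavailable $\beta^T$ factor.

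A second genuine problem is summability over the freezing positions. Your bound on the weight of a frozen particle at $q$ is uniform in $q$ and does not decay with $d(x,q)$ or $d(e,q)$, so the sum over the infinitely many possible first-entry points of the cone diverges even after the avoidance gain. The paper resolves this by letting the avoidance radius grow \emph{linearly} in $d(x,z)$ (the sets $U_\epsilon(x,m)$ in Lemma~\ref{TraceAvoidTransBall}), so each point at distance $k$ from $x$ contributes $e^{-e^{\delta\epsilon k}}$ against a count of $e^{vk}$. This choice is also what creates the need for the separate treatment of parabolic detours — first-entry points $z$ with $d(x,z)$ large but with the middle of $[x,z]$ deep in one parabolic coset, where no linearly-growing avoidance ball is available — which occupies Lemmas~\ref{DeepContainedLem}, \ref{exponentialmoments} and \ref{TraceAvoidTransBall2} and the case analysis in Lemma~\ref{LogTrackingLem}; your sketch does not engage with this case. (Your deterministic fact (ii) is essentially correct — it follows from Lemma~\ref{TransLargeLem} together with the definition of the Floyd length of a path — and your fact (i) is Lemma~\ref{ConvConicalLem}, though it should be phrased with Floyd distances rather than Gromov products, since the Cayley graph is not hyperbolic. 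But these reductions do not repair the probabilistic estimate.)
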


This should be compared with analogous results for random walks.
Under finite first moment condition, a random walk on a hyperbolic group almost surely sub-linearly tracks geodesics from the basepoint $e$ to the limit point of the random walk in the Gromov boundary.
We refer to  Kaimanovich \cite[Theorem~7.3]{Kaimanovich} for a proof, see also \cite{Ledrappier} for the case of trees.
Sub-linear tracking of geodesics is one of the most important result in the study of random walks on groups with hyperbolic properties and is related to a celebrated multiplicative ergodic theorem of Furstenberg and Kesten \cite{FurstenbergKesten}.
It was first coined by Kaimanovich in the context of symmetric spaces \cite{Kaimanovichsymmetricspaces} and was then extended to groups with non-positive curvature by Karlsson and Margulis \cite{KarlssonMargulis}
and to more general classes of group by Tiozzo \cite{Tiozzo}, including mapping class groups.
If the random walk has finite support, then the tracking is in fact logarithmic and this is true for all weakly hyperbolic groups, i.e.\ groups admitting a non-elementary action by isometries on a Gromov hyperbolic space, see \cite[Theorem~1.3]{MaherTiozzo}.
If the group is relatively hyperbolic, then the random walk actually sub-linearly tracks transition points on the word geodesic in the Cayley graphs, see \cite[Proposition~3.2]{DussauleYang}.
Theorem~\ref{thmlogarithmictracking} can thus be thought as a generalization of these results to branching random walks on relatively hyperbolic groups and is new, even for hyperbolic groups.

\medskip
Let us finally say that we will not investigate the problem of the critical exponent of $\omega_\Gamma(r)$ in this paper.
In \cite{CandelleroGilchMuller}, the authors show that for free products this critical exponent is not always $1/2$, depending on $\mu$ and more precisely depending on whether the first derivative of the Green function is finite or infinite at its radius of convergence, see \cite[Theorem~3.10]{CandelleroGilchMuller}.
It might be possible to prove that it is in fact $1/2$ whenever the underlying random walk is spectrally non-degenerate, combining techniques of \cite{SWX}, \cite{DussauleLLT2} and of the present paper.
We refer to Definition~\ref{defnspectraldegeneracy} for the definition of spectral degeneracy of a random walk.

\subsection{Parabolic gap and purely exponential growth of Green functions}
Among the results of \cite{CandelleroGilchMuller}, the authors claimed that the Hausdorff dimension of the limit set  intersected with the set of ends  of each free factor is strictly less than that of  the  whole limit set of the branching random walk (see their Corollary~3.7).
However, their proof is incorrect on assuming that the quantity $H_r(n)$ as defined above is sub-multiplicative; we refer to Remark \ref{CGMGap}   for more details.
In our study, the sub-multiplicativity of $H_r(n)$   turns out to be a  subtle property and we propose a sufficient criterion called a parabolic gap condition, which is inspired by the work of \cite{DOP} and that we now introduce.
Note that assuming that the parabolic gap condition holds, we recover their result \cite[Corollary~3.7]{CandelleroGilchMuller}, see     Remark \ref{CGMCor}.

Let $\Gamma$ be a non-elementary relatively hyperbolic group.
Let $P$ be a parabolic subgroup.
We set
$$H_{P,r}(n)=\sum_{x\in S_n\cap P}G_r(e,x)$$
and
$$\omega_P(r)=\limsup_{n\to \infty}\frac{1}{n}\log H_{P,r}(n).$$
Thus, $\omega_\Gamma(r)$ is the growth rate of the sum of the Green functions along spheres, while $\omega_P(r)$ is the similarly defined growth rate, but restricted to the parabolic subgroup $P$.
It follows from the definition that $\omega_P\leq \omega_\Gamma$.

\begin{defn}
If $\omega_P(r)<\omega_\Gamma(r)$, we say that $\Gamma$ has a \textit{parabolic gap} along $P$ for the Green function at $r$.
If for every $P$, for every $r\in (1,R]$, $\omega_P(r)<\omega_\Gamma(r)$, then we say that $\Gamma$ has a \textit{parabolic gap} for the Green function.
\end{defn}

Note that this notion only depends on the underlying random walk driven by $\mu$, not on the offspring distribution of the branching random walk.

\begin{thm}\label{mainthmparabolicgap}
Let $\Gamma$ be a non-elementary relatively hyperbolic group.
Consider a finitely supported admissible and symmetric probability measure $\mu$ on $\Gamma$.
If $\Gamma$ has a parabolic gap for the Green function,
then the sum of Green functions along spheres is roughly multiplicative and has purely exponential growth, in the sense that for all $1\leq r\leq R$, there exist $C=C(r)\geq 1$ and $C'=C'(r)\geq 1$
such that for all $n$,
$$\frac{1}{C}H_{r} (n+m)\leq  H_r(n)H_r(m)\le C H_{r} (n+m)
$$
and
$$\frac{1}{C'}\mathrm{e}^{n\omega_\Gamma(r)}\leq H_r(n)\leq C' \mathrm{e}^{n\omega_\Gamma(r)}.$$
\end{thm}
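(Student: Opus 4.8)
The plan is to derive roughly multiplicativity from the parabolic gap condition via a combinatorial decomposition of geodesics in the Cayley graph, and then deduce purely exponential growth by a standard Fekete-type argument applied to a correction of the sequence $H_r(n)$.

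\medskip

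\textbf{Step 1: Upper bound $H_r(n)H_r(m)\le C H_r(n+m)$ (the easy inequality).} This direction does not need the parabolic gap. Given $x\in S_n$ and $y\in S_m$, the product $xy$ lies in $S_k\cap\Gamma$ with $k\le n+m$, and by the triangle inequality and finite support of $\mu$ one has $n+m-k\le$ const only when $x$ and $y$ travel "almost geodesically" together; in general one just uses that the map $(x,y)\mapsto xy$ is finite-to-one on each fiber up to a controlled multiplicity, combined with the submultiplicativity of the Green function $G_r(e,xy)\ge G_r(e,x)G_r(x,xy)=G_r(e,x)G_r(e,y)$ (using $G_r(x,xy)=G_r(e,y)$ by translation invariance, and Harnack-type comparison to move between spheres $S_k$ and $S_{n+m}$). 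Summing over $x\in S_n$, $y\in S_m$ and reorganizing by the value of $xy$ gives $H_r(n)H_r(m)\le C\sum_{j\le n+m}H_r(j)$, and then one absorbs the sum using the trivial exponential upper bound $H_r(j)\le C''\mathrm e^{j v}$ together with a dyadic/geometric summation, or more cleanly one first proves the two-sided estimate for a smoothed version $\widetilde H_r(n)=\sum_{j\le n}H_r(j)$.

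\medskip

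\textbf{Step 2: Lower bound $\tfrac1C H_r(n+m)\le H_r(n)H_r(m)$ (where the parabolic gap enters).} Fix $z\in S_{n+m}$. Take a word geodesic $[e,z]$ in the Cayley graph and look at its transition points, which partition $[e,z]$ into "hyperbolic" stretches and deep excursions into parabolic cosets $gP$. Pick the transition point $w$ on $[e,z]$ nearest to combinatorial position $n$. If $w$ is a genuine transition point, then $G_r(e,z)\asymp G_r(e,w)G_r(w,z)$ by the Ancona-type inequality along transition points for relatively hyperbolic random walks (a consequence of the relative Ancona inequalities proved in the references cited earlier, e.g. for deviation inequalities), and $|w|=n'$, $|w^{-1}z|=m'$ with $n'+m'=n+m$ exactly and $|n'-n|$ bounded by the length of the parabolic excursion straddling position $n$. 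The difficulty is that this excursion can be long. This is exactly where $\omega_P(r)<\omega_\Gamma(r)$ is used: the total contribution to $H_r(n+m)$ coming from $z$ whose geodesic has a long parabolic excursion near the midpoint is exponentially dominated, with rate governed by $\max_P\omega_P(r)<\omega_\Gamma(r)$, so that term is negligible compared to $\mathrm e^{(n+m)\omega_\Gamma(r)}$. For the remaining $z$, the excursion near position $n$ is short, $n'$ is within a bounded distance of $n$, and Harnack moves $G_r(e,w)$, $G_r(w,z)$ back to spheres $S_n$, $S_m$ at bounded multiplicative cost; summing over $z$ and using that the pair $(w,w^{-1}z)$ determines $z$ up to bounded multiplicity yields $H_r(n+m)\le C H_r(n)H_r(m)$.

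\medskip

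\textbf{Step 3: From rough multiplicativity to purely exponential growth.} Once $\tfrac1C H_r(n+m)\le H_r(n)H_r(m)\le C H_r(n+m)$ holds, the sequence $a_n=\log(C\,H_r(n))$ is subadditive, so $a_n/n\to\inf a_n/n=:\omega_\Gamma(r)$ by Fekete, giving $H_r(n)\ge \tfrac1C\mathrm e^{n\omega_\Gamma(r)}$; symmetrically $b_n=\log(C\,H_r(n))$ handled via the reversed inequality (i.e. $-\log(H_r(n)/C)$ is subadditive) gives the matching upper bound $H_r(n)\le C'\mathrm e^{n\omega_\Gamma(r)}$. One checks $\omega_\Gamma(r)$ coincides with the $\limsup$ definition, which is immediate. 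The main obstacle is Step 2: making precise the claim that geodesics with long central parabolic excursions contribute a subexponentially-dominant amount to $H_r(n+m)$, which requires a careful counting argument organizing $z$ by the location and length of its parabolic excursions and invoking both the relative Ancona inequalities and the strict gap $\omega_P(r)<\omega_\Gamma(r)$ uniformly over the finitely many conjugacy classes of parabolic subgroups $P$.
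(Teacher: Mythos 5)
Your overall skeleton (split at a point near position $n$ on a geodesic, use entry/exit points of parabolic excursions, control the parabolic contribution via the gap, finish with Fekete) is the paper's, but two steps have genuine gaps. First, Step 1 fails as written: the map $(x,y)\in S_n\times S_m\mapsto xy$ is not boundedly-to-one (when $m=n$ every pair $(x,x^{-1})$ maps to $e$), and $xy$ can land in a sphere $S_k$ with $k$ far below $n+m$, so your ``Harnack-type comparison to move between spheres $S_k$ and $S_{n+m}$'' costs a factor of order $C^{n+m-k}$, which is unbounded; the absorption via $\sum_{j\le n+m}H_r(j)$ does not recover $H_r(n+m)$ either. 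The paper's proof of this direction (Lemma~\ref{UpperBDLem}, which indeed needs no parabolic gap) inserts a connecting element $f$ from a fixed finite set of powers of loxodromic elements via the extension lemma (Lemma~\ref{ExtensionLem}), so that $xfy$ labels a uniform quasi-geodesic with $x$ and $xf$ uniformly close to $[e,xfy]$. This forces $|xfy|$ into a bounded-width annulus around $n+m$, makes $(x,y)\mapsto xfy$ uniformly boundedly-to-one, and gives $G_r(e,x)G_r(e,y)\le cG_r(e,xfy)$ by Harnack along $f$. Some such injection is needed; without it the inequality $H_r(n)H_r(m)\le CH_r(n+m)$ does not follow.

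Second, in Step 2 the key dismissal is circular. You discard the $z$ whose geodesic has a long central parabolic excursion on the grounds that their total contribution is ``negligible compared to $\mathrm{e}^{(n+m)\omega_\Gamma(r)}$''; but the lower bound $H_r(n+m)\ge c\,\mathrm{e}^{(n+m)\omega_\Gamma(r)}$ is part of the conclusion --- supermultiplicativity and Fekete only pin the limit of $\tfrac1n\log H_r(n)$ and the upper bound, so a priori $H_r(n+m)$ could dip well below $\mathrm{e}^{(n+m)\omega_\Gamma(r)}$ along a subsequence, and in any case the quantity you must dominate by is $H_r(n)H_r(m)$, not $\mathrm{e}^{(n+m)\omega_\Gamma(r)}$. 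The paper keeps all terms and records the decomposition as a three-fold convolution inequality
$H_{r}(n+m) \le c_0\sum_{k \le n} \sum_{j \le m} H_{r}(k)H_{r}(j)K_{r}(n+m-k-j)$,
where $K_r(i)=\max_{P\in\mathbb P_0}\sum_{x\in S_i\cap P}G_r(e,x)\le c_1\mathrm{e}^{i\omega}$ for some $\omega_P(r)<\omega<\omega_\Gamma(r)$ by the gap; after normalizing $a^\omega(n)=\mathrm{e}^{-\omega n}H_r(n)$ this becomes $a^\omega(n+m)\le c_1\bigl(\sum_{k\le n}a^\omega(k)\bigr)\bigl(\sum_{j\le m}a^\omega(j)\bigr)$, and a nontrivial elementary sequence lemma (\cite[Theorem~5.3]{YangSCC}), combined with the supermultiplicativity from Step 1, yields both the rough submultiplicativity and the two-sided purely exponential bounds. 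Your Step 3 is correct once both inequalities are available, but as proposed neither of them is established.
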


\begin{rem}
Actually, $C$ and $C'$ can be chosen independently of $r$ in the two upper bounds, which do not require a parabolic gap to hold by Lemma~\ref{UpperBDLem}.
\end{rem}

We will prove two criteria for having a parabolic gap, see Corollary~\ref{parabolicdivergentgrowthgap} and Proposition~\ref{secondcriterionparabolicgap}.
In particular, we   prove that whenever parabolic subgroups are amenable, $\omega_P(r)<\omega_\Gamma(r)$ for all $r<R$.
Moreover, if parabolic subgroups have sub-exponential growth, then we also have $\omega_P(R)<\omega_\Gamma(R)$.

\medskip
Let us compare our notion of parabolic gap   to similar notions in different settings.
Consider a finitely generated group $\Gamma$ acting via isometries on a metric space $(X,d)$.
Then, one can endow $\Gamma$ with a left-invariant pseudo-distance by declaring
$$d(g,h)=d(g\cdot x_0,h\cdot x_0),$$
where $x_0$ is a fixed point in $X$.
 We define the volume growth rate for any subgroup $P<\Gamma$ as follows:
$$v_X(P)=\limsup_{n\to \infty} \frac{1}{n}\log \big( \sharp B(e,n)\cap P\big)=\limsup_{n\to \infty}\frac{1}{n}\log \big (\sharp \left \{g\in P: d(x_0,g\cdot x_0)\leq n\right \}\big).$$
Choosing $X$ to be the Cayley graph associated with a finite generating set $S$, endowed with the graph distance, we recover the word distance on $\Gamma$.
Then the volume growth rate of a subgroup $P$, also denoted by $v_S(P)$, is the standard terminology.

If $\Gamma$ is relatively hyperbolic, then it acts by isometries on a proper geodesic hyperbolic space $(X,d)$.
We say in this context that $\Gamma$ has a parabolic gap (also referred as critical gap in literature) if for every parabolic subgroup $P$, $v_X(P)<v_X(\Gamma)$.
This definition makes sense in larger contexts than relatively hyperbolic groups and
this property was studied a lot in literature, see for instance \cite{DOP}, \cite{DPPS}, \cite{PS18}, \cite{PTV}, \cite{Rob}, \cite{ST21}, \cite{Vidotto} and references therein.
For typical cases, we have $v_X(P)<v_X(\Gamma)$.
On the other hand, exotic examples   of  geometrically finitely groups acting on a negatively-pinched Cartan-Hadamard manifold with the critical gap property, i.e.\ $v_X(\Gamma)=v_X(P)$ for some $P$, were first constructed in \cite{DOP}, see \cite{Peigne} for other examples.

On the contrary, when endowing $\Gamma$ with a word distance given by a finite generating set $S$, this cannot happen,
since in this context, it was proved in \cite{DFW} that one always has $v_S(P)<v_S(\Gamma)$.

\bigskip
Before going further, recall that the critical exponents $v_X(\Gamma)$ and $v_X(P)$ coincide with the exponential radius of convergence of a suited Poincaré series.
Namely, define
$$\Theta_s(\Gamma)=\sum_{g\in \Gamma}\mathrm{e}^{-sd(x_0,g\cdot x_0)}=\sum_{n\geq 0}\sum_{n\leq d(x_0,g\cdot x_0)<n+1}\mathrm{e}^{-sn}.$$
Then, for $s<v_X(\Gamma)$, $\Theta_s(\Gamma)$ diverges and for $s>v_X(\Gamma)$, it converges.
Similarly, replacing $\Gamma$ by any subgroup $P$ in the above formula defines the corresponding $\Theta_s(P)$ and $v_X(P)$.

Now, consider a probability measure $\mu$ on a relatively hyperbolic group $\Gamma$ and for $r>0$, set
$$I(r)=\sum_{g\in \Gamma}G_r(e,g)G_r(g,e).$$
The following result is well-known, see for instance \cite[Proposition~1.9]{GouezelLalley}.
For every $r$, we have
$$I(r)=\frac{d}{dr}\big (rG_r(e,e)\big ).$$
Thus, for $r<R$, this quantity converges and for $r>R$, it diverges.
Consider the $r$-symmetrized Green distance $d_{G,r}$ defined as
$$d_{G,r}(g,h)=-\log \frac{G_r(g,h)}{G_r(e,e)}-\log \frac{G_r(h,g)}{G_r(e,e)}$$
which is a generalization of the Green distance introduced by Blach\`ere and Brofferio \cite{BlachereBrofferio}.
Then, the quantity $I(r)$ is exactly the Poincaré series associated with the distance $d_{G,r}$.
The only difference with the previous setting is that the parameter $r$ is part of the definition of the distance.
The quantity $-\log R$ is analogous to the critical exponent $v_X(\Gamma)$.
It is more complicated to define a notion of parabolic gap, because we cannot interpret 
$-\log R$ as the radius of convergence of the Poincaré series, which is not a power series in $r$.
However, the analogous notion which was coined in \cite{DG21} is called spectral degeneracy.
We will properly introduce this notion below and
we refer to \cite[Section~3.3]{DussauleLLT1} and \cite[Section~1.3]{DPT} for more explanations on this analogy.
Anyway, by results of Cartwright \cite{Cartwright1}, \cite{Cartwright2} and of Candellero and Gilch \cite{CandelleroGilch}, it is possible to construct both a spectrally degenerate random walk and a spectrally non-degenerate one on a relatively hyperbolic group, although every known example is in the class of free products.

\medskip
Back to our critical gap condition, we will see that the critical exponent $\omega_\Gamma(r)$ is the radius of convergence of a twisted Poincaré series $s\mapsto \Theta_{r,s}(\Gamma)$ defined by~(\ref{defPoincareseries}), involving both the Green function $G_r(e,x)$ and the word distance.
We saw above that there are sufficient conditions to have a parabolic gap, but it is difficult to tell if one can construct an example where $\omega_P(r)=\omega_\Gamma(r)$.
Answering this question would require new material.
However, the last example of Section~\ref{sectionexamples} below  suggests that it might happen (see the famous examples in the geometric context in \cite{DOP} and \cite{Peigne}).
We prove there that if there exists a finitely generated group $P$ endowed with a finitely supported admissible random walk with convergent twisted Poincaré series $\Theta_{r,s}$ at some $r\leq R$, then for a suited random walk on the free product $\Gamma=P*\mathbb Z^d$, we have $\omega_\Gamma(r')=\omega_P(r')$ for some $r'$.
See the comments at the end of Section~\ref{sectionexamples} for further details.

\subsection{Organization of the paper}
We now outline the contents of the paper and explain the overall strategy of our proofs.
In Section~\ref{sectionrelativelyhyperbolicgroups}, we recall the definition of relatively hyperbolic groups, of the Floyd distance and of the Floyd boundary.
The Floyd distance is then related with geometric properties of such groups via a number of preliminary results  used throughout the paper.
Finally, we recall the relative Ancona inequalities that will be a crucial tool.

In Section~\ref{SectiongrowthrateGreenfunction}, we study the growth rate of the Green function $\omega_\Gamma(r)$.
We prove in particular that it is increasing and continuous and bounded by $v/2$, see Corollary~\ref{corolowersemicontinuous}, Corollary~\ref{coroboundv/2}.
We also prove Theorem~\ref{mainthmparabolicgap}, i.e.\ purely exponential growth, assuming there is a parabolic gap, see Lemma~\ref{LowerBDLem}.
This is done by using classical methods for Poincar\'e series, inspired by \cite{YangSCC}.
Finally, we discuss the notion of parabolic gap through examples in the last part of the section.

Section~\ref{SectiongrowthrateBRW} is dedicated to the growth rate of the branching random walk and we prove that it is almost surely equal to the growth rate $\omega_\Gamma(r)$ of the Green function, see Proposition~\ref{P:vgrBRW}.
In particular, this ends the proof of Theorem~\ref{Thmvolumegrowthconst}.
The upper-bound $\limsup \frac{1}{n}\log M_n\leq \omega_\Gamma(r)$ is very general and does not involve relative hyperbolicity.
The lower-bound is more difficult to obtain and we have to restrict our attention to points $x\in \Gamma$ such that a geodesic from $e$ to $x$ spends a uniformly bounded amount of time $L$ in every parabolic coset.
Such geodesics are Morse in the sense of \cite{Cordes}, see also \cite{Tran} for a study a Morse geodesics in relatively hyperbolic groups.
It turns out Morse geodesic rays form a proper subset of the whole Bowditch boundary, which is typically too small to serve as a model for the Poisson boundary and thus is too small to give much information about asymptotic properties of finitely supported random walks, see in particular the comments in the introduction of \cite{QRT}, where a bigger boundary called the sub-linearly Morse boundary is introduced.
We can nevertheless prove that the growth rate of the Green function restricted to Morse directions converges to the growth rate of the whole Green function, as $L$ tends to infinity, see Lemma~\ref{omegaL}.
This is enough to adapt the arguments of \cite{SWX} for hyperbolic groups, which allows us to conclude the proof.

Finally, in the two last sections, we study the Hausdorff dimension of the limit set and we prove Theorem~\ref{ThmHdimconst}, Theorem~\ref{coroaccessiblegroups} and Corollary~\ref{corohausdorffdimensionhyperbolic}.
We start with the lower bound in Section~\ref{SectionlowerboundHausdorff}.
Following \cite{SWX}, we use the Frostman lemma and show that for every $h<\omega_\Gamma(r)/-\log \lambda$, there exists with positive probability a finite measure $\chi$ on the limit set $\Lambda$ such that the integral
$$\int \int \overline{\delta}_e(x,y)^{-h} d\chi(x)d\chi(y)$$ is finite, where $\overline{\delta}_e$ is the shortcut distance on the Bowditch boundary.
However, the proof in \cite{SWX} has a gap and we need to find a new strategy to construct the measure $\chi$, which will be defined as a random Patterson-Sullivan type measure on the limit set.
The construction is performed by using convergence results for random finite measures, whose proofs are postponed to the Appendix.
Our strategy also applies to groups with infinitely many ends and we prove Theorem~\ref{coroaccessiblegroups}.
The upper-bound for the Hausdorff dimension is proved in Section~\ref{SectionupperboundHausdorff}.
We first prove Theorem~\ref{thmlogarithmictracking}, i.e.\ logarithmic tracking by the trace of transition points on geodesic rays joining the limit set, see Lemma~\ref{LogTrackingLem}.
This allows us to cover the limit set with suited shadows and to conclude as in \cite{SWX}.
Note that the covering by shadows in \cite{SWX} only works for $r<\rho^{-1}$ and Theorem~\ref{thmlogarithmictracking} is one of the needed step to cover the case $r=\rho^{-1}$.

\section{Relatively hyperbolic groups and random walks}\label{sectionrelativelyhyperbolicgroups}
\subsection{Relative hyperbolic groups} 

Consider a finitely generated group $\Gamma$ acting properly via isometries on a proper Gromov hyperbolic space $X$.
Define the \textit{limit set} $\Lambda_\Gamma$ as the closure of $\Gamma$ in the Gromov boundary $\partial X$ of $X$, that is, fixing a base point $x_0$ in $X$, $\Lambda_\Gamma$ is the set of all possible limits of sequences $g_n\cdot x_0$ in $\partial X$, $g_n\in \Gamma$.
A point $\xi\in \Lambda_\Gamma$ is called conical if there is a sequence $g_{n}$ of $\Gamma$ and distinct points $\xi_1,\xi_2$ in $\Lambda_\Gamma$ such that
$g_{n}\xi$ converges to $\xi_1$ and $g_{n}\zeta$ converges to $\xi_2$ for all $\zeta\neq \xi$ in $\Lambda_\Gamma$.
A point $\xi\in \Lambda_\Gamma$ is called \textit{parabolic} if its stabilizer in $\Gamma$ is infinite, fixes exactly $\xi$ in $\Lambda_\Gamma$ and contains no loxodromic element.
A parabolic limit point $\xi$ in $\Lambda_\Gamma$ is called \textit{bounded parabolic} if its stabilizer in $\Gamma$ is infinite and acts co-compactly on $\Lambda_\Gamma \setminus \{\xi\}$.
Say that the action of $\Gamma$ on $X$ is \textit{geometrically finite} if the limit set only consists of conical limit points and bounded parabolic limit points.

There are in literature several equivalent definitions of relatively hyperbolic groups.
Following Bowditch \cite{Bowditch}, we say that a finitely generated group $\Gamma$ is \textit{relatively hyperbolic} with respect to a collection of subgroups $\mathbb P_0$ if it acts via a geometrically finite action on a proper geodesic Gromov hyperbolic space $X$, such that the stabilizers of parabolic limit points for this action are exactly the conjugates of the groups in $\mathbb P_0$,
which are called maximal parabolic subgroups or simply parabolic subgroups if there is no ambiguity.
By \cite[Proposition~6.15]{Bowditch}, there is a finite number of conjugacy classes of parabolic subgroups, so in other words, $\mathbb P_0$ needs to be finite.

The limit set $\Lambda_\Gamma$ is called the \textit{Bowditch boundary} of $\Gamma$.
It is unique up to equivariant homeomorphism and we will denote it by $\partial_\mathcal{B}\Gamma$ in the sequel.
A relatively hyperbolic group is called \textit{non-elementary} if its Bowditch boundary is infinite; equivalently, if some parabolic subgroup $P\in \mathbb P_0$ is of infinite index in $\Gamma$.

Relatively hyperbolic groups are modelled on finite co-volume Kleinian groups.
In this case, the group acts via a geometrically finite action on the hyperbolic space $\mathbb H^n$ and there is a collection of separated horoballs such that the action on the complement of these horoballs is co-compact.
The parabolic subgroups are exactly the stabilizers of the horoballs.
Moreover, the Bowditch boundary is the ideal boundary $\mathbb S^{n-1}$ of $\mathbb H^n$ and parabolic limit points are the centers of the horoballs.

In \cite{Bowditch}, Bowditch gives another definition of relative hyperbolicity, mimicking the above geometric description of Kleinian groups.
Given a hyperbolic space $X$, one can define a coarse notion of horoballs.
A finitely generated group $\Gamma$ acting properly via isometries on a proper geodesic hyperbolic space $X$ is \textit{relatively hyperbolic} if only if there exists a $\Gamma$-invariant family of sufficiently separated horoballs centered at points in the Gromov boundary of $X$ such that $\Gamma$ acts co-compactly on the complement of these horoballs.
The parabolic subgroups are then exactly the stabilizers of these horoballs.
We also refer to \cite{Osin}, \cite{Farb}, \cite{DrutuSapir} and references therein for alternative definitions of relatively hyperbolic groups.

We set
$$\mathbb P=\{gP: g\in \Gamma, P\in\mathbb P_0\}$$ 
and we call $\mathbb P$ the collection of all parabolic cosets.

 Let us also fix some notations.
Given a finite generating $S$ on $\Gamma$, let $\Gx$ be the Cayley graph with respect to $S$.
The graph combinatorial distance is called the \textit{word distance.}
We denote the $n$-sphere centered at the identity $e$ by
$S_n=\{x\in \Gamma: d(e, x)=n\}$.
We will frequently write $|x|:=d(e,x)$. 
Finally, we will denote by $v$ the \textit{volume growth rate} of $\Gamma$ with respect to $S$, which is defined by
$$v=\limsup \frac{1}{n}\log \sharp B(e,n).$$
The word distance on relatively hyperbolic groups has purely exponential growth in the following sense.

\begin{lem}\label{purelyexponentialgrowth}\cite[Theorem 1.9]{YangPS}
There exists $c>0$ such that for every $n\geq0$, we have
$$\frac{1}{c}\mathrm{e}^{vn}\leq \sharp S_n\leq c \mathrm{e}^{vn}.$$
\end{lem}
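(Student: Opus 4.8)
The statement is purely exponential growth of the word metric, and I would begin by separating off the lower bound, which is elementary and holds in any finitely generated group: a word geodesic from $e$ to a point $g$ with $|g|=n+m$ splits as $g=g_1g_2$ with $|g_1|=n$ and $|g_2|=m$, so $S_{n+m}\subseteq S_nS_m$ and hence $\sharp S_{n+m}\le\sharp S_n\,\sharp S_m$. By Fekete's lemma the sequence $\tfrac1n\log\sharp S_n$ converges to $\inf_n\tfrac1n\log\sharp S_n$, and a routine comparison with $\sharp B(e,n)$ (whose logarithm is likewise subadditive) identifies this limit with $v$; since the infimum bounds every term from below, $\sharp S_n\ge\mathrm e^{vn}$ for all $n$. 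So the real content is the matching upper bound $\sharp S_n\le c\,\mathrm e^{vn}$.

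For the upper bound I would use the two structural inputs available for a non-elementary relatively hyperbolic group with the word metric: first, $\Gamma$ contains an element $g_0$ that is loxodromic for the action on the coned-off Cayley graph $\hat\Gamma$, and whose $\langle g_0\rangle$-orbit in $\mathrm{Cay}(\Gamma,S)$ is a Morse, hence contracting, quasi-geodesic; second, the action $\Gamma\curvearrowright\mathrm{Cay}(\Gamma,S)$ is statistically convex-cocompact, which is equivalent to the parabolic gap $v_S(P)<v$ for every parabolic subgroup $P$, the latter being exactly \cite{DFW}. These are the hypotheses of the \emph{Extension Lemma} for actions with a contracting element: there are a finite set $F\subseteq\Gamma$ containing a suitable power of $g_0$, and a constant $C$, so that for all $g_1,g_2\in\Gamma$ one can pick $f=f(g_1,g_2)\in F$ with $|g_1fg_2|\ge|g_1|+|g_2|-C$, and so that the map $(g_1,g_2)\mapsto g_1fg_2$ is at most $N$-to-one for a uniform $N$ — because in a word geodesic representing $g_1fg_2$ the positions of $g_1$ and of $g_1f$ can be recovered up to bounded error from the contracting ``landmark'' created by the inserted power of $g_0$. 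Since also $|g_1fg_2|\le|g_1|+|g_2|+L$ with $L=\max_{f\in F}|f|$, feeding in $g_1\in B(e,n)$ and $g_2\in B(e,m)$ yields $\sharp B(e,n)\,\sharp B(e,m)\le N\,\sharp B(e,n+m+L)$. A version of Fekete's lemma tolerating the bounded shift $L$ and the constant $N$ then forces $\sharp B(e,n)\le c\,\mathrm e^{vn}$ for all $n$, and in particular $\sharp S_n\le c\,\mathrm e^{vn}$.

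The main obstacle is the uniform bounded multiplicity of this concatenation map, i.e. reconstructing $(g_1,g_2,f)$ from $g_1fg_2$ with bounded ambiguity; this is precisely where the Morse/contracting geometry of loxodromic axes in relatively hyperbolic groups — and, for the word metric, the statistical convex-cocompactness provided by \cite{DFW} — is used in an essential way, and once it is in place the counting above is immediate. As an alternative route one can deduce the lemma from Patterson--Sullivan theory on the Bowditch boundary: build a $v$-conformal density $(\mu_x)$, prove a shadow lemma of the form $\mu_e(\mathcal O_R(g))\asymp\mathrm e^{-v|g|}$ for transition points $g$ (with the Stratmann--Velani depth correction, which is $\le 1$ precisely because of the gap) together with the fact that the shadows of $S_n$ cover $\partial_{\mathcal B}\Gamma$; this is the approach of \cite{YangPS}.
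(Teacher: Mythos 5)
The paper does not prove this lemma: it is quoted from \cite{YangPS}, and the alternative route you sketch at the end (a $v$-conformal density on the Bowditch boundary plus a shadow lemma with the parabolic-gap input from \cite{DFW}) is essentially the argument of that reference. Your lower bound is correct and elementary: prefixes and suffixes of word geodesics are geodesic, so $S_{n+m}\subseteq S_nS_m$, $\log\sharp S_n$ is subadditive, Fekete identifies $\lim\frac1n\log\sharp S_n$ with $v$, and the infimum property gives $\sharp S_n\ge\mathrm e^{vn}$.

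Your main route to the upper bound has a genuine gap. The concatenation map $(g_1,g_2)\mapsto g_1fg_2$ from the Extension Lemma is uniformly boundedly-to-one only when $|g_1|$ and $|g_2|$ are prescribed, i.e.\ on $S_n\times S_m$: one recovers $g_1$ from $x=g_1fg_2$ as a point within $\epsilon$ of a geodesic $[e,x]$ \emph{at distance exactly $n$ from $e$}, which pins it down up to error $4\epsilon$ (this is Lemma~\ref{BoundedToOne}). On $B(e,n)\times B(e,m)$ the image does not determine $|g_1|$, and the ``landmark'' created by $f$ cannot be located either (nothing prevents $g_1$ or $g_2$ from containing long powers of $g_0$); the $\epsilon$-neighborhood of $[e,x]$ has cardinality comparable to $|x|$, so the multiplicity one gets this way is $O(n)$, not $O(1)$ — e.g.\ in a free group every splitting $(a^k,a^{n-k-n_0})$ of $a^n$ around an inserted $a^{n_0}$ is an admissible quasi-geodesic concatenation. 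Hence $\sharp B(e,n)\,\sharp B(e,m)\le N\,\sharp B(e,n+m+L)$ is not established, and inserting the $O(nm)$ loss into Fekete only yields $\sharp B(e,n)\le\mathrm e^{vn}n^{O(1)}$, which is not the statement. Working with spheres gives $\sharp S_n\,\sharp S_m\le N\,\sharp A(n+m,l)$ with an annulus on the right, and converting the annulus back to a single sphere needs the two-sided comparison $\sharp S_k\asymp\sharp S_{k+1}$, whose half $\sharp S_k\le C\,\sharp S_{k+1}$ is not automatic because of possible dead ends — this is precisely the analogue of Lemma~\ref{FactsHrLem}(1), where irreducibility of the random walk is used; for pure counting one can repair it by applying the Extension Lemma once more with $h=e$. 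Finally, a misattribution: the Extension Lemma requires only a contracting element (equivalently here, non-trivial Floyd boundary), not statistical convex-cocompactness, so \cite{DFW} plays no role in your first argument.
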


\subsection{Floyd boundary}\label{SectionFloydboundary} 
 
We first recall the definition of   the Floyd boundary and their relation with the Bowditch boundary.
This boundary was introduced by Floyd in \cite{Floyd} and we also refer to \cite{Karlssonfreesubgroups} for more details.

Let $\Gamma$ be a finitely generated group and let $\Gx$ denote its Cayley graph associated with a finite generating set $S$.
Let $f:\mathbb{N}\to \mathbb{R}_{\ge 0}$ be a function satisfying that
$$\sum_{n\geq 0}f(n)<\infty$$ and that there exists $\lambda\in (0,1)$ such that
$$1\geq f({n+1})/f(n)\geq\lambda$$ for all $n \in \mathbb{N}$.
The function $f$ is then called the {\it rescaling function} or the \textit{Floyd function}.
In the following, we will always choose an exponential Floyd function, that is $f(n)=\lambda^n$ for some $\lambda\in (0,1)$.
Fix a basepoint $o\in \Gamma$ and rescale $\Gx$ by declaring the length of an edge $\sigma$ to be $f(d(o,\sigma))$.
The induced length distance on $\Gx$ is called the {\it Floyd distance} with respect to the basepoint $o$ and Floyd function $f$ and is denoted by $\delta_{f,o}(.,.)$.
Whenever $f$ is of the form $f(n)=\lambda^n$, we will write $\delta_{\lambda,o}=\delta_{f,o}$ and if $\lambda$ is fixed, $\delta_o=\delta_{\lambda,o}$.

The Floyd compactification $\overline{\Gamma}_{\mathcal{F}}$  is the Cauchy completion of $\Gx$ endowed with the Floyd distance.
The Floyd boundary is then defined as $\partial_{\mathcal{F}}\Gamma=\overline{\Gamma}_{\mathcal{F}}\setminus \Gx$. Different choices of base-points yield bi-Lipschitz Floyd distances because
\begin{equation}\label{changeofbasepointFloyd}
\forall x,y\in \Gamma,\; \delta_{f,o}(x,y) \le \lambda^{-d(o,o')} \delta_{f,o'}(x,y) 
\end{equation}
so the corresponding Floyd compactifications are bi-Lipschitz. Note  that the topology  may  depend on the choice of the rescaling function and the generating set. 

The following fact proved in   \cite{Karlssonfreesubgroups} plays a crucial role in understanding the Floyd geometry.
\begin{lem}\label{Karlssonlem}\cite[Lemma~1]{Karlssonfreesubgroups}
For any $\delta>0$, there exists a function $\kappa=\kappa(\delta)$ with the following property.
If  $x,y,z\in \Gamma$ are three points so that $\delta_x(y,z)\ge \delta$ then $d(x, [y,z])\le \kappa$.    
\end{lem}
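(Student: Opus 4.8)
The plan is to prove the contrapositive: if $D := d(x,[y,z])$ is large, then $\delta_x(y,z)$ is small. Since $\delta_{\lambda,x}$ is the length metric obtained by rescaling each edge $\sigma$ of $\Gx$ to have length $\lambda^{d(x,\sigma)}$, every path from $y$ to $z$ gives an upper bound for $\delta_x(y,z)$; in particular $\delta_x(y,z)\le \sum_{\sigma\in [y,z]}\lambda^{d(x,\sigma)}$, the Floyd length of a fixed geodesic $[y,z]$. So it suffices to show that this Floyd length tends to $0$ as $D\to\infty$, with a bound depending only on $D$ and $\lambda$, not on $y$ and $z$.

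First I would fix a vertex $p$ on $[y,z]$ with $d(x,p)=D$ and cut $[y,z]=\gamma_1\cup\gamma_2$ at $p$. Walking along $\gamma_1$ from $p$, the $k$-th vertex $p_k$ satisfies two lower bounds: $d(x,p_k)\ge D$, because $p$ is a closest point of the whole geodesic, and $d(x,p_k)\ge d(p,p_k)-d(x,p)=k-D$ by the triangle inequality. Hence $d(x,p_k)\ge\max(D,k-D)$, and the same holds (up to an additive constant $1$) for the $k$-th edge of $\gamma_1$. Summing the resulting geometric series, the Floyd length of $\gamma_1$ is bounded by $2D\lambda^{D-1}+\lambda^{D}/(1-\lambda)$, and likewise for $\gamma_2$. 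This quantity depends only on $D$ and $\lambda$ and goes to $0$ as $D\to\infty$, so choosing $\kappa=\kappa(\delta)$ large enough that the bound is $<\delta$ for all $D>\kappa$ finishes the argument.

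I do not expect a genuine obstacle here: Karlsson's lemma is soft. The one point requiring a little care is that the distance to $x$ along a geodesic need not be monotone, so one cannot simply assert that ``the geodesic moves away from $x$ exponentially fast''; this is exactly why one anchors the estimate at a closest point $p$ and uses the two-sided bound $d(x,p_k)\ge\max(D,k-D)$ instead. A secondary, purely cosmetic point is the convention for $d(x,\sigma)$ when $\sigma$ is an edge rather than a vertex, which only affects the constants by an additive $1$.
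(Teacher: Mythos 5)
Your proof is correct. The paper does not prove this statement at all --- it is quoted verbatim from Karlsson's paper (Lemma~1 there) --- and your argument is exactly the standard one: anchor the estimate at a closest point $p$ of $[y,z]$ to $x$, use the two-sided bound $d(x,p_k)\ge\max(D,k-D)$ on the vertices of each half of the geodesic, and sum $2D\lambda^{D-1}+\lambda^{D}/(1-\lambda)$ for each half to get an upper bound on the Floyd length that depends only on $D$ and $\lambda$ and tends to $0$ as $D\to\infty$. Your remark about non-monotonicity of $k\mapsto d(x,p_k)$ correctly identifies the one place where a naive argument would go wrong, and the edge-versus-vertex convention indeed only shifts constants by $1$.
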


If $\sharp \partial_{\mathcal{F}}\Gamma\ge 3$, Karlsson proved in \cite{Karlssonfreesubgroups}  that $\Gamma$ acts  by homeomorphism on $\partial_{\mathcal{F}}\Gamma$ as a convergence group action. By the general theory of convergence groups,   the elements in $\Gamma$ are subdivided into the classes of elliptic, parabolic and loxodromic elements. The latter two   being   infinite order  elements have exactly one and two fixed points in $\partial_{\mathcal{F}}\Gamma$ accordingly.
Moreover,  in this case the Floyd boundary contains uncountable many points and so the cardinality of $\partial_{\mathcal{F}}\Gamma$ is either 0, 1, 2 or uncountably infinite. By   \cite[Proposition~7]{Karlssonfreesubgroups}, $\sharp \partial_{\mathcal{F}}\Gamma= 2$ exactly when the group $\Gamma$ is virtually infinite cyclic.
Following Karlsson, we say that the Floyd boundary is \textit{trivial} if it is finite. The non-triviality of Floyd boundary does not depend on the choice of generating sets \cite[Lemma~7.1]{Yanggrowthtightness}.
We will only have to deal with groups with non-trivial Floyd boundary.

\medskip
We now assume that $\Gamma$ is non-elementary relatively hyperbolic.
We denote by $\partial_{\mathcal{B}}\Gamma$ its Bowditch boundary.
The following is due to Gerasimov.
\begin{thm}\label{mapGerasimov}\cite[Map Theorem]{Gerasimov}
There exists $\lambda_0\in (0,1)$ such that for every $\lambda\in[\lambda_0,1)$, the identity on $\Gamma$ extends to a continuous and equivariant surjection $\phi$ from the Floyd compactification to the Bowditch compactification of $\Gamma$.
\end{thm}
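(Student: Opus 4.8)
The plan is to recover Gerasimov's Map Theorem by extending to the Floyd compactification the orbit map of the geometrically finite action. First I would fix a geometrically finite action of $\Gamma$ on a proper geodesic hyperbolic space $X$ witnessing relative hyperbolicity, take the Floyd basepoint $o=e$, and choose $x_0\in X$ in the thick part, i.e.\ the complement of the $\Gamma$-invariant system of horoballs on which $\Gamma$ acts cocompactly. The orbit map $g\mapsto gx_0$ is $\Gamma$-equivariant, and properness of the action makes it a proper map from $(\Gamma,d_S)$ to $X$. The goal is to extend it to a continuous equivariant surjection $\phi\colon\overline{\Gamma}_{\mathcal F}\to\Lambda_\Gamma=\partial_{\mathcal B}\Gamma$ on the boundary.

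The key step is a comparison estimate: I would show there are $\lambda_0\in(0,1)$ and a function $\varepsilon\colon\mathbb R_{\geq0}\to\mathbb R_{>0}$ such that for every $\lambda\in[\lambda_0,1)$ and all $g,h\in\Gamma$, if the Gromov product $(gx_0\mid hx_0)_{x_0}$ in $X$ is at most $C$, then $\delta_{\lambda,o}(g,h)\geq\varepsilon(C)$. To prove it, note that a bounded Gromov product forces every $X$-geodesic from $gx_0$ to $hx_0$ to pass within bounded distance of $x_0$; since $x_0$ lies in the thick part this passage takes place, up to bounded error, in the thick part, hence by cocompactness within bounded distance of some $wx_0$ with $|w|_S$ bounded in terms of $C$. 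By the stability properties of word geodesics in relatively hyperbolic groups, the transition points along a word geodesic $[g,h]$ stay uniformly close to this $X$-geodesic, so $[g,h]$ carries a transition point $q$ with $d_S(e,q)$ bounded in terms of $C$. Finally, a transition point, being not deep in any parabolic coset, has a coarse separation property: every path in $\Gx$ from $g$ to $h$ meets a bounded neighbourhood of $q$, hence of $e$; and the finitely many parabolic cosets meeting that neighbourhood are themselves at bounded distance from $e$, so no path from $g$ to $h$ can short-circuit the neighbourhood of $e$ with arbitrarily small Floyd length. This yields the lower bound. The threshold $\lambda_0$ is precisely what is needed so that the Floyd rescaling decays slowly enough that such detours stay expensive; it is dictated by the hyperbolicity constant of $X$ and the separation constant of the horoballs, with Lemma~\ref{Karlssonlem} governing the converse implication.

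Granting the estimate, the remaining arguments are formal. If $g_n\to\xi$ in $\overline{\Gamma}_{\mathcal F}$ then $\delta_{\lambda,o}(g_n,g_m)\to0$, so by contraposition $(g_nx_0\mid g_mx_0)_{x_0}\to\infty$; hence $(g_nx_0)$ converges in $X\cup\partial X$ to a point $\phi(\xi)\in\partial X$, which lies in $\Lambda_\Gamma=\partial_{\mathcal B}\Gamma$ as a limit of the orbit. Interleaving two sequences tending to the same $\xi$ shows $\phi(\xi)$ is well defined, and uniformity in the estimate gives continuity of $\phi$ on all of $\overline{\Gamma}_{\mathcal F}$. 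For surjectivity, I would write any $\eta\in\partial_{\mathcal B}\Gamma$ as $\lim g_nx_0$ with $g_n\in\Gamma$, pass to a subsequence converging in the compact space $\overline{\Gamma}_{\mathcal F}$ to some $\xi$ (necessarily in $\partial_{\mathcal F}\Gamma$, since $\eta\notin\Gamma x_0$), and conclude $\phi(\xi)=\eta$. Equivariance holds on $\Gamma$ by construction and extends to the boundary by density, the two boundary actions being by homeomorphisms.

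The hard part will be the comparison estimate of the second paragraph, and within it the interplay between the Floyd metric and the parabolic cosets: one must rule out cheap Floyd shortcuts through deep parabolic regions except precisely when $g$ and $h$ converge to a common parabolic point — the case $\phi$ is designed to collapse. Making the transition-point separation quantitative, and thereby pinning down $\lambda_0$, is where the bounded coset penetration machinery of relatively hyperbolic groups enters in an essential way; the rest should be routine.
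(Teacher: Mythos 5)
This statement is not proved in the paper: it is quoted verbatim from Gerasimov's work (the citation \cite[Map Theorem]{Gerasimov}), so there is no internal proof to compare against. Judged on its own terms, your proposal has the right global architecture — once one has the comparison estimate ``$(gx_0\mid hx_0)_{x_0}\le C$ implies $\delta_{\lambda,o}(g,h)\ge\varepsilon(C)$'', the construction of $\phi$ on Cauchy sequences, its well-definedness, continuity, equivariance and surjectivity are indeed the routine part, and you handle them correctly.

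The gap is that the comparison estimate, which is the entire content of the theorem, is not actually established, and the sketch you give for it contains a false step. You assert that a transition point $q$ on $[g,h]$ has the property that \emph{every path} in $\Gx$ from $g$ to $h$ meets a bounded neighbourhood of $q$. This is false in any group that is not (virtually) free: paths can avoid any prescribed ball. The correct statement is that such detours are expensive in the \emph{Floyd} metric based at $q$, i.e.\ $\delta_q(g,h)\ge\delta$ for a uniform $\delta>0$ — but that is exactly Lemma~\ref{TransLargeLem} (\cite[Corollary~5.10]{GePoGGD}), which is itself the hard analytic core of the Gerasimov--Potyagailo theory and essentially equivalent in difficulty to the Map Theorem. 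The genuinely delicate point, which you flag in your final paragraph but do not address, is ruling out Floyd-cheap shortcuts that travel far from the basepoint through (or around) highly non-hyperbolic regions: for instance, two antipodal points deep in a $\mathbb{Z}^2$ parabolic coset are Floyd-close even though no word path between them stays near any fixed transition point, and controlling when this can and cannot happen is precisely where the choice of $\lambda_0$ and the distortion analysis of horospheres enter. As written, the argument is circular at its key step (it presupposes the separation property it is meant to prove), so the proposal does not constitute a proof; it is an accurate reduction of the theorem to its known hard kernel.
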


Actually, Gerasimov only stated the existence of the map $\phi$ for one Floyd function $f_0=\lambda_0^n$, but then
Gerasimov and Potyagailo proved that the same result holds for any Floyd function $f\geq f_0$, see \cite[Corollary~2.8]{GePoJEMS}.
They also proved that the preimage of a conical limit point is reduced to a single point and described the preimage of a parabolic limit point in terms of the action of $\Gamma$ on $\partial_{\mathcal{F}}\Gamma$, see precisely \cite[Theorem~A]{GePoJEMS}.
From now on, the parameter $\lambda$ will always be assumed to be contained in $[\lambda_0,1)$.

The Floyd distance can be transferred to a distance on the Bowditch boundary using the map $\phi$.
The resulting distance is called the \textit{ shortcut distance} and we denote it by $\overline{\delta}_{e,\lambda}$ or $\overline{\delta}_e$ if $\lambda$ is fixed.
It is the largest distance on the Bowditch boundary satisfying that for every $\xi ,\zeta\in \partial_\mathcal{F}\Gamma$,
\begin{equation}\label{shortcutsmallerFloyd}
    \bar{\delta}_{e,\lambda}(\phi(\xi),\phi(\zeta))\leq \delta_{e,\lambda}(\xi,\zeta).
\end{equation}
We refer to \cite[Section~4]{GePoGGD} for more details on its construction.

If $\Gamma$ is hyperbolic, then the Gromov, Bowditch and Floyd boundary all coincide.
Thus, the shortcut distance and the Floyd distance are the same.
Furthermore, by \cite[Proposition~6.1]{PY}, the visual distance and the Floyd distance are bi-Lipschitz.

The next couple of lemmas will be used later on.

\begin{lem}\label{ExtensionLem}
Suppose that $\Gamma$ admits a non-trivial Floyd boundary. Then there exist a finite set $F$ of elements and constants $c\ge 1, \delta>0$ with the following property:

for any two elements $g, h\in \Gamma$, there exists $f\in F$ such that $gfh$ labels a $c$-quasi-geodesic and 
$$
\max\{d(g, [e, gfh]), \; d(gf, [e, gfh])\}\le \epsilon
$$
and $\delta_g(e, gfh)\ge \delta$.
\end{lem}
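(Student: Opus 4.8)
The plan is to build $F$ out of large powers of finitely many loxodromic elements of the Floyd convergence action, and to use their north--south dynamics on $\partial_{\mathcal F}\Gamma$ to ``steer'' the concatenated path $e\to g\to gf\to gfh$ so that it turns by a definite Floyd angle at each of the two corners $g$ and $gf$; once this is arranged, the distance estimates and the quasi-geodesic property fall out of Lemma~\ref{Karlssonlem} together with an elementary length computation.

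First I would fix the data. Since $\partial_{\mathcal F}\Gamma$ is non-trivial, the action $\Gamma\curvearrowright\partial_{\mathcal F}\Gamma$ is a non-elementary convergence action, so $\Gamma$ contains loxodromic elements, and one can choose three of them $u,v,w$ with pairwise disjoint fixed-point pairs in $\partial_{\mathcal F}\Gamma$. Let $2\epsilon_0>0$ be strictly smaller than the minimum of the pairwise Floyd distances $\delta_e(\cdot,\cdot)$ among the six fixed points $u^{\pm},v^{\pm},w^{\pm}$. By the (uniform) north--south dynamics of a loxodromic and standard divergence estimates for the Floyd metric, there is $L_0>0$ such that for $t\in\{u,v,w\}$ and any $\xi\in\overline{\Gamma}_{\mathcal F}$ with $\delta_e(\xi,t^{+})\ge\epsilon_0$ (resp.\ $\delta_e(\xi,t^{-})\ge\epsilon_0$) the geodesic $[e,\xi]$ and a geodesic tracking the $t$-axis toward $t^{+}$ (resp.\ $t^{-}$) already Floyd-diverge by length $L_0$; fix an integer $N$ with $N\cdot\min(|u|,|v|,|w|)>L_0$ and large enough for the bounded-cancellation bound below, and set $F=\{\,t_1^{N}t_2^{N}:t_1,t_2\in\{u,v,w\}\,\}$.

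The selection of $f$ is the heart of the matter. Given $g,h\in\Gamma$: since $u^{+},v^{+},w^{+}$ are $2\epsilon_0$-separated, $g^{-1}$ lies within $\epsilon_0$ of at most one of them, so we may pick $t_1\in\{u,v,w\}$ with $\delta_e(g^{-1},t_1^{+})\ge\epsilon_0$; likewise pick $t_2\in\{u,v,w\}$ with $\delta_e(h,t_2^{-})\ge\epsilon_0$, and note $\delta_e(t_1^{+},t_2^{-})\ge 2\epsilon_0$ automatically. Put $f=t_1^{N}t_2^{N}\in F$. By equivariance of the Floyd distance, $\delta_g(e,gfh)=\delta_e(g^{-1},fh)$ and $\delta_{gf}(e,gfh)=\delta_e(f^{-1}g^{-1},h)$; by the contracting behaviour of the loxodromics the geodesic $[e,fh]$ tracks the $t_1$-axis toward $t_1^{+}$ for length proportional to $N$ (hence $>L_0$), and $[e,f^{-1}g^{-1}]=[e,t_2^{-N}t_1^{-N}g^{-1}]$ tracks the $t_2$-axis toward $t_2^{-}$ for length $>L_0$, so the divergence estimate produces a constant $\delta=\delta(\epsilon_0)>0$ with $\delta_g(e,gfh)\ge\delta$ and $\delta_{gf}(e,gfh)\ge\delta$; the first of these is already one of the asserted conclusions. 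The same separation $\delta_e(g^{-1},t_1^{+})\ge\epsilon_0$ forbids more than a uniformly bounded amount of cancellation between $g$ and $f$, so $|gf|\ge |g|+|f|-2k_0$ for a uniform $k_0$, which I record for the last step.

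Finally I would assemble the conclusion. Applying Lemma~\ref{Karlssonlem} with basepoint $g$ (resp.\ $gf$) and endpoints $e,gfh$ gives $d(g,[e,gfh])\le\kappa(\delta)$ and $d(gf,[e,gfh])\le\kappa(\delta)$, so one takes $\epsilon=\kappa(\delta)$. For the quasi-geodesic claim, let $p,q\in[e,gfh]$ be points within $\epsilon$ of $g$ and of $gf$; since $|f|$ is large and $|gf|\ge |g|+|f|-2k_0$, one has $d(e,p)<d(e,q)$, so $|gfh|=d(e,p)+d(p,q)+d(q,gfh)\ge (|g|-\epsilon)+(|f|-2\epsilon)+(|h|-\epsilon)$; hence the word obtained by concatenating geodesic representatives of $g$, $f$, $h$ has length at most $|gfh|+4\epsilon$, and a path whose total length exceeds the distance between its endpoints by at most $4\epsilon$ is a $(1,4\epsilon)$-quasi-geodesic, so it is a $c$-quasi-geodesic for a uniform $c$. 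The main obstacle is exactly the selection step: steering the path away from the ``bad'' direction at both corners $g$ and $gf$ simultaneously with a single $f$ from a fixed finite list, uniformly in $g$ and $h$. This is what forces the use of several pairwise-independent loxodromics (so that every direction is avoided by some steering element) and of products of two large powers (one power per corner), and it dictates the order in which the constants must be fixed: the loxodromics first, then $\epsilon_0$, then $N$, then $\delta$, then $\epsilon=\kappa(\delta)$, and finally $c$.
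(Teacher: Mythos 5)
Your route is genuinely different from the paper's, and the logical order is reversed. The paper gets the quasi-geodesic property and the bounds on $d(g,[e,gfh])$, $d(gf,[e,gfh])$ for free by citing the extension lemma of \cite{YangSCC}, whose engine is the strongly contracting property of loxodromic axes (imported from \cite{Yanggrowthtightness} for groups with non-trivial Floyd boundary); only afterwards does it extract the Floyd-angle lower bound $\delta_g(e,gfh)\ge\delta$ via Lemma~\ref{Karlssonlem} and the triangle inequality for $\delta_e$. You instead establish the Floyd-angle lower bounds at \emph{both} corners $g$ and $gf$ first, by a ping-pong selection of $f=t_1^Nt_2^N$ steering away from $g^{-1}$ and $h$, then apply Lemma~\ref{Karlssonlem} to get both distance bounds, and finally deduce the quasi-geodesic property from the resulting $4\epsilon$-tautness of the concatenation. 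What your approach buys is self-containedness: the only geometric inputs are Karlsson's lemma and the convergence dynamics of loxodromics on $\partial_{\mathcal F}\Gamma$, with no appeal to contracting subsets. The paper's version buys brevity and a single choice of $f$ ranging over powers of three elements rather than nine products. Your taut-path-implies-quasi-geodesic computation at the end is correct, as is the ordering argument for $p,q$ once $|gf|\ge|g|+|f|-2k_0$ is in hand.

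The one place where you assert rather than prove is precisely where the work lives: the claims that $[e,fh]$ tracks the $t_1$-axis, that $f^{-1}g^{-1}$ is Floyd-close to $t_2^-$, and the bounded-cancellation bound $|gf|\ge|g|+|f|-2k_0$. Attributing these to ``the contracting behaviour of the loxodromics'' either re-imports the very input from \cite{Yanggrowthtightness} that your approach was avoiding, or leaves a gap. They can, however, be proved with your tools by iterating Lemma~\ref{Karlssonlem} corner by corner: for instance, if $\delta_e(h,t_2^-)\ge\epsilon_0$ but $\delta_e(t_2^N,t_2^Nh)\ge\delta'$, then $d(t_2^{-N},[e,h])\le\kappa(\delta')$, and summing Floyd lengths along $[e,h]$ past that near-point forces $\delta_e(h,t_2^{-N})$, hence $\delta_e(h,t_2^-)$, below $\epsilon_0$ once $N$ is large --- a contradiction. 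So $t_2^Nh$ is Floyd-close to $t_2^+$, which feeds the corner condition at $t_1^N$, and two such steps give the tracking of $[e,fh]$; the mirror argument handles $f^{-1}g^{-1}$, and the same mechanism gives the taut concatenation $[g^{-1},e]\cup[e,f]$ needed for $|gf|\ge|g|+|f|-2k_0$. With that induction written out, your proof is complete; note also that this forces $N$ to be chosen after $\epsilon_0$ \emph{and} after $\kappa(\epsilon_0/10)$, consistent with the order of constants you list.
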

\begin{proof}
Note that if the Floyd boundary of $\Gamma$ is nontrivial, then $\Gamma$ is not virtually cyclic, and every hyperbolic element is strongly contracting \cite{Yanggrowthtightness}.
Thus, the extension lemma in \cite{YangSCC} applies in this setting.
Namely, let $F$ any set of three independent hyperbolic elements. Set $F^n=\{f^n: f\in F\}$ for given $n\ge 1$. Then for any sufficiently large $n_0$, and for any $g,h\in \Gamma$ there exists $f\in F^{n_0}$ such that $gfh$ labels a $c$-quasi-geodesic for a uniform constant $c$. 

It remains to prove that $\delta_g(e, gf^{2n_0}h)$ has a uniformly lower bound $\delta$ when $n_0$ is large.   Indeed, since  every $ f\in F$ is a hyperbolic element with two distinct fixed points, there exists $\delta=\delta(F)>0$ such that $\delta_e(f^{-n_0}, f^{n_0})\ge \delta$ for any $f\in F$ and $n\ge 1$.
Since $gf^{2n_0}h$ labels a $c-$quasi-geodesic, we see that $d(e, f^{n_0}[e,h])$ and $d(e, f^{-n_0}[e,g^{-1}])$  increase to $\infty$  as $n\to\infty$.
By Lemma \ref{Karlssonlem}, we have  for $n\ge n_0$ $\delta_e(f^n,f^nh)<\delta/4$  and $\delta_e(f^{-n_0},f^{-n_0}g^{-1})<\delta/4$.
Thus, $\delta_e(f^{-n_0}g^{-1}, f^{n_0}h)>\delta/2$, and then $\delta_{gf^{n_0}}(e, gf^{2n_0}h)\ge \delta/2$. Consequently, there exists $\delta'=\delta'(\delta, n_0)$ such that $\delta_{g}(e, gf^{2n_0}h)\ge \delta'$.
\end{proof}


Floyd and Bowditch boundaries are visual: any two distinct points can be connected by a geodesic.  This enables us to define 
the notion of  shadows on both of them, that will be used in our arguments. 
Given $K>0$ and $x\in \Gamma$, let $\Pi_K(x)$ be the set of boundary points $\xi$ for which some geodesic between $e$ and $\xi$ intersects $B(x,K)$.
We call $\Pi_K(x)$ the big shadow at $x$ of width $K$. 
Balls and shadows are related by \cite[Lemmas~4.13, 4.14, 4.15]{PY}.
We prove here a slight generalization of these results.

\begin{lem}\label{diametershadow}
There exists $C$ such that the diameter of the big shadow $\Pi_K(g)$ is bounded by $CK\lambda^{|x|-K}$ for either the Floyd distance on the Floyd boundary or the shortcut distance on the Bowditch boundary.
\end{lem}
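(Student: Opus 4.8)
The plan is to estimate the diameter of $\Pi_K(g)$ by showing that any two boundary points $\xi,\zeta$ in the big shadow are connected, together with $e$, by geodesics passing near $g$, and then invoke Karlsson-type estimates on the Floyd distance. First I would fix $\xi,\zeta\in \Pi_K(g)$; by definition there are geodesics $\alpha=[e,\xi]$ and $\beta=[e,\zeta]$ each meeting $B(g,K)$, say at points $a\in\alpha$ and $b\in\beta$ with $d(a,g),d(b,g)\le K$. In particular $|a|,|b|\ge |g|-K$ and $d(a,b)\le 2K$. The key geometric point is that a geodesic $[\xi,\zeta]$ between the two boundary points must then pass within a bounded (in fact $\le 2\kappa+2K$ or so) distance of $g$ as well: one compares $[\xi,\zeta]$ with the concatenation of the terminal rays of $\alpha$ and $\beta$ emanating from a common bounded-size region near $g$, using thin-triangle/visibility properties of the hyperbolic space $X$ underlying the relatively hyperbolic structure (equivalently, of the Floyd completion), so the whole shadow lies ``behind'' a bounded neighborhood of $g$.

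Next I would turn this into a Floyd-distance estimate. For the Floyd distance with basepoint $e$: any point $\eta\in\Pi_K(g)$ is reached from $e$ by a geodesic ray through $B(g,K)$, and the Floyd length of the portion of that ray beyond $B(g,K)$ is bounded by a geometric tail $\sum_{n\ge |g|-K} \lambda^n \asymp \frac{\lambda^{|g|-K}}{1-\lambda}$. Since $\delta_{e,\lambda}(\eta,\eta')$ for $\eta,\eta'\in\Pi_K(g)$ is at most the Floyd length of a path from $\eta$ to $\eta'$ staying in that tail region, routed through the bounded set $B(g,2K+2\kappa)$ (whose own Floyd diameter is at most $CK\lambda^{|g|-K}$, since it consists of at most $O(K)$ edges each of Floyd length $\le\lambda^{|g|-K}$ up to a multiplicative constant — here one also needs the edges of $B(g,2K+2\kappa)$ to have distance $\ge |g|-2K-2\kappa$ from $e$, absorbing the extra additive constant into $C$), one gets $\operatorname{diam}\Pi_K(g)\le CK\lambda^{|g|-K}$ for the Floyd distance. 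Strictly, I would cover the connecting path by: (i) the final ray-tail of the first ray from a point $a'\in B(g,2K+2\kappa)$, (ii) a geodesic segment inside $B(g,2K+2\kappa)$ of length $O(K)$, and (iii) the final ray-tail of the second ray; each piece has Floyd length $\le C'K\lambda^{|g|-K}$, and summing gives the claim. For the shortcut distance on the Bowditch boundary, the inequality~(\ref{shortcutsmallerFloyd}) $\bar\delta_{e,\lambda}(\phi(\xi),\phi(\zeta))\le\delta_{e,\lambda}(\xi,\zeta)$ immediately transports the upper bound, since $\Pi_K(g)\subset\partial_{\mathcal B}\Gamma$ is the $\phi$-image of the corresponding big shadow in $\partial_{\mathcal F}\Gamma$.

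The main obstacle I anticipate is the geometric claim that \emph{all} geodesics realizing the distances between points of $\Pi_K(g)$ (including boundary-to-boundary geodesics $[\xi,\zeta]$) stay in a uniformly bounded neighborhood of $g$; in the relatively hyperbolic setting one cannot appeal to thin triangles in the Cayley graph directly, so I would instead work in the proper hyperbolic space $X$ of the geometrically finite action, use $\delta$-thinness of ideal triangles there, and then translate back via the quasi-isometric-type control relating $X$-geodesics and word geodesics near transition points — or, more cleanly, use Lemma~\ref{Karlssonlem}: if $\delta_g(\xi,\zeta)$ were large we would be done directly, and if it is small we bound $\delta_{e,\lambda}(\xi,\zeta)$ by going through $g$ anyway. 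Managing the interplay between the additive constant $\kappa$ from Lemma~\ref{Karlssonlem} and the multiplicative constant $C$ (so that $\lambda^{-\kappa}$ and the $O(K)$ edge count both get absorbed) is the routine-but-delicate bookkeeping; everything else follows the pattern of \cite[Lemmas~4.13--4.15]{PY}.
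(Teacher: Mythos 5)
Your second paragraph is exactly the paper's proof: concatenate the tail of $[e,\xi]$ beyond its intersection point with $B(g,K)$, a short path of length $O(K)$ near $g$, and the tail of $[e,\zeta]$, estimate the Floyd length of each piece by $\sum_{k\geq |g|-K}\lambda^k$ or $2K\lambda^{|g|-K}$, and transfer the bound to the Bowditch boundary via~(\ref{shortcutsmallerFloyd}). The ``main obstacle'' you anticipate, however, is not an obstacle at all: the Floyd distance is by definition the \emph{length} distance induced by the rescaled edge lengths, i.e.\ an infimum over all connecting paths, so exhibiting the single path above already gives the upper bound on $\delta_{e,\lambda}(\xi,\zeta)$ --- you never need to know where geodesics $[\xi,\zeta]$ go, and the entire first paragraph (thin ideal triangles in $X$, Lemma~\ref{Karlssonlem}, control of $[\xi,\zeta]$ near $g$) can be deleted.
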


\begin{proof}
By~(\ref{shortcutsmallerFloyd}) we only need to give the proof for the Floyd boundary.
Let $\xi,\zeta$ in $\Pi_K(g)$ and let $[e,\xi]$ and $[e,\zeta]$ be two geodesics intersecting $B(x,K)$ at $y$ and $z$ respectively.
Then, following back $[e,\xi]$ from $\xi$ to $y$, then following a path from $y$ to $z$ that stays inside $B(x,K)$ and finally following the geodesic $[e,\zeta]$ from $z$ to $\zeta$ yields a path from $\xi$ to $\zeta$ of Floyd length at most
$$\sum_{k\geq |x|-K}\lambda^k+2K\lambda^{|x|-K}+\sum_{k\geq |x|-K}\lambda^k\leq CK\lambda^{|x|-K}.$$
This concludes the proof.
\end{proof}

\subsection{Transition points and Floyd geometry}\label{sectiontransitionpoints}
In contrast with hyperbolic groups, the Cayley graph of relatively hyperbolic groups is not Gromov hyperbolic anymore, so the thin triangle property and the Morse property do not hold in general. However, a certain kind of ``relative" Morse property persists and is manifested in a notion of transition points introduced in \cite{Hruska} (see also \cite{DrutuSapir, GePoGGD}). 

Recall that $\mathbb P=\{gP: g\in \Gamma, P\in\mathbb P_0\}$ is the collection of all parabolic cosets. 
\begin{defn}
Let $P\in\mathbb P$ be a parabolic coset and $\eta, L>0$ be fixed constants. A point $p$ on a geodesic $\alpha$ is called \textit{$(\eta, L)$-deep} in $P$ if $B(p, 2L)\cap \alpha\subseteq N_\eta(P)$. It is called \textit{$(\eta, L)$-transitional} if it is not \textit{$(\eta, L)$-deep} in any parabolic coset $P\in \mathbb P$.  
\end{defn}

According to the definition, it is clear that an $(\eta, L_1)$-transition point is an $(\eta, L_2)$-transition point for $L_1\le L_2.$ 
The parameters $\eta, L>0$ are usually chosen via the \textit{bounded intersection} property of the collection $\mathbb P$ (see \cite[Lemma~4.7]{DrutuSapir}): for any $\eta>0$ there exists $L=L(\eta)>0$ such that for any two $P\ne P'\in\mathbb P$ we have $\diam{N_\eta(P)\cap N_\eta(P')}\le L.$

\begin{lem}\label{EntryTransLem}\cite[Proposition~5.6]{GePoGGD}, \cite[Theorem~4.1]{DrutuSapir}.
For large enough $\eta$, there exists $L=L(\eta)$ such that any point of a geodesic $\alpha$ can  be $(\eta, L)$-deep in at most one  $P\in \mathbb P$. 
Moreover, if it is $(\eta, L)$-deep in $P$, the  entry and exit points of $\alpha$ at $N_\eta(P)$ are $(\eta, L)$-transitional. 
\end{lem}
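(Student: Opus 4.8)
This lemma is classical, with complete proofs in \cite[Proposition~5.6]{GePoGGD} and \cite[Theorem~4.1]{DrutuSapir}; the plan is to deduce it from two standard facts about the hyperbolic space $X$ on which $\Gamma$ acts. The first is the bounded intersection property of $\mathbb P$ recalled above: for every $\eta'>0$ there is a constant $L_0(\eta')$ bounding $\diam{N_{\eta'}(P)\cap N_{\eta'}(P')}$ for all distinct $P,P'\in\mathbb P$. The second is that parabolic cosets are quasiconvex in $X$, so that for every $\eta$ there is $D=D(\eta)$ with the property that, for any geodesic $\alpha$, the set of parameters $t$ with $\alpha(t)\in N_\eta(P)$ lies within Hausdorff distance $D$ of a single closed interval; hence, between its first entry point and its last exit point at $N_\eta(P)$, $\alpha$ stays inside $N_{\eta+D'}(P)$ for a suitable $D'=D'(\eta)$, while outside those two points it lies outside $N_\eta(P)$, provided the endpoints of $\alpha$ are outside $N_\eta(P)$ --- the only case in which the entry and exit points are genuine crossings, and the only case relevant for us. With these in hand I would fix $\eta$ large enough for the standing conventions on transition points and set $L:=\max\{L_0(\eta),\,L_0(\eta+D'),\,D'\}+1$.

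For the first assertion: if a point $p=\alpha(t_0)$ were $(\eta,L)$-deep in two distinct cosets $P$ and $P'$, then $B(p,2L)\cap\alpha\subseteq N_\eta(P)\cap N_\eta(P')$, while $B(p,2L)\cap\alpha$ contains a subsegment of $\alpha$ of diameter at least $2L>L_0(\eta)$, contradicting bounded intersection. The only exception would be a geodesic too short for such a subsegment to exist, a degenerate situation that does not occur in our applications.

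For the second assertion, suppose $p=\alpha(t_0)$ is $(\eta,L)$-deep in $P$ and let $q=\alpha(t_q)$ be the first entry point of $\alpha$ at $N_\eta(P)$; the exit point is treated symmetrically. Since $\alpha(t_0-2L)\in N_\eta(P)$ we have $t_q\le t_0-2L$, and quasiconvexity gives $\alpha([t_q,t_0])\subseteq N_{\eta+D'}(P)$, hence $\alpha([t_q,t_q+2L])\subseteq N_{\eta+D'}(P)$. Now suppose, for contradiction, that $q$ is $(\eta,L)$-deep in some coset $Q$, so $B(q,2L)\cap\alpha\subseteq N_\eta(Q)$ and therefore $\alpha([t_q,t_q+2L])\subseteq N_\eta(Q)$. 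If $Q\ne P$, this produces a subsegment of $\alpha$ of diameter $2L>L_0(\eta+D')$ inside $N_{\eta+D'}(P)\cap N_{\eta+D'}(Q)$, contradicting bounded intersection. If $Q=P$, then $B(q,2L)\cap\alpha\subseteq N_\eta(P)$; but $q$ is the first entry point, so every point of $\alpha$ of parameter strictly less than $t_q$ lies outside $N_\eta(P)$, and there is one inside $B(q,2L)$ since $t_q>0$ (if $t_q=0$ then $q$ would be the initial point of $\alpha$, which lies outside $N_\eta(P)$ by assumption). Either way we reach a contradiction, so $q$ is $(\eta,L)$-transitional.

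The delicate point is not the combinatorics of the two cases, which are short, but the quantitative form of the geometric input in the first paragraph --- the coarse connectedness of $\alpha\cap N_\eta(P)$ and the resulting control $\alpha([t_q,t_0])\subseteq N_{\eta+D'}(P)$. This is one of the standard manifestations of the quasiconvexity of parabolic cosets in a relatively hyperbolic group, and the bulk of the work would be to quote it in exactly the form used here, as is done in \cite{DrutuSapir} and \cite{GePoGGD}.
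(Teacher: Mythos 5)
The paper does not prove this lemma: it is imported wholesale from \cite[Proposition~5.6]{GePoGGD} and \cite[Theorem~4.1]{DrutuSapir}, so there is no internal argument to compare yours against. Taken on its own terms, your sketch is a sound reconstruction, and you correctly identify where the real content lives: everything reduces to the bounded intersection property together with the quantitative coarse connectedness of $\alpha\cap N_\eta(P)$ (quasiconvexity/saturation of parabolic cosets). Once those are granted in the form you state, the two case analyses are exactly as short as you make them, and your choice of $L$ in terms of $L_0(\eta)$, $L_0(\eta+D')$ and $D'$ is the right one.

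Two caveats on the degenerate cases you partially flag. First, your argument for the case $Q=P$ relies on the entry point being a genuine crossing, i.e.\ on the initial endpoint of $\alpha$ lying outside $N_\eta(P)$. But the paper invokes the lemma precisely in the situation where the entry point may coincide with the initial endpoint (proof of Lemma~\ref{LowerBDLem}: ``possibly $u=e$ or $v=x$''), and there the entry point genuinely need not be $(\eta,L)$-transitional --- one must either adopt the convention that endpoints of $\alpha$ count as transitional, or note that in the intended application the Ancona factorization through $u=e$ is vacuous since $G_r(e,u)=G_r(e,e)$ is a constant. Second, the uniqueness assertion can likewise fail literally for geodesics of length less than $2L$, as you observe. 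Neither issue is an obstruction to the results of the paper, but a complete write-up would have to fix these conventions; as it stands your proof proves a marginally weaker statement than the one the paper actually uses.
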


The following result which refines Lemma~\ref{Karlssonlem} explains the application of the Floyd geometry in relatively hyperbolic groups. 
\begin{lem}\label{TransLargeLem}\cite[Corollary~5.10]{GePoGGD} 
For every large enough $\eta$, there exist $L=L(\eta)$ and $\delta=\delta(\eta)>0$ such that if $x$ is an $(\eta, L)$-transition point  on a geodesic $\alpha$ with endpoints $\alpha_-$ and $\alpha_+$, then $\delta_{x}(\alpha_-,\alpha_+)\ge \delta$. 
\end{lem}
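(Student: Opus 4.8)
The plan is to argue by contradiction, using a compactness argument on the Floyd compactification to extract a limiting bi-infinite geodesic and then contradicting the transition property. Fix $\eta$ large enough that the bounded intersection property and Lemma~\ref{EntryTransLem} apply, and let $L=L(\eta)$ be the constant furnished by Lemma~\ref{EntryTransLem}. Suppose the assertion fails for this $L$: then there are geodesics $\alpha_n=[a_n,b_n]$ with $a_n\neq b_n$, carrying $(\eta,L)$-transition points $x_n\in\alpha_n$, and with $\delta_{x_n}(a_n,b_n)\to 0$. Since left translations are isometries of $\Gx$, $\delta_{x_n}(a_n,b_n)=\delta_e(x_n^{-1}a_n,x_n^{-1}b_n)$, so after translating we may assume $x_n=e$; thus $e$ is an $(\eta,L)$-transition point on $[a_n,b_n]$ and $\delta_e(a_n,b_n)\to 0$.

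\textbf{Extracting a limiting bi-infinite geodesic.} First, $|a_n|,|b_n|\to\infty$: if, say, $a_n$ stayed bounded, a subsequence would be constant equal to some $a\in\Gamma$, and then $b_n$, being $\delta_e$-Cauchy, would converge in $\overline{\Gamma}_{\mathcal{F}}$ to a point $\zeta$ with $\delta_e(a,\zeta)=0$, forcing $a=\zeta$, which is absurd. Passing to a subsequence, $a_n\to\xi_+$ and $b_n\to\xi_-$ in $\partial_{\mathcal F}\Gamma$; continuity of $\delta_e$ on the compactification gives $\delta_e(\xi_+,\xi_-)=0$, so $\xi_+=\xi_-=:\xi$. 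By Arzel\`a--Ascoli (local finiteness of $\Gx$, together with $e\in\alpha_n$ for all $n$), a further subsequence of the $\alpha_n$ converges uniformly on compacta to a bi-infinite geodesic $\gamma$ through $e$, whose two ideal endpoints are $\lim a_n=\xi$ and $\lim b_n=\xi$. Moreover $e$ is still an $(\eta,L)$-transition point of $\gamma$: the defining condition concerns only $B(e,2L)\cap\gamma$, and, $\Gx$ being locally finite, this set coincides with $B(e,2L)\cap\alpha_n$ once $n$ is large.

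\textbf{The contradiction.} By the Map Theorem~\ref{mapGerasimov}, $\phi(\xi)\in\partial_{\mathcal B}\Gamma$, which by geometric finiteness is either conical or bounded parabolic. If $\phi(\xi)$ is conical, pass to the coned-off Cayley graph $\hat\Gamma$, which is Gromov hyperbolic; it is a standard feature of relatively hyperbolic groups that the transition points of a $\Gx$-geodesic lie within a universal distance $C_0$, measured in $\hat\Gamma$, of any $\hat\Gamma$-geodesic joining the same pair of endpoints. Writing $\hat\alpha_n$ for such a $\hat\Gamma$-geodesic from $a_n$ to $b_n$, we get $\hat d(e,\hat\alpha_n)\le C_0$; but $a_n,b_n\to\phi(\xi)$ in the Bowditch (equivalently, at a conical point, the coned-off) compactification, so the $\hat\alpha_n$ eventually leave every bounded subset of $\hat\Gamma$, whence $\hat d(e,\hat\alpha_n)\to\infty$, a contradiction. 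If instead $\phi(\xi)$ is bounded parabolic, let $P\in\mathbb P$ be the parabolic coset having $\phi(\xi)$ as its parabolic point. Both ends of $\gamma$ converge to $\phi(\xi)$, and from the bounded coset penetration property (equivalently, the confinement of geodesics whose endpoints lie deep in a parabolic coset) one deduces that $\gamma$ is contained in $N_{\eta_0}(P)$ for a constant $\eta_0$ depending only on $(\Gamma,S)$. Having taken $\eta\ge\eta_0$ from the start, $B(e,2L)\cap\gamma\subseteq N_\eta(P)$, so $e$ is $(\eta,L)$-deep in $P$, contradicting that $e$ is $(\eta,L)$-transition.

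\textbf{Main obstacle.} The delicate point is the parabolic case, and precisely the claim that the limiting geodesic $\gamma$ --- a bi-infinite geodesic both of whose ideal endpoints equal the single Floyd point $\xi$ sitting over a parabolic point --- is trapped in a \emph{universally} bounded neighborhood of $P$, with the bound independent of $\eta$ and $L$. This requires the fine geometry of geodesics near parabolic cosets (a $\Gx$-geodesic whose endpoints lie deep inside $N_\eta(P)$ remains inside $N_{\eta_0}(P)$ with $\eta_0$ not depending on $\eta$), together with the Gerasimov--Potyagailo description of $\phi^{-1}(\phi(\xi))$, which is what guarantees that "both ends at the same Floyd point", rather than merely at the same Bowditch point, forces a genuinely deep non-transitioning excursion into $P$. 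A more hands-on alternative that avoids boundaries entirely would be to estimate directly that any path from $\alpha_-$ to $\alpha_+$ avoiding a large metric ball around a transition point $x$ must be exponentially long in $\Gx$ --- since near a transition point one cannot "go around" cheaply through a parabolic coset --- and that for $\lambda\ge\lambda_0$ this exponential length outweighs the exponential Floyd discount, forcing $\delta_x(\alpha_-,\alpha_+)$ to be bounded below.
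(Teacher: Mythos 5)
The paper does not prove this lemma: it is quoted verbatim from \cite[Corollary~5.10]{GePoGGD}, so there is no internal argument to compare yours against. Judged on its own terms, your compactness architecture is sensible, the reduction to $x_n=e$ and the extraction of a bi-infinite limit geodesic $\gamma$ with both ends at the single Floyd point $\xi$ are fine (the fact that the ends of $\gamma$ converge to $\xi$ follows from the tail estimate $\delta_e(\alpha_n(k),a_n)\le \lambda^k/(1-\lambda)$, which you should record), and the conical branch works modulo the standard identification of convergence to a conical Bowditch point with convergence in the coned-off graph.

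The parabolic branch, however, contains a genuine gap, and it is exactly the one you flag: the assertion that a bi-infinite geodesic both of whose ideal ends converge to the same Floyd point $\xi\in\phi^{-1}(p)$ must lie in $N_{\eta_0}(gP)$ for a universal $\eta_0$. This is not a consequence of bounded coset penetration alone, because $\gamma(\pm k)$ need not lie in any bounded neighborhood of $gP$ (a sequence such as $a^k t^k$ in $\mathbb Z^2 * \langle t\rangle$ converges to the Floyd point over the parabolic point of $\mathbb Z^2$ while escaping every neighborhood of $\mathbb Z^2$), so the quasiconvexity statement for geodesics with endpoints \emph{in} $N_\eta(gP)$ does not apply directly. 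What one actually has to show is that the last exit of $\gamma|_{[0,\infty)}$ from $N_\eta(gP)$ occurs at parameters tending to $+\infty$ (and symmetrically for the negative ray); the natural way to prove that a ray which leaves $N_\eta(gP)$ for good cannot converge to a point of $\Lambda_{\mathcal F}(gP)=\phi^{-1}(p)$ is to use the visibility of the exit point — i.e., precisely the lower bound $\delta_y(\cdot,\cdot)\ge\delta$ at a transition point that the lemma is asserting. So as written the parabolic case is circular, or at best defers to unproved material of the same depth as the statement itself. Your proposed ``hands-on alternative'' (paths from $\alpha_-$ to $\alpha_+$ avoiding a large ball around a transition point are too long for the exponential Floyd discount) is in fact much closer to a self-contained proof and to the Gerasimov--Potyagailo analysis; if you want an argument independent of \cite{GePoGGD}, that is the route to develop.
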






The following is called the relative thin triangle property for transition points.
It is folklore and was proved at several places, using different terminology, see for instance \cite[Proposition~7.1.1]{GePoGGD}, \cite[Proposition~4.6]{Sistometricrelhyp}.
It can also be derived from \cite[Theorem~1.12]{DrutuSapir}, \cite[Section~8]{Hruska}, \cite[Proposition~3.15]{Osin}.
Usually, it is stated for points $x,y,z$ in the $\Gamma$.
The following version for points in the boundary is proved in \cite[Lemma~2.4]{DG20}.

\begin{lem}\label{ThinTransitional2}
For large enough $\eta$, there exist $L(\eta)$ such that for every $L\geq L(\eta)$, there exists $C=C(\eta,L)$ such that for every triple of points $(x,y,z)$ that are either conical limit points or elements of $\Gamma$, any $(\eta,L)$-transition point on one of the side of the geodesic triangle with vertices $x,y,z$ is within $C$ of an $(\eta,L)$-transition point on one of the two other sides.
\end{lem}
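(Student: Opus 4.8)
The plan is to reduce the statement to the classical relative thin triangle property for points in $\Gamma$ and then handle conical limit points by an approximation argument. First I would fix $\eta$ large enough so that Lemma~\ref{EntryTransLem} and Lemma~\ref{TransLargeLem} apply, and recall the version of the relative thin triangle property for triples of elements of $\Gamma$: for $x,y,z\in\Gamma$ and $L$ large, any $(\eta,L)$-transition point on $[x,y]$ lies within a uniform constant $C_0=C_0(\eta,L)$ of a transition point on $[x,z]\cup[y,z]$. This is exactly the folklore statement quoted just before the lemma, so I would simply invoke it. The task is then to upgrade this from elements of $\Gamma$ to points that may be conical limit points in $\partial_{\mathcal B}\Gamma$.

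The key device is Floyd geometry via Lemma~\ref{Karlssonlem} and Lemma~\ref{TransLargeLem}. Suppose without loss of generality that the transition point $p$ lies on the side $[x,y]$ of the triangle with vertices $x,y,z$, where each of $x,y,z$ is either in $\Gamma$ or a conical limit point. Let $p$ be an $(\eta,L)$-transition point on $[x,y]$. I would approximate $x$ and $y$ by points $x_n,y_n\in\Gamma$ lying on the chosen geodesic rays $[e,x]$, $[e,y]$ (or segments of $[x,y]$) far enough out that $p$ still lies on a geodesic $[x_n,y_n]$ — more precisely, choose $x_n,y_n$ on $[x,y]$ on either side of $p$ with $d(p,x_n),d(p,y_n)\ge 2L$, so that whether $p$ is transitional on $[x_n,y_n]$ is unchanged (the deep/transition condition at $p$ only sees $B(p,2L)$). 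Similarly approximate $z$ by $z_n\in\Gamma$ on $[e,z]$ (or take $z_n=z$ if $z\in\Gamma$) with $d(e,z_n)\to\infty$. Applying the $\Gamma$-version of the relative thin triangle property to the triangle $x_n,y_n,z_n$ gives a transition point $q_n$ on $[x_n,z_n]\cup[y_n,z_n]$ with $d(p,q_n)\le C_0$.

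The remaining issue is a convergence/stability argument: I need $q_n$ (or a subsequence) to correspond to a transition point on the original sides $[x,z]$ or $[y,z]$ of the limiting triangle. Since $d(p,q_n)\le C_0$, the points $q_n$ lie in the finite ball $B(p,C_0)$, so by passing to a subsequence they are eventually a fixed point $q\in\Gamma$ with $d(p,q)\le C_0$. The sides $[x_n,z_n]$ converge (locally uniformly, up to taking the geodesics in a fixed compact region and using properness of $X$, i.e.\ Arzelà--Ascoli) to a geodesic from $x$ to $z$, and likewise $[y_n,z_n]\to[y,z]$; one then checks that $q$, being a transition point on $[x_n,z_n]$ for all large $n$ in a fixed ball, is a transition point on the limiting geodesic $[x,z]$ (again using that the $(\eta,L)$-transition condition is local, depending only on $B(q,2L)$, which stabilizes). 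This gives the desired transition point within $C=C_0$ of $p$ on one of the two other sides, with $C$ depending only on $\eta,L$.

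The main obstacle I anticipate is the last step: making precise the stability of ``being an $(\eta,L)$-transition point'' under the passage to the limit of the approximating geodesics, and ensuring the constants do not degrade. Concretely, one must verify that if $q$ is $(\eta,L)$-deep in some parabolic coset $P$ on the limiting geodesic $[x,z]$, then it is already $(\eta,L')$-deep on the approximants $[x_n,z_n]$ for a slightly larger $L'$, contradicting that $q_n$ was transitional — this requires choosing the approximation parameters and a small inflation of $L$ carefully, and invoking the bounded intersection property (Lemma~\ref{EntryTransLem}) so that the parabolic coset $P$ is uniquely determined near $q$. Once this local stability is in hand, the rest is bookkeeping. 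I expect this is exactly why the authors cite \cite[Lemma~2.4]{DG20} for the boundary version rather than reproving it; my plan would be to cite that and sketch only the approximation step.
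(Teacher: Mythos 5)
The paper gives no proof of this lemma at all: it is stated as folklore for vertices in $\Gamma$ (with references to Gerasimov--Potyagailo, Sisto, Dru\c{t}u--Sapir, Hruska) and the boundary version is simply cited from \cite[Lemma~2.4]{DG20}. Your reduction of the boundary case to the interior case by truncating the sides and passing to a limit is the standard strategy and is essentially what the cited reference does, and your observation that the $(\eta,L)$-transition condition is local (depending only on $B(p,2L)\cap\alpha$, hence unaffected by truncating at distance $\ge 2L$ from $p$) is correct.

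There is, however, one genuine gap in the approximation step beyond the one you flag. The Arzel\`a--Ascoli limit of the sides $[x_n,z_n]$ is only \emph{some} geodesic from $x$ to $z$, not necessarily the side $[x,z]$ chosen in the original triangle; since the Cayley graph of a relatively hyperbolic group is not hyperbolic, two geodesics with the same endpoints need not uniformly fellow-travel, so you cannot conclude directly that $q$ is close to a transition point on the \emph{given} side. You need the additional (true, but nontrivial) fact that an $(\eta,L)$-transition point on one geodesic between a pair of (possibly ideal) endpoints lies within a uniform distance of an $(\eta,L)$-transition point on any other geodesic between the same endpoints --- i.e.\ the bigon case of the same statement, which itself requires an argument for ideal endpoints. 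Separately, you must verify that the limit geodesic actually converges to $x$ and $z$ in the compactification; for conical limit points this is where Lemma~\ref{TransLargeLem} (the uniform lower bound on the Floyd distance at transition points) is actually used, whereas in your sketch the Floyd lemmas are announced but never invoked. By contrast, your worry about needing to inflate $L$ in the last step is unfounded: in a locally finite graph, locally uniform convergence of the geodesics forces eventual coincidence on the finite set $B(q,2L)$, so the transition property passes to the limit with the same parameters.
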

In what follows, we fix $\eta$ and $L(\eta)$ so that any pair $(\eta,L)$ satisfies the above lemmas for $L\geq L(\eta)$.

\subsection{Random walks on relatively hyperbolic groups}
Let $\Gamma$ be a relatively hyperbolic group and let $\mu$ be a finitely supported symmetric admissible probability measure on $\Gamma$.
Let $\rho$ be the spectral radius of the random walk and $R$ its inverse.
We collect here some results that will be used all along the paper.
Recall that $\eta$ and $L(\eta)$ are fixed such that for every $L\geq L(\eta)$, any $(\eta,L)$-transition point satisfies the results in Section~\ref{sectiontransitionpoints}.

A very useful set of inequalities relating the Green functions along geodesics were first proved   by Ancona \cite{Ancona} in hyperbolic groups and used to  identify  the Martin boundary   with the Gromov boundary.
These inequalities were recently extended up to the spectral radius by Gou\"ezel-Lalley in co-compact Fuchisan groups \cite{GouezelLalley} and by Gou\"ezel in general hyperbolic groups \cite{Gouezel}.
They state that there exists $C$, depending only on the hyperbolicity parameters of the group, such that for every $x,y,z\in \Gamma$ such that $y$ is on a geodesic from $x$ to $z$, for every $r\leq R$,
$$\frac{1}{C}G_r(x,y)G_r(y,z)\leq G_r(x,z)\leq C G_r(x,y)G_r(y,z).$$
The only non-trivial part is the upper-bound.
In relatively hyperbolic groups, a relative version of Ancona inequalities in terms of Floyd distance was obtained in \cite{GGPY} to establish a surjective map from the Martin boundary to the Floyd boundary.

Ancona inequalities are one of the main ingredient in \cite{SWX} for studying branching random walks on hyperbolic groups.
In the present paper, we will make very crucial use of the relative Ancona inequalities extended up to the spectral radius in \cite{DG21}.

\begin{prop}[Relative Ancona inequalities]\cite[Theorem~3.6]{DG21}
For every $L\geq L(\eta)$ and $K\geq0$, there exists $C=C(\eta, K)$ such that the following holds.
Let $x,y,z\in \Gamma$ and assume that $y$ is within $K$ of an $(\eta,L)$-transition point on $[x,z]$.
Then for every $r\leq R$, we have
$$\frac{1}{C}G_r(x,y)G_r(y,z)\leq G_r(x,z)\leq C G_r(x,y)G_r(y,z).$$
\end{prop}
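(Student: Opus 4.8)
The lower bound is immediate and needs no hypothesis on $y$: concatenating an arbitrary path from $x$ to $y$ with an arbitrary path from $y$ to $z$ produces a path from $x$ to $z$, so, summing the non-negative weights $p_n(\cdot,\cdot)r^n$ and using the Cauchy product formula, $G_r(x,z)\ge G_r(x,y)G_r(y,z)$; this is the left-hand inequality for every $C\ge 1$. The content is therefore the upper bound. Let $p$ be a point of $[x,z]$ with $d(p,y)\le K$ that is $(\eta,L)$-transitional; by Lemma~\ref{TransLargeLem} there is $\delta=\delta(\eta,L)>0$ with $\delta_p(x,z)\ge\delta$, and moving the basepoint from $p$ to $y$ via (\ref{changeofbasepointFloyd}), together with $d(p,y)\le K$, gives $\delta_y(x,z)\ge\delta'$ for some $\delta'=\delta'(\eta,L,K)>0$.

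For the upper bound, write $w_r(\gamma)$ for the weight of a path $\gamma$ (the product of the $\mu$-transition probabilities along $\gamma$, times $r^{|\gamma|}$), so $G_r(x,z)=\sum_{\gamma : x\to z}w_r(\gamma)$, and fix a radius $N\ge 0$. Split this sum according to whether $\gamma$ meets $B(y,N)$. If it does, a first-entrance decomposition at the first vertex $w\in B(y,N)$ visited by $\gamma$ gives $\sum_{\gamma\cap B(y,N)\ne\emptyset}w_r(\gamma)\le\sum_{w\in B(y,N)}G_r(x,w)G_r(w,z)$, the first-entrance generating function being bounded by the full Green function. By the Harnack inequality — a consequence of the ellipticity and finite support of $\mu$, uniform for $r$ in the compact range $[1,R]$ — each product $G_r(x,w)G_r(w,z)$ is comparable to $G_r(x,y)G_r(y,z)$ with a ratio depending only on $N$; since moreover $\sharp B(y,N)\le c\,\mathrm{e}^{vN}$ by Lemma~\ref{purelyexponentialgrowth}, this part is at most $C(N)\,G_r(x,y)G_r(y,z)$. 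It remains to bound $\mathcal A_r^{(N)}:=\sum_{\gamma\cap B(y,N)=\emptyset}w_r(\gamma)$. Such a $\gamma$ must detour around $y$: its Floyd length based at $y$ is at least $\delta_y(x,z)\ge\delta'$, while every edge $\sigma$ of $\gamma$ has $d(y,\sigma)\ge N$ and hence Floyd length at most $\lambda^N$, so $\gamma$ has at least $\delta'\lambda^{-N}$ edges; that is, $\gamma$ makes a long detour whose length grows with $N$.

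The proof thus reduces to a uniform bound on $\mathcal A_r^{(N)}$. When $r<R$ the $r$-reweighted walk is uniformly transient, detours are exponentially costly, and $\mathcal A_r^{(N)}\le\tfrac12 G_r(x,z)$ as soon as $N$ is large enough (depending on $r$); absorbing this term into $G_r(x,z)$ yields the upper bound for $r<R$, which is in substance the relative Ancona inequalities of \cite{GGPY}, but with $N$ and the final constant depending on $r$. The main obstacle is the critical case $r=R$: there $p_n(x,z)R^n$ decays only polynomially, the exponential gain is lost, and $N$ cannot be chosen uniformly by this argument. The resolution, following Gou\"ezel's treatment of the hyperbolic case \cite{Gouezel} (building on \cite{GouezelLalley}), is to show that for a \emph{fixed} radius $N$ one still has $\mathcal A_R^{(N)}\le C'\,G_R(x,y)G_R(y,z)$ with $C'$ independent of $x,z$; the relative thin-triangle property for transition points, Lemma~\ref{ThinTransitional2}, forces the detour of such a $\gamma$ to remain in a bounded neighbourhood of $y$, so that Gou\"ezel's deviation estimates can be run near $y$ in place of Gromov-hyperbolicity, with Lemma~\ref{TransLargeLem} supplying the Floyd-geometric control there. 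This uniform-in-$r$ estimate is the technical heart of the argument; granting it, $G_r(x,z)\le\bigl(C(N)+C'\bigr)\,G_r(x,y)G_r(y,z)$ for all $r\le R$, and the proposition follows.
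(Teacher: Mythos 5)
First, note that the paper does not prove this proposition: it is imported verbatim from \cite[Theorem~3.6]{DG21}, so there is no internal argument to compare yours against. Your sketch does correctly reproduce the architecture of the proof in that reference (and of Gou\"ezel's hyperbolic antecedent): a first-entrance decomposition over $B(y,N)$, the Floyd-distance detour argument via Lemma~\ref{TransLargeLem} and~(\ref{changeofbasepointFloyd}) to force paths avoiding $B(y,N)$ to be long, and the identification of the critical case $r=R$ as the only real difficulty. But as a proof it has two gaps.

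The first is minor: the lower bound $G_r(x,z)\ge G_r(x,y)G_r(y,z)$ with constant $1$ is false as stated (take $x=y=z$: it would force $G_r(e,e)\le 1$). Concatenation over-counts, since a path through $y$ admits one decomposition per visit to $y$; the correct statement is $G_r(x,z)\ge F_r(x,y)G_r(y,z)=G_r(x,y)G_r(y,z)/G_r(y,y)$, which suffices because $G_r(y,y)=G_r(e,e)\le G_R(e,e)<\infty$ by non-amenability. The second gap is the substance of the theorem: you explicitly ``grant'' the uniform-in-$r$ bound $\mathcal A_R^{(N)}\le C'\,G_R(x,y)G_R(y,z)$ for a fixed radius $N$, with only a one-sentence gesture at running Gou\"ezel's deviation estimates near $y$. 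That estimate \emph{is} the cited theorem: it occupies the bulk of the relevant section of \cite{DG21}, and it requires, among other things, the superexponential decay statement recorded here as Lemma~\ref{SupExpDecayLem} together with an inductive bootstrapping over scales that your sketch does not set up. So the proposal is a correct roadmap rather than a proof; in the context of this paper the honest move is exactly what the authors do, namely to quote \cite{DG21}.
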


Note that there also exists a strong form of relative Ancona inequalities in \cite{DG21}, although we will not need them in this paper.
The following result is one of the step into proving relative Ancona inequalities in \cite{DG21}.
It will be useful in this paper.
If $A\subset \Gamma$ and if $x,y\in \Gamma$, for every $r\leq R$ we denote by $G_r(x,y;A)$ the Green function from $x$ to $y$ restricted to trajectories staying in $A$, except maybe at the end points, i.e.
$$G_r(x,y;A)=\sum_{n\geq 0}\sum_{z_1, \ldots ,z_{n-1}\in A}\mu(x^{-1}z_1)\mu(z_1^{-1}z_2) \cdots \mu(z_{n-1}^{-1}y)r^n.$$

\begin{lem}\label{SupExpDecayLem}\cite[Proposition 3.5]{DG21}
For every $L\geq L(\eta)$, there exist  $\delta>1$ and $K_0>0$   such that the following holds.  
For every $x,y,z\in \Gamma$ such that  $y$ is an $(\eta, L)$-transition point on $[x,z]$ and for every $K\geq K_0$, we have
$$G_R(x,z; B(y,K)^c)\le e^{- e^{\delta K}}.$$
\end{lem}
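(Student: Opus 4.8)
The plan is to combine a Floyd-metric length bound with a bootstrap on the radius $K$. Fix $L\geq L(\eta)$ and let $\delta_0=\delta(\eta)>0$ be the constant of Lemma~\ref{TransLargeLem}, so that $\delta_y(x,z)\geq\delta_0$ whenever $y$ is an $(\eta,L)$-transition point on $[x,z]$. I would reduce to the case $d(x,y)\geq K$, $d(z,y)\geq K$: if $A<d(x,y)<K$ where $\supp\mu\subseteq B(e,A)$, then decomposing a trajectory at its first step $\omega_1$ and applying the relative thin-triangle property (Lemma~\ref{ThinTransitional2}) to the triangle $(x,\omega_1,z)$ shows $y$ lies within a uniform constant $C$ of a transition point $y'$ of $[\omega_1,z]$, whence $G_R(x,z;B(y,K)^c)\leq R\,F(K-C)$ with $F$ as below; and if $d(x,y)\leq A+C$ then for $K$ large no admissible trajectory exists and the quantity vanishes. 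It therefore suffices to bound
\[
F(K):=\sup\bigl\{\,G_R(x,z;B(y,K)^c):\ y\text{ an }(\eta,L)\text{-transition point on }[x,z],\ d(x,y),\,d(z,y)\geq K\,\bigr\},
\]
which is non-increasing in $K$. The first step is that admissible trajectories are long: for any trajectory $\omega=(x=\omega_0,\dots,\omega_N=z)$ with $\omega_i\notin B(y,K)$ for $0<i<N$ (hence, after the reduction, with every edge of $\omega$ at distance $\geq K-1$ from $y$), the Floyd length of $\omega$ based at $y$ is at most $N\lambda^{K-1}$, while it is at least $\delta_y(x,z)\geq\delta_0$; hence $N\geq N_K:=\delta_0\lambda^{1-K}$. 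Consequently $F(K)\leq\sup_{x,z}\sum_{N\geq N_K}R^{N}p_N(x,z)\leq(1+R)\sum_{2k\geq N_K-1}R^{2k}p_{2k}(e,e)$, using $p_{2k}(e,g)\leq p_{2k}(e,e)$ and $p_{2k+1}(e,g)\leq\max_h p_{2k}(e,h)\leq p_{2k}(e,e)$; since $G_R(e,e)<\infty$ this tends to $0$, so $F(K_0)$ can be made as small as desired by taking $K_0$ large.

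The heart of the argument is a self-improving inequality: I expect to prove that there are uniform constants $K_1,A\geq1$ and an integer $m_0\geq2$ with $F(K+K_1)\leq A\,F(K)^{m_0}$ for all $K\geq K_0$. Given an admissible trajectory $\omega$ from $x$ to $z$ avoiding $B(y,K+K_1)$, with $d(x,y),d(z,y)\geq K+K_1$, one travels along $\omega$ and compares with $[x,z]$ to produce a vertex $w$ of $\omega$ whose nearest-point projection to $[x,z]$ is within a uniform distance of $y$; the relative thin-triangle property applied to $(x,w,z)$ then puts $y$ within a uniform constant $C$ of a transition point of $[x,w]$ and of one of $[w,z]$, so that (taking $K_1\geq C$) the halves $\omega|_{[x,w]}$ and $\omega|_{[w,z]}$ each avoid a $K$-ball about a transition point of the respective geodesic. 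Splitting the weight of $\omega$ at $w$, summing over the possible cut points, and controlling that sum by the purely exponential growth of spheres (Lemma~\ref{purelyexponentialgrowth}), by the exponential decay of $G_R$ along transition geodesics, and — crucially — by the super-exponential rarity of each half-trajectory (which is what lets the bound $F(K)$ enter with a power $>1$), produces the claimed inequality.

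It remains to iterate. Setting $a_m:=A^{1/(m_0-1)}F(K_0+mK_1)$, the inequality of the previous step gives $a_m\leq a_{m-1}^{m_0}$, hence $a_m\leq a_0^{\,m_0^{m}}$. Fixing $K_0$ so large that $a_0\leq1/2$ and using monotonicity of $F$, one gets for every $K\geq K_0$
\[
F(K)\ \leq\ F\bigl(K_0+\lfloor(K-K_0)/K_1\rfloor K_1\bigr)\ \leq\ A^{-1/(m_0-1)}\,2^{-m_0^{\,(K-K_0)/K_1-1}}\ \leq\ \exp\!\bigl(-e^{\delta K}\bigr)
\]
for $K$ large, with $\delta=(\log m_0)/K_1$, after enlarging $K_0$ to absorb the multiplicative constant; $\delta>1$ is obtained by arranging the self-improving inequality with $m_0$ large enough relative to $K_1$. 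Together with the reduction this gives the stated estimate (with $K_0$ and $\delta$ readjusted).

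The main obstacle is the self-improving inequality: after cutting a trajectory avoiding $B(y,K+K_1)$ into two halves, each avoiding a $K$-ball around a transition point, one must sum over the infinitely many possible cut points without losing the gain. The trivial bound $G_R\leq G_R(e,e)$ is far too weak here — it would yield only exponential, not super-exponential, decay in $K$ — so one is forced to feed the super-exponential rarity of the halves back into the estimate, balanced against the exponential volume growth of the annuli around $y$; this is precisely where relative hyperbolicity, through the thin-triangle and transition-point machinery of Section~\ref{sectiontransitionpoints}, is used in an essential way.
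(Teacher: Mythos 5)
The first thing to say is that the paper does not prove this lemma: it is imported verbatim from \cite[Proposition 3.5]{DG21}, so your attempt can only be measured against the argument given there, which (following Gou\"ezel and Gou\"ezel--Lalley in the hyperbolic case) has exactly the architecture you describe — a Floyd-length lower bound forcing admissible trajectories to be exponentially long, an initial smallness estimate coming from the convergence of $G_R(e,e)$, and a self-improving inequality iterated to produce doubly exponential decay. Your first and last steps (the length bound via Lemma~\ref{TransLargeLem}, the reduction to $d(x,y),d(y,z)\gtrsim K$, the observation that the naive bound only gives single-exponential decay, and the iteration scheme) are sound and match the known proof.

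The gap is in the middle step, which you yourself flag as ``the main obstacle'' but do not close, and whose proposed resolution is circular. After cutting a trajectory at its first crossing point $w$, the sum over the infinitely many possible $w$ cannot be controlled by invoking ``the super-exponential rarity of each half-trajectory'': that rarity is exactly the statement being proved, and $w$ may lie at distance from $y$ far larger than $K+K_1$, i.e.\ at scales not covered by any inductive hypothesis. The actual argument must bound one factor by the supremum $F(K)$ and the other by an unrestricted Green function, and then control the sum over $w$ by decay of $G_R$ along transition points together with the relative Ancona inequalities; this balance is genuinely delicate because at $r=R$ the series $\sum_w G_R(x,w)G_R(w,z)$ need not converge (for hyperbolic groups it diverges, since $R^np_n(e,e)\sim Cn^{-3/2}$). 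Two further points. First, Lemma~\ref{ThinTransitional2} places $y$ near a transition point of only \emph{one} of $[x,w]$, $[w,z]$, whereas your splitting needs both; this requires exploiting the specific choice of $w$ (e.g.\ via Lemma~\ref{existenceofcenter}), not just the thin-triangle property. Second, the iteration $F(K+K_1)\le A\,F(K)^{m_0}$ yields $\delta=(\log m_0)/K_1$, which is $<1$ for the natural choice $m_0=2$, $K_1\ge 1$; your closing remark that $\delta>1$ can be ``arranged'' by taking $m_0$ large relative to $K_1$ is unsupported as written (harmlessly so for this paper, since every application of the lemma only uses $\delta>0$).
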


Finally, we will also use the following result, proved in \cite{DG21}.
If $P\in \mathbb P$ is a parabolic coset, $\eta\geq 0$ and $x\in \Gamma$, we denote by  $$\pi_{N_\eta(P)}(x) :=\{y\in N_\eta(P): d(x,y)=d(x, N_\eta(P))\}$$ the set of its shortest projections on the $\eta$-neighborhood $N_\eta(P)$ of $P$. 
For $x,y\in \Gamma$, we set $$d_{N_\eta(P)}(x,y):=\diam{\pi_{N_\eta(P)}(x)\cup\pi_{N_\eta(P)}(y)}.$$
It follows from \cite[Corollary~8.2]{Hruska} that the shortest projection is coarsely Lipschitz: 
$$d_{N_\eta(P)}(x,y)\le kd(x,y)+k$$ for a fixed $k\ge 1$ depending only on $\eta$. Thus, $\pi_{N_\eta(P)}(x)$ has  bounded diameter.

\begin{lem}\label{lemmaexponentialmoments}
Let $P\in \mathbb P$ be a parabolic coset.
For every $M\geq 0$, there exists $\eta_0$ such that for $\eta\geq \eta_0$, we have
$$G_R(e,x;N_\eta(P)^c)\leq \mathrm{e}^{-Md_{N_\eta(P)}(e,x)}.$$
\end{lem}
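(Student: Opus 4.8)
The plan is to show that remaining in the complement of $N_\eta(P)$ forces the underlying random walk to make a long detour around the parabolic coset $P$, and that the cost of this detour is exponential in $D:=d_{N_\eta(P)}(e,x)$ with a rate that grows without bound as $\eta\to\infty$. First a few reductions. Since the nearest-point projection $\pi_{N_\eta(P)}$ is $k$-coarsely Lipschitz, $d(e,x)\ge (D-k)/k$, so it suffices to treat $x$ with $D$ larger than a fixed constant, the finitely many remaining cases being absorbed into a multiplicative constant. Fix a geodesic $[e,x]$. If $[e,x]$ does not meet $N_\eta(P)$ then $D$ is bounded in terms of $\eta$ and there is nothing to prove; otherwise, by Lemma~\ref{EntryTransLem}, $[e,x]$ meets $N_\eta(P)$ in a sub-segment $[a,b]$ whose endpoints $a,b$ are $(\eta,L)$-transition points of $[e,x]$, with $a$ (resp.\ $b$) within a uniformly bounded distance of $\pi_{N_\eta(P)}(e)$ (resp.\ $\pi_{N_\eta(P)}(x)$); hence the parabolic excursion $[a,b]$ has length $d(a,b)\ge D/k-C$, comparable to $D$.

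The core of the argument is a quantitative detour estimate. Subdivide the segment from $\pi_{N_\eta(P)}(e)$ to $\pi_{N_\eta(P)}(x)$ inside $N_\eta(P)$ into consecutive blocks of uniformly bounded diameter, so that their number $m$ satisfies $m\asymp D$. For the $j$-th block, let $\Sigma_j$ be the set of elements $z\notin N_\eta(P)$ with $\pi_{N_\eta(P)}(z)$ in that block. Because $\pi_{N_\eta(P)}$ is coarsely Lipschitz, the $\pi_{N_\eta(P)}$-image of any random walk trajectory is a sequence with uniformly bounded jumps running from $\pi_{N_\eta(P)}(e)$ to $\pi_{N_\eta(P)}(x)$; hence every trajectory from $e$ to $x$ that stays in $N_\eta(P)^c$ (except at its endpoints) must visit each $\Sigma_j$. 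Cutting such a trajectory at its successive first visits to $\Sigma_1,\dots,\Sigma_m$ and applying the Markov property, $G_R(e,x;N_\eta(P)^c)$ is bounded above by a product of $\asymp m$ factors, each an $R$-Green function of trajectories that travel from one block to the next while staying at distance $\ge\eta-\eta_0$ from $P$. I would estimate each such factor — using the coarse horoball geometry of $N_\eta(P)$, the exponential distortion of $P$ inside $\Gamma$, and a super-exponential local decay estimate in the spirit of Lemma~\ref{SupExpDecayLem} applied at transition points framing the relevant block — by a quantity $\varepsilon(\eta)<1$ with $\varepsilon(\eta)\to 0$ as $\eta\to\infty$. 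Multiplying the $m\asymp D$ factors then yields $G_R(e,x;N_\eta(P)^c)\le\varepsilon(\eta)^{cD}$ for a fixed $c>0$, and it suffices to take $\eta_0$ so large that $c\log(1/\varepsilon(\eta))\ge M$ for all $\eta\ge\eta_0$.

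The main obstacle is that an arbitrary random walk trajectory avoiding $N_\eta(P)$ need not resemble a geodesic: it may backtrack and wander far from $P$, so the transition-point machinery of Section~\ref{sectiontransitionpoints}, being geodesic in nature, does not apply to it directly. This is what forces one to route everything through the coarsely Lipschitz projection $\pi_{N_\eta(P)}$, which simultaneously produces the $m\asymp D$ forced crossings and localizes the cost of each crossing to a region where geodesics — and hence transition points and the estimates of Section~\ref{sectiontransitionpoints} and Lemma~\ref{SupExpDecayLem} — are again available. The second delicate point is purely quantitative: one must verify that the per-crossing cost $\varepsilon(\eta)$ genuinely tends to $0$ as $\eta$ grows, so that raising it to the power $m\asymp D$ beats the target rate $M$; this is exactly where the strength of the local decay estimate (Lemma~\ref{SupExpDecayLem}) together with the distortion of the parabolic subgroup is needed, rather than merely the relative Ancona inequalities.
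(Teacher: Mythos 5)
First, be aware that the paper does not actually reprove this lemma: its proof is a one-line reduction to \cite[Lemma~4.6]{DG21}, observing that the argument there never uses that the starting point lies in $N_\eta(P)$ and already yields a bound of the form $\mathrm{e}^{-l\,h(\eta)}G_R(e,x)$ with $l\asymp d_{N_\eta(P)}(e,x)$ and $h(\eta)\to\infty$. So you are reconstructing the cited external argument, and the overall shape you propose (forced crossings of slabs transverse to $P$, a Markov-property factorization, a per-slab cost decaying in $\eta$) is indeed the right one.

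However, there is a genuine gap at the core of your quantitative step. With slabs of \emph{uniformly bounded} diameter you do get $m\asymp D$ factors, but the per-slab factor does not tend to $0$ as $\eta\to\infty$, and is not even obviously $<1$. Concretely, take $z_j$ and $z_{j+1}$ adjacent in the Cayley graph, both at distance exactly $\eta+1$ from $P$, with projections in consecutive blocks (possible, since the projection of a single step moves by a bounded amount); the first-passage weight of this one-step crossing is at least $R\,\mu(z_j^{-1}z_{j+1})>0$, independent of $\eta$, and at $r=R>1$ first-passage generating functions are not sub-probabilities, so there is no a priori bound $\varepsilon(\eta)<1$ either. Your appeal to Lemma~\ref{SupExpDecayLem} does not rescue this: that lemma requires the trajectory to avoid a ball $B(y,K)$ around a transition point $y$ of a geodesic joining the endpoints of the excursion, and for consecutive bounded-width blocks the geodesic $[z_j,z_{j+1}]$ need not come anywhere near $P$, so avoiding $N_\eta(P)$ forces nothing. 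The repair — and this is what the cited proof actually does — is to take slabs of width comparable to $\eta$. Then consecutive crossing points satisfy $d_{N_\eta(P)}(z_j,z_{j+1})\gtrsim\eta$, so the geodesic $[z_j,z_{j+1}]$ enters $N_{\eta/2}(P)$ at a transition point $a_j$ (Lemma~\ref{EntryTransLem}) with $B(a_j,\eta/2-C)\subseteq N_\eta(P)$; this ball is therefore avoided by the trajectory, and Lemma~\ref{SupExpDecayLem} gives a per-slab factor of order $\mathrm{e}^{-\mathrm{e}^{\delta\eta/2}}$, which dominates both the entropy of summing over endpoints in a slab and the loss from having only $\asymp D/\eta$ slabs, since $\mathrm{e}^{\delta\eta/2}/\eta\to\infty$. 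Without this change of scale the product bound you write down does not close.
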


\begin{proof}
In \cite[Lemma~4.6]{DG21} this result is stated for the first return kernel to $N_\eta(P)$, i.e. the quantity
$G_R(x_0,x;N_\eta(P)^c)$ where $x_0\in N_\eta(P)$, but the proof can be applied here.
Indeed it is shown without assuming that $x_0\in N_\eta(P)$ that the whole contribution of trajectories from $x_0$ to $x$ staying outside $N_\eta(P)$ is bounded by
$\mathrm{e}^{-l h(\eta)}G_R(x_0,x)$, where $\frac{1}{c}l\leq d_{N_\eta(P)}(x_0,x)\leq c l$ for some $c>0$ 
and $h(\eta)$ is a function of $\eta$ going to infinity as $\eta$ goes to infinity, see the before last equation of the proof of \cite[Lemma~4.6]{DG21}.
In particular, applying this to $x_0=e$, we get that for large enough $\eta$,
$$G_R(e,x;N_\eta(P)^c)\leq \mathrm{e}^{-M d_{N_\eta(P)}(e,x)}.$$
This concludes the proof.
\end{proof}



\section{The growth rate of the Green function}\label{SectiongrowthrateGreenfunction}

\subsection{Preliminary results}
Recall the following definitions from the introduction.
Let $\Gamma$ be a finitely generated group endowed with a finite generating set.
Let $\mu$ be a symmetric
probability measure whose finite support generates $\Gamma$. Denote by $\rho$ the spectral
radius of $\mu$ and by $R$ its inverse.
Set $p_n(x,y)=\mu^{\star n}(x^{-1}y)$ for $n\ge 1$ and $p_0(x,y)=\delta_x(y)$. Then, $R$ is the radius of convergence of the
Green function
$$
G_r(x, y) :=\sum_{n=0}^\infty r^np_n(x,y).
$$

For $1\le r\le R$, we consider   the sum of the $r$-Green function over the $n$-sphere
   $$H_r(n) := \sum_{x\in S_n} G_r(e,x)$$    
and define the \textit{growth rate  of the Green function} as follows 
$$\omega_\Gamma(r):= \limsup_{n\to \infty} \frac{\log H_r(n)}{n}$$
We first record a few simple facts about $H_r(n)$.
\begin{lem}\label{FactsHrLem}
The following statements are true:
\begin{enumerate}
    \item 
      There exists $C > 1$ such that for any $1 \leq  r \leq R$ and $n \geq 1$
    $$C^{-1} H_{r}(n+1)\le H_r(n) \le CH_{r}(n+1).$$
    \item
    There exists a constant $C>0$ such that $\frac{1}{C}\leq H_1(n)\le C$ for any $n\ge 1$. In particular, $\omega_{\Gamma}(1) = 0$. 
    \item
      The function $r \mapsto \omega_{\Gamma}(r)$ is strictly increasing on $[1,R]$ and continuous on $[1,  R)$. Consequently, $\omega_{\Gamma}(r) > 0$ for $r > 1$.
\end{enumerate}

\end{lem}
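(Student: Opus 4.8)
The plan is to establish the three items in turn, the common engine being a first--passage inequality for the Green function together with the extension Lemma~\ref{ExtensionLem}. Write $D':=\max_{s\in\supp\mu}|s|$, and recall that non-amenability of $\Gamma$ forces $R>1$ and $G_R(e,e)<\infty$; set $G_0:=G_R(e,e)$ and $\epsilon_1:=\min_{|s|=1}G_1(e,s)>0$. Decomposing random walk paths from $e$ to $xs$ at the first passage through $x$ gives $G_r(e,xs)\ge \tfrac{G_r(e,x)}{G_r(e,e)}G_r(e,s)$, hence for all $x\in\Gamma$, $s\in S$ and $1\le r\le R$,
\[
G_r(e,x)\ \le\ \frac{G_0}{\epsilon_1}\,G_r(e,xs),\qquad\text{and inductively}\qquad G_r(e,x)\ \le\ \bigl(G_0/\epsilon_1\bigr)^{|w|}G_r(e,xw).
\]
For the inequality $H_r(n+1)\le C\,H_r(n)$ I would send each $z\in S_{n+1}$ to the penultimate vertex $z'\in S_n$ of a geodesic $[e,z]$: then $G_r(e,z)\le (G_0/\epsilon_1)G_r(e,z')$ and each $z'$ is hit at most $\card{S}$ times, so $H_r(n+1)\le \tfrac{\card{S}\,G_0}{\epsilon_1}H_r(n)=:C_1H_r(n)$, and hence $H_r(m)\le C_1^{\,m-n}H_r(n)$ for $m\ge n$. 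The reverse bound $H_r(n)\le C\,H_r(n+1)$ is the substantial half: apply Lemma~\ref{ExtensionLem} with $h=e$, enlarging the powers defining $F$ so that $\min_{f\in F}|f|$ exceeds twice the fellow--travelling constant $\epsilon$ of that lemma. Then for each $y\in S_n$ there is $f=f(y)\in F$ with $d(y,[e,yf])\le\epsilon$, which gives $|y|+1\le |yf|\le |y|+N_0$ with $N_0:=\max_{f\in F}|f|$; so $G_r(e,y)\le (G_0/\epsilon_1)^{N_0}G_r(e,yf)$, the map $y\mapsto yf(y)$ is at most $\card{F}$-to-one, and therefore $H_r(n)\le (G_0/\epsilon_1)^{N_0}\card{F}\sum_{m=n+1}^{n+N_0}H_r(m)\le C\,H_r(n+1)$ by the easy direction. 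All constants are independent of $r$ and $n$.

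Item (2) splits into a soft lower bound and a delicate upper bound. For the lower bound, transience gives $|X_k|\to\infty$ almost surely, so the first time $\tau$ with $|X_\tau|\ge n$ is finite and $X_\tau\in A_n:=\bigcup_{j=n}^{n+D'-1}S_j$; consequently $P_e(\text{the walk visits }A_n)=1$, and writing $G_1(e,x)=F_1(e,x)G_1(e,e)$ with $F_1(e,x)\ge P_e(\text{visit }x)$,
\[
\sum_{j=n}^{n+D'-1}H_1(j)\ =\ \sum_{x\in A_n}G_1(e,x)\ =\ G_1(e,e)\sum_{x\in A_n}F_1(e,x)\ \ge\ G_1(e,e)\,P_e(\text{visit }A_n)\ \ge\ 1 .
\]
Combined with $H_1(j)\le C_1^{\,j-n}H_1(n)$ this yields $H_1(n)\ge (D'C_1^{D'-1})^{-1}>0$; the same argument gives $H_r(n)\ge c_3>0$ for every $r\in[1,R]$, with $c_3$ independent of $r,n$. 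The upper bound $H_1(n)\le C$ is the point that genuinely requires work (see the last paragraph); granting it, $\tfrac1n\log H_1(n)\to 0$, i.e. $\omega_\Gamma(1)=0$.

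For item (3), observe that $H_r(n)=\sum_{x\in S_n}G_r(e,x)$ is analytic in $r$ on $\{|r|<R\}$ (a finite sum of Green functions), with
\[
\frac{d}{dr}\log H_r(n)\ =\ \frac{\sum_{x\in S_n}\sum_{k\ge0}k\,p_k(e,x)r^{k-1}}{\sum_{x\in S_n}\sum_{k\ge0}p_k(e,x)r^{k}} .
\]
Since $p_k(e,x)=0$ for $x\in S_n$ unless $k\ge n/D'$, the numerator is at least $\tfrac{n}{D'r}H_r(n)$, so $\tfrac{d}{dr}\bigl(\tfrac1n\log H_r(n)\bigr)\ge \tfrac1{D'R}$; integrating and letting $n\to\infty$ (using that $\limsup$ respects a uniform pointwise inequality, and handling $r'=R$ by a limit together with the obvious monotonicity of $\omega_\Gamma$) gives $\omega_\Gamma(r')\ge \omega_\Gamma(r)+\tfrac{r'-r}{D'R}$ for $1\le r<r'\le R$, which is strict monotonicity; with $\omega_\Gamma(1)\ge0$ this also gives $\omega_\Gamma(r)>0$ for $r>1$. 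For continuity I would bound the same derivative from above on a compact $[r_0,r_1]\subseteq[1,R)$ by splitting the $k$-sum at $k=Mn$: the tail is controlled by $p_k(e,x)\le\rho^k$ and $\card{S_n}\le c\,\mathrm{e}^{vn}$ (Lemma~\ref{purelyexponentialgrowth}) to be at most a constant, which is $\lesssim H_r(n)$ because $H_r(n)\ge c_3$, while the head is $\le MnH_r(n)$; choosing $M>v/\log(R/r_1)$ yields $\tfrac{d}{dr}\bigl(\tfrac1n\log H_r(n)\bigr)\le L(r_0,r_1)$ uniformly in $n$. Hence $r\mapsto\tfrac1n\log H_r(n)$ is $L$-Lipschitz on $[r_0,r_1]$ uniformly in $n$, so the same holds for $\omega_\Gamma=\limsup_n(\cdot)$, proving continuity on $[1,R)$.

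The main obstacle is thus the uniform upper bound $H_1(n)\le C$ in (2); equivalently, in view of purely exponential growth of spheres, the matching decay estimate $G_1(e,x)\le C\,\mathrm{e}^{-v|x|}$ for the Green function at $r=1$. I expect to derive this either from the relative Ancona inequalities combined with a comparison of the Green function with the volume of balls, or by a direct estimate on the expected number of visits of the walk to $S_n$ using that it escapes at positive speed. Everything else above is routine given this estimate and Lemma~\ref{ExtensionLem}.
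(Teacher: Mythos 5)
Your items (1) and (3), and the lower bound in (2), are correct, and in places take a different (but valid) route from the paper: for (1) the paper uses only uniform irreducibility ($p_l(x,y)\ge p_0$ for adjacent $x,y$, so $G_r(e,x)\asymp G_r(e,y)$ for neighbours) together with a boundedly-to-one assignment between spheres, whereas you invoke Lemma~\ref{ExtensionLem}; for the lower bound in (2) the paper uses the Shadow Lemma for harmonic measure, whereas your transience/first-visit-to-the-annulus argument is more elementary and perfectly fine; your derivative bounds in (3) are just the differential form of the paper's inequalities $G_s(e,x)\le (s/r)^{c_1|x|}G_r(e,x)$ and $H_s(n)\ge (s/r)^{c_2 n}(H_r(n)-c_3)$, and they do use your lower bound $H_r(n)\ge c_3$ exactly where the paper does.

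The genuine gap is the one you flagged yourself: the uniform upper bound $H_1(n)\le C$, which is needed both for the statement as written and for $\omega_\Gamma(1)\le 0$. The paper obtains it from the Shadow Lemma of \cite{DG20}: $G_1(e,x)\asymp \nu(\widetilde{\Pi}_K(x))$ for the harmonic measure $\nu$, together with the fact that the partial shadows based at points of $S_n$ cover the conical limit set with uniformly bounded multiplicity, so that $H_1(n)\asymp \nu(\partial^{con}_{\mathcal B}\Gamma)\asymp 1$. Neither of your two proposed substitutes closes the gap. First, $H_1(n)\le C$ is \emph{not} equivalent to the pointwise bound $G_1(e,x)\le C\mathrm{e}^{-v|x|}$; the latter is strictly stronger and is false in general (e.g.\ for a nearest-neighbour walk on $F_2$ heavily biased towards one generator $a$, one has $G_1(e,a^n)=F_1(e,a)^nG_1(e,e)$ with $F_1(e,a)$ arbitrarily close to $1$, while $\mathrm{e}^{-vn}=3^{-n}$). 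Second, the ``expected number of visits to $S_n$'' computation, $H_1(n)=\sum_k \mathbf P(|X_k|=n)$, controlled by positive speed and exponential large deviations, only yields $H_1(n)=O(n)$: the window of times $k$ with $|X_k|=n$ plausible has length linear in $n$. (That weaker bound would still give $\omega_\Gamma(1)=0$, but not the stated $H_1(n)\le C$, and the paper's proof of left-continuity at $R$ and of purely exponential growth elsewhere relies on the uniform bound.) To complete the proof you should either import the Shadow Lemma as the paper does, or find another argument giving $\sum_{x\in S_n}G_1(e,x)=O(1)$; this is a substantive input, not a routine estimate.
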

\begin{proof}
Since the random walk is irreducible and $\Gamma$-invariant, there exist 
$l \ge 1$ and a uniform number $p_0 > 0$ such that $p_l(x, y) > p_0$ for any $x,y\in \Gamma$ with $d(x,y)=1$. Thus, if $x\ne 1\ne y$, we have $p_n(e,x)\ge p_0\cdot  p_{n-l}(e,y)$ for $n\ge l$. This implies $G_r(e, x)\ge p_0 G_r(e,y)$. Thus, we have $H_{r}(n+1)\ge p_0 H_{r}(n)$.
For the other inequality, note that every  $y\in S_{n+1}$ is adjacent   to at most $N=\sharp S_1$ elements  $x\in S_{n}$.  
We then obtain $H_{r}(n+1)\le \frac{N}{p_0} H_{r}(n)$.
This proves the first statement.

We introduce the partial shadow $\widetilde{\Pi}_K(x)$ of width $K$ at $x$ as the set of limit points such that some geodesic $[e,\xi]$ intersects the ball $B(x,K)$ at a transition point. By the Shadow Lemma \cite[Proposition~4.4]{DG20} for harmonic measures, there exists $K>0$ such that $G_1(e,x)\asymp \nu(\widetilde{\Pi}_K(x))$ for any $x\in \Gamma$, which means that the ratio of these two quantities is bounded away from 0 and infinity.
Note that for each $n\ge 1$, any conical limit point can be covered in a uniform number of shadows at $x\in S_n$. We thus obtain that the sum $\sum_{x\in S_n} \nu(\widetilde{\Pi}_K(x))$ coarsely gives the measure of the whole set of conical limit points, so is uniformly bounded from above and below. The second statement follows.

Finally let us prove the third statement.
Let $1\le s<r$.
Since the random walk is finitely supported, there exists $c_1>0$ such that for every $x$, $p_m(e,x)=0$ for every $m\leq c_1|x|$.
Thus, we have
$$G_s(e,x)=\sum_{m\geq c_1 |x|}s^m p_m(e,x)
\leq \left(\frac{s}{r}\right)^{c_1 |x|}\sum_{m \geq c_1 |x|}r^m p_m(e,x)
=\left(\frac{s}{r}\right)^{c_1 |x|}G_r(e,x)$$ and so
\begin{equation}\label{exponentialdifferenceGreen}
G_s(e,x)\leq \left(\frac{s}{r}\right)^{c_1 |x|}G_r(e,x).
\end{equation}
Thus, $H_s(n) \leq \left( \frac{s}{r} \right)^{c_1 n} H_r(n)$ and $\omega_{\Gamma}(r) -  \omega_{\Gamma}(s) \ge c_1 \left( \log r  - \log s\right)$.  Therefore $\omega_{\Gamma}(r)$ is strictly increasing on $[1, R]$.

  For $\delta > 0$ we choose $c_2$ so that $v \le c_2 \left( \log R - \log (R - \delta) \right)$.
  Note that since the underlying random walk is symmetric, for every $x$ and every $m$, we have
  $p_m(e,x)p_m(e,x)\leq p_{2m}(e,e)$ and by \cite[Lemma~1.9]{Woessbook}, $p_{2m}(e,e)\leq R^{-2m}$.
  Thus,
  $p_m(e, x) \leq R^{-m}$ for every $x \in \Gamma$ and $n \ge 0$.
  Consequently, by Lemma~\ref{purelyexponentialgrowth}, we have for $1 \le r \le R - \delta$,
\[
\sum_{|x| = n} \sum_{m > c_2 n} r^m p_m(e, x) \le c\mathrm{e}^{v n} \sum_{m > c_2 n} \left( \frac{r}{R} \right)^m \le c_3 
\]
for some constant $c_3 > 0$. Now for $1 \le s < r \le R - \delta$,
\begin{align*}
      H_s(n) = \sum_{|x| = n} \sum_{m = 0}^{\infty} s^m p_m(e, x)
  &\ge \left( \frac{s}{r} \right)^{c_2 n} \sum_{|x| = n} \sum_{m = 0}^{c_2 n} r^m p_m(e, x)\\
  &\ge \left( \frac{s}{r} \right)^{c_2 n} \left( H_r(n) - c_3 \right). 
\end{align*}
It follows that $\omega_{\Gamma}(r) - \omega_\Gamma(s) \le c_2 \left( \log r - \log s \right)$. Since $\delta > 0$ is arbitrary, we prove that $\omega_{\Gamma}(r)$ is continuous in $1 \le r < R$.
\end{proof}

Our goal in the remainder of this section is to compare $H_r(n)$ with $\mathrm{e}^{n \omega_\Gamma(r)}$.

\subsection{Upper bound} 
We start with the following lemma.

\begin{lem}\label{UpperBDLem}
There exists  constants $C,C'>1$ such that for any $1\le r\le R$ and any  integer $n, m\ge 1$, we have
$$H_r(m) H_r(n) \le C H_{r}(m+n) ,$$
 and $H_r(n)\le C' e^{n \omega_\Gamma(r)}$. 
In particular, the following limit exists
$$\omega_\Gamma(r)= \lim_{n\to \infty} \frac{\log H_r(n)}{n}=\sup_n \frac{\log C^{-1}H_r(n)}{n}.$$
\end{lem}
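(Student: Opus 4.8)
The key inequality to establish is the super-multiplicativity $H_r(m)H_r(n) \le C\, H_r(m+n)$; once this is in hand, the upper bound $H_r(n) \le C' \mathrm{e}^{n\omega_\Gamma(r)}$ and the existence of the limit both follow from Fekete's subadditive lemma applied to the sequence $a_n := -\log\big(C^{-1}H_r(n)\big)$, which is subadditive by the first inequality. So the bulk of the work is the first display.

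The plan for super-multiplicativity is to take $x\in S_m$ and $y\in S_n$ and produce, via the extension lemma (Lemma~\ref{ExtensionLem}), an element of the form $x f y'$ lying near $S_{m+n}$, where $y'$ is a translate of $y$ and $f$ ranges over the fixed finite set $F$, such that $[e, xfy']$ passes uniformly close to both $x$ and $xf$. Concretely, given $x$ and $y$, there is $f\in F$ so that $xfy'$ (with $y' = f^{-1}y$, or more carefully handled so that the word $xfy$ makes sense as a concatenation) labels a $c$-quasi-geodesic and $x$, $xf$ are within a bounded distance $\epsilon$ of $[e,xfy']$. Since transition points are detected by the Floyd distance being bounded below (the estimate $\delta_g(e, xfy)\ge\delta$ in Lemma~\ref{ExtensionLem}, together with Lemma~\ref{TransLargeLem}), both $x$ and $xf$ are within a bounded distance of $(\eta,L)$-transition points on a genuine geodesic $[e, xfy]$. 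Then the relative Ancona inequalities (the Proposition of \cite{DG21}, quoted in the excerpt) give
$$G_r(e, xfy) \ge \tfrac{1}{C_1} G_r(e, x) G_r(x, xf) G_r(xf, xfy) \ge \tfrac{1}{C_2} G_r(e,x) G_r(xf, xfy),$$
using that $G_r(x,xf) = G_r(e,f) \ge c_F > 0$ is bounded below uniformly since $f$ ranges over the finite set $F$ and the walk is admissible. By $\Gamma$-invariance $G_r(xf, xfy) = G_r(e,y)$. The point $xfy$ lies in some sphere $S_k$ with $|k - (m+n)| \le D$ for a uniform $D$ (since $|f|$ and the quasi-geodesic constant are bounded); by Lemma~\ref{FactsHrLem}(1), $G_r$-sums over nearby spheres are comparable, so we may replace $k$ by $m+n$ at the cost of a constant.

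Finally I would sum over $x\in S_m$ and $y\in S_n$. The one subtlety is multiplicity: the map $(x,y,f)\mapsto xfy$ need not be injective, but for a fixed target $z$ and fixed $f$, knowing $z$ and $y$ (hence $x = z y^{-1} f^{-1}$) shows the fiber over each $z$ has size at most $\sharp F$ times the number of ways to split, and in fact since $|x| = m$ and $|y| = n$ are pinned down, each $z\in S_k$ arises from at most $\sharp F$ triples — actually one must argue that for fixed $z$ and $f$ the pair $(x,y)$ is determined only up to the constraint $xfy = z$, but $y$ can be recovered from $z$ only if we know where the "$y$-part" begins; the clean fix is that the number of triples $(x,f,y)$ mapping to a given $z$ is at most $\sharp F \cdot \sharp\{y : |y| = n\}$... which is not bounded. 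The correct accounting: fix $f$; then $x\mapsto xfy$ for the specific $y$ paired with $x$ — but $y$ is summed independently. So instead, bound $\sum_{x,y,f} G_r(e,xfy)$ from above by $(\sharp F)\sum_{z} (\text{mult}(z))\, G_r(e,z)$ and observe $\text{mult}(z) = \sharp\{(x,y): xfy = z \text{ for some } f\in F,\ |x|=m,|y|=n\} \le \sharp F$ because once $f$ is chosen, $x$ determines $y = f^{-1}x^{-1}z$ and $|x|=m$ together with $x^{-1}z \in f\cdot S_n$ still leaves $x$ free — so this needs care. I expect this multiplicity bookkeeping, rather than the geometry, to be the main obstacle, and I would resolve it by noting that we only need $\sum_{x\in S_m, y\in S_n} G_r(e,x)G_r(e,y) \le C_2 \sum_{x,y,f} G_r(e, xfy) \le C_2 (\sharp F)^2 \sup_f \sum_{z\in S_{\approx m+n}} G_r(e,z) = C_2(\sharp F)^2 H_r(m+n)$ up to sphere-comparison constants, where the middle inequality uses that for fixed $f$ the map $(x,y)\mapsto xfy$ is injective (indeed $xfy = x'fy'$ with $|x|=|x'|=m$ forces $x = x'$ hence $y = y'$, since $x = (xfy)(f y)^{-1}$... again requiring $|y|$ known — which it is). This injectivity for fixed $f$ is the clean statement, and summing over the finitely many $f$ gives the factor $\sharp F$. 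That completes the first inequality, and Fekete finishes the rest.
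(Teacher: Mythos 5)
Your overall strategy is the same as the paper's: concatenate $x\in S_m$ and $y\in S_n$ through an element $f$ of the finite set $F$ from Lemma~\ref{ExtensionLem}, bound $G_r(e,x)G_r(e,y)\le c\,G_r(e,xfy)$, sum, and finish with Fekete. Two remarks before the main point. First, the lower bound $G_r(e,xfy)\ge c\,G_r(e,x)G_r(x,xf)G_r(xf,xfy)$ does not need the relative Ancona inequalities or any transition-point analysis: it is the elementary concatenation inequality $G_r(u,w)\ge G_r(e,e)^{-1}G_r(u,v)G_r(v,w)$, valid for all triples, which is how the paper argues. Second, the Fekete step is fine.

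The genuine gap is exactly the multiplicity bookkeeping you flag as ``the main obstacle,'' and your resolution of it is incorrect. You claim that for fixed $f$ the map $(x,y)\mapsto xfy$ with $|x|=m$, $|y|=n$ is injective because ``$|x|=|x'|=m$ forces $x=x'$.'' This is false: $xfy=x'fy'$ only gives $y'=f^{-1}x'^{-1}xfy$, and knowing $|x|=|x'|=m$ and $|y|=|y'|=n$ does not pin down the factorization (already in a parabolic subgroup such as $\mathbb{Z}^d$ an element $z$ admits unboundedly many splittings $z=xy$ with $|x|=m$, $|y|=n$). The correct argument is geometric, not algebraic, and it uses precisely the part of Lemma~\ref{ExtensionLem} you quote but do not exploit here: both $x$ and $x'$ lie within $\epsilon$ of the \emph{same} geodesic $[e,z]$, at arc-length parameter within $\epsilon$ of $m$, hence $d(x,x')\le 4\epsilon$; similarly $d(h^{-1},(h')^{-1})\le 4\epsilon$ for the right-hand factors. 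This bounds the fiber of the map by roughly $(\sharp F)\cdot(\sharp B(e,4\epsilon))^2$, which is the content of the paper's Lemma~\ref{BoundedToOne}. With that substitution (and summing only over the designated $f=f(x,y)$ rather than over all $f\in F$, so that $xfy$ is guaranteed to land in the annulus $\bigcup_{|i|\le l}S_{m+n+i}$, after which Lemma~\ref{FactsHrLem}(1) lets you replace the annulus by $S_{m+n}$), your proof goes through.
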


Set $l=\max\{d(e,f): f\in F\}+4\epsilon$, where $F$ is a finite set given by Lemma \ref{ExtensionLem}.
and let
$A(n, l)=\bigcup\limits_{-l\le i\le l}S_{n+i}$ be the annulus of width $l$ and radius $n$ and
define $$\Phi: S_m\times S_n\to A(n+m, l)$$ by setting $\Phi(g,h)=gfh$, where $f\in F$ is provided by Lemma~\ref{ExtensionLem}.
\begin{lem}\label{BoundedToOne}
The map $\Phi$ is uniformly bounded to one.
\end{lem}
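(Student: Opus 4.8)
The goal is to show that $\Phi\colon S_m\times S_n\to A(n+m,l)$, $\Phi(g,h)=gfh$, is uniformly bounded to one, i.e.\ there is a constant $N_0$ (independent of $m,n$) so that every point $w\in A(n+m,l)$ has at most $N_0$ preimages under $\Phi$. First I would unpack what a preimage is: a pair $(g,h)$ with $g\in S_m$, $h\in S_n$, $f\in F$ the element selected by Lemma~\ref{ExtensionLem} for that pair, and $gfh=w$. The key structural input is the geometric control in Lemma~\ref{ExtensionLem}: $gfh$ labels a $c$-quasigeodesic and both $g$ and $gf$ lie within $\epsilon$ of the geodesic $[e,gfh]=[e,w]$. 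So if $(g,h)$ is a preimage of $w$, then $g$ is within $\epsilon$ of some point of $[e,w]$, and since $|g|=m$, that point is within a bounded distance (controlled by $\epsilon$, $c$ and the quasigeodesic constants) of the point of $[e,w]$ at parameter $m$. Hence $g$ ranges over $B(y_m,C_1)$ where $y_m$ is a fixed point on $[e,w]$ roughly at distance $m$ from $e$, and $C_1$ is uniform.

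\textbf{Main steps.} (1) Fix $w\in A(n+m,l)$ and a geodesic $[e,w]$; let $y$ be its point at distance $\min(m, |w|)$ from $e$ (or any fixed choice near parameter $m$). (2) For any preimage $(g,h)$, use Lemma~\ref{ExtensionLem} to bound $d(g,y)\le C_1$ for a uniform $C_1$: indeed $d(g,[e,w])\le\epsilon$ and, because $|g|=m$ and $[e,w]$ is a genuine geodesic while $gfh$ is a $c$-quasigeodesic fellow-traveling it, the projection of $g$ to $[e,w]$ is within a bounded distance of $y$. (3) By Lemma~\ref{purelyexponentialgrowth} (or merely local finiteness of the Cayley graph, which already gives $\sharp B(\cdot,C_1)\le \sharp B(e,C_1)$), the number of choices for $g$ is at most $\sharp B(e,C_1)=:N_1$, a uniform constant. (4) Once $g$ is fixed, there are at most $\sharp F$ choices for $f$. (5) Once $g$ and $f$ are fixed, $h=f^{-1}g^{-1}w$ is uniquely determined. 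Therefore $w$ has at most $N_1\cdot\sharp F$ preimages, and we set $N_0=N_1\cdot\sharp F$, which is independent of $m$ and $n$.

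\textbf{Anticipated obstacle.} The only genuinely delicate point is Step (2): turning "$g$ is $\epsilon$-close to the geodesic $[e,w]$" into "$g$ is boundedly close to the \emph{specific} point $y$ at parameter $\approx m$." This requires knowing that the point of $[e,w]$ nearest to $g$ has distance to $e$ within a uniform constant of $m=|g|$. This follows because $gfh$ is a $c$-quasigeodesic from $e$ to $w$ with $g$ lying on it at combinatorial position $m$ and $gf$ at position $\le m+l$, and $g$ is $\epsilon$-fellow-traveled by $[e,w]$; a standard quasigeodesic-stability / projection estimate (using only that $[e,w]$ is a geodesic and $gfh$ a uniform quasigeodesic, plus the triangle inequality $\big||g|-d(e,\pi(g))\big|\le d(g,\pi(g))\le\epsilon$) then pins $\pi(g)$ near $y$. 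I would spell this out with a one-line triangle-inequality computation: if $\pi(g)$ is the projection of $g$ to $[e,w]$ then $|d(e,\pi(g))-m|=|d(e,\pi(g))-|g||\le d(g,\pi(g))\le\epsilon$, so $d(\pi(g),y)\le 2\epsilon+1$ and $d(g,y)\le 3\epsilon+1=:C_1$. With that in hand the counting argument in Steps (3)--(5) is immediate.
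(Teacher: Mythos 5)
Your proof is correct and follows essentially the same route as the paper: both arguments use the conclusion of Lemma~\ref{ExtensionLem} that $g$ lies within $\epsilon$ of the geodesic $[e,w]$ together with $|g|=m$ to confine all possible first coordinates to a ball of uniformly bounded radius, and then count. Your observation that $h=f^{-1}g^{-1}w$ is determined once $g$ and $f$ are fixed is a slight streamlining of the paper's version, which instead bounds the second coordinates separately by the same fellow-travelling argument applied to $gf$.
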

\begin{proof}
Indeed, assume that $\Phi(g,h)=gf_1h$ and $\Phi(x,y)=xf_2y$ for $f_1,f_2\in F$. If $\Phi(g,h)=\Phi(x,y)$, we obtain $d(g,x), d(h^{-1},y^{-1})\le 4\epsilon$ from Lemma \ref{ExtensionLem}. Thus, there are at most $3 (\sharp B(e,4\epsilon))^2$ pairs of  elements $(x,y)$ such that  $\Phi(g,h)=\Phi(x,y)$.
\end{proof}
\begin{proof}[Proof of Lemma \ref{UpperBDLem}]
For any $g\in S_n, h\in S_m$, we have 
$$
G_r(e,g) G_r(e,h) \le c_1 G_r(e,g) G_r(e,f) G_r(e,h)\le c_2 G_r(e, gfh) 
$$
Thus, 
$$
H_r(n) H_r(m) \le c_2\sum_{z\in A(n+m,l)} G_r(e,z).
$$
Note that $c_3^{-1} H_{r}(n+1)\le H_r(n) \le c_3H_{r}(n+1)$ by Lemma \ref{FactsHrLem}.
Thus, we have
$$H_r(n) H_r(m) \le c_4 H_r(n+m).$$
This proves the first part of the lemma.
The second part follows from the Fekete sub-additive lemma.
\end{proof}

\begin{cor}\label{lowersemicontinuity}\label{corolowersemicontinuous}
The function $r\in [1,R]\mapsto \omega_\Gamma(r)$ is increasing and continuous. 
\end{cor}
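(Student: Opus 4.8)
The statement to prove is Corollary~\ref{corolowersemicontinuous}: the function $r \in [1,R] \mapsto \omega_\Gamma(r)$ is increasing and continuous. The plan is to combine what has already been established in Lemma~\ref{FactsHrLem} and Lemma~\ref{UpperBDLem}, the only genuinely new issue being continuity at the right endpoint $r = R$, since Lemma~\ref{FactsHrLem}(3) only gives continuity on $[1,R)$.

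First I would recall that Lemma~\ref{FactsHrLem}(3) already gives that $\omega_\Gamma$ is strictly increasing on $[1,R]$ and continuous on $[1,R)$, so it remains only to verify continuity at $R$, i.e. $\lim_{r \uparrow R} \omega_\Gamma(r) = \omega_\Gamma(R)$. Since $\omega_\Gamma$ is monotone increasing, the left-hand limit $\omega_\Gamma(R^-) := \lim_{r\uparrow R}\omega_\Gamma(r)$ exists and satisfies $\omega_\Gamma(R^-) \le \omega_\Gamma(R)$; the task is the reverse inequality. The key tool is Lemma~\ref{UpperBDLem}, which provides a uniform constant $C' > 1$, \emph{independent of} $r \in [1,R]$, with $H_r(n) \le C' \mathrm{e}^{n\omega_\Gamma(r)}$, equivalently $\omega_\Gamma(r) = \sup_n \tfrac{1}{n}\log\big(C^{-1}H_r(n)\big)$, again with $C$ uniform in $r$.

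The argument then goes as follows. Fix $n$. For each fixed $n$, the map $r \mapsto H_r(n) = \sum_{x \in S_n} G_r(e,x)$ is a finite sum of power series in $r$ with non-negative coefficients, hence is continuous (indeed non-decreasing) on $[1,R]$ — in particular $H_r(n) \to H_R(n)$ as $r \uparrow R$. Therefore, for each fixed $n$,
\[
\frac{1}{n}\log\!\big(C^{-1} H_R(n)\big) = \lim_{r\uparrow R} \frac{1}{n}\log\!\big(C^{-1} H_r(n)\big) \le \lim_{r \uparrow R} \omega_\Gamma(r) = \omega_\Gamma(R^-),
\]
using that $\tfrac{1}{n}\log(C^{-1}H_r(n)) \le \omega_\Gamma(r)$ for every $r$ by Lemma~\ref{UpperBDLem}. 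Taking the supremum over $n$ on the left and invoking the formula $\omega_\Gamma(R) = \sup_n \tfrac{1}{n}\log(C^{-1}H_R(n))$ from Lemma~\ref{UpperBDLem}, we obtain $\omega_\Gamma(R) \le \omega_\Gamma(R^-)$, which combined with the trivial inequality gives equality. Hence $\omega_\Gamma$ is continuous at $R$, and together with Lemma~\ref{FactsHrLem}(3) it is continuous and increasing on all of $[1,R]$.

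The only subtle point — and what makes this corollary more than a one-line citation — is precisely that one needs the \emph{uniform} (in $r$) sub-multiplicativity constant from Lemma~\ref{UpperBDLem}, so that the characterization $\omega_\Gamma(r) = \sup_n \tfrac1n \log(C^{-1}H_r(n))$ holds with the \emph{same} $C$ across all $r \le R$; this is what lets us pass the fixed-$n$ inequalities to the limit $r \uparrow R$ before taking the supremum over $n$. Monotone convergence / continuity of each $H_\cdot(n)$ in $r$ is routine. I do not expect any real obstacle beyond being careful to quote the uniform constants correctly.
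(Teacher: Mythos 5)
Your proof is correct and follows essentially the same route as the paper: the paper also reduces to left-continuity at $R$ via monotonicity, and uses the uniform-in-$r$ constant from Lemma~\ref{UpperBDLem} to write $\omega_\Gamma(r)=\sup_n \frac{1}{n}\log\bigl(C^{-1}H_r(n)\bigr)$ as a supremum of functions continuous in $r$, concluding by lower semi-continuity (the paper cites a reference for this last step, whereas you unpack the $\varepsilon$-free argument by hand — a cosmetic difference only).
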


\begin{proof}
We proved in Lemma \ref{FactsHrLem} that $\omega_{\Gamma}$ is increasing and continuous on $[1, R)$.
By Lemma~\ref{UpperBDLem}, it can be expressed as a supremum of continuous functions, hence it is lower semi-continuous by \cite[Lemma~2.41]{AliprantisBorder}.
We deduce that it is left continuous at $R$.
\end{proof}

\begin{cor}\label{coroboundv/2}
For every $r\leq R$, $\omega_\Gamma(r)\leq v/2$.
\end{cor}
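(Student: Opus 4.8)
The plan is to exploit the symmetry of the random walk together with the purely exponential growth of spheres (Lemma \ref{purelyexponentialgrowth}), essentially repeating the Cauchy--Schwarz argument already used in the third part of Lemma \ref{FactsHrLem}. First I would recall that since $\mu$ is symmetric, for every $x$ and every $m$ one has $p_m(e,x)^2 = p_m(e,x)p_m(x,e) \le p_{2m}(e,e)$, and by \cite[Lemma~1.9]{Woessbook} the return probability satisfies $p_{2m}(e,e) \le R^{-2m} = \rho^{2m}$. Hence $p_m(e,x) \le R^{-m}$ for all $x \in \Gamma$ and all $m \ge 0$. Since the measure is finitely supported, there is $c_1 > 0$ with $p_m(e,x) = 0$ whenever $m < c_1|x|$; in particular, for $x \in S_n$ only the terms with $m \ge c_1 n$ contribute to $G_r(e,x)$.

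With these two facts in hand, for $r \le R$ and $x \in S_n$ I would estimate, using $r \le R$ so that $(r/R)^m \le 1$ is not by itself enough — instead one keeps one factor $R^{-m/2}$ and bounds the sum of $r^m p_m(e,x) R^{-m} \cdot R^{m}$ carefully. More concretely, write $r^m p_m(e,x) = r^m R^{-m} \cdot (R^m p_m(e,x))$ and use $p_m(e,x) \le R^{-m}$ to get $r^m p_m(e,x) \le (r/R)^m \le 1$; then $G_r(e,x) \le \sum_{m \ge c_1 n} (r/R)^m$, which for $r < R$ is at most a constant times $(r/R)^{c_1 n}$. Summing over $x \in S_n$ and invoking $\sharp S_n \le c\, \mathrm{e}^{vn}$ gives $H_r(n) \le c'\, \mathrm{e}^{vn}(r/R)^{c_1 n}$, so $\omega_\Gamma(r) \le v + c_1(\log r - \log R)$. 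This is the wrong bound — it only gives $\omega_\Gamma(r) \le v$ and degenerates as $r \to R$. The correct route is the Cauchy--Schwarz split: for $r \le R$,
\[
G_r(e,x) = \sum_{m \ge 0} r^m p_m(e,x) \le \Big(\sum_{m \ge 0} R^m p_m(e,x)\Big)^{1/2}\Big(\sum_{m\ge 0} \frac{r^{2m}}{R^m} p_m(e,x)\Big)^{1/2}
= G_R(e,x)^{1/2}\, G_{r^2/R}(e,x)^{1/2},
\]
using $r^2/R \le R$. Then by Cauchy--Schwarz over the sphere,
\[
H_r(n) \le \Big(\sum_{x \in S_n} G_R(e,x)\Big)^{1/2}\Big(\sum_{x\in S_n} G_{r^2/R}(e,x)\Big)^{1/2} = H_R(n)^{1/2}\, H_{r^2/R}(n)^{1/2}.
\]
Taking $\frac{1}{n}\log$ and letting $n \to \infty$ (the limits exist by Lemma \ref{UpperBDLem}) yields $\omega_\Gamma(r) \le \tfrac12\big(\omega_\Gamma(R) + \omega_\Gamma(r^2/R)\big) \le \omega_\Gamma(R)$, which is still not what we want.

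The genuinely correct argument is: apply Cauchy--Schwarz with the \emph{trivial} Green function. For $x \in S_n$, $G_r(e,x) \le G_R(e,x)$, and $G_R(e,x) = \sum_m R^m p_m(e,x) \le \sum_m R^m R^{-m} = \sum_m (\text{but this diverges})$ — so one must instead bound $\sum_{x \in S_n} G_R(e,x) \le \sum_{x} G_R(e,x)\mathbf 1_{|x|=n}$ and use that $\sum_m R^m p_m(e,x) = \sum_m R^{m/2}\cdot R^{m/2}p_m(e,x)$, applying Cauchy--Schwarz in $m$ against $\sum_m R^m p_m(e,x)^2 \le \sum_m p_{2m}(e,e) R^m$... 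The clean statement I will actually use: since $p_m(e,x) \le R^{-m}$ and $p_m(e,x) = 0$ for $m < c_1 n$, we get $G_R(e,x) = \sum_{m\ge c_1 n} R^m p_m(e,x) \le \sum_{m \ge c_1 n}\sqrt{p_m(e,x)}\cdot\sqrt{R^{2m}p_m(e,x)}$ — hmm. I will simply mirror verbatim the computation in Lemma \ref{FactsHrLem}: there it was shown $p_m(e,x) \le R^{-m}$, hence $\sum_{x\in S_n}G_r(e,x) = \sum_{x \in S_n}\sum_{m} r^m p_m(e,x) \le \sum_{x \in S_n}\sum_m r^m R^{-m/2} p_m(e,x)^{1/2}$, and by Cauchy--Schwarz over $x \in S_n$ this is $\le (\sharp S_n)^{1/2}\big(\sum_{x \in S_n}(\sum_m r^m R^{-m/2}p_m(e,x)^{1/2})^2\big)^{1/2}$. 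The main obstacle is pinning down exactly which Cauchy--Schwarz split produces the factor $\sharp S_n^{1/2} \asymp \mathrm{e}^{vn/2}$ times a bounded (in $n$, for each fixed $r\le R$) quantity; once that is identified, taking $\frac1n\log$ and using $\omega_\Gamma(r) = \lim \frac1n \log H_r(n)$ from Lemma \ref{UpperBDLem} gives $\omega_\Gamma(r) \le v/2$ immediately. The key inputs are all already available: symmetry $\Rightarrow p_m(e,x) \le R^{-m}$, finite support $\Rightarrow$ the series starts at $m \asymp n$, and $\sharp S_n \le c\,\mathrm{e}^{vn}$.
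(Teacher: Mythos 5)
There is a genuine gap, and you acknowledge it yourself: none of the three attempts closes, and you end by saying the main obstacle is ``pinning down exactly which Cauchy--Schwarz split'' works. The missing ingredient is not a cleverer split of the series in $m$; it is the fact that for $r<R$ the quantity $c(r)=\sum_{x\in \Gamma}G_r(e,x)^2$ is \emph{finite}. This follows from the identity $\sum_{g}G_r(e,g)G_r(g,e)=\frac{d}{dr}\big(rG_r(e,e)\big)$ (see \cite[Proposition~1.9]{GouezelLalley}, quoted in the paper) together with symmetry of $\mu$, which gives $G_r(g,e)=G_r(e,g)$. With this in hand the proof is a single Cauchy--Schwarz over the sphere:
$$H_r(n)^2=\Big(\sum_{x\in S_n}G_r(e,x)\Big)^2\leq \sharp S_n\sum_{x\in S_n}G_r(e,x)^2\leq \sharp S_n\, c(r),$$
and since $\sharp S_n\leq c\,\mathrm{e}^{vn}$ this yields $\omega_\Gamma(r)\leq v/2$ for $r<R$. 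Your attempts only use the pointwise bound $p_m(e,x)\leq R^{-m}$, which controls each $G_r(e,x)$ individually but cannot show that the $\ell^2$-norm of $G_r(e,\cdot)$ over $S_n$ stays bounded uniformly in $n$; that is exactly why your third computation terminates in $H_r(n)\leq C(r)\,\sharp S_n$, i.e.\ only $\omega_\Gamma(r)\leq v$, and why your first attempt degenerates as $r\to R$.

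A second omission: the case $r=R$. The bound $c(R)<\infty$ may fail (it fails precisely when the first derivative of the Green function diverges at $R$), so the Cauchy--Schwarz argument only covers $r<R$. The paper concludes at $r=R$ by invoking the (left-)continuity of $r\mapsto\omega_\Gamma(r)$ from Corollary~\ref{corolowersemicontinuous}. Your proposal never addresses the endpoint, and your intermediate inequality $\omega_\Gamma(r)\leq\frac12\big(\omega_\Gamma(R)+\omega_\Gamma(r^2/R)\big)$ only reproduces the trivial monotonicity bound $\omega_\Gamma(r)\leq\omega_\Gamma(R)$, so it cannot substitute for either step.
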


\begin{proof}
By the Cauchy-Schwarz inequality,
$$\left(\sum_{x\in S_n}G_r(e,x)\right)^2\leq |S_n|\sum_{x\in S_n}G_r(e,x)^2.$$
For any $r<R$, by \cite[Proposition~1.9]{GouezelLalley},
$$c(r)=\sum_{x\in \Gamma}G_r(e,x)^2<\infty,$$
so
$$\omega_\Gamma(r)\leq \limsup \frac{1}{2n}\left(\log c(r)+\log |S_n|\right)=\frac{1}{2}v.$$
This proves the desired inequality for $r<R$.
By Corollary~\ref{lowersemicontinuity}, $r\mapsto \omega_{\Gamma}(r)$ is 
continuous, so the inequality also holds at $R$.
\end{proof}

\subsection{Lower bound via parabolic gap} 

For $1\le r\le R$ and $s\ge 0$, we consider the following Poincar\'e series:

\begin{equation}\label{defPoincareseries}
\Theta_{r, s}(\Gamma) := \sum_{h\in \Gamma} G_r(e, h) e^{-sd(e, h)}.
\end{equation}
 We can rearrange the terms in $\Theta_{r, s}(\Gamma)$ as follows

$$\Theta_{r, s}(\Gamma) = \sum_{n\ge 0} H_r(n) e^{-sn}$$
so that  $H_r(n)$ appears in the place of $\sharp S_n$ in the usual Poincare series. Thus, for each $r$ fixed, $\omega_\Gamma(r)$ is the exponential radius of convergence of the series $\Theta_{r, s}(\Gamma)$ in $s$.

Similarly, for any subgroup $P\subset \Gamma$, we can consider the associated Poincar\'e series
$$\Theta_{r, s}(P)=\sum_{n\geq 0}\sum_{x\in P\cap S_n}G_r(e,x)\mathrm{e}^{-sd(e,x)}$$ and its growth rate
$$ \omega_P(r)=\limsup \frac{1}{n}\log \sum_{x\in P\cap S_n}G_r(e,x),$$
which is the exponential radius of convergence of the series $s\mapsto \Theta_{r,s}(P)$.

\begin{defn}
We say that $\Gamma$ has \textit{parabolic gap}   for the Green functions if for every parabolic subgroup $P\in \mathbb{P}_0$, $\omega_P(r)<\omega_\Gamma(r)$ for every $1< r\le R$.
\end{defn}

\begin{lem}\label{LowerBDLem}
Suppose that $\Gamma$ has a parabolic gap for the Green function.
For every $1<r\leq R$, there exists $C=C(r)>1$ such that 
$$H_{r} (n+m) \le C H_r(n) H_r(m)
$$
and there exists a constant $C'=C'(r)>1$ such that for any $1<r\le R_\mu$, we have 
$$
\frac{1}{C'} e^{n \omega_\Gamma(r)}\le H_r(n)\le C' e^{n \omega_\Gamma(r)}
$$
for every $n\ge 1$.
\end{lem}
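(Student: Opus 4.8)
The plan is to reduce everything to the single inequality $H_r(n+m)\le C H_r(n)H_r(m)$. Indeed, Lemma~\ref{UpperBDLem} already gives the reverse (super-multiplicative) inequality $H_r(n)H_r(m)\le C_0 H_r(n+m)$, the upper bound $H_r(n)\le C_0 e^{n\omega_\Gamma(r)}$, and the fact that $\tfrac1n\log H_r(n)\to\omega_\Gamma(r)$. Granting sub-multiplicativity, the sequence $CH_r(n)$ is sub-multiplicative, so by Fekete's lemma $\tfrac1n\log(CH_r(n))$ decreases to its infimum, which equals $\omega_\Gamma(r)$; hence $CH_r(n)\ge e^{n\omega_\Gamma(r)}$ for every $n$, and we obtain $\tfrac1{C'}e^{n\omega_\Gamma(r)}\le H_r(n)\le C'e^{n\omega_\Gamma(r)}$ with $C'=\max(C,C_0)$. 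So the whole content is the sub-multiplicative estimate, and this is where the parabolic gap is used.

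To prove $H_r(n+m)\le C H_r(n)H_r(m)$, fix once and for all a geodesic $[e,z]$ for each $z\in\Gamma$, and let $w_z\in[e,z]$ be its vertex at distance $n$ from $e$. I split $S_{n+m}$ according to whether $w_z$ is an $(\eta,L)$-transition point (Case (a)) or is $(\eta,L)$-deep in the unique parabolic coset $P=P_z\in\mathbb P$ containing it (Case (b)); in Case (b), Lemma~\ref{EntryTransLem} provides entry and exit points $u,u'$ of $[e,z]$ at $N_\eta(P)$, which are $(\eta,L)$-transitional, at levels $n-a\le n$ and $n+b\ge n$, and I further split Case (b) into (b1) $a+b\le\ell_0$ and (b2) $a+b>\ell_0$, for a large constant $\ell_0$ to be chosen at the end. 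In Cases (a) and (b1) I apply the relative Ancona inequalities once, at $w_z$ respectively at $u$, writing $z$ as an honest product $xy$ with $x$ at level $n$ (resp.\ $n-a$) and $y$ at the complementary level; the map $z\mapsto(x,y)$ is injective (one recovers $z=xy$), and since $a\le\ell_0$ is bounded, Lemma~\ref{FactsHrLem}(1) (which gives $H_r(n-a)\le C_H^{a}H_r(n)$ and $H_r(m+a)\le C_H^{a}H_r(m)$) turns the combined contribution into $\le C(\ell_0)\,H_r(n)H_r(m)$. In Case (b2) I apply the relative Ancona inequalities twice, at $u$ and $u'$, getting $G_r(e,z)\le C\,G_r(e,u)G_r(u,u')G_r(u',z)$, and I note that $u^{-1}u'$ equals $\alpha^{-1}s\alpha'$ with $\alpha,\alpha'\in B(e,\eta)$ and $s\in P_0$ a parabolic element with $\bigl||s|-(a+b)\bigr|\le 2\eta$; by the elementary super-multiplicativity of the Green function ($G_r(e,s)\ge G_r(e,\alpha)G_r(e,u^{-1}u')G_r(e,\alpha'^{-1})$) this bounds $G_r(u,u')$ by $C\,G_r(e,s)$, so $G_r(e,z)\le C'\,G_r(e,u)G_r(e,s)G_r(e,y)$ with $y=(u')^{-1}z$ at level $m-b$.

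Summing Case (b2) over $z$, grouping by $\ell=a+b>\ell_0$, and using the reverse inequality $H_r(n-a)H_r(m-b)\le C_0 H_r(n+m-\ell)$ (and that the number of pairs with $a+b=\ell$ is $\le\ell+1$), one gets a bound
$$\text{(b2)}\ \le\ C\sum_{\ell>\ell_0}(\ell+1)\,\widetilde H_{\mathbb P_0,r}(\ell)\,H_r(n+m-\ell),$$
where $\widetilde H_{\mathbb P_0,r}(\ell)=\sum_{P_0\in\mathbb P_0}\sum_{|j-\ell|\le 2\eta}H_{P_0,r}(j)$. Now the parabolic gap enters: since $\omega_{P_0}(r)<\omega_\Gamma(r)=:\omega$ for every $P_0$, the definition of $\limsup$ gives $(\ell+1)\widetilde H_{\mathbb P_0,r}(\ell)\le C\,e^{\ell\omega''}$ for all $\ell$, with $\omega''<\omega$ and $C$ independent of $\ell_0$; on the other hand, $\tfrac1k\log H_r(k)\to\omega$ gives $H_r(\ell)\ge e^{\ell(\omega-\epsilon)}$ for all large $\ell$, with $\epsilon$ chosen so $\omega-\epsilon>\omega''$. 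Combining these with super-multiplicativity in the form $H_r(n+m-\ell)\le C_0 H_r(n+m)/H_r(\ell)$ bounds Case (b2) by $C\bigl(\sum_{\ell>\ell_0}e^{-\ell\gamma}\bigr)H_r(n+m)$ for some $\gamma>0$; choosing $\ell_0$ large makes this $\le\tfrac12 H_r(n+m)$. Putting the three cases together yields $H_r(n+m)\le C(\ell_0)H_r(n)H_r(m)+\tfrac12 H_r(n+m)$, hence $H_r(n+m)\le 2C(\ell_0)H_r(n)H_r(m)$, as wanted. (The finitely many small values of $n,m$ are absorbed into the constant using Lemma~\ref{FactsHrLem}.)

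The main obstacle is exactly this self-improvement step: a crude treatment of Case (b2) only produces an error term comparable to $e^{(n+m)\omega}$, which is of the same order as $H_r(n+m)$ and cannot be absorbed on its own. It is the conjunction of the parabolic gap (supplying the decaying factor $e^{\ell\omega''}$ with $\omega''<\omega$) and the \emph{a priori} asymptotics $\tfrac1n\log H_r(n)\to\omega$ already established in Lemma~\ref{UpperBDLem} (supplying the lower bound on $H_r(\ell)$ that converts the error into a convergent geometric series of arbitrarily small total mass) that breaks the apparent circularity and lets the error be absorbed. Everything else — the case split, the two applications of relative Ancona, the parabolic decomposition of $u^{-1}u'$ — is routine bookkeeping with the transition-point machinery of Section~\ref{sectiontransitionpoints}.
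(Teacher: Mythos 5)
Your proof is correct and follows essentially the same route as the paper: decompose $S_{n+m}$ at the level-$n$ point of a geodesic, apply the relative Ancona inequalities once at a transition point or twice at the entry/exit points of a parabolic neighborhood, and use the parabolic gap to control the contribution of deep excursions, exactly as in the paper's inequality~(\ref{threeunion}). The only divergence is the final analytic step: the paper rearranges the resulting convolution inequality into~(\ref{SubMulEQ}) and cites \cite[Theorem~5.3]{YangSCC}, whereas you close the argument explicitly by splitting at $\ell_0$ and absorbing the long-excursion term via the a priori limit $\frac{1}{n}\log H_r(n)\to\omega_\Gamma(r)$ and the super-multiplicativity from Lemma~\ref{UpperBDLem} --- a correct, self-contained substitute for that citation.
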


\begin{proof}
Let $\mathbb P_0$ be the finite set of maximal parabolic subgroup up to conjugacy.
Set $$K_r(n) := \max_{P\in\mathbb P_0}\sum_{x\in S_n\cap P} G_r(e,x).$$

\textbf{Step 1.} First, the following holds  
\begin{equation}\label{threeunion}
H_{r}(n+m) \le \sum_{0\le k \le n} \sum_{0\le j \le m}  c_0 H_{r}(k) \cdot H_{r}(j) \cdot   K_{r}(n+m-k-j)  
\end{equation}
for any $n, m\ge 0$. 

Indeed, for given $x\in S_{n+m}$, consider a  geodesic $\gamma=[e,x]$ and the point $y\in \gamma$ such that $d(e,y)=n$. If $y$ is a transition point, then by the relative Ancona inequalities, we have 
$$
G_r(e,x) \le C G_r(e, y) G_r(e, y^{-1}x).
$$
This corresponds to the case $k=j=0$.

If $y$ is deep in some $P$-coset $X$, let $u,v$ be the entry and exit points of $\gamma$ in $N_\eta(X)$ (possibly $u=e$ or $v=x$). Then $u,v$ are transition points so again the relative Ancona inequalities show   
$$
G_r(e,x) \le C G_r(e, u) G_r(e, u^{-1}v)  G_r(e, v^{-1}x).
$$

Summing up $G_r(e,x)$ over all $x\in S_n$ according to $k=d(u,y)$ and $l=d(y,v)$, we obtain (\ref{threeunion}).

\textbf{Step 2.} 
By assumption,  $\omega_P(r) < \omega_\Gamma(r)$ for every $P\in \mathbb P_0$. Then for any given $\omega_\Gamma(r)  > \omega> \omega_P(r)$, there exists $c_1=c_1(r)>0$ such that $K_r(i)\le c_1 e^{i \cdot \omega }$ for any $i\ge 1$.
For $\omega>0$, we define $$a^\omega(n)= e^{-\omega n} \cdot H_{r}(n).$$ 
Then a re-arrangement of (\ref{threeunion}) gives rise to the form as follows:  
\begin{equation}\label{SubMulEQ}
a^\omega(n+m) \le c_1 \left( \sum_{1\le k\le n} a^\omega(k) \right) \cdot \left(\sum_{1\le j \le m} a^\omega(j) \right),
\end{equation}
for any $n, m \ge 0$.
We conclude as in \cite[Theorem 5.3]{YangSCC}.
\end{proof}

Theorem~\ref{mainthmparabolicgap} is now a consequence of Lemma~\ref{FactsHrLem} for the lower bound at $r=1$, Lemma~\ref{LowerBDLem} for the upper bound and Lemma~\ref{UpperBDLem} for the lower bound at $r>1$.



\subsection{Criteria for Green parabolic gap}
A possible way to get this parabolic gap is the following divergence criterion based on \cite[Lemma~2.23]{YangSCC}.

Let $A, B$ be two subsets of $G$.
Denote by $\mathbb W(A, B)$ the set of all words over the alphabet set $A\sqcup B$ with letters alternating in $A$ and $B$. 

\begin{lem}\label{degrowth}
Assume that there exists an injective  map $\iota:  \mathbb W(A)\to  \mathbb W(A, B)$ such that the evaluation map $\Phi: \mathbb W(A, B) \to G$ is injective on the subset $\iota(\mathbb W(A))$ as well. Set $X:=\Phi(\iota(\mathbb W(A,B))$  and assume that   $B$ is finite. Then   $\Theta_{r,s}(A)$ converges  at $s=\omega_{X}(r)$. In particular, if $\Theta_{r,s}(A)$ diverges at $s=\omega_{A}(r)$ 
then $\omega_{X}(r)
> \omega_{A}(r)$.
\end{lem}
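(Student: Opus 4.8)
The strategy is to transport the divergence of $\Theta_{r,s}(A)$ into $X$ via the injection $\iota$ and the injectivity of $\Phi$ on $\iota(\mathbb W(A))$, while controlling the error introduced by the alternating $B$-letters, which are finitely many and hence of bounded Green-weight and bounded word-length. First I would fix a word $w = a_1 a_2 \cdots a_m \in \mathbb W(A)$ and write $\iota(w) = a_1 b_1 a_2 b_2 \cdots$ (alternating, with the $b_i\in B$), so that $g := \Phi(\iota(w)) \in X$ is, up to the bounded insertions coming from $B$, a product of the $a_i$. Using the relative Ancona inequalities along a geodesic $[e,g]$ — more precisely the roughly-multiplicative estimate $G_r(e, g) \asymp \prod_i G_r$ of consecutive pieces, valid because one can arrange the decomposition so that the relevant break points are transition points (this is exactly how $X$ is built as a ``quasi-geodesically spelled'' subset, cf. the use of Lemma~\ref{ExtensionLem}) — one gets
$$
G_r(e, a_1) G_r(e, a_2) \cdots G_r(e, a_m) \;\le\; c^{m}\, G_r(e, g)
$$
for a constant $c = c(r) \ge 1$ absorbing both the Ancona multiplicative constant and a lower bound on $\min_{b\in B} G_r(e,b)$ (finite and positive since $B$ is finite and the walk is admissible). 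Likewise $|a_1| + \cdots + |a_m| \ge |g| - m\cdot\max_{b\in B}|b|$, so $e^{-s(|a_1|+\cdots+|a_m|)} \le e^{s\, m\, D}\, e^{-s|g|}$ with $D = \max_{b\in B}|b|$.

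Next I would sum over all $w\in\mathbb W(A)$. The left-hand side sums to $\sum_m \big(\Theta_{r,s}(A) - (\text{the }m=0,1\text{ corrections})\big)$-type expressions; concretely, since $\mathbb W(A)$ consists of all alternating (here: all) words over $A$, the generating-function identity
$$
\sum_{w\in\mathbb W(A)} \prod_{i} \Big( G_r(e,a_i) e^{-s|a_i|}\Big)
\;=\; \sum_{m\ge 1} \Theta_{r,s}(A)^{\,m}
$$
shows this diverges as soon as $\Theta_{r,s}(A) \ge 1$, in particular at any $s$ where $\Theta_{r,s}(A)$ diverges, and even for $s$ slightly below $\omega_A(r)$ where $\Theta_{r,s}(A)$ is finite but $\ge 1$. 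On the right-hand side, because $\iota$ is injective and $\Phi$ is injective on $\iota(\mathbb W(A))$, the map $w\mapsto g$ is injective, so distinct $w$ contribute distinct elements $g\in X$; bounding the per-term factor $c^m e^{smD}$ is the delicate point, since $m$ is unbounded. I would handle this by the standard device of replacing $s$ with a nearby parameter: for any $s' > s$ one has $c^m e^{s m D} e^{-s|g|} \le e^{-(s - s_0)|g|}$ for a suitable $s_0 = s_0(c,D) > 0$ absorbed once the ``speed'' $|g| \ge c_1 m$ is invoked (here $c_1>0$ because $\iota(w)$ labels a $c$-quasi-geodesic, so $|g|$ grows linearly in the number of syllables). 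Hence
$$
\sum_{m\ge 1}\Theta_{r,s}(A)^m \;\le\; \sum_{g\in X} G_r(e,g)\, e^{-(s - s_0)|g|} \;=\; \Theta_{r,\, s - s_0}(X).
$$

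Finally I would extract the conclusion. If $\Theta_{r,s}(A)$ diverges at $s = \omega_A(r)$, then already for $s$ slightly below $\omega_A(r)$ we have $\Theta_{r,s}(A) > 1$, so the left side above diverges, forcing $\Theta_{r, s - s_0}(X) = \infty$, i.e. $\omega_X(r) \ge s - s_0$; but a cleaner route is: divergence of $\sum_m \Theta_{r,\omega_A(r)}(A)^m$ at $s=\omega_A(r)$ forces $\Theta_{r,\omega_A(r)}(X) = \infty$ after re-tuning the constants so that $s_0$ can be taken as small as we like relative to the gap — this gives $\omega_X(r) \ge \omega_A(r)$, and since the $B$-insertions strictly lengthen generic words, the inequality is strict: $\omega_X(r) > \omega_A(r)$. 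Symmetrically, the first assertion (convergence of $\Theta_{r,s}(A)$ at $s=\omega_X(r)$) is the contrapositive packaging of the same inequality chain, read as $\Theta_{r,\omega_X(r)}(A) < \infty$ because otherwise the series $\sum_m \Theta_{r,\omega_X(r)}(A)^m$ would diverge and inject into $\Theta_{r,\omega_X(r)}(X)$, contradicting that $\omega_X(r)$ is the critical exponent of $X$.

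\textbf{Main obstacle.} The genuinely delicate step is the uniform control of the factor $c^m e^{smD}$ accumulated from the $m$ syllables and the $m$ bounded $B$-insertions: one must trade the exponential-in-$m$ loss against the linear-in-$m$ gain in word length $|g|$ coming from the quasi-geodesic property of $\iota(w)$, and arrange the bookkeeping so that the net exponential rate is $\ge \omega_A(r)$ (and then strictly bigger). This is precisely where finiteness of $B$ is used — it makes $D$, $\min_{b}G_r(e,b)$, and the Ancona constant all uniform — and where one invokes Lemma~\ref{ExtensionLem} (or the analogous extension lemma of \cite{YangSCC}) to guarantee the decomposition points are transition points so that the relative Ancona inequalities apply. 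Everything else is a routine geometric-series manipulation of Poincaré series, parallel to \cite[Lemma~2.23]{YangSCC}.
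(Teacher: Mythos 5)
There is a genuine gap at the step you yourself flag as the main obstacle, and your proposed fix does not close it. After summing over $w\in\mathbb W(A)$ you arrive (correctly, up to normalization) at the per-word inequality
$G_r(e,g)e^{-s|g|}\ge (c^{-1}e^{-sD})^{m}\prod_i G_r(e,a_i)e^{-s|a_i|}$,
and you then try to absorb the factor $(c\,e^{sD})^{m}$ by trading it against $|g|\ge c_1 m$ and shifting $s$ to $s-s_0$. But $s_0=(\log c+sD)/c_1$ is a \emph{fixed} positive constant; it cannot be "re-tuned to be as small as we like", so all you obtain is $\omega_X(r)\ge \omega_A(r)-s_0$, which is vacuous, and neither the first assertion (convergence of $\Theta_{r,s}(A)$ at $s=\omega_X(r)$) nor the strict inequality follows. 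The correct bookkeeping is to keep the constant \emph{inside} the geometric series: summing the displayed inequality over $m$ and over the (injectively parametrized) elements of $X$ gives
$\Theta_{r,s}(X)\ \ge\ \sum_{m\ge1}\bigl(\Theta_{r,s}(A)\cdot C_2 e^{-sC_1}\bigr)^{m}$
with $C_1=\max_{b\in B}|b|$ and $C_2=\min_{b\in B}G_r(e,b)/G_r(e,e)$. If $\Theta_{r,s}(A)$ diverged at $s=\omega_X(r)$, a finite partial sum already exceeds $1/(C_2e^{-sC_1})$ and, being continuous in $s$, still does so at some $s>\omega_X(r)$; then the ratio of the geometric series exceeds $1$, forcing $\Theta_{r,s}(X)=\infty$ and $s\le\omega_X(r)$, a contradiction. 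The point is that divergence of $\Theta_{r,s}(A)$ beats any fixed per-syllable constant loss, so no shift of the exponent is ever needed; the "in particular" clause then follows by monotonicity of $s\mapsto\Theta_{r,s}(A)$.

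A secondary issue: you import hypotheses the lemma does not have. Lemma~\ref{degrowth} is stated for abstract subsets $A,B\subset G$ with $B$ finite and an injectivity assumption; nothing guarantees that $\iota(w)$ labels a quasi-geodesic, that the break points are transition points, or that $|g|\ge c_1 m$. None of this is needed: the two inequalities that drive the proof are the triangle inequality $d(e,a_1b_1\cdots a_mb_m)\le\sum_i(d(e,a_i)+C_1)$ and the trivial supermultiplicativity $G_r(e,gh)G_r(e,e)\ge G_r(e,g)G_r(e,h)$, both of which hold in any group; the relative Ancona inequalities (whose nontrivial content is the reverse bound) play no role here.
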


\begin{proof}
Since $\Phi: \mathbb W(A, B) \to G$ is injective, each element in the image $X$ has a unique  alternating product form over $A\sqcup B$. Set  $C_1=\max_{b\in B} \{d(e, b)\}<\infty$, $C_2=\min_{b\in B} \Big\{\frac{G_r(1, b)}{G_r(1,1)}\Big\}<\infty$ where  $B$ is a finite set by assumption. For a word $W=a_1b_1a_2\cdots a_nb_n \in \mathbb W(A, B)$, we have $$
d(e, a_1b_1\cdots a_nb_n) \le \sum_{1\le i\le n} \big (d(e, a_i)+C_1\big),
$$and 
$$
G_r(e, a_1b_1\cdots a_nb_n) \ge C_2^n \prod_{1\le i\le n} \big (G_r(e, a_i) \big).
$$
As a consequence, we estimate the Poincar\'e series of $X$ as follows:
$$
\begin{array}{rl}
&\sum\limits_{g \in X} G_r(e,g)  e^{-s\cdot d(e, g)} \\
 
\ge &   \sum\limits_{n =1}^{\infty} \left(\sum\limits_{a \in A} G_r(e,a) e^{-s\cdot d (e, a)}\right)^n\cdot   (C_2e^{-sC_1}) ^n.
\end{array}
$$

By contradiction, assume that $\sum\limits_{a \in A} G_r(e,a) e^{-\omega_X(r) \cdot d (e, a)} = \infty$. Then there exists some $s >
\omega_X(r)$ such that $\sum\limits_{a \in A} G_r(e,a)  e^{-s\cdot d (e,
a)}\cdot C_2e^{-sC_1}
> 1$. By the above estimates, this implies the  series $\Theta_X(r,s)$ diverges at $s$, so $s\le \omega_X(r)$, which is a contradiction.
\end{proof}

\begin{cor}\label{parabolicdivergentgrowthgap}
Let $P\in \mathbb{P}_0$. 
If the series $$\Theta_{r,s}(P)=\sum_{p\in P} G_r(e,p) e^{-s d(e,p)} $$ diverges at $s=\omega_P(r)$ then $\omega_P(r)<\omega_\Gamma(r)$.
\end{cor}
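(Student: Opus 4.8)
The plan is to deduce the statement from the abstract divergence criterion Lemma~\ref{degrowth}, applied with $A=P$. Concretely, I would produce a finite set $B\subset\Gamma$ and an injective map $\iota\colon\mathbb W(P)\to\mathbb W(P,B)$ such that the evaluation map $\Phi$ is injective — or, what is just as good for the argument, uniformly bounded-to-one — on $\iota(\mathbb W(P))$, and such that $X:=\Phi(\iota(\mathbb W(P)))$ is contained in $\Gamma$. The last property will hold by construction, and it yields $\sum_{x\in X\cap S_n}G_r(e,x)\le H_r(n)$ for every $n$, hence $\omega_X(r)\le\omega_\Gamma(r)$. On the other hand, since by hypothesis $\Theta_{r,s}(P)$ diverges at $s=\omega_P(r)$, Lemma~\ref{degrowth} gives $\omega_X(r)>\omega_P(r)$. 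Combining the two inequalities yields $\omega_P(r)<\omega_X(r)\le\omega_\Gamma(r)$, which is the assertion.

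To build $B$ and $\iota$, take $B=F$, where $F$ is the finite set provided by the Extension Lemma~\ref{ExtensionLem}; as its proof shows, $F$ may be taken to consist of high powers of three independent loxodromic elements, which are strongly contracting because $\Gamma$ has non-trivial Floyd boundary, and I would fix the powers large enough that the barriers in the admissible paths below are sufficiently long and that distinct elements of $F$ lie pairwise at distance exceeding the fellow-travelling constant appearing in the last step. Given a word $w=(p_1,\dots,p_n)$ with each $p_i\in P\setminus\{e\}$, I would apply Lemma~\ref{ExtensionLem} at each corner $(p_j,p_{j+1})$, and trivially after $p_n$, to choose $f_j\in F$, and set $\iota(w)=(p_1,f_1,\dots,p_n,f_n)$ and $\Phi(\iota(w))=p_1f_1p_2f_2\cdots p_nf_n$; since the $p_i$ occupy the odd positions of $\iota(w)$, the map $\iota$ is injective. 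The Extension Lemma makes each block $p_jf_jp_{j+1}$ a $c$-quasigeodesic with controlled transition behaviour at the corner $p_1f_1\cdots p_j$, and feeding this into the local-to-global principle for admissible paths built from long strongly contracting barriers, the concatenated path $\gamma_w$ from $e$ to $\Phi(\iota(w))$ is a quasigeodesic with constants independent of $n$ and $w$, whose $F$-blocks are $(\eta,L)$-transition sub-paths and whose entry and exit points at each parabolic coset $P_j:=p_1f_1\cdots p_{j-1}f_{j-1}P$ are $(\eta,L)$-transition points, by Lemmas~\ref{EntryTransLem} and~\ref{TransLargeLem}.

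Finally, I would check that $\Phi$ is uniformly bounded-to-one on $\iota(\mathbb W(P))$. If $g=\Phi(\iota(w))=\Phi(\iota(w'))$, then $\gamma_w$ and $\gamma_{w'}$ are two quasigeodesics from $e$ to $g$, so, comparing them to a fixed geodesic $[e,g]$ (quasigeodesics in a relatively hyperbolic group satisfy a Morse-type property relative to the parabolic subgroups) and using the relative thin-triangle property for transition points (Lemma~\ref{ThinTransitional2}) together with the facts that a geodesic point is deep in at most one parabolic coset and that distinct parabolic cosets have bounded coarse intersection (Lemma~\ref{EntryTransLem} and the bounded intersection property), one sees that $\gamma_w$ and $\gamma_{w'}$ pass deeply through exactly the same ordered sequence of parabolic cosets, with matching entry and exit points up to a bounded error. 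Hence $n=n'$ and $P_j=P_j'$ for every $j$; the connecting elements of $F$ are then pinned down up to that bounded error, hence exactly, by the choice of powers, and the residual ambiguity in each $p_j\in P$ is confined to a uniformly bounded set. Therefore every fibre of $\Phi\circ\iota$ is finite with cardinality bounded independently of $w$, which is all the Poincaré-series estimate in the proof of Lemma~\ref{degrowth} uses (one may also simply pass to one representative per fibre to obtain a genuinely injective $\iota$). I expect this last recovery step to be the main obstacle: it is exactly here that the transition-point technology of Section~\ref{sectiontransitionpoints} and the admissible-path machinery are indispensable, the crucial point being to keep the quasigeodesic constants — and therefore the fibre bound — uniform in the length of the word, so that the comparison between $\omega_X(r)$ and $\omega_P(r)$ survives.
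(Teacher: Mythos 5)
Your reduction is the same as the paper's: apply Lemma~\ref{degrowth} with $A=P$ and a finite set $B$, observe that $X\subseteq\Gamma$ forces $\omega_X(r)\le\omega_\Gamma(r)$, and let the divergence hypothesis give $\omega_P(r)<\omega_X(r)$. Where you genuinely diverge is in verifying the injectivity hypothesis of Lemma~\ref{degrowth}, and there the paper is far more economical: it takes $B=\{h^{n_0},h^{-n_0}\}$ for a single loxodromic element $h$ and notes that for $n_0$ large the subgroup $\langle P,h^{n_0}\rangle$ is the free product $P*\langle h^{n_0}\rangle$ (ping-pong), so uniqueness of normal forms in a free product gives injectivity of $\Phi$ on the alternating words $p_1h^{n_0}p_2h^{n_0}\cdots$ outright, with no quasigeodesic analysis at all. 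The Extension Lemma~\ref{ExtensionLem} and the admissible-path technology you invoke are the tools the paper reserves for the multiplicativity estimates on $H_r(n)$ (Lemmas~\ref{UpperBDLem} and~\ref{omegaL}), not for this corollary.

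As written, your verification has two soft spots, both located in the recovery step you yourself flag as the main obstacle. First, the claim that the two concatenated paths ``pass deeply through exactly the same ordered sequence of parabolic cosets'' is vacuous, hence gives nothing, when the letters $p_j$ are short (say all of word length $1$): the path then never goes deep into any coset $P_j$, so $n=n'$ and the block decomposition must be recovered entirely from the long contracting $F$-segments via the admissible-path machinery of \cite{YangSCC}; that argument exists but is not the one you sketched. Second, the asserted \emph{uniform} fibre bound is almost certainly too strong: the bounded ambiguity in each $p_j$ compounds multiplicatively across blocks, so what one actually obtains is a fibre bound of the form $c^n$ in the number of blocks (compare the bound $c_2^m$ in the proof of Lemma~\ref{omegaL}). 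This is harmless for your purposes --- an extra factor $c^{-n}$ in the lower bound for $\Theta_{r,s}(X)$ in the proof of Lemma~\ref{degrowth} still forces divergence for $s$ slightly above $\omega_X(r)$, since $\Theta_{r,s}(A)\to\infty$ as $s\downarrow\omega_X(r)$ --- but it needs to be said rather than replaced by a claim of uniformity. With those two repairs your argument goes through; it is, however, a long detour around a one-line ping-pong observation.
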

\begin{proof} 
For any hyperbolic element $h$ and sufficiently large  integer $n\ge 1$, the subgroup generated by $P$ and $h^n$ is a free product $P* \langle h^n\rangle$. 
Thus, the evaluation map $\Phi: \mathbb W(A, B) \to G$ is injective on $\mathbb W(A)$ seen as a subset of $\mathbb W(A,B)$, where $A=P$ and $B=\{h^n,h^{-n}\}$.
\end{proof}


Here is a second criterion for the parabolic gap.

\begin{prop}\label{secondcriterionparabolicgap}
Let $P\in \mathbb P_0$.
For every $1<r\leq R$, if $\omega_P(r)\leq 0$, then $\omega_P(r)<\omega_\Gamma(r)$.
\end{prop}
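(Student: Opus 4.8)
The plan is to read the statement off directly from properties of $\omega_\Gamma$ that are already in hand, combining a trivial sign inequality with the strict monotonicity of the Green-function growth rate. Concretely, I would fix $r$ with $1<r\le R$ and assume $\omega_P(r)\le 0$. By Lemma~\ref{FactsHrLem}(2) we have $\omega_\Gamma(1)=0$, and by Lemma~\ref{FactsHrLem}(3) the map $s\mapsto \omega_\Gamma(s)$ is strictly increasing on $[1,R]$. Hence $\omega_\Gamma(r)>\omega_\Gamma(1)=0\ge \omega_P(r)$, so $\omega_P(r)<\omega_\Gamma(r)$, which is precisely the parabolic gap along $P$ at $r$.

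The one point worth being careful about is the endpoint $r=R$: strict increase there is not literally part of the continuity statement, but it follows from the quantitative inequality $\omega_\Gamma(r)-\omega_\Gamma(s)\ge c_1(\log r-\log s)$ obtained in the proof of Lemma~\ref{FactsHrLem} from the finite-support estimate~(\ref{exponentialdifferenceGreen}); taking $s=1$ and $r=R$ gives $\omega_\Gamma(R)\ge c_1\log R>0$ since $R>1$. Thus the argument covers all $r\in(1,R]$ uniformly, and no relative-hyperbolicity input beyond what is used for Lemma~\ref{FactsHrLem} is needed.

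I do not expect any genuine obstacle here: the entire content is carried by Lemma~\ref{FactsHrLem}. What I would add, right after the proof, is a short remark explaining that the hypothesis $\omega_P(r)\le 0$ is satisfied in natural situations — for instance whenever the parabolic subgroup $P$ has sub-exponential word growth, since then the bound $G_r(e,x)\le G_r(e,e)$ forces $H_{P,r}(n)\le G_r(e,e)\,\sharp(S_n\cap P)$ to be sub-exponential in $n$, so that $\omega_P(r)\le v_S(P)=0$. This makes the criterion non-vacuous and shows that Proposition~\ref{secondcriterionparabolicgap} complements the divergence criterion of Corollary~\ref{parabolicdivergentgrowthgap} (in particular it yields a parabolic gap at $r=R$ for parabolic subgroups of sub-exponential growth, where the divergence criterion need not apply).
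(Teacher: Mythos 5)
Your proof is correct and is essentially the paper's own argument: the paper simply invokes Lemma~\ref{FactsHrLem} to get $\omega_\Gamma(r)>0$ for $r>1$, which combined with $\omega_P(r)\le 0$ gives the strict inequality. Your extra care about the endpoint $r=R$ is already covered by the lemma's statement that $\omega_\Gamma$ is strictly increasing on all of $[1,R]$, and the supplementary remark about sub-exponential parabolic subgroups reproduces the content of Proposition~\ref{subexpgrowthparabolicgap}.
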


\begin{proof}
By Lemma~\ref{FactsHrLem}, for every $1<r$, $\omega_\Gamma(r)>0$.
\end{proof}

\subsection{Examples}\label{sectionexamples}
We now give various examples of different possible situations.
In Example~A, we give a criterion for having $\omega_P(r)\leq 0$, which automatically implies that $\omega_P(r)<\omega_\Gamma(r)$.
In Example~B, we construct an example where $\omega_P(r)>0$, but we still have $\omega_P(r)<\omega_\Gamma(r)$.
Example~C is devoted to construct an example where $\omega_P(r)=\omega_\Gamma(r)$, assuming that $P$ satisfies some properties.

We first recall some terminology from \cite{DG21}.
Let $r\leq R$ and let $P$ be a parabolic subgroup.
Denote by $p^{r,P}$ the first return transition kernel to $P$ associated with $r\mu$, i.e.\ for every $x,y\in P$,
\begin{equation}\label{deffirstreturnkernel}
p^{r,P}(x,y)=\sum_{n\geq 1}\hspace{.1cm}\sum_{z_1,...,z_n\notin P}r^n\mu(x^{-1}z_1)\mu(z_1^{-1}z_2)...\mu(z_n^{-1}y).
\end{equation}
Also denote by $G_t^{r,P}$ its associated Green function at $t$, i.e.
$$G_t^{r,P}(x,y)=\sum_{n\geq 0}t^np_n^{r,P}(x,y),$$
where $p^{r,P}_n$ is the $n$th convolution power of $p^{r,P}$.
Finally, denote by $R_P(r)$ the inverse of the spectral radius of $p^{r,P}$.
Then by \cite[Lemma~4.4]{DG21}, $G_1^{r,P}(e,x)=G_r(e,x)$ which is finite, so in particular $R_P(r)\geq 1$.

\begin{defn}\label{defnspectraldegeneracy}
We say that the random walk is \textit{spectrally degenerate} along $P$ if $R_P(R)=1$.
It is called \textit{spectrally non-degenerate} if it is not spectrally degenerate along any parabolic subgroup.
\end{defn}

\subsubsection{Example~A}
\begin{prop}
Let $P$ be a parabolic subgroup.
Assume that $P$ is amenable.
Then for every $1<r< R$, $\omega_P(r)< 0$.
In particular, for every $1<r< R$, $\omega_P(r)<\omega_\Gamma(r)$.
Moreover, if the random walk is not spectrally degenerate along $P$, then $\omega_P(R)\leq 0<\omega_\Gamma(R)$.
\end{prop}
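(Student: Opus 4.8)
The plan is to control $\omega_P(r)$ through the first-return kernel $p^{r,P}$ to $P$ defined in~(\ref{deffirstreturnkernel}), using amenability of $P$ via Kesten's criterion. The starting point is $G_r(e,x)=G_1^{r,P}(e,x)$ for $x\in P$ (\cite[Lemma~4.4]{DG21}), which rewrites $\sum_{x\in P}G_r(e,x)=\sum_{m\ge 0}\sum_{x\in P}(p^{r,P})^{\ast m}(e,x)$. Since $p^{r,P}$ is invariant under left translation by $P$, the inner sum equals $\big(\sum_{y\in P}p^{r,P}(e,y)\big)^m$, so the whole series is the geometric series $\sum_{m\ge0}\|p^{r,P}\|_1^{\,m}$, where $\|p^{r,P}\|_1:=\sum_{y\in P}p^{r,P}(e,y)$. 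Hence it is enough to prove $\|p^{r,P}\|_1<1$ for $1\le r<R$ (and also at $r=R$ when the walk is spectrally non-degenerate along $P$), and then to upgrade finiteness of $\sum_{x\in P}G_{r'}(e,x)$ to exponential decay of $\sum_{x\in P\cap S_n}G_r(e,x)$.

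To get $\|p^{r,P}\|_1<1$, first I would identify it with $R_P(r)^{-1}$. Because $\mu$ is symmetric, $p^{r,P}$ is reversible and $P$-left-invariant, so $p^{r,P}(e,\cdot)$ is a positive multiple of a symmetric probability measure on the subgroup of $P$ generated by its support. As $P$ is amenable, Kesten's criterion gives that the spectral radius of the associated convolution operator equals its total mass, that is $R_P(r)^{-1}=\rho(p^{r,P})=\|p^{r,P}\|_1$. So the problem becomes to show $R_P(r)>1$. I would use: (i) $R_P(r)\ge 1$ for all $r\le R$, which the paper already records from $G_1^{r,P}(e,e)=G_r(e,e)<\infty$; and (ii) strict monotonicity of $r\mapsto R_P(r)$ on $[1,R]$. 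For (ii), note that in~(\ref{deffirstreturnkernel}) every first-return excursion carries a power $r^{n}$ with $n\ge1$, so for $r<r'$ one has $(p^{r',P})^{\ast m}(e,e)\ge (r'/r)^{m}(p^{r,P})^{\ast m}(e,e)$ for all $m$, hence $\rho(p^{r',P})\ge (r'/r)\,\rho(p^{r,P})>\rho(p^{r,P})$ (and $\rho(p^{r,P})>0$ because $p^{r,P}(e,e)>0$, using that $P$ is a proper subgroup so $\supp(\mu)\not\subseteq P$). Combining (i) and (ii): for $1\le r<R$, choosing $r''\in(r,R)$ gives $R_P(r)>R_P(r'')\ge1$; and $R_P(R)>1$ is exactly spectral non-degeneracy along $P$.

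The last step converts convergence of $\sum_{x\in P}G_{r'}(e,x)$ into an exponential rate. Fix $1<r<R$ and pick $r'\in(r,R)$. By~(\ref{exponentialdifferenceGreen}) there is $c_1>0$ with $G_r(e,x)\le (r/r')^{c_1|x|}G_{r'}(e,x)$ for all $x$, so
$$\sum_{x\in P\cap S_n}G_r(e,x)\ \le\ \Big(\tfrac{r}{r'}\Big)^{c_1 n}\sum_{x\in P}G_{r'}(e,x)\ =\ C(r')\Big(\tfrac{r}{r'}\Big)^{c_1 n},$$
whence $\omega_P(r)\le c_1\log(r/r')<0$; as $r>1$ gives $\omega_\Gamma(r)>0$ by Lemma~\ref{FactsHrLem}, we conclude $\omega_P(r)<0<\omega_\Gamma(r)$. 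When the walk is spectrally non-degenerate along $P$, the same reasoning gives $\sum_{x\in P}G_R(e,x)<\infty$, so $\sum_{x\in P\cap S_n}G_R(e,x)\to0$ and $\omega_P(R)\le0$, while $\omega_\Gamma(R)>0$ (as $R>1$ by non-amenability); hence $\omega_P(R)\le0<\omega_\Gamma(R)$.

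I expect the only genuine obstacle to be establishing $R_P(r)>1$ for every $r\in[1,R)$ without assuming non-degeneracy: this rests on the strict monotonicity of $R_P$ together with the finiteness $G_{r''}(e,e)<\infty$. Everything else is soft: amenability enters only through the single equality $\rho(p^{r,P})=\|p^{r,P}\|_1$ (the elementary direction of Kesten's theorem), and the remaining manipulations are elementary. A minor point to double-check is that $p^{r,P}(e,\cdot)$ need not be supported on a generating set of $P$; this is harmless because every subgroup of an amenable group is amenable, so the equality $\rho=\|{\cdot}\|_1$ still holds.
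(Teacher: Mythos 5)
Your proof is correct, and it follows the same overall strategy as the paper's: pass to the first-return kernel $p^{s,P}$ for a parameter $s\in(r,R)$, use that it is a symmetric $P$-invariant kernel on an amenable group so that Kesten's criterion converts the spectral-radius bound $\rho(p^{s,P})<1$ into the mass bound $\sum_{y\in P}p^{s,P}(e,y)<1$, deduce $\sum_{x\in P}G_s(e,x)<\infty$, and finally downgrade from $s$ to $r$ via~(\ref{exponentialdifferenceGreen}) to obtain strict negativity of $\omega_P(r)$; the case $r=R$ under spectral non-degeneracy is handled identically in both arguments. Two of your steps are local variants of the paper's, both more self-contained. First, the paper cites \cite[Lemma~4.15]{DG21} for $\rho(p^{s,P})<1$ when $s<R$, whereas you derive it from the termwise comparison $p^{r',P}\ge (r'/r)\,p^{r,P}$ (each excursion carries $r^n$ with $n\ge1$), giving $\rho(p^{r',P})\ge (r'/r)\rho(p^{r,P})$, which combined with $R_P(r'')\ge 1$ forces $\rho(p^{r,P})\le r/r''<1$; this is a clean elementary substitute, and your justification that $\rho(p^{r,P})>0$ via $\mathrm{supp}(\mu)\not\subseteq P$ is the right one. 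Second, where the paper normalizes to a Markov kernel and shows summability of the Green function through the $\ell_1$ operator norm of the convolution operator and the invertibility of $I-t\widetilde{P}_s$, you observe directly that left-invariance gives $\sum_{x\in P}(p^{r,P})^{*m}(e,x)=\bigl(\sum_{y\in P}p^{r,P}(e,y)\bigr)^m$, so the Green sum is a geometric series — same content, computed more transparently. The only point you (and the paper) leave implicit is that the total mass of $p^{r,P}(e,\cdot)$ is finite a priori, which is needed before normalizing to a probability measure; this is harmless, since on an amenable group the spectral radius of a symmetric invariant kernel dominates its total mass (truncate to finite symmetric pieces and apply Kesten to each), so $\rho(p^{r,P})\le 1$ already forces the mass to be at most $1$.
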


\begin{proof}
Fix $r<R$ and choose $s$ such that $r<s<R$.
Denote by $p^{s,P}$ the first return transition kernel to $P$ associated with $s\mu$ and by $G^{s,P}$ its associated Green function as above.
By \cite[Lemma~4.15]{DG21}, the spectral radius of $p^{s,P}$ is strictly less than 1, because $s<R$.
Since the underlying random walk driven by $\mu$ is symmetric, $p^{s,P}$ is a symmetric $P$-invariant transition kernel on $P$.
By amenability, this transition kernel is necessarily sub-Markov, see \cite[Corollary~12.5]{Woessbook}.
Let $t<1$ be such that $\tilde{p}^s=\frac{1}{t}p^{s,P}$ is Markov.
Then, letting $\widetilde{G}^s$ be the Green function associated with $\tilde{p}^s$, for any $x\in P$ we have $G_1^{s,P}(e,x)=\widetilde{G}_t^s(e,x)$.
We now prove that
\begin{equation}\label{equationGreenparabolicfinite}
\sum_{x\in P}\widetilde{G}_t^s(e,x)<\infty.
\end{equation}
Consider the convolution operator
$$\widetilde{P}_s:f\mapsto \left ( x\mapsto \sum_{y}\tilde{p}^s(x,y)f(y)\right )$$
acting on the space of $\ell_1$ functions, i.e.\ summable functions on $P$.
The $\ell_1$-norm of $\widetilde{P}_s$ is 1, so its $\ell_1$-spectral radius is bounded by 1.
In particular, $1/t$ is bigger than the $\ell_1$-spectral radius, so by definition, $I-t\widetilde{P}_s$ is invertible in the space of summable functions.
Moreover, the inverse is of the form 
$$\widetilde{Q}_s=\sum_{n\geq 0} t^n\widetilde{P}_s^n.$$
Consider the function $f$ defined by $f(x)=1$ if $x=e$ and 0 otherwise.
Then, $\widetilde{Q}_sf(x)=\widetilde{G}_t^s(e,x^{-1})$.
Since the function $f$ is summable, $\widetilde{Q}_sf$ also is summable, and so the Green function $\widetilde{G}^s$ at $t$ is summable.
This proves~(\ref{equationGreenparabolicfinite}).

Since for $x\in P$, $G_s(e,x)=G_1^{s,P}(e,x)=\widetilde{G}_t^s(e,x)$,
we deduce that $\sum_{x\in P}G_s(e,x)<\infty$ and so
$\omega_P(s)\leq 0$.
Moreover, by~(\ref{exponentialdifferenceGreen}), we have
$$G_r(e,x)\leq \left(\frac{r}{s}\right)^{c|x|}G_s(e,x)$$ for every $x\in \Gamma$.
Summing over $P\cap S_n$, we see that $\omega_P(r)<\omega_P(s)$ and so $\omega_P(r)<0$.

Finally, if the random walk is not spectrally degenerate along $P$, then by definition the spectral radius of $p^{R,P}$ is strictly less than 1.
The same proof shows that $\omega_P(R)\leq 0$.
\end{proof}
 
 
 When the parabolic subgroup $P$ has sub-exponential growth, we do not need spectral non-degeneracy to get that $\omega_P(R)\leq 0$.
 \begin{prop}\label{subexpgrowthparabolicgap}
 Let $P$ be a maximal parabolic subgroup.
Assume that $P$ has sub-exponential growth.
Then for every $1\leq r\leq R$, $\omega_P(r)\leq 0$.
In particular, for every $1<r\leq R$, $\omega_P(r)<\omega_\Gamma(r)$.
 \end{prop}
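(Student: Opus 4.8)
The plan is to derive a \emph{uniform} bound on the Green function and combine it with the sub-exponential growth of $P$. Recall that $G_R(e,e)<\infty$ in our setting. The first step is to show that
$$G_r(e,x)\ \le\ G_R(e,e)\qquad\text{for every }x\in\Gamma\text{ and every }1\le r\le R.$$

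To see this, I would use the first-visit decomposition $G_r(e,x)=F_r(e,x)\,G_r(x,x)=F_r(e,x)\,G_r(e,e)$, where $F_r(e,x)=\sum_{k\ge 0}r^k\,\mathbb{P}_e[\tau_x=k]$, with $\tau_x$ the first hitting time of $x$, and where $G_r(x,x)=G_r(e,e)$ by left-invariance. Decomposing a loop at $e$ according to whether it visits $x$, and at its first visit to $x$ if it does, gives
$$G_r(e,e)=G_r^{(x)}(e,e)+F_r(e,x)\,G_r(x,e)=G_r^{(x)}(e,e)+F_r(e,x)^2\,G_r(e,e),$$
where $G_r^{(x)}(e,e)\ge 1$ is the Green function of the walk killed on hitting $x$, and where $G_r(x,e)=G_r(e,x)$ by symmetry of $\mu$. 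For $1\le r<R$ this forces $F_r(e,x)^2<1$; letting $r\uparrow R$ (monotone convergence of power series with non-negative coefficients) yields $F_R(e,x)=\sup_{r<R}F_r(e,x)\le 1$, hence $G_R(e,x)=F_R(e,x)\,G_R(e,e)\le G_R(e,e)$, and since $r\mapsto G_r(e,x)$ is non-decreasing the displayed bound holds for all $1\le r\le R$.

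The conclusion then follows immediately: fix $1\le r\le R$; since $P$ has sub-exponential growth, $\limsup_n\frac1n\log\sharp(P\cap S_n)=v_S(P)=0$, so
$$\sum_{x\in P\cap S_n}G_r(e,x)\ \le\ \sharp(P\cap S_n)\cdot G_R(e,e),$$
whence $\omega_P(r)=\limsup_n\frac1n\log\sum_{x\in P\cap S_n}G_r(e,x)\le\limsup_n\frac1n\log\sharp(P\cap S_n)=0$. Finally, for $1<r\le R$, Lemma~\ref{FactsHrLem} gives $\omega_\Gamma(r)>0$, so $\omega_P(r)\le 0<\omega_\Gamma(r)$, which is the last assertion.

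For $1\le r<R$ one could alternatively invoke the preceding proposition, since a group of sub-exponential growth is amenable; the substance of the statement is really the endpoint $r=R$, where the spectral non-degeneracy used in the amenable case is replaced by the a priori bound $\sup_x G_R(e,x)\le G_R(e,e)<\infty$. Consequently the \textbf{main obstacle}, or rather the only nontrivial input, is the finiteness $G_R(e,e)<\infty$, which is indispensable here: were it false one would have $G_R\equiv\infty$ and $\omega_P(R)=+\infty$. Once this is in hand the argument is the elementary counting estimate above, and, in contrast with the amenable case, no hypothesis on the first-return kernel $p^{R,P}$ is needed.
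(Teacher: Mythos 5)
Your proof is correct and follows essentially the same route as the paper: the only input is the uniform bound $\sup_x G_R(e,x)\le G_R(e,e)<\infty$, which combined with sub-exponential growth of $P$ gives $\omega_P(r)\le 0$, and then $\omega_\Gamma(r)>0$ for $r>1$ finishes the argument. The only difference is cosmetic — the paper simply asserts the uniform bound on $G_R(e,\cdot)$ and phrases the conclusion via convergence of $\Theta_{r,s}(P)$ for all $s>0$, whereas you supply the standard first-visit/symmetry derivation of that bound and take logarithms of the sphere sums directly.
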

 
 \begin{proof}
 Let $s>0$.
There exists $C\geq 0$ such that for every $x\in \Gamma$, $G_R(e,x)\leq C$.
 In particular, for every $r\leq R$,
 $$\Theta_{r,s}(P)\leq C\sum_{n\geq 0}\sharp (P\cap S_n)\mathrm{e}^{-sn}<+\infty.$$
 Thus, $\Theta_{r,s}(P)$ is finite for every positive $s$ and so $\omega_P(r)\leq 0$.
 \end{proof}
 
 \subsubsection{Adapted random walks on free products}
Before giving other examples, let us briefly recall some terminology and basic properties of random walks on free products.
Consider a free product $\Gamma=\Gamma_0*\Gamma_1$.
Let $\mu_0$ be an admissible probability measure on $\Gamma_0$, $\mu_1$ an admissible probability measure on $\Gamma_1$ and define $$\mu_\alpha=\alpha \mu_1+(1-\alpha)\mu_0.$$
Then $\mu_\alpha$ is an admissible probability measure on $\Gamma$.
Moreover, if both $\mu_0$ and $\mu_1$ are symmetric, respectively finitely supported, then so is $\mu_\alpha$.

Such a probability measure is called adapted to the free product structure and it can only move inside one of the free factors $\Gamma_0$ or $\Gamma_1$ at each step.
Adapted probability measures on free products have been considered by many authors, see \cite{Cartwright1}, \cite{Cartwright2}, \cite{CandelleroGilch}, \cite{CandelleroGilchMuller}, \cite{WoessMartin} and \cite{WoessLLT} for instance.
For convenience we will assume that the random walk driven by $\mu_i$ on $\Gamma_i$ is transient at the spectral radius, i.e.\ the Green function is finite at its radius of convergence.
This is not very restrictive, since by the Varapoulos Theorem, only groups with quadratic growth, i.e.\ groups that are virtually $\mathbb{Z}$ or $\mathbb Z^2$, can carry an admissible random walk which is not transient at the spectral radius.
Actually, Varapoulos \cite{Varapoulos} proved that only groups with quadratic growth can carry a non-transient (at $r=1$) random walk, but a standard argument due to Guivarc'h \cite{Guivarch} allows one to reduce non-transience at $r=R$ to non-transience at $r=1$ by the use of a suited $h$-process, see \cite{Woessbook} for more details and in particular \cite[Theorem~7.8]{Woessbook} for a complete proof.

Denote by $G^{\mu_i}$ the Green function on $\Gamma_i$ associated with $\mu_i$, $i=0,1$ and by $R_{\mu_i}$ the radius of convergence of $G^{\mu_i}$, i.e.\ the inverse of the spectral radius of $\mu_i$.
Also, as in the previous example, denote by $p^{s,\Gamma_0}$ the first return kernel to $\Gamma_0$ associated with $s\mu_\alpha$ and by $G_t^{s,\Gamma_0}$ the associated Green function.
We first relate $G^{\mu_0}$ and $G^{s,\Gamma_0}$ which are two Green functions associated with different transition kernels on the same group $\Gamma_0$.
Because $\mu_\alpha$ is adapted to the free product structure, the first return kernel $p^{s,\Gamma_0}$ can be written as
$$p^{s,\Gamma_0}(e,x)=(1-\alpha)s\mu_0+w_{s,\alpha}\delta_{e,x},$$
where $w_{s,\alpha}$ is the weight of the first return to $e$, starting with a step in $\Gamma_1$.
Thus, \cite[Lemma~9.2]{Woessbook} shows that for any $x,y\in \Gamma_0$,
\begin{equation}\label{superWoess}
G_t^{s,\Gamma_0}(x,y)=\frac{1}{1-w_{s,\alpha}t}G_\frac{(1-\alpha)st}{1-w_{s,\alpha}t}^{\mu_0} (x,y).
\end{equation}
Define
$$\zeta_0(s)=\frac{(1-\alpha)s}{1-w_{s,\alpha}}.$$
As in the previous example, by \cite[Lemma~4.4]{DG21}, $G_1^{s,\Gamma_0}(e,x)=G_s(e,x)$, where as usual $G_s$ denotes the Green function associated with the initial random walk driven by $\mu_\alpha$ at $s$.
So in particular, applying~(\ref{superWoess}) at $t=1$,
\begin{equation}\label{superWoess2}
G_s(x,y)=G_1^{s,\Gamma_0}(x,y)=\frac{1}{1-w_{s,\alpha}}G_{\zeta_0(s)}^{\mu_0}(x,y).
\end{equation}

We also set
$$\zeta_1(s)=\frac{\alpha s}{1-w'_{s,\alpha}},$$
where $w'_{s,\alpha}$ is the weight of the first return to $e$, starting with a step in $\Gamma_0$. 
Then, by symmetry, we also have for any $x,y\in \Gamma_1$,
\begin{equation}\label{superWoess2'}
G_s(x,y)=\frac{1}{1-w'_{s,\alpha}}G_{\zeta_1(s)}^{\mu_1}(x,y).
\end{equation}

Let $R_\alpha$ be the inverse of the spectral radius of $\mu_\alpha$ and let $R_{\mu_0}$ be the inverse of the spectral radius of $\mu_0$ on $\Gamma_0$.
\begin{lem}\label{lemalphato0}
As $\alpha$ tends to 0, $R_\alpha$ and $\zeta_0(R_\alpha)$ both converge to $R_{\mu_0}$.
Moreover, $w_{r,\alpha}$ converges to 0 and the convergence is uniform in $r\leq R_\alpha$.
\end{lem}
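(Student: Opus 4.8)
The plan is to first reduce the statement and then handle the two convergences separately. Observe that, by its very definition through~(\ref{deffirstreturnkernel}), $w_{r,\alpha}$ is a power series in $r$ with non-negative coefficients, hence non-decreasing in $r$; therefore $\sup_{r\le R_\alpha}w_{r,\alpha}=w_{R_\alpha,\alpha}$, and all the assertions of the lemma, the uniformity included, follow once we know that $R_\alpha\to R_{\mu_0}$, that $w_{R_\alpha,\alpha}\to 0$, and that $\zeta_0(R_\alpha)\to R_{\mu_0}$ as $\alpha\to 0$.

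To get $R_\alpha\to R_{\mu_0}$ I would sandwich the spectral radius $\rho(\mu_\alpha)$. Retaining in $\mu_\alpha^{\star n}(e)$ only the contribution of the $n$-step paths whose increments all lie in $\supp\mu_0$ gives $\mu_\alpha^{\star n}(e)\ge(1-\alpha)^n\mu_0^{\star n}(e)$, whence $\rho(\mu_\alpha)\ge(1-\alpha)\rho(\mu_0)$, i.e. $R_\alpha\le(1-\alpha)^{-1}R_{\mu_0}$. For the reverse inequality, recall that for a symmetric probability measure the spectral radius equals the norm of the associated convolution operator on $\ell^2$; moreover a measure supported on a subgroup $\Gamma_i$ has the same $\ell^2$-operator norm on $\Gamma$ as on $\Gamma_i$, since $\ell^2(\Gamma)$ decomposes as the sum of the invariant subspaces $\ell^2(\Gamma_i c)$ over the right cosets of $\Gamma_i$, on each of which the operator is a copy of convolution by $\mu_i$ on $\ell^2(\Gamma_i)$. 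Writing the convolution operator of $\mu_\alpha$ as the convex combination of those of $\mu_1$ and $\mu_0$ and using $\rho(\mu_1)\le 1$, one gets $\rho(\mu_\alpha)\le\alpha+(1-\alpha)\rho(\mu_0)$, hence a matching lower bound for $R_\alpha$. Letting $\alpha\to 0$ yields $R_\alpha\to R_{\mu_0}$; in particular the $R_\alpha$ remain bounded.

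For the other two convergences I would exploit the identity~(\ref{superWoess2}). For every $r<R_\alpha$ we have $G_r(e,e)<\infty$, so~(\ref{superWoess2}) forces $w_{r,\alpha}<1$ and $G^{\mu_0}_{\zeta_0(r)}(e,e)<\infty$; since the $\mu_0$-walk is transient at its spectral radius, $G^{\mu_0}_\zeta(e,e)$ is finite exactly for $\zeta\le R_{\mu_0}$, so $\zeta_0(r)\le R_{\mu_0}$. Because $r\mapsto w_{r,\alpha}$ is finite (it is bounded by $G_r(e,e)$) and continuous on $[1,R_\alpha)$, so is $r\mapsto\zeta_0(r)=\frac{(1-\alpha)r}{1-w_{r,\alpha}}$, which is non-decreasing and bounded above by $R_{\mu_0}$; letting $r\uparrow R_\alpha$ gives $\zeta_0(R_\alpha)\le R_{\mu_0}$, that is $1-w_{R_\alpha,\alpha}\ge(1-\alpha)R_\alpha/R_{\mu_0}$. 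The right-hand side tends to $1$ by the first step, hence $w_{R_\alpha,\alpha}\to 0$; and then $\zeta_0(R_\alpha)=\frac{(1-\alpha)R_\alpha}{1-w_{R_\alpha,\alpha}}$ has numerator tending to $R_{\mu_0}$ and denominator tending to $1$, so $\zeta_0(R_\alpha)\to R_{\mu_0}$. The uniform convergence of $w_{r,\alpha}$ is then immediate from the reduction made at the outset.

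The step I expect to be delicate is the passage $r\uparrow R_\alpha$ just carried out: one must be sure that the identity~(\ref{superWoess2}) and the continuity of $w_{\cdot,\alpha}$ genuinely hold on the whole half-open interval $[1,R_\alpha)$ — this is precisely where~\cite[Lemma~4.4]{DG21} is invoked — and that the Green functions of $\mu_0$ stay finite up to the critical parameter $\zeta=R_{\mu_0}$, which is exactly the standing transience-at-the-spectral-radius assumption on the free factors. Everything else is soft manipulation of power series with non-negative coefficients.
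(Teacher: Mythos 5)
Your proof is correct, but it follows a genuinely different route from the paper's on both halves of the lemma. For the convergence $R_\alpha\to R_{\mu_0}$, the paper proves the stronger statement that $\alpha\mapsto R_\alpha$ is continuous, by showing that the convolution operator $P_\alpha$ is continuous in the $\ell_1$ and $\ell_\infty$ operator norms and invoking the Riesz--Thorin interpolation theorem to get $\ell^2$-continuity; your two-sided sandwich $(1-\alpha)\rho(\mu_0)\le\rho(\mu_\alpha)\le\alpha\rho(\mu_1)+(1-\alpha)\rho(\mu_0)$ is more elementary, avoids interpolation entirely, and yields exactly the convergence at $\alpha\to0$ that the lemma asserts (though not continuity on all of $[0,1]$). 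For the decay of $w_{r,\alpha}$, the paper argues directly from the first-return decomposition of \cite[Proposition~9.18]{Woessbook}, obtaining the quantitative bound $w_{r,\alpha}\le C\alpha$ at the price of using transience of the $\mu_1$-walk at its spectral radius; you instead deduce $w_{R_\alpha,\alpha}\to0$ from the constraint $\zeta_0(r)\le R_{\mu_0}$ forced by finiteness of the Green function in~(\ref{superWoess2}) together with the already-established convergence $R_\alpha\to R_{\mu_0}$, which is softer (no input from the $\Gamma_1$-factor beyond $\rho(\mu_1)\le1$) but only gives $w_{R_\alpha,\alpha}\le 1-(1-\alpha)R_\alpha/R_{\mu_0}$ rather than a linear rate. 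Your reduction of the uniformity claim to monotonicity of the power series $r\mapsto w_{r,\alpha}$ is clean and correct. Two cosmetic points: the invariant subspaces for convolution by a measure supported on $\Gamma_i$ are the $\ell^2$ of \emph{left} cosets $c\Gamma_i$ under the convention $Pf(x)=\sum_y\mu(x^{-1}y)f(y)$ (this does not affect the conclusion), and one should observe that $w_{r,\alpha}<1$ follows from $G_r(e,e)<\infty$ \emph{before} invoking~(\ref{superWoess2}) at $t=1$ (e.g.\ via $w_{r,\alpha}\le U(r)$ and $G_r(e,e)=(1-U(r))^{-1}$, as the paper does for $w'$ in Lemma~\ref{lemalphato1}), rather than reading it off from the identity itself.
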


\begin{proof}
We first show that $R_\alpha$ is a continuous function of $\alpha$.
Let $P_\alpha$ be the convolution operator
$$P_\alpha:f\mapsto \left (x\mapsto \sum_{y}\mu_\alpha(x^{-1}y)f(y)\right).$$

\begin{claim}
In the $\ell^2$ operator norm, $P_\alpha$ is a continuous function of $\alpha$.
\end{claim}

\begin{proof}[Proof of the claim]
We first show that $P_\alpha$ is continuous in the $\ell_1$ and $\ell_{\infty}$ operator norms.
Fix $\alpha_0$.
Note that $\mu_\alpha$ has a finite support included in a fixed finite set $\Sigma$, so $\mu_\alpha(x)$ uniformly converges to $\mu_{\alpha_0}(x)$ as $\alpha$ tends to $\alpha_0$.
Let $f$ be an $\ell_\infty$ function.
Then, for every $x$,
$$\left |\bigg (P_\alpha-P_{\alpha_0}\bigg)f(x)\right |\leq \sum_{y\in \Gamma}\left |\mu_{\alpha}(x^{-1}y)-\mu_{\alpha_0}(x^{-1}y)\right|\|f\|_{\infty}.$$
The only possible $y$ such that we do not have $\mu_{\alpha_0}(x^{-1}y)=0$ and $\mu_\alpha(x^{-1}y)=0$ are in $x\Sigma$.
This yields
$$\sum_{y\in \Gamma}\left |\mu_{\alpha}(x^{-1}y)-\mu_{\alpha_0}(x^{-1}y)\right|=\sum_{y'\in \Sigma}\left |\mu_{\alpha}(y')-\mu_{\alpha_0}(y')\right |.$$
Therefore, for every $\epsilon>0$, if $\alpha$ is close enough to $\alpha_0$,
$$\left |\bigg (P_\alpha-P_{\alpha_0}\bigg)f(x)\right |\leq\epsilon \|f\|_\infty.$$
This proves continuity for the $\ell_\infty$ operator norm.
Now, let $f$ be an $\ell_1$ function. Then,
$$\left \|\bigg(P_\alpha-P_{\alpha_0}\bigg)f\right \|_1\leq \sum_{x,y\in \Gamma}\big|\mu_\alpha(y^{-1}x)-\mu_{\alpha_0}(y^{-1}x)\big||f(y)|.$$
Fix $y$.
Then, as above,
$$\sum_{x\in \Gamma}\big|\mu_\alpha(y^{-1}x)-\mu_{\alpha_0}(y^{-1}x)\big|=\sum_{x'\in \Sigma}\big|\mu_\alpha(x')-\mu_{\alpha_0}(x')\big|.$$
Let $\epsilon>0$.
Then if $\alpha$ is close enough to $\alpha_0$ this last sum is bounded by $\epsilon$, independently of $y$.
Consequently,
$$\left \|\bigg(P_\alpha-P_{\alpha_0}\bigg)f\right \|_1\leq \epsilon  \sum_{y\in \Gamma}|f(y)|=\epsilon \|f\|_1.$$
This proves continuity for the $\ell_1$ operator norm.
The Riesz-Thorin interpolation theorem \cite[Theorem~(6.27)]{Folland} shows that $P_\alpha$ converges to $P_{\alpha_0}$ for the $\ell_p$ operator norm, for every $1\leq p\leq +\infty$.
This proves the claim, taking $p=2$.
\end{proof}

Since $R_\alpha$ is the inverse of the spectral radius of $P_\alpha$, continuity of $R_\alpha$ follows.
To conclude it thus suffices to prove that $w_{r,\alpha}$ uniformly converges to 0.
By \cite[Proposition~9.18]{Woessbook},
$$w_{r,\alpha}=\sum_{n\geq 1}\mathbf P_\alpha\left (X_n=e,X_k\neq e,1\leq k<n, \text{ first step chosen using }\alpha\mu_1\right )r^n$$
and by the Markov property,
$$w_{r,\alpha}=r\alpha\mu_1(e)+r\sum_{x\neq e\in \Gamma_1}\mathbf P_\alpha(X_1=x)\sum_{n\geq 1}\mathbf P_\alpha\left (X_n=x^{-1},X_k\neq x^{-1},k<n\right )r^n.$$
Set
\begin{equation}\label{defF}
F_r(e,x)=\sum_{n\geq 1}\mathbf P_\alpha\left (X_n=x,X_k\neq x,k<n\right )r^n
\end{equation}
and for $i=0,1$, $x\in \Gamma_i$,
\begin{equation}\label{defFi}
F_r^i(e,x)=\sum_{n\geq 1}\mathbf P_{\mu_i}\left (X_n=x,X_k\neq x,k<n\right )r^n.
\end{equation}
Then, by \cite[Lemma~1.13~(b)]{Woessbook}, for $x\in \Gamma$,
$$G_r(e,x)=F_r(e,x)G_r(e,e)$$
and for $x\in \Gamma_i$,
$$G_r^i(e,x)=F_r^i(e,x)G_r^i(e,e).$$
Thus, by~(\ref{superWoess2}), we have
$$F_r(e,x)=F_{\zeta_i(r)}^i(e,x)$$
and so we recover \cite[Proposition~9.18]{Woessbook}.
In particular,
$$w_{r,\alpha}\leq \alpha R_\alpha \left (\mu_1(e)+\sum_{x\in \Gamma_1}\mu_1(x)F_{R_{\mu_1}}^1(e,x^{-1})\right ).$$
Finally, $F_{R_{\mu_1}}^1(e,x^{-1})$ is finite since we assume that the random walk on $\Gamma_1$ is transient at the spectral radius.
Thus, there exists a constant $C$ such that
$$w_{r,\alpha}\leq \alpha C$$
and so $w_{r,\alpha}$ uniformly converges to 0.
\end{proof}

\begin{lem}\label{lemalphato1}
There exists $\alpha_0$ such that for $\alpha\in [0,\alpha_0]$, the quantity $w'_{r,\alpha}$ stays bounded away from 1.
In particular, as $\alpha$ tends to 0, $\zeta_1(r)$ converges to 0 and
the convergence is uniform in $1\leq r\leq R_\alpha$.
\end{lem}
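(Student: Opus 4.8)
The plan is to run the same first-step computation as in the proof of Lemma~\ref{lemalphato0}, but now decomposing $w'_{r,\alpha}$ according to the first move of the walk, which takes place in $\Gamma_0$. Conditioning on $X_1$ (chosen using $(1-\alpha)\mu_0$) and separating the contribution $X_1=e$ from $X_1=x\neq e$ gives, exactly as for $w_{r,\alpha}$ with the roles of $\Gamma_0,\Gamma_1$ interchanged,
\begin{equation*}
w'_{r,\alpha}=r(1-\alpha)\mu_0(e)+r(1-\alpha)\sum_{x\neq e\in\Gamma_0}\mu_0(x)\,F_r(e,x^{-1}),
\end{equation*}
where $F_r$ is the first-passage generating function of the $\mu_\alpha$-walk defined in~(\ref{defF}). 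Since $x^{-1}\in\Gamma_0$, the identity $F_r(e,x^{-1})=F^0_{\zeta_0(r)}(e,x^{-1})$ established in the proof of Lemma~\ref{lemalphato0} applies, and the classical first-step decomposition of the first-return generating function $U^{\mu_0}_s(e,e)$ of the $\mu_0$-walk, namely $U^{\mu_0}_s(e,e)=s\mu_0(e)+s\sum_{x\neq e\in\Gamma_0}\mu_0(x)F^0_s(e,x^{-1})$, collapses the bracket at $s=\zeta_0(r)$ and yields
\begin{equation*}
w'_{r,\alpha}=\frac{r(1-\alpha)}{\zeta_0(r)}\,U^{\mu_0}_{\zeta_0(r)}(e,e).
\end{equation*}

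The key simplification is then purely algebraic: since $\zeta_0(r)=\frac{(1-\alpha)r}{1-w_{r,\alpha}}$, the prefactor is exactly $1-w_{r,\alpha}$, so
\begin{equation*}
w'_{r,\alpha}=(1-w_{r,\alpha})\,U^{\mu_0}_{\zeta_0(r)}(e,e)\le U^{\mu_0}_{\zeta_0(r)}(e,e),
\end{equation*}
using $0\le w_{r,\alpha}<1$ on $[1,R_\alpha]$, which holds once $\alpha_0$ is small enough by Lemma~\ref{lemalphato0}. Next I would bound $U^{\mu_0}_{\zeta_0(r)}(e,e)$ by a constant strictly less than $1$. The map $s\mapsto\zeta_0(s)$ is nondecreasing on $[1,R_\alpha]$ (its numerator is nondecreasing, and $w_{s,\alpha}$ is a power series in $s$ with nonnegative coefficients, hence $\frac{1}{1-w_{s,\alpha}}$ is nondecreasing), so for $r\le R_\alpha$ one has $\zeta_0(r)\le\zeta_0(R_\alpha)$; moreover $\zeta_0(R_\alpha)\le R_{\mu_0}$, because for $s<R_\alpha$ the identity~(\ref{superWoess2}) at $x=y=e$ forces $G^{\mu_0}_{\zeta_0(s)}(e,e)=(1-w_{s,\alpha})G_s(e,e)<\infty$, hence $\zeta_0(s)\le R_{\mu_0}$, and this passes to the limit $s\uparrow R_\alpha$ by continuity. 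By the standing assumption that the $\mu_0$-walk is transient at its spectral radius, $G^{\mu_0}_{R_{\mu_0}}(e,e)<\infty$, i.e. $\rho_0:=U^{\mu_0}_{R_{\mu_0}}(e,e)=1-1/G^{\mu_0}_{R_{\mu_0}}(e,e)<1$; since $s\mapsto U^{\mu_0}_s(e,e)$ is nondecreasing, this gives $w'_{r,\alpha}\le\rho_0<1$ uniformly over $\alpha\in[0,\alpha_0]$ and $1\le r\le R_\alpha$, which is the first assertion.

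For the second assertion, I would simply feed the uniform bound into $\zeta_1(r)=\frac{\alpha r}{1-w'_{r,\alpha}}$ to obtain $\zeta_1(r)\le\frac{\alpha r}{1-\rho_0}\le\frac{\alpha R_\alpha}{1-\rho_0}$, and since $R_\alpha\to R_{\mu_0}<\infty$ as $\alpha\to0$ by Lemma~\ref{lemalphato0}, the right-hand side tends to $0$ independently of $r\in[1,R_\alpha]$. The only genuinely delicate point is the bookkeeping that makes the spurious factor $r$ disappear --- this is precisely the identity $\frac{r(1-\alpha)}{\zeta_0(r)}=1-w_{r,\alpha}$; without it the estimate would blow up as $r\to R_\alpha$ (where $\zeta_0(r)$ itself stays bounded by $R_{\mu_0}$ but $r$ does not go to $0$). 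Everything else is a direct transcription of the first-passage computations already carried out for $w_{r,\alpha}$, combined with the transience-at-the-spectral-radius hypothesis on $\Gamma_0$.
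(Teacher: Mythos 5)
Your proof is correct. The paper's own argument is much shorter and is routed differently on the surface: it simply bounds $w'_{r,\alpha}$ by the full first-return generating function $U(r)$ of the $\mu_\alpha$-walk on $\Gamma$ (every first return to $e$ begins with a step drawn from one of the two factors), invokes the renewal identity $G_r(e,e)=1/(1-U(r))$, and observes that $G_r(e,e)$ is uniformly bounded for $\alpha$ near $0$ via~(\ref{superWoess2}) and Lemma~\ref{lemalphato0}; hence $U(r)$, and a fortiori $w'_{r,\alpha}$, stays away from $1$. Your route instead computes $w'_{r,\alpha}$ exactly by a first-step decomposition and transfers to the $\mu_0$-walk on $\Gamma_0$, arriving at the identity $w'_{r,\alpha}=(1-w_{r,\alpha})\,U^{\mu_0}_{\zeta_0(r)}(e,e)$; unwinding this via $G^{\mu_0}=1/(1-U^{\mu_0})$ and~(\ref{superWoess2}) shows it is equivalent to $U(r)=w_{r,\alpha}+w'_{r,\alpha}$, so the two proofs ultimately rest on the same ingredients (the factorization~(\ref{superWoess2}), the uniform control of $w_{r,\alpha}$ from Lemma~\ref{lemalphato0}, the inequality $\zeta_0(r)\le R_{\mu_0}$, and transience of the $\mu_0$-walk at its spectral radius), with yours being a sharpened, fully explicit version of the paper's crude bound. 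Two details in your write-up are genuinely worth having: you justify $\zeta_0(r)\le R_{\mu_0}$ for all $r\le R_\alpha$ explicitly, whereas the paper leaves this buried in the phrase ``uniformly bounded''; and you isolate the cancellation $\frac{r(1-\alpha)}{\zeta_0(r)}=1-w_{r,\alpha}$, which is indeed what prevents the estimate from degenerating as $r\to R_\alpha$. The deduction of the second assertion from the first is the same in both proofs.
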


\begin{proof}
By \cite[Proposition~9.18]{Woessbook},
$$w'_{r,\alpha}\leq U(r)=\sum_{n\geq 1}\mathbf P_\alpha (X_n=e,X_k\neq e,1\leq k<n)r^n.$$
Also, by \cite[Lemma~1.13]{Woessbook},
$$G_r(e,e)=\frac{1}{1-U(r)}.$$
Note that $G_r(e,e)$ depends on $\alpha$, but according to Lemma~\ref{lemalphato0} and~(\ref{superWoess2}), $G_r(e,e)$ is uniformly bounded for $\alpha$ in a fixed neighborhood of 0.
Consequently, $U(r)$ stays bounded away from 1.
\end{proof}

\subsubsection{Example~B}

Here is now an example where $\omega_P(r)>0$ and $\omega_P(r)<\omega_\Gamma(r)$.
Consider the group
$\Gamma=\mathbb F_2*\mathbb{Z}^d$, where $\mathbb F_2$ is the free group with two generators.
In other words, with the notations above, we set $\Gamma_0=\mathbb F_2$ and $\Gamma_1=\mathbb{Z}^d$.
Then $\Gamma$ is hyperbolic relative to $\Gamma_0$ and $\Gamma_1$.

We choose $d\geq 3$ so that every finitely supported admissible random walk on $\Gamma_1$ is transient at the spectral radius.
Choose an adapted measure $\mu_\alpha=\alpha\mu_1+(1-\alpha)\mu_0$ as above.
Then, $\mu_0$ is a probability measure on the non-amenable group $\mathbb F_2$ whose finite support generates $\mathbb F_2$,
hence, $R_{\mu_0}>1$.
According to Lemma~\ref{lemalphato0}, we can thus fix $\alpha$ so that $\zeta_0(r)>1$ for every $r$ in a neighborhood of $R_\alpha$, say $r_0< r\leq R_\alpha$.
Now that $\alpha$ is fixed, we omit it in the notations.
By~(\ref{superWoess2}),
$$\sum_{x\in \mathbb F_2\cap S_n}G_r(e,x)=\frac{1}{1-w_r}\sum_{x\in \mathbb F_2\cap S_n}G_{\zeta_0(r)}^{\mu_0}(e,x).$$
Since $\zeta_0(r)>1$, \cite[Note~1.7]{GouezelLalley} shows that
$\sum_{x\in \mathbb F_2\cap S_n}G_{\zeta_0(r)}^{\mu_0}(e,x)$ diverges as $n$ tends to infinity.
In particular, we see that $\omega_{\mathbb F_2}(r)\geq 0$.
On the other hand, by~(\ref{exponentialdifferenceGreen}), for $s>r$, $\omega_{\mathbb F_2}(s)>\omega_{\mathbb F_2}(r)$, so for large enough $r$, $\omega_{\mathbb F_2}(r)>0$.

We also deduce from~(\ref{superWoess2}) that $\omega_{\mathbb F_2}(r)=\omega_{\mu_0}(\zeta_0(r))$, where $\omega_{\mu_0}$ is the growth rate of the Green function associated with $\mu_0$ on $\mathbb F_2$.
Since $\mathbb F_2$ is hyperbolic, by \cite[Theorem~3.1]{SWX}, the Green function has purely exponential growth, i.e.
$$\sum_{x\in \mathbb F_2\cap S_n}G_r^{\mu_0}(e,x)\asymp \mathrm{e}^{n\omega_{\mu_0}(r)}.$$
Consequently, the Poincaré series
$$\Theta_{r,s}(\mathbb F_2)=\sum_{x\in \mathbb F_2}G_r(e,x)\mathrm{e}^{-sd(e,x)}$$
diverges at $s=\omega_{\mathbb F_2}(r)$.
By Corollary~\ref{parabolicdivergentgrowthgap}, $\omega_{\mathbb F_2}(r)<\omega_\Gamma(r)$.

\subsubsection{Example~C}
Finally, here is a last example.
We assume that there exists a finitely generated group $\Gamma_0$ endowed with an admissible finitely supported probability measure $\mu_0$ such that for some $r_0< R_{\mu_0}$, the Poincaré series
$$\Theta_{r_0,s}(\Gamma_0)=\sum_{x\in \Gamma_0}G_{r_0}^{\mu_0}(e,x)\mathrm{e}^{-sd(e,x)}$$
converges at $s=\omega_{\mu_0}(r_0)>0$.

We consider the free product $\Gamma=\Gamma_0*\Gamma_1$ where $\Gamma_1=\mathbb{Z}^d$, $d\geq 3$.
As above, $\Gamma$ is hyperbolic relative to $\Gamma_0$ and $\Gamma_1$.
We consider the adapted measure $\mu_\alpha=\alpha \mu_1+(1-\alpha)\mu_0$.
We will prove that for some $r$, $\omega_{\Gamma_0}(r)=\omega_\Gamma(r)$.
By Lemma~\ref{lemalphato0}, $\zeta_0(R_\alpha)$ converges to $R_{\mu_0}$ as $\alpha$ converges to 0.
Thus, for small enough $\alpha$, there exists $r_\alpha$ such that $\zeta_0(r_\alpha)=r_0$.
Also, by Lemma~\ref{lemalphato1}, $\zeta_1(r_\alpha)$ converges to 0 and $w'_{r,\alpha}$ stays bounded away from 1 as $\alpha$ tends to 0.
By~(\ref{superWoess2'}), for every $\epsilon>0$, there exists $\alpha$ such that for every $x\neq e\in \Gamma_1$,
\begin{equation}\label{smallGs}
G_r(e,x)\leq \epsilon.
\end{equation}

Every element $x\in \Gamma$ can be written as $x=a_1b_1...a_kb_k$, where $a_i\in \Gamma_0$, $b_i\in \Gamma_1$ and $a_i\neq e$ except maybe $a_1$ and $b_i\neq e$ except maybe $b_k$.
Moreover, since the random walk is adapted to the free product structure, it has to pass through $a_1b_1...a_k$ before reaching $a_1b_1...a_kb_k$.
Consequently,
$$\frac{G_r(e,x)}{G_r(e,e)}=\frac{G_r(e,a_1b_1...a_k)}{G_r(e,e)}\frac{G_r(e,b_k)}{G_r(e,e)},$$
see also \cite[(3.3)]{WoessMartin}.
Note that this is an exact version of the relative Ancona inequalities in the specific case of adapted random walks on free products.
We thus get

\begin{equation}\label{boundPoincare}
\begin{split}
&\Theta_{r,s}(\Gamma)=\sum_{x\in \Gamma}G_r(e,x)\mathrm{e}^{-sd(e,x)}\\
&\leq G_r(e,e)\sum_{k\geq 0}\left (\sum_{a\in \Gamma_0\setminus\{e\}}\frac{G_r(e,a)}{G_r(e,e)}\mathrm{e}^{-sd(e,a)}\right )^k\left (\sum_{b\in \Gamma_1\setminus\{e\}}\frac{G_r(e,b)}{G_r(e,e)}\mathrm{e}^{-sd(e,b)}\right )^k.
\end{split}
\end{equation}

By~(\ref{superWoess2}), $\omega_{\Gamma_0}(r_\alpha)=\omega_{\mu_0}(r_0)$, so $\omega_\Gamma(r_\alpha)\geq \omega_{\mu_0}(r_0)>0$.
Therefore,
\begin{align*}
\sum_{x\in \Gamma_0}\frac{G_{r_\alpha}(e,x)}{G_{r_\alpha}(e,e)}\mathrm{e}^{-\omega_\Gamma(r_\alpha)d(e,x)}\leq &\sum_{x\in \Gamma_0}\frac{G_{r_0}^{\mu_0}(e,x)}{G_{r_0}^{\mu_0}(e,e)}\mathrm{e}^{-\omega_{\mu_0}(r_0)d(e,x)}\\
&=\frac{1}{G_{r_0}^{\mu_0}(e,e)}\Theta_{r_0,\omega_{\mu_0}(r_0)}(\Gamma_0)<+\infty.
\end{align*}
If $x\in \Gamma_1$, then as explained above, we have
$$\frac{G_r(e,x)}{G_r(e,e)}=F_r(e,x)=F_{\zeta_1(r)}^1(e,x)$$ where $F$ and $F^1$ are defined by~(\ref{defF}) and~(\ref{defFi}).
Since the random walk on $\Gamma_1=\mathbb Z^d$ is transient at the spectral radius, there exists $C_1$ such that
$$F_{\zeta_1(r)}^1(e,x)\leq F_{R_{\mu_1}}^1(e,x)\leq C_1.$$
Also, since $\Gamma_1=\mathbb{Z}^d$ has polynomial growth, there exists a constant $C_2$ such that
$$\sum_{x\in \Gamma_1}\frac{G_r(e,x)}{G_r(e,e)}\mathrm{e}^{-\omega_{\Gamma}(r_\alpha)d(e,x)}\leq C_1 \sum_{x\in \Gamma_1}\mathrm{e}^{-\omega_{\mu_0}(r_0)d(e,x)}\leq C_2.$$
We now choose $\epsilon>0$ such that
$$\epsilon C_2 \frac{\Theta_{r_0,\omega_{\mu_0}(r_0)}(\Gamma_0)}{G_{r_0}^{\mu_0}(e,e)}=C_3<1$$
and we fix $\alpha$ such that
by~(\ref{smallGs}),
$$\sum_{b\in \Gamma_1\setminus\{e\}}\frac{G_r(e,b)}{G_r(e,e)}\mathrm{e}^{-sd(e,b)}\leq \epsilon \sum_{x\in \Gamma_1}\mathrm{e}^{-\omega_{\Gamma}(r_\alpha)d(e,x)}\leq \epsilon C_2.$$
Consequently, by~(\ref{boundPoincare}),
$$\Theta_{r_\alpha,\omega_\Gamma(r_\alpha)}(\Gamma)\leq \sum_{k\geq 0}C_3^k.$$
This proves that the Poincaré series $\Theta_{r_\alpha,\omega_\Gamma(r_\alpha)}(\Gamma)$ is convergent.
According to Lemma~\ref{LowerBDLem}, we deduce that $\omega_{P}(r_\alpha)=\omega_\Gamma(r_\alpha)$ for some parabolic group $P$.
Since $\Gamma_1=\mathbb Z^d$ has polynomial growth, by Proposition~\ref{subexpgrowthparabolicgap}, we necessarily have $P=\Gamma_0$.

\begin{rem}
We assumed that the Poincaré series of $\Gamma_0$ was convergent at its critical exponent for some $r_0<R_{\mu_0}$.
However, this was only for convenience.
If this Poincaré series is convergent for $r=R_{\mu_0}$, then we can make the same construction and choose instead $r_\alpha=R_\alpha$.
We only need to know that $\zeta_0(R_\alpha)=R_{\mu_0}$ for small enough $\alpha$.
The fact that $\zeta_0(R_\alpha)=R_{\mu_0}$ is equivalent to the fact that $\mu_\alpha$ is spectrally degenerate along $\Gamma_0$, so we just need to ensure that for small enough $\alpha$, $\mu_\alpha$ is spectrally degenerate along $\Gamma_0$.
Now, if the Poincaré series is convergent for $r=R_{\mu_0}$, then $\sum_{x\in S_n\cap \Gamma_0}G^{\mu_0}_{R_{\mu_0}}(e,x)\leq C\mathrm{e}^{-\omega_{\mu_0}(R_{\mu_0})}$ and so
$$\sum_{x\in \Gamma_0}G^{\mu_0}_{R_{\mu_0}}(e,x)G^{\mu_0}_{R_{\mu_0}}(x,e)\leq C\Theta_{r_0,\omega_{\mu_0}}(r_0)<\infty.$$
By \cite[Proposition~1.9]{GouezelLalley}, this implies that the first derivative of the Green function $t\mapsto G^{\mu_0}_t(e,e)$ is finite at $R_{\mu_0}$.
Using the work of \cite[Section~7]{CandelleroGilch}, we can then construct such a spectrally degenerate probability $\mu_\alpha$ along $\Gamma_0$ for small enough $\alpha$.
\end{rem}

\begin{rem}\label{CGMGap}
In \cite[Lemma~4.7]{CandelleroGilchMuller}, the authors prove that $\omega_P(r)<\omega_\Gamma(r)$ always holds.
However, their proof is incorrect.
Indeed, they use the following inequality
$$\sum_{x\in S_{n+m}\cap P}G_r(e,x)\leq C\sum_{x\in S_n\cap P}G_r(e,x)\sum_{x\in S_m\cap P}G_r(e,x).$$
Proving such an inequality would require that for any $x\in S_{n+m}$ and any $y\in S_n$ on a geodesic from $e$ to $x$, we have
$$G_r(e,x)\leq C G_r(e,y)G_r(y,x).$$
This in turn would require Ancona inequalities for the group $P$.
It is an open question whether the fact that Ancona inequalities hold for any geodesic implies that the group is hyperbolic, but it is easy to prove that they do not hold in a parabolic subgroup $P$ if $P$ is virtually abelian, see for instance \cite[Section~5.2]{DG20}.
\end{rem}

Let us give some final remarks to conclude this discussion.
The last example raises the following question.
\begin{quest}\label{question}
Does there exist a finitely generated group $\Gamma_0$ endowed with an admissible finitely supported probability measure $\mu_0$ such that for some $r\leq R_{\mu_0}$, the Poincaré series $\Theta_{r,s}(\Gamma_0)$ is convergent at the radius of convergence $s=\omega_{\Gamma_0}(r)>0$ ?
\end{quest}

If there is a positive answer to this question, then as we saw, there exists a relatively hyperbolic group $\Gamma$ endowed with an admissible finitely supported probability measure $\mu$ and a parabolic subgroup $P$ for which $\omega_P(r)=\omega_\Gamma(r)$ at some $r$.
Moreover, if the measure $\mu_0$ is symmetric, then we can choose the measure $\mu$ to be symmetric as well.

On the contrary, if this question has a negative answer, then Corollary~\ref{parabolicdivergentgrowthgap} suggests that we always have $\omega_P(r)<\omega_\Gamma(r)$.
However, this corollary requires the Poincar\'e series
$$\Theta_{r,s}(P)=\sum_{p\in P} G_r(e,p) e^{-s d(e,p)} $$
to be divergent, where $G_s(e,p)$ is the Green function associated with the measure $\mu$ on $\Gamma$.
Using \cite[Lemma~4.4]{DG21} as above, we can rewrite this Poincar\'e series as
$$\Theta_{r,s}(P)=\sum_{p\in P} G_1^{r,P}(e,p) e^{-s d(e,p)},$$
where $G_1^{r,P}$ is the Green function at 1 associated with the first return kernel $p^{r,P}$ defined in~(\ref{deffirstreturnkernel}).
Unfortunately, this first return kernel is not in general finitely supported, so even if Question~\ref{question} has a negative answer, we cannot deduce that this Poincar\'e series is divergent, hence we cannot deduce that $\omega_P(r)<\omega_\Gamma(r)$.

\section{The growth rate of the trace of the branching random walk}\label{SectiongrowthrateBRW}
\label{s:vgrbrw}

Recall that $\mathcal{P}_n$ is the set of points in $S_n$ that are eventually visited by some particle of the branching random walk and that $M_n=\sharp \mathcal{P}_n$.
In this section, we compare the growth rate of the Green function $\omega_\Gamma(r)$ with the growth rate of the branching random walk $\log \limsup M_n^{1/n}$.
Our goal is to prove the following proposition.
\begin{prop}\label{P:vgrBRW}
For $r\in [1, \rho^{-1}]$, $\omega_{\Gamma}(r) = \limsup_{n \to \infty} \frac{1}{n} \log M_n$ almost surely.
\end{prop}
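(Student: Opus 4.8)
The plan is to prove the two inequalities separately, as is standard for such growth-rate identities. For the upper bound $\limsup_n \tfrac1n \log M_n \le \omega_\Gamma(r)$, I would argue via a first-moment (expectation) computation. Recall that the expected number of particles of the branching random walk located at a given vertex $x$ after $n$ steps equals $r^n p_n(e,x)$, where $r = \mathbf{E}[\nu]$. Summing the generating function over all times gives $\sum_n r^n p_n(e,x) = G_r(e,x)$, so the expected total number of visits to $x$ (over all times) is $G_r(e,x)$, and hence $\mathbf{E}[\sharp\{x \in S_n : x \text{ is visited}\}] \le \sum_{x \in S_n} G_r(e,x) = H_r(n)$. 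By Markov's inequality, $\mathbf{P}(M_n \ge \mathrm{e}^{(\omega_\Gamma(r)+\epsilon)n}) \le H_r(n)\mathrm{e}^{-(\omega_\Gamma(r)+\epsilon)n}$, which by Lemma~\ref{UpperBDLem} (namely $H_r(n) \le C'\mathrm{e}^{n\omega_\Gamma(r)}$) is summable in $n$; Borel--Cantelli then gives the upper bound almost surely. This step is general and does not use relative hyperbolicity.

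The lower bound $\limsup_n \tfrac1n \log M_n \ge \omega_\Gamma(r)$ is the substantial part. Here a naive second-moment argument fails because $\mathbf{E}[M_n]$ can be much larger than $M_n$ on the relevant event, so one must restrict attention to a well-chosen subfamily of geodesics, following the strategy of \cite{SWX}. Concretely, fix a large $L$ and restrict to $L$-Morse geodesics, i.e. those spending at most time $L$ in every parabolic coset; along such geodesics every point is an $(\eta,L')$-transition point for suitable $L'$, so the relative Ancona inequalities of \cite{DG21} apply uniformly. Define $H_r^L(n) = \sum_x G_r(e,x)$ where the sum is over $x \in S_n$ admitting an $L$-Morse geodesic from $e$, and set $\omega_\Gamma^L(r) = \limsup_n \tfrac1n \log H_r^L(n)$. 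One then runs a truncated second-moment / percolation-type argument on the tree of the branching random walk restricted to these Morse directions: using Ancona inequalities one shows the first and second moments of a suitably weighted counting functional along $L$-Morse geodesics are comparable, which produces a surviving sub-population whose trace grows at rate $\omega_\Gamma^L(r)$ with positive probability; a $0$--$1$ argument (using that BRW survives forever, via the irreducibility and the branching) upgrades "positive probability" to "almost surely." Finally one invokes Lemma~\ref{omegaL} to let $L \to \infty$, so that $\omega_\Gamma^L(r) \to \omega_\Gamma(r)$, completing the proof.

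The main obstacle is the lower bound, and within it, two points require care. First, one needs the comparability of first and second moments of the relevant weighted particle counts along Morse geodesics; this is where the relative Ancona inequalities are essential, since they are exactly what let one bound $G_r(e,x)$ by $G_r(e,y)G_r(y,x)$ up to a multiplicative constant when $y$ lies on a geodesic through a transition point, and hence control the overlap between distinct geodesic branches. Second, the passage from a positive-probability event to an almost sure statement: one must use the branching structure to say that the relevant "good growth" event is essentially inherited by infinitely many independent sub-branching-random-walks emanating from descendants, and since the population survives forever (as $r > 1$), at least one such copy realizes the good event almost surely. I expect the bookkeeping in the second-moment estimate — keeping all constants uniform in $n$ while summing over pairs of Morse geodesics that may share a long common prefix — to be the most delicate part of the argument.
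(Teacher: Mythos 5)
Your overall architecture coincides with the paper's: the upper bound is the same first-moment/Markov/Borel--Cantelli argument (the paper avoids invoking $H_r(n)\le C'\mathrm{e}^{n\omega_\Gamma(r)}$ and uses only the definition of the $\limsup$, but your variant via Lemma~\ref{UpperBDLem} is equally valid since that upper bound needs no parabolic gap), and the lower bound is exactly the paper's scheme: restrict to $L$-transitional geodesics, prove purely exponential growth of $H_{r,L}(n)$ via Ancona and the extension lemma, run a second-moment/Paley--Zygmund argument, upgrade to almost sure via the zero--one law for $\limsup M_n^{1/n}$, and let $L\to\infty$ using Lemma~\ref{omegaL}.

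There is, however, one genuine gap: your plan does not explain how to reach the critical case $r=\rho^{-1}$, nor offspring laws $\nu$ with infinite variance, both of which are included in the statement. The second-moment machinery you describe really does require $r<\rho^{-1}$ (the correlation estimate $\sum_{z}G_r(e,z)G_r(z,x)G_r(z,y)\le cG_r(e,w)G_r(w,x)G_r(w,y)$ uses the exponential decay of $G_r$ and the convergence of $\sum_z G_r(v,z)G_r(z,v)$, both of which fail or are unavailable at $r=\rho^{-1}$) and requires $\mathbf E[\nu^2]<\infty$ (the bound $\mathbf E[Z_xZ_y]\le C\sum_z G_r(e,z)G_r(z,x)G_r(z,y)$ needs it). The paper closes this by a stochastic domination argument: for each $\epsilon>0$ one constructs $\nu'$ with mean $r-\epsilon$ and finite second moment, stochastically dominated by $\nu$, applies the subcritical result to the dominated process, and then uses the continuity of $r\mapsto\omega_\Gamma(r)$ (Corollary~\ref{corolowersemicontinuous}, whose left-continuity at $R$ is itself a nontrivial input) to let $\epsilon\to0$. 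Without this final approximation step your argument proves the proposition only for $r<\rho^{-1}$ and second-moment offspring distributions.
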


Theorem~\ref{Thmvolumegrowthconst} is then a consequence of Proposition~\ref{P:vgrBRW}, Corollary~\ref{corolowersemicontinuous} and Corollary~\ref{coroboundv/2}.

\subsection{Upper bound}
We first prove the following.

\begin{prop}\label{upperboundvgrBRW}
Almost surely, we have
$$\limsup_{n \to \infty} \frac{1}{n} \log M_n \leq \omega_\Gamma(r).$$
\end{prop}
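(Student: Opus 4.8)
The plan is to bound the expected number of particles reaching the sphere $S_n$ by the sum of Green functions $H_r(n)$, and then use a Borel--Cantelli argument together with the already-established estimate $H_r(n)\leq C'\mathrm{e}^{n\omega_\Gamma(r)}$ from Lemma~\ref{UpperBDLem}. This part of the argument is genuinely soft and uses almost nothing about relative hyperbolicity beyond the definition of $\omega_\Gamma(r)$.

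First I would set up the many-to-one / first-moment formula. Label the particles of the branching random walk by a Galton--Watson tree, and for a particle $v$ at generation $k$ write $X_v$ for its position. A particle contributes to $\mathcal{P}_n$ if it (or one of its ancestors, but it suffices to count visits) ever sits at a given point $x\in S_n$. Using that $\mathbf{E}[\nu]=r$, the expected number of particles $v$ with $X_v=x$, summed over all generations, is exactly
$$\sum_{k\geq 0} r^k p_k(e,x)=G_r(e,x),$$
the classical computation underlying the correspondence between branching random walks and the $r$-Green function (this is where the hypothesis $r\leq R=\rho^{-1}$ enters: the series converges). Hence
$$\mathbf{E}\Big[\sum_{x\in S_n}\#\{v: X_v=x\}\Big]=\sum_{x\in S_n}G_r(e,x)=H_r(n).$$
Since $M_n=\#\mathcal{P}_n\leq \sum_{x\in S_n}\#\{v:X_v=x\}$ (a point in $\mathcal{P}_n$ is visited by at least one particle), we get $\mathbf{E}[M_n]\leq H_r(n)\leq C'\mathrm{e}^{n\omega_\Gamma(r)}$.

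Now fix $\varepsilon>0$ and apply Markov's inequality: $\mathbf{P}\big(M_n\geq \mathrm{e}^{n(\omega_\Gamma(r)+\varepsilon)}\big)\leq C'\mathrm{e}^{-n\varepsilon}$, which is summable in $n$. By Borel--Cantelli, almost surely $M_n< \mathrm{e}^{n(\omega_\Gamma(r)+\varepsilon)}$ for all large $n$, so $\limsup_n \frac1n\log M_n\leq \omega_\Gamma(r)+\varepsilon$. Letting $\varepsilon\to 0$ along a countable sequence gives the claim almost surely.

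The only mild subtlety — and the step I would be most careful about — is justifying the first-moment identity $\mathbf{E}[\sum_{x\in S_n}\#\{v: X_v=x\}]=H_r(n)$ rigorously, i.e.\ interchanging the expectation over the Galton--Watson tree with the sum over generations and positions. This is standard (one conditions on the tree, uses independence of the displacements, and Tonelli since everything is nonnegative), but one should note that a particle alive at generation $k$ contributes $p_k(e,\cdot)$ in expectation and the expected number of generation-$k$ particles is $r^k$, these being independent because offspring numbers and motions are independent; summing the geometric-type series requires precisely $r\leq R$. Everything else is a routine Markov/Borel--Cantelli estimate.
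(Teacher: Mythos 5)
Your argument is correct and is essentially the paper's proof: both reduce to $\mathbf{E}[M_n]\leq H_r(n)$ via the many-to-one formula (the paper phrases it as $M_n=\sum_{x\in S_n}\mathbb{1}_{Z_x\geq 1}$ with $\mathbf{E}[Z_x]=G_r(e,x)$), followed by Markov and Borel--Cantelli. The only cosmetic difference is that the paper avoids invoking the bound $H_r(n)\leq C'\mathrm{e}^{n\omega_\Gamma(r)}$ and instead observes directly from the definition of $\omega_\Gamma(r)$ as a $\limsup$ that $\sum_n H_r(n)(\mathrm{e}^{\omega_\Gamma(r)}+\epsilon)^{-n}<\infty$; your use of Lemma~\ref{UpperBDLem} is equally valid.
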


The proof of \cite{SWX}, which relies on the Borel-Cantelli lemma uses the purely exponential growth of  the Green functions over spheres.
However, only a small adaptation is needed to apply it here.
We rewrite it for convenience.

\begin{proof}
For $x\in \Gamma$, we denote by $Z_x$ the number of particles of the branching random walk that ever visit $x$.
The many-to-one formula states that
$$\mathbf E[Z_x]=G_r(e,x).$$
Then,
$M_n=\sum_{x\in S_n}\mathbb{1}_{Z_x\geq 1}$, so by the Markov inequality,
$$\mathbf E[M_n]=\sum_{x\in S_n}\mathbf P (Z_x\geq 1)\leq \sum_{x\in S_n}\mathbf E[Z_x]=H_r(n).$$

Let $\epsilon>0$.
By the Markov inequality,
$$\mathbf{P}\left (M_n^{1/n}\geq \mathrm{e}^{\omega_\Gamma(r)}+\epsilon\right)\leq \frac{\mathbf{E}[M_n]}{(\mathrm{e}^{\omega_\Gamma(r)}+\epsilon)^n}\leq \frac{H_r(n)}{(\mathrm{e}^{\omega_\Gamma(r)}+\epsilon)^n}.$$
By definition,
$\omega_\Gamma(r)=\log \limsup H_r(n)^{1/n}$,
so there are at most finitely many $n$ such that
$$H_r(n)^{1/n}\geq \mathrm{e}^{\omega_\Gamma(r)}+\epsilon/2.$$
Therefore,
$$\sum_n \frac{H_r(n)}{(\mathrm{e}^{\omega_\Gamma(r)}+\epsilon)^n}<\infty.$$
The statement of the lemma is thus a consequence of the Borel-Cantelli lemma.
\end{proof}

\subsection{Lower bound}
Before proving the lower bound, we first recall some geometric lemmas about relatively hyperbolic groups.
Let $\mathbb P=\{gP: g\in \Gamma, P\in\mathbb P_0\}$ be the collection of all parabolic cosets.
 Recall that $\eta$ and $L(\eta)$ are fixed so that for $L\geq L(\eta)$, any $(\eta,L)$-transition point satisfies the results of Section~\ref{sectiontransitionpoints}.

\begin{defn}\label{deftransitional}
For $L\geq L(\eta)$, an $(\eta,L)$-transition point on a geodesic is called an $L$-transition point.
A geodesic $\alpha$ is called \textit{$L$-transitional} if every point on $\alpha$ is an $L$-transition point.
\end{defn}

\begin{lem}\label{RelThinTriangleLem2}
Let $L\geq L(\eta)$.
There exists $K$ such that the following holds.
For every $x,y,z\in \Gamma$,
if both $[x,z]$ and $[y,z]$ are $L$-transitional, then there exists a point $w$ within $K$ of an $L$-transition point on $[x,z]$, an $L$-transition point on $[y,z]$ and an $L$-transition point on $[x,y]$.
\end{lem}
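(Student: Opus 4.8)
The plan is to reduce the statement to a discrete intermediate‑value argument along the side $[x,z]$, the key input being the relative thin triangle property for transition points (Lemma~\ref{ThinTransitional2}). Write $\gamma_1=[x,z]$, $\gamma_2=[y,z]$, $\gamma_3=[x,y]$, let $C_0=C(\eta,L)$ be the constant of Lemma~\ref{ThinTransitional2}, and parametrise $\gamma_1$ by arc length as $\gamma_1\colon\{0,1,\dots,\ell_1\}\to\Gamma$ with $\gamma_1(0)=x$, $\gamma_1(\ell_1)=z$, $\ell_1=d(x,z)$. Since $\gamma_1$ is $L$-transitional, every $\gamma_1(i)$ is an $L$-transition point on $\gamma_1$, so Lemma~\ref{ThinTransitional2} applies to it: $\gamma_1(i)$ lies within $C_0$ of an $L$-transition point on $\gamma_2$ or on $\gamma_3$. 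Let $A$ (resp.\ $B$) be the set of indices $i\in\{0,\dots,\ell_1\}$ for which $\gamma_1(i)$ is within $C_0$ of an $L$-transition point on $\gamma_2$ (resp.\ on $\gamma_3$); then $A\cup B=\{0,\dots,\ell_1\}$.

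Then I would settle the two ``endpoint'' membership facts. Since $\gamma_2=[y,z]$ is $L$-transitional, its endpoint $z$ is an $L$-transition point on $\gamma_2$, so $\gamma_1(\ell_1)=z$ is within $0\le C_0$ of one and $\ell_1\in A$. At the other end I would show $0\in B$ by checking that $x$ is within a bounded distance (depending only on $\eta,L$) of an $L$-transition point on $\gamma_3$: if $x$ were $(\eta,L)$-deep in some $P\in\mathbb P$ along $\gamma_3$, then $x\in N_\eta(P)$ would be the entry point of $\gamma_3$ into $N_\eta(P)$, which is an $L$-transition point by Lemma~\ref{EntryTransLem} --- a contradiction --- so $x$ itself is such a point, giving $0\in B$ (after enlarging $C_0$ by a bounded amount if necessary). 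Now let $i_0=\min A$, which exists because $\ell_1\in A$. If $i_0=0$, set $w=x$. If $i_0\ge 1$, then $i_0-1\notin A$, hence $i_0-1\in B$ because $A\cup B$ covers $\{0,\dots,\ell_1\}$; set $w=\gamma_1(i_0)$, so that $d\big(w,\gamma_1(i_0-1)\big)=1$.

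In either case $w$ is a point of $\gamma_1=[x,z]$, hence an $L$-transition point on $[x,z]$; $w$ is within $C_0$ of an $L$-transition point on $[y,z]$, since the relevant index ($i_0$ or $0$) lies in $A$; and $w$ is within $C_0+1$ of an $L$-transition point on $[x,y]$, either directly (when $w=x$ and $0\in B$) or through $\gamma_1(i_0-1)$ (when $i_0-1\in B$, using $d(w,\gamma_1(i_0-1))=1$). Taking $K=C_0+1$, adjusted upward by the bounded error from Lemma~\ref{EntryTransLem}, finishes the proof. I expect the only real work to be this constant‑bookkeeping at the two ends --- guaranteeing that $z$ is (close to) an $L$-transition point on $\gamma_2$ and $x$ one on $\gamma_3$, which is exactly what anchors the intermediate‑value step --- together with the harmless degenerate case in which $[x,y]$ is so short that it carries no interior transition point; the interior step, where Lemma~\ref{ThinTransitional2} does all the work, is immediate.
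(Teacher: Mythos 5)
Your proof is correct and follows essentially the same route as the paper: locate the index along $[x,z]$ where the side switches from being $C$-close to a transition point of $[y,z]$ to one of $[x,y]$ (the paper phrases this as ``the last point $w_0$ on $[z,x]$ within $C$ of $[z,y]$'' and takes the next point), and exploit that $[x,z]$ being $L$-transitional makes the chosen point a transition point on $[x,z]$ for free. Your explicit treatment of the endpoint cases (that $z$ is transitional on $[y,z]$, and that $x$ is transitional on $[x,y]$ via Lemma~\ref{EntryTransLem}) is a welcome bit of extra care that the paper's proof glosses over.
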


\begin{proof}
Let $x,y,z$ satisfy the statement of the lemma.
Applying Lemma~\ref{ThinTransitional2}, consider the last point $w_0$ on $[z,x]$ which is within $C$ of $[z,y]$ and let $w$ be the next point on $[z,x]$.
Since $w$ also is $L$-transitional, by definition of $w_0$, $w$ is within $C$ of an $L$-transition point on $[x,y]$.
Moreover, $w$ is within $C+1$ of an $L$-transitional point on $[y,z]$.
\end{proof}

\begin{lem}\label{existenceofcenter}
There exists a constant $C$ such that the following holds.
Let $x_0,x_1,x_2$ be three points in $\Gamma$.
Then, there exist $w_0,w_1,w_2$ such that for $i \mod 3$,
$$d(x_i,x_{i+1}) \geq d(x_i,w_i)+d(w_i,w_{i+1})+d(w_{i+1},x_{i+1})-C.$$
Moreover, $w_i$ and $w_{i+1}$ are within $C$ of an $L$-transition point on $[x_i,x_{i+1}]$.
Finally, if $[x_i,x_{i+1}]$ is $L$-transitional, then $d(w_i,w_{i+1})\leq C'$, where $C'$ only depends on $L$. 
\end{lem}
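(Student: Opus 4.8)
The plan is to realise $w_0,w_1,w_2$ as the coarse \emph{corners} of the geodesic triangle with vertices $x_0,x_1,x_2$: $w_i$ records where the two sides issuing from $x_i$ diverge from one another, in the same way that the three internal points of a thin triangle in a hyperbolic space cluster around a single centre. Fix geodesics $\gamma_i:=[x_i,x_{i+1}]$ for $i\bmod 3$ and let $C_0=C_0(\eta,L)$ be the constant of Lemma~\ref{ThinTransitional2}. If the three points lie pairwise within a bounded distance, the statement is routine, so we assume the triangle is large.

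\textbf{Construction of the corners.} Fix $i$ and look at the two sides through $x_i$, namely $\gamma_{i-1}=[x_{i-1},x_i]$ and $\gamma_i=[x_i,x_{i+1}]$, both oriented to start at $x_i$. Using Lemma~\ref{ThinTransitional2} one checks that near $x_i$ every $(\eta,L)$-transition point of $\gamma_i$ lies within $C_0$ of a point of $\gamma_{i-1}$, so the two sides coarsely fellow-travel until they first enter a common parabolic coset $Q_i\in\mathbb P$ (the ``hyperbolic-like'' alternative, where they separate at bounded distance at a point $c_i\in\Gamma$, being treated as the degenerate case $Q_i=\{c_i\}$). Define $w_i$ to be a shortest projection of $x_i$ onto $N_\eta(Q_i)$ (respectively $w_i:=c_i$). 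The point requiring proof is that $w_i$ is \emph{coarsely well defined}: the last point of $\gamma_{i-1}$ in $N_\eta(Q_i)$ before $x_i$, the first point of $\gamma_i$ in $N_\eta(Q_i)$, and any element of $\pi_{N_\eta(Q_i)}(x_i)$ all agree up to a uniform constant — because a sub-segment of a geodesic avoiding $N_\eta(Q_i)$ realises the distance to it, and because $\pi_{N_\eta(Q_i)}(x_i)$ has uniformly bounded diameter by the coarse Lipschitz property of shortest projections recalled before Lemma~\ref{lemmaexponentialmoments}.

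\textbf{Checking the three conclusions.} By construction, $w_i$ and $w_{i+1}$ lie (coarsely) on $\gamma_i=[x_i,x_{i+1}]$ — the former near the $x_i$-end, where $\gamma_i$ meets $N_\eta(Q_i)$, the latter near the $x_{i+1}$-end, where it meets $N_\eta(Q_{i+1})$ — so $w_i$ precedes $w_{i+1}$ along $\gamma_i$, and, up to the uniform error above, $\gamma_i$ decomposes as $[x_i,w_i]\cup[w_i,w_{i+1}]\cup[w_{i+1},x_{i+1}]$; this gives
$$d(x_i,x_{i+1})\ \ge\ d(x_i,w_i)+d(w_i,w_{i+1})+d(w_{i+1},x_{i+1})-C.$$
Since $w_i$ sits at the entry/exit of the deep excursion of $\gamma_i$ into $N_\eta(Q_i)$, it is within $C$ of an $(\eta,L)$-transition point of $\gamma_i$ by Lemma~\ref{EntryTransLem}, and likewise $w_{i+1}$ via $Q_{i+1}$ (the hyperbolic-like case being immediate). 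Finally, if $\gamma_i$ is $L$-transitional it has no $(\eta,L)$-deep point, so the coarsely connected portion of $\gamma_i$ inside $N_\eta(Q_i)$ (and that inside $N_\eta(Q_{i+1})$) has length at most a constant $C''=C''(\eta,L)$ — otherwise its midpoint would be $(\eta,L)$-deep — and between its corners $\gamma_i$ can fellow-travel the remaining sides only for a bounded time; hence $d(w_i,w_{i+1})\le C'$ with $C'=C'(L)$.

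\textbf{Main obstacle.} The crux is the coarse well-definedness of $w_i$ in the parabolic case: because distinct geodesics need not fellow-travel inside a parabolic coset, one cannot simply take ``the point where $\gamma_{i-1}$ and $\gamma_i$ part'', but must show that they part precisely upon entering a common coset $Q_i$ and that the resulting candidate corner points agree to within a uniform constant. This is where Lemma~\ref{EntryTransLem}, the bounded intersection property of $\mathbb P$, and the uniformly bounded diameter of shortest projections onto $N_\eta(Q_i)$ are used together; keeping the depth parameter $\eta$ coherent throughout is routine given the conventions fixed after Lemma~\ref{ThinTransitional2}.
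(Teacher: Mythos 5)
Your proposal takes a genuinely different route from the paper's. The paper does not build the tripod in the Cayley graph at all: it passes to the relative (coned-off) Cayley graph, takes a relative geodesic $[x_1,x_2]$, lets $w_1$ be the projection of $x_0$ onto it (closest to $x_1$) and $w_2$ the next vertex of that relative geodesic, and then uses the projection lemma \cite[Lemma~4.16]{DussauleLLT1} to show these points are coarsely shared by the other two relative geodesics, finally pulling everything back to word geodesics via \cite[Proposition~8.13]{Hruska}. The hyperbolicity of the relative graph is what makes the coarse tripod automatic, and taking $w_i,w_{i+1}$ to be (near) \emph{consecutive} vertices of a relative geodesic is what makes the last conclusion, $d(w_i,w_{i+1})\le C'$ for an $L$-transitional side, immediate. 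You instead try to manufacture the corners directly in the Cayley graph from transition points and projections onto $N_\eta(Q_i)$.

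The difficulty is exactly where you flag the ``main obstacle'', and you do not close it. Two assertions carry the whole proof and are not established. First, the dichotomy that the two sides issuing from $x_i$ fellow-travel along transition points and then part either at a uniformly bounded region or upon entering a \emph{common} parabolic coset $Q_i$: Lemma~\ref{ThinTransitional2} only says each transition point of one side is $C$-close to a transition point of \emph{one of the other two} sides (you must first rule out the third side near $x_i$, and then convert ``close to the other side pointwise'' into a coarsely well-defined separation locus); this is precisely the content the paper outsources to the hyperbolicity of the coned-off graph. Second, and fatal for your proof of the final conclusion: $Q_i$ is attached to the vertex $x_i$ and $Q_{i+1}$ to the vertex $x_{i+1}$, and your bound $d(w_i,w_{i+1})\le C'$ for an $L$-transitional $[x_i,x_{i+1}]$ only works if the subsegment of $[x_i,x_{i+1}]$ between $w_i$ and $w_{i+1}$ lies in a \emph{single} $N_\eta(Q)$, i.e.\ if $Q_i=Q_{i+1}$ (or both corners are degenerate). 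That the three corners of the triangle share one central coset is true but is again part of what must be proved; without it, ``between its corners $\gamma_i$ can fellow-travel the remaining sides only for a bounded time'' has no justification, since $L$-transitionality bounds the excursion into any one coset but says nothing about the distance between two distinct cosets met by the geodesic. As written, the argument records the correct picture but proves neither of the two facts that make the picture valid.
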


It will be convenient to rely on similar results proved in \cite{DussauleLLT1}.
However, the terminology is a bit different and \cite{DussauleLLT1} uses the notion of relative geodesics.
Let us briefly introduce this notion.
Let $\mathbb P_0$ be the chosen set of representatives of conjugacy classes of parabolic subgroups and let $S$ be a finite generating set of $\Gamma$.
Following Osin \cite{Osin}, the relative graph is the Cayley graph of $\Gamma$ endowed with the (possibly infinite) generating set $S\bigcup \bigcup_{P\in \mathbb P_0}P$.
It is quasi-isometric to the coned-off graph introduced by Farb \cite{Farb} who gave one of the first definitions of relatively hyperbolic groups.
The relative graph is hyperbolic.
A relative geodesic is a geodesic in the relative graph.
By \cite[Proposition~8.13]{Hruska}, if $x,y\in \Gamma$, then any point on a relative geodesic from $x$ to $y$ is within a uniformly bounded distance of a transition point on a geodesic from $x$ to $y$ (in the Cayley graph of $\Gamma$).

\begin{proof}[Proof of Lemma~\ref{existenceofcenter}]
Consider the projection of $x_0$ on a relative geodesic from $x_1$ to $x_2$ in the relative Cayley graph.
If several projections exist, choose the closest possible to $x_1$.
Denote this projection by $w_1$ and
let $w_2$ be the point on this relative geodesic just after $w_1$.

By \cite[Lemma~4.16]{DussauleLLT1}, any relative geodesic from $x_0$ to $x_1$ passes at a point $v$ within a bounded distance of $w_1$.
We prove by contradiction that $v$ is within a bounded distance of the projection of $x_2$ on a relative geodesic from $x_0$ to $x_1$ the closest to $x_1$.
Denote by $v'$ such a projection.
Then applying again \cite[Lemma~4.16]{DussauleLLT1}, the relative geodesic from $x_2$ to $x_1$ we chose would pass at a point $w_1'$ within a bounded distance of $v'$.
If $d(v,v')$ is large, then $d(w_1,w_1')$ is also large.
Now, if $w_1$ is before $w_1'$ on the relative geodesic, this contradicts the definition of $v'$ and if $w_1'$ is before $w_1$, this contradicts the definition of $w_1$.

Finally, denote by $w_0$ the point just before $v$ on the relative geodesic from $x_0$ to $x_1$.
Then, applying \cite[Lemma~4.16]{DussauleLLT1} one last time, a relative geodesic from $x_i$ to $x_{i+1}$ passes within a bounded distance of $w_i$ and $w_{i+1}$.
Since points on a relative geodesics are within a bounded distance of transition points (see \cite[Proposition~8.13]{Hruska}), this proves the two first properties of the points $w_i$.

Notice that the points $w_i,w_{i+1}$ are chosen within a bounded distance of successive points on a relative geodesic from $x_i$ to $x_{i+1}$.
If the geodesic $[x_i,x_{i+1}]$ is $L$-transitional, then the corresponding relative geodesic has bounded jumps in parabolic subgroups, hence the distance between $w_i$ and $w_{i+1}$ is bounded.
This concludes the last part of the lemma.
\end{proof}

We define
\[
  S_{n, L} = \left\{ x \in S_n \colon [e, x] \text{ is } L \text{-transitional} \right\}.  
\]
Set $M_{n, L} = \sharp \mathcal{P}_n \cap S_{n, L}$. 
We first consider the lower bound for the quantity $\lim_{n \to \infty} \frac{1}{n} \log M_{n, L}$. 
To this end, mimicking the strategy of \cite{SWX}, we need first and second moments estimates for $M_{n, L}$.  Set
\[
  H_{r, L}(n) = \sum_{x \in S_{n, L}} G_r(e, x), 
\]
and 
\[
  \omega_{\Gamma, L}(r) = \limsup_{n \to \infty} \frac{1}{n} \log H_{r, L}(n). 
\]

\begin{prop}\label{puregrowthtransitional}
  \label{GreenL}
  For every $L \geq L(\eta)$ and $r \in [1, \rho^{-1}]$, there is a constant $c_L > 0$ such that
  \[
    c_L^{-1} \mathrm{e}^{\omega_{\Gamma, L}(r)} \leq H_{r, L}(n) \leq c_L \mathrm{e}^{\omega_{\Gamma, L}(r)}, \quad n \geq 0. 
  \]
  
\end{prop}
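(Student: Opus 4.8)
The plan is to prove that $H_{r,L}(n)$ grows purely exponentially by establishing that it is roughly multiplicative, i.e.\ that there is a constant $c_L\ge 1$ with
$$c_L^{-1}H_{r,L}(n+m)\le H_{r,L}(n)H_{r,L}(m)\le c_L H_{r,L}(n+m)$$
for all $n,m\ge 1$, and then concluding by the Fekete lemma exactly as in the proof of Theorem~\ref{mainthmparabolicgap}. The key point, in contrast with Lemma~\ref{LowerBDLem}, is that here \emph{no parabolic gap hypothesis is needed}: if we cut an $L$-transitional geodesic at an interior point, that point is by definition an $(\eta,L)$-transition point, so the relative Ancona inequalities \cite[Theorem~3.6]{DG21} apply directly and the ``parabolic error term'' $K_r$ appearing in the proof of Lemma~\ref{LowerBDLem} never shows up.

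For the sub-multiplicative bound $H_{r,L}(n+m)\le c_L H_{r,L}(n)H_{r,L}(m)$: given $x\in S_{n+m,L}$, fix an $L$-transitional geodesic $\gamma=[e,x]$ and let $y$ be its point at distance $n$ from $e$. Since $y$ is an $(\eta,L)$-transition point on $[e,x]$, relative Ancona gives $G_r(e,x)\le C\,G_r(e,y)G_r(y,x)=C\,G_r(e,y)G_r(e,y^{-1}x)$. The subsegments $[e,y]$ and $[y,x]$ of $\gamma$ are again transitional, up to discarding a bounded-length piece near the cut; I would handle this by moving $y$ by a uniformly bounded amount to an entry or exit point of the nearby parabolic coset, using Lemma~\ref{EntryTransLem}, so that after the adjustment $y\in S_{n',L}$ and $y^{-1}x\in S_{m',L}$ with $n'=n+O(1)$, $m'=m+O(1)$, $n'+m'=n+m$. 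Since $x\mapsto(y,y^{-1}x)$ is injective, summing over $x\in S_{n+m,L}$ gives $H_{r,L}(n+m)\le C\sum_{|i|\le t}H_{r,L}(n+i)H_{r,L}(m-i)$, and one finishes with the $H_{r,L}$-analogue of the bounded-factor comparison $H_r(k)\asymp H_r(k+1)$ of Lemma~\ref{FactsHrLem}(1), proved the same way via relative Ancona.

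For the super-multiplicative bound $H_{r,L}(n)H_{r,L}(m)\le c_L H_{r,L}(n+m)$: I would apply the Extension Lemma~\ref{ExtensionLem} with $F$ a set of high powers of three independent hyperbolic elements, whose quasi-axes are uniformly transitional (hyperbolic elements are strongly contracting since the Floyd boundary is non-trivial). For $g\in S_{n,L}$ and $h\in S_{m,L}$ this produces $f\in F$ so that $gfh$ labels a $c$-quasi-geodesic with $\max\{d(g,[e,gfh]),d(gf,[e,gfh])\}\le\epsilon$. Since $[e,g]$, $g[e,f]$ and $gf[e,h]$ are all $L$-transitional and the strong contraction of $f$ controls the two junctions, $[e,gfh]$ is $L$-transitional (for $L$ at least a constant depending only on $F$ and $c$, which we absorb into $L(\eta)$), so $gfh\in S_{N,L}$ with $N=d(e,gfh)=n+m+O(1)$. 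Relative Ancona, applied at the points of $[e,gfh]$ within $\epsilon$ of $g$ and of $gf$ (transition points, as $[e,gfh]$ is transitional), gives $G_r(e,gfh)\ge c^{-1}G_r(e,g)G_r(e,f)G_r(e,h)\ge (c')^{-1}G_r(e,g)G_r(e,h)$, using $\min_{f\in F}G_r(e,f)>0$ since $F$ is finite. Summing over $(g,h)$, using that $(g,h)\mapsto gfh$ is uniformly bounded-to-one as in Lemma~\ref{BoundedToOne} and again the bounded-range comparison for $H_{r,L}$, finishes this direction. Combining: $c_L^{-1}H_{r,L}$ is super-multiplicative, so by Fekete $\lim_n\frac1n\log H_{r,L}(n)=\omega_{\Gamma,L}(r)$ and $H_{r,L}(n)\le c_L\mathrm{e}^{n\omega_{\Gamma,L}(r)}$; and $c_L H_{r,L}$ is sub-multiplicative, so $H_{r,L}(n)\ge c_L^{-1}\mathrm{e}^{n\omega_{\Gamma,L}(r)}$, which is the assertion.

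The main obstacle I foresee is precisely the geometric bookkeeping around transitionality: one must check that cutting an $L$-transitional geodesic at an interior transition point, and re-gluing via the extension word $gfh$, keeps the relevant geodesics $L$-transitional rather than merely $L'$-transitional for a larger $L'$ — which would spoil the matching of the exponent $\omega_{\Gamma,L}(r)$ on the two sides of the inequalities. This is where Lemma~\ref{EntryTransLem}, the uniform transitionality of quasi-axes of hyperbolic elements, and the bounded-perturbation comparisons for $H_{r,L}$ all have to be combined carefully; everything else is a routine adaptation of the arguments for Theorem~\ref{mainthmparabolicgap}.
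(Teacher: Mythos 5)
Your proposal follows essentially the same route as the paper's proof: sub-multiplicativity by cutting an $L$-transitional geodesic at an interior transition point and applying the relative Ancona inequalities, super-multiplicativity via the Extension Lemma~\ref{ExtensionLem} together with the bounded-to-one map of Lemma~\ref{BoundedToOne}, and the Fekete lemma to convert rough multiplicativity into purely exponential growth. The only difference is that you spell out the bookkeeping needed to keep the cut subsegments and the concatenation $gfh$ inside the $L$-transitional class (up to bounded perturbations), a point the paper's proof asserts without comment.
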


\begin{proof}
  For $x \in S_{n + m, L}$, the point $y = [e, x] \cap S_n$ is in $S_{n, L}$ and $y^{-1}x \in S_{m, L}$.  By the relatively Ancona inequalities, there is a constant $c_1 > 0$ such that
\[
  G_r(e, x) \leq c_1 G_r(e, y) G_r(e, y^{-1} x).
\]
Thus we have that
\begin{equation}
  \label{e:GreenLU}
  H_{r, L}(n + m) \leq c_1 \sum_{y \in S_{n, L}} G_r(e, y) \sum_{z \in S_{m, L}} G_r(e, z) = c_1 H_{r, L}(n) H_{r, L}(m). 
\end{equation}

Let $F$ be the finite set given by Lemma~\ref{ExtensionLem}.  Set
\[
  l = \max \left\{ d(e, f) \colon f \in F \right\} + 4 \epsilon. 
\]
For $x \in S_{n, L}$ and $y \in S_{m, L}$ there is $f \in F$ such that
\[
  d(x, [e, x f y]) \leq \epsilon, \quad d(x f, [e, x f y]) \leq \epsilon. 
\]
Also, there are positive constants $c_2$ and $c_3$ such that
\[
  G_r(e, x) G_r(e, y) \leq c_2 G_r(e, x) G_r(x, x f) G_r(x f, x f y) \leq c_3 G_r(e, x f y).  
\]
Note that $x f y \in \bigcup_{-l \leq i \leq l} S_{n + i, L}$.  Thus
\begin{equation}
  \label{e:GreenLD}
  H_{r, L}(n) H_{r, L}(m) \leq c_4 H_{r, L}(n+m)
\end{equation}
for some $c_4 > 0$.  This proposition follows by the Fekete Subadditive Lemma and~\eqref{e:GreenLU}, \eqref{e:GreenLD}. 
\end{proof}

\begin{lem}\label{omegaL}
  For $r \in [1, \rho^{-1}]$, $\lim_{L \to \infty} \omega_{\Gamma, L}(r) = \omega_{\Gamma}(r)$. 
\end{lem}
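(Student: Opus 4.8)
The plan is to prove the two inequalities separately. Since $S_{n,L}\subseteq S_{n,L'}$ whenever $L\le L'$, the exponent $\omega_{\Gamma,L}(r)$ is non-decreasing in $L$, and $S_{n,L}\subseteq S_n$ forces $\omega_{\Gamma,L}(r)\le\omega_\Gamma(r)$; hence $\omega_\infty(r):=\lim_{L\to\infty}\omega_{\Gamma,L}(r)=\sup_L\omega_{\Gamma,L}(r)$ exists with $\omega_\infty(r)\le\omega_\Gamma(r)$, and it suffices to prove $\omega_\infty(r)\ge\omega_\Gamma(r)$. First I would observe that the constants in Proposition~\ref{puregrowthtransitional} may be taken independent of $L$, as they only involve the relative Ancona inequalities at a fixed width and the Extension Lemma; so there is $c\ge 1$, not depending on $L$, with $c^{-1}\mathrm{e}^{n\omega_{\Gamma,L}(r)}\le H_{r,L}(n)\le c\,\mathrm{e}^{n\omega_{\Gamma,L}(r)}$. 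Writing $\Theta^{(L)}_{r,s}:=\sum_{n\ge 0}H_{r,L}(n)\mathrm{e}^{-sn}$, this yields the uniform bound $\Theta^{(L)}_{r,s}\le c\,\bigl(1-\mathrm{e}^{-(s-\omega_{\Gamma,L}(r))}\bigr)^{-1}$ for $s>\omega_{\Gamma,L}(r)$, so that the whole statement reduces to: for every $s<\omega_\Gamma(r)$ there exists $L$ with $\Theta^{(L)}_{r,s}=\infty$ (equivalently $\omega_{\Gamma,L}(r)\ge s$ for some $L$).

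Fix such an $s$, so $\Theta_{r,s}(\Gamma)=\sum_n H_r(n)\mathrm{e}^{-sn}=\infty$. For $x\in S_n$, cut a geodesic $[e,x]$ along its maximal deep sub-segments inside parabolic cosets — those of length of order at least $L$, finitely many, each inside a unique coset by Lemma~\ref{EntryTransLem} — leaving complementary sub-segments with no deep point, hence $L$-transitional. Their endpoints (entry/exit points of the deep parts) are transition points by Lemma~\ref{EntryTransLem}, so the relative Ancona inequalities factorise $G_r(e,x)$ up to a multiplicative error $C^{2k+1}$, where $k\le n/L$ is the number of deep parts. Summing over $x$ grouped by combinatorial type, and using $\Gamma$-invariance and the bounded diameter of coarse projections to parabolic cosets, one gets, with $s_L:=s-O(1/L)$,
\begin{equation*}
\Theta_{r,s}(\Gamma)\ \le\ C\sum_{k\ge 0}\Theta^{(L)}_{r,s_L}\bigl(\mathcal E^{(L)}_{r,s_L}\,\Theta^{(L)}_{r,s_L}\bigr)^{k},\qquad \mathcal E^{(L)}_{r,s}:=\sum_{P\in\mathbb P_0}\ \sum_{w\ \text{near}\ P,\ |w|\ge L}G_r(e,w)\mathrm{e}^{-s|w|}.
\end{equation*}
Suppose first $\Gamma$ has a parabolic gap at $r$, i.e. $\omega_{\mathbb P}(r):=\max_{P\in\mathbb P_0}\omega_P(r)<\omega_\Gamma(r)$; arguing by contradiction assume $\omega_\infty(r)<\omega_\Gamma(r)$ and pick $s\in\bigl(\max(\omega_{\mathbb P}(r),\omega_\infty(r)),\,\omega_\Gamma(r)\bigr)$. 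For $L$ large, $\mathcal E^{(L)}_{r,s_L}$ is dominated by a tail of the convergent series $\sum_{P}\Theta_{r,s'}(P)$ with $\omega_{\mathbb P}(r)<s'<s_L$, so $\mathcal E^{(L)}_{r,s_L}\to 0$, while $\Theta^{(L)}_{r,s_L}$ stays bounded by the uniform estimate above. The displayed inequality then forces $\Theta^{(L)}_{r,s_L}=\infty$, i.e. $\omega_{\Gamma,L}(r)\ge s_L$; letting $L\to\infty$ and then $s\uparrow\omega_\Gamma(r)$ contradicts $\omega_\infty(r)<\omega_\Gamma(r)$.

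There remains the case of no parabolic gap at $r$, say $\omega_P(r)=\omega_\Gamma(r)$ for some $P\in\mathbb P_0$, which I would settle by directly producing enough transitional elements. Fix a loxodromic $h$ with $\langle P,h^N\rangle=P*\langle h^N\rangle$ for $N$ large (as in Corollary~\ref{parabolicdivergentgrowthgap}) and consider $x=p_1h^Np_2h^N\cdots h^Np_k$ with $p_i\in P$, $|p_i|\le D$. Since $h^N$ is strongly contracting, $[e,x]$ fellow-travels the concatenation of the coset geodesics, so its excursions into parabolic cosets have length at most $D+O(1)$, whence $[e,x]$ is $(\eta,L)$-transitional for a fixed $L=L(D)$; moreover the vertices $p_1\cdots p_i$ are transition points, so the elementary lower bound in the relative Ancona inequalities gives $G_r(e,x)\ge c_0^{\,k}\prod_i G_r(e,p_i)$, and $(p_1,\dots,p_k)\mapsto x$ is boundedly-to-one. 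Summing,
\begin{equation*}
\Theta^{(L(D))}_{r,s}\ \ge\ \sum_{k\ge 1}c_1^{\,k}\Bigl(\ \sum_{p\in P,\ |p|\le D}G_r(e,p)\mathrm{e}^{-s|p|}\Bigr)^{k}.
\end{equation*}
Because $\omega_P(r)=\omega_\Gamma(r)$, for $s<\omega_\Gamma(r)$ the inner sum increases to $\Theta_{r,s}(P)=\infty$ as $D\to\infty$; choosing $D$ large makes the right-hand series diverge, so $\omega_{\Gamma,L(D)}(r)\ge s$, and again $\omega_\infty(r)\ge\omega_\Gamma(r)$.

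The delicate point is exactly this last case — a parabolic subgroup carrying the full growth rate, which by the examples of Section~\ref{sectionexamples} genuinely occurs, and which is close in spirit to the borderline behaviour at $r=R$: one cannot then afford to discard parabolic excursions and must instead manufacture transitional elements of comparable Green mass by alternating bounded parabolic steps with hyperbolic detours. The two ingredients that make both cases close are the purely exponential growth of $H_{r,L}$ with constants uniform in $L$ (Proposition~\ref{puregrowthtransitional}) and the relative Ancona inequalities, valid all the way up to the spectral radius.
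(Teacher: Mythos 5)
Your overall strategy can be pushed through, but it is far more elaborate than necessary, and the hard half of it is the part that is under-justified. Your "no-gap" construction (case 2) is, up to the choice of building blocks, exactly the paper's proof: the paper takes \emph{arbitrary} $x_1,\dots,x_m\in S_{n_0}$, glues them with connectors from Lemma~\ref{ExtensionLem} into $x=x_1f_1x_2\cdots f_{m-1}x_m$, and observes that $[e,x]$ is $L$-transitional for every $L\ge L(n_0)$, simply because any deep excursion of $[e,x]$ is confined to a bounded number of consecutive pieces of the quasi-geodesic, each of length at most $n_0+l$. This yields $\sum_k H_{r,L}(k)\ge c^m\left[H_r(n_0)\right]^m$ over an annulus of radius about $n_0m$, hence $\omega_{\Gamma,L}(r)\ge\frac{1}{n_0+l}\left(\log c+\log H_r(n_0)\right)$, and letting $n_0\to\infty$ finishes via $\omega_\Gamma(r)=\sup_n\frac1n\log\left(C^{-1}H_r(n)\right)$ from Lemma~\ref{UpperBDLem}. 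If in your case 2 you replace the building blocks $P\cap B(e,D)$ by all of $B(e,D)$ (or by $S_{n_0}$), the inner sum tends to $\Theta_{r,s}(\Gamma)=\infty$ for every $s<\omega_\Gamma(r)$ with no assumption whatsoever on parabolic subgroups, and your entire case distinction disappears.

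As written, however, your proof genuinely depends on case 1, and that is where the gaps are. First, the complementary subsegments of $[e,x]$ between deep excursions need not be $L$-transitional as standalone geodesics: a point $p$ of a subsegment $\beta\subseteq[e,x]$ is $(\eta,L)$-deep in $\beta$ when $B(p,2L)\cap\beta\subseteq N_\eta(P)$, and since $B(p,2L)\cap\beta\subseteq B(p,2L)\cap[e,x]$, deepness is \emph{easier} to achieve on the subsegment; points within $2L$ of the endpoints of $\beta$ can become deep after restriction even though they are transitional on $[e,x]$. So bounding the $\beta$-factors by $\Theta^{(L)}_{r,s_L}$ requires an adjustment of parameters and an argument you do not give. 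Second, the passage from the sum over $x$ to the product $\Theta^{(L)}\bigl(\mathcal E^{(L)}\Theta^{(L)}\bigr)^k$ needs a bounded-multiplicity statement for the decomposition map, which is asserted but not proved. Both points are repairable with the standard transitional/deep bookkeeping of Lemma~\ref{LowerBDLem}, but they are not automatic; the cleanest fix is to discard case 1 entirely and run your case-2 concatenation with unrestricted building blocks, which is precisely the paper's short proof.
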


\begin{proof}
  For $n_0 \in \mathbb{N}$ and $x_i \in S_{n_0}$, $1 \leq i \leq m$, we can choose $f_i \in F$ so that the conditions in Lemma~\ref{ExtensionLem} hold with $g$, $h$ replaced by $x_1f_1x_2\cdots x_{i-1} f_{i-1} x_i$, $x_{i+1}$.  In particular, 
  \[
    x = x_1 f_1 x_2 \cdots f_{m-1} x_m \in \bigcup_{k = (n_0 - l) m}^{(n_0 + l) m} S_{k, \, L}
  \]
  for $L$ sufficiently large,  where $l = \max \left\{ d(e, f) \colon f \in F \right\} + 4 \epsilon$.  Since $F$ is a fixed finite set, we have that 
  \[
    G_r(e, x) \geq c_1^m G_r(e, x_1) \cdots G_r(e, x_m) 
  \]
  with $c_1 = \min_{f \in F} \frac{G_r(e, \, f)}{G_r(e, \, e)}$.  By Lemma~\ref{BoundedToOne}, there is $c_2 > 0$ such that each $x$ has at most $c_2^m$ possible representations in the form of $x = x_1 f_1 x_2 \cdots f_{m-1} x_m$.  Therefore 
  \[
    \sum_{k=(n_0 - l) m}^{(n_0 + l) m} H_{r, L}(r) \geq c_3^m \sum_{x_1, \ldots, x_m \in S_{n_0}} G_r(e, x_1) \cdots G_r(e, x_m) = c_3^m \left[ H_r(n_0) \right]^m  
  \]
  with $c_3 = c_1 c_2^{-1}$.    This and Proposition~\ref{GreenL} imply that
  \[
    2 c_L l m \mathrm{e}^{(n_0 + l) m \omega_{\Gamma, L}(r)} \geq c_3^m \left[ H_r(n_0) \right]^m.   
  \]
  Letting first $m \to \infty$ and then $L \to \infty$, we have for every $n_0 \in \mathbb{N}$, 
  \[
    \liminf_{L \to \infty} \omega_{\Gamma, L}(r) \geq \frac{1}{n_0 + l} \left( \log c_3 + \log H_r(n_0) \right), 
  \]
  which completes the proof of this lemma. 
\end{proof}

Now we are ready to estimate the second moment of $M_{n, L}$, which will help us find a lower bound. 
Let $x$ and $y$ be in $S_{n, L}$.  By Lemma~\ref{RelThinTriangleLem2}, there exists $w = w(x, y) \in \Gamma$ such that $w$ is within a bounded distance of transition points of $[e, x]$ and $[e, y]$ respectively.
\begin{lem}\label{Correlationxy}
  For $1 \leq r < \rho^{-1}$ and $x$, $y \in S_{n, L}$, there exists a positive constant $c > 0$ such that
  \[
    \sum_{z \in \Gamma} G_r(e, z) G_r(z, x) G_r(z, y) \leq c G_r(e, w) G_r(w, x) G_r(w, y).  
  \]
  
\end{lem}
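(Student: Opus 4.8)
The plan is to realise $w=w(x,y)$ as a coarse center of the geodesic triangle $(e,x,y)$ (as provided by Lemma~\ref{RelThinTriangleLem2}) and, for each $z\in\Gamma$, to locate the point $q=q(z)$ at which $z$ ``branches off'' this triangle. Concretely, I would pass to the relative Cayley graph, which is genuinely hyperbolic, and use the thin‑quadrilateral (tree) approximation of the four points $e,x,y,z$: up to bounded error there are two internal points, one of which is $w$ (since $w$ is a center of $(e,x,y)$), and $q$ is the other. Transferring back to the word metric via the fact that points of relative geodesics lie within a bounded distance of transition points (\cite[Proposition~8.13]{Hruska}), and using the relative thin‑triangle property (Lemma~\ref{ThinTransitional2}) together with the $L$‑transitionality of $[e,x]$ and $[e,y]$, one gets that $q$ lies within a uniform constant $K$ of a transition point on each of $[e,z]$, $[z,x]$, $[z,y]$ and on the leg of the tripod $(e,x,y)$ to which it belongs. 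There are three cases, according to which of $[e,w]$, $[w,x]$, $[w,y]$ contains $q$; they are handled in the same way, so consider $q\in[w,x]$, in which case $[e,z]$ runs coarsely $e\to w\to q\to z$, $[z,x]$ runs $z\to q\to x$, and $[z,y]$ runs $z\to q\to w\to y$.

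In this case the relative Ancona inequalities, applied with a uniform constant, give
$$G_r(e,z)G_r(z,x)G_r(z,y)\ \asymp\ \big(G_r(e,w)G_r(w,q)G_r(q,z)\big)\big(G_r(q,z)G_r(q,x)\big)\big(G_r(q,z)G_r(q,w)G_r(w,y)\big),$$
and since $q\in[w,x]$ also $G_r(w,q)G_r(q,x)\asymp G_r(w,x)$, so after collecting terms
$$G_r(e,z)G_r(z,x)G_r(z,y)\ \asymp\ G_r(q,z)^3\,G_r(q,w)\,G_r(e,w)G_r(w,x)G_r(w,y)$$
(the other two cases produce the same shape, with the factor $w$ passing through $[e,z]$ and $[z,y]$ instead). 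Now I would sum, first over all $z$ sharing a given branch point $q$, using $\sum_{z}G_r(q,z)^3\le G_r(e,e)\sum_{g\in\Gamma}G_r(e,g)^2<\infty$, which is finite for $r<\rho^{-1}$ by \cite[Proposition~1.9]{GouezelLalley} (this is the quantity $c(r)$ appearing in Corollary~\ref{coroboundv/2}); and then over the points $q$ on the leg, using that a leg is a geodesic segment so $\sum_{q\in[w,x]}G_r(q,w)\le\sum_{k\ge0}C_r(r/R)^{c_1k}<\infty$, where the exponential bound $G_r(e,g)\le C_r(r/R)^{c_1|g|}$ for $r<R$ follows from finite support as in~(\ref{exponentialdifferenceGreen}). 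Summing the three cases yields the asserted inequality with a constant independent of $n$, $x$, $y$; note that both summations are finite precisely because $r<\rho^{-1}=R$ strictly, which is why the lemma excludes $r=\rho^{-1}$.

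The main obstacle is the geometric bookkeeping rather than the analysis: one must check, with constants uniform in $x,y,z$, that the branch point $q$ is well defined up to bounded error, that $z\mapsto q(z)$ has bounded multiplicity over a $1$‑net of each leg (so that summing over $z$ and then over $q$ overcounts only boundedly), and that $q$ and $w$ genuinely sit within $K$ of transition points, in the correct order, on every geodesic to which the relative Ancona inequalities are applied — in particular that any geodesic joining points within $K$ of $[e,x]$ fellow‑travels a subsegment of $[e,x]$, so that all the relevant intermediate points remain transition points. Once this is in place, the remainder is a routine concatenation of relative Ancona estimates followed by the two elementary summations above.
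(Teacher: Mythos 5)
Your proposal is correct and follows essentially the same route as the paper's proof: locate a branch point $v$ (your $q$) of the configuration $e,x,y,z$ relative to the coarse center $w$, decompose all three Green functions at $w$ and $v$ via the relative Ancona inequalities, sum over $z$ with $v$ fixed using $\sum_{z} G_r(v,z)G_r(z,v)^2 \leq C\sum_{z}G_r(v,z)G_r(z,v) < \infty$ from \cite[Proposition~1.9]{GouezelLalley}, and then sum over $v$ along a leg of the tripod using the exponential decay of $G_r$ for $r<\rho^{-1}$. The geometric bookkeeping you defer — that $v$ and $w$ lie within a uniform distance of transition points, in the correct order, on every geodesic to which Ancona is applied — is precisely where the paper's proof spends its effort (via Lemma~\ref{existenceofcenter}, Lemma~\ref{RelThinTriangleLem2}, the $L$-transitionality of $[e,x]$ and $[e,y]$, and two auxiliary claims), but the tools you name are the right ones and the verification goes through.
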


\begin{proof}
  Let $\kappa =C+K$ where $C$ and $K$ are given by Lemma~\ref{ThinTransitional2} and Lemma~\ref{RelThinTriangleLem2}.  Define
  \[
    \Omega_1 = \left\{ z \in \Gamma \colon d(w, u) \leq \kappa \text{ for some transition point } u \in [e, z] \right\}, 
  \]
  and
  \[
    \Omega_2 = \left\{ z \in \Gamma \colon d(w, u_i) \leq \kappa \text{ for transition points } u_1 \text{ on } [z, x] \text{ and } u_2 \text{ on } [z, y] \right\}.  
  \]
By Lemma~\ref{ThinTransitional2} and Lemma~\ref{RelThinTriangleLem2}, $\Gamma = \Omega_1 \cup \Omega_2$.

Assume $z \in \Omega_1$. Applying Lemma~\ref{existenceofcenter} to $(x,y,z)$, we get the existence of transition points $v,v'$ on $[x,y]$ such that $v$ is within a bounded distance of a transition point on $[z,x]$ and $v'$ is within a bounded distance of a transition point on $[z,y]$.
Moreover, since $[e,x]$ and $[e,y]$ are $L$-transitional, $d(v,v')$ is bounded.
Combining all this, we see that $v$ is within a bounded distance of transition points on $[x,y]$, $[x,z]$ and $[y,z]$.

Now, applying Lemma~\ref{RelThinTriangleLem2} to $(w,x,y)$, we have that $v$ is within a bounded distance of either a transition point on $[w,x]$ or on $[w,y]$. We assume without loss of generality that the latter holds.

  \begin{center}
\begin{tikzpicture}[scale=2.2]
\draw (-1.8,.5)--(-2,-.4);
\draw (2.2,.5)--(2,-.4);
\draw (-2,-.4)--(2,-.4);
\draw (-.2,-1.3)--(-.2,-.4);
\draw[dotted] (-.2,-.4)--(-.2,0);
\draw (-.2,0)--(.4,-.2);

\draw (-.2,0)--(.2,.4);

\draw[dotted] (.4,-.2)--(.4,-.4);
\draw (.4,-.4)--(.4,-1);
\draw (.4,-.2)--(.2,.4);
\draw (.2,.4)--(.2,1);
\draw[dotted] (.2,1)--(.2,1.4);
\draw (-2.3,1.9)--(-2.5,1);
\draw (.7,1.9)--(.5,1);
\draw (-2.5,1)--(.5,1);
\draw (.2,1.4)--(-1.6,1.7);
\draw (-1.6,1.7)--(-.5,1.3);
\draw (-.5,1.3)--(.2,1.4);
\draw (-1.6,1.7)--(-1.6,2.5);
\draw (-.5,1.3)--(-.5,2.3);
\draw (-.5,-1.3) node{$e$};
\draw (.1,.05) node{$w$};
\draw (.6,-1.1) node{$x$};
\draw (-1.7,2.5) node{$z$};
\draw (.3,1.4) node{$v$};
\draw (-.45,1.2) node{$v'$};
\draw (-.4,2.3) node{$y$};
\end{tikzpicture}
\end{center}

\begin{claim}
For every given $K$, there exists $K'$ such that if $w$ is within $K$ of a transition point on $[v,y]$, then $d(v,w)\leq K'$.
\end{claim}

\begin{proof}[Proof of the claim]
Since $v$ is within a bounded distance of a transition point on $[w,y]$ by assumption, we have
$$d(w,y)\geq d(w,v)+d(v,y)-C.$$
Thus, if $d(w,[v,y])\leq K$, then
$$d(v,y)\geq d(d(w,v)+d(w,y)-2K$$
and so
$$2d(v,w)\leq C+2K,$$
which proves the claim.
\end{proof}

\begin{claim}
For every given $K$, there exists $K'$ such that if $v$ is within $K$ of a transition point on $[w,x]$, then $d(v,w)\leq K'$.
\end{claim}

\begin{proof}[Proof of the claim]
By the previous claim, we can assume that $w$ is far from a transition point on $[v,y]$.
But then, since $w$ is within a bounded distance of a transition point on $[x,y]$, applying Lemma~\ref{RelThinTriangleLem2} to $(v,x,y)$ shows that $w$ is within a bounded distance of a transition point on $[v,x]$.
Therefore,
$$d(v,x)\geq d(v,w)+d(w,x)-C.$$
Thus, if $d(v,[w,x])\leq K$, then
$$d(w,x)\geq d(v,w)+d(v,x)-2K$$
and so
$$2d(v,w)\leq c+2K,$$
which proves the claim.
\end{proof}

These two claims show that either $d(v,w)$ is bounded or
$w$ is far from a transition point on $[v,y]$ and $v$ is far from a transition point on $[w,x]$.
Applying Lemma~\ref{RelThinTriangleLem2} to $(v,x,y)$ and then to $(z,w,x)$, in every case we get that
$w$ is within a bounded distance of a transition point on $[v,x]$ and $v$ is within a bounded distance of a transition point on $[w,z]$.

By the relatively Ancona inequalities, we thus have
  \begin{align*}
    G_r(e, z) &\leq c_1 G_r(e, w) G_r(w, v) G_r(v, z), \\
    G_r(z, x) &\leq c_1 G_r(z, v) G_r(v, w) G_r(w, x), \\
    G_r(z, y) &\leq c_1 G_r(z, v) G_r(v, y). 
  \end{align*}
Consequently,
  \begin{align*}
&\sum_{z \in \Omega_1} G_r(e, z) G_r(z, x) G_r(z, y) \\
    &\hspace{.2cm}\leq
    c_1^3 G_r(e, w) G_r(w, x) \sum_{z \in \Omega_1} G_r(v, y) G_r(w, v)G_r(v,w) G_r(v, z)G_r(z,v)^2. 
  \end{align*}
    
Note that $G_r(w, v) G_r(v, y) \leq c_2 G_r(w, y)$.  Therefore,
  \begin{align*}
    \sum_{z \in \Omega_1} G_r(e, z) G_r(z, x) G_r(z, y)
    \leq&
    c_3 G_r(e, w) G_r(w, x) G_r(w, y) \\
    &\hspace{.6cm}\sum_{v \in [w, y]} G_r(w, v) \sum_{z \in \Gamma} G_r(v,z)G_r(z, v)^2\\
    &
    \leq c_4 G_r(e, w) G_r(w, x) G_r(w, y). 
  \end{align*}
Here we used the facts that for $r < \rho^{-1}$, $G_r(w, v)$ is decaying exponentially in $d(w, v)$, which is a direct consequence of~(\ref{exponentialdifferenceGreen}) and that
$$\sum_{z \in \Gamma} G_r(v, z)G_r(z,v)^2\leq C\sum_{z\in \Gamma}G_r(v,z)G_r(z,v) < \infty$$
by \cite[Proposition~1.9]{GouezelLalley}.

\medskip
Now we consider the case $z \in \Omega_2$.  Using Lemma~\ref{existenceofcenter} again, there exists $v \in \Gamma$ such that
$v$ is within a bounded distance of a transition point on $[e,z]$, a transition point on $[z,w]$ and a transition point on $[e,w]$. 

  \begin{center}
\begin{tikzpicture}[scale=2.2]
\draw (-1.8,.5)--(-2,-.4);
\draw (2.2,.5)--(2,-.4);
\draw (-2,-.4)--(2,-.4);
\draw (-1.2,-1.3)--(-1.2,-.4);
\draw[dotted] (-1.2,-.4)--(-1.2,0);
\draw (-1.2,0)--(.8,-.1);
\draw (-1.2,0)--(-1,.4);
\draw[dotted] (.8,-.1)--(.8,-.4);
\draw (.8,-.4)--(.8,-1);
\draw (.8,-.1)--(-1,.4);
\draw (-1,.4)--(-1,1);
\draw[dotted] (-1,1)--(-1,1.4);
\draw (-2.3,1.9)--(-2.5,1);
\draw (.7,1.9)--(.5,1);
\draw (-2.5,1)--(.5,1);
\draw (-1,1.4)--(-1.2,1.7);
\draw (-1.2,1.7)--(-1.9,1.3);
\draw (-1.9,1.3)--(-1,1.4);
\draw (-1.2,1.7)--(-1.2,2.5);
\draw (-1.9,1.3)--(-1.9,2.3);
\draw (-1.5,-1.3) node{$e$};
\draw (-.9,.2) node{$v$};
\draw (1,-1) node{$z$};
\draw (-1.1,2.5) node{$y$};
\draw (-2.1,2.3) node{$x$};
\draw (-1.3,1.5) node{$w$};
\end{tikzpicture}
\end{center}
  
By the same argument as in the case $z \in \Omega_1$, we have that
  \[
    \sum_{z \in \Omega_2} G_r(e, z) G_r(z, x) G_r(z, y)
    \leq
    c_5 G_r(e, w) G_r(w, x) G_r(w, y)
  \]
This completes the proof of the lemma.  
\end{proof}

This lemma will help us estimate $\mathbf E \left [M_{n,L}^2\right ]$.
We first recall the following result from \cite{SWX} whose proof has nothing to do with hyperbolicity and holds for any finitely generated group $\Gamma$.
As above, for every $x\in \Gamma$, we denote by $Z_x$ the number of particles that ever visit $x$.

\begin{lem}\label{4.3SWX}
Assume that $\nu$ has finite second moment.
Then, there exists $C$ such that for every $x,y\in \Gamma$,
$$\mathbf E[Z_xZ_y]\leq  C \sum_{z\in \Gamma}G_r(e,z)G_r(z,x)G_r(z,y).$$
\end{lem}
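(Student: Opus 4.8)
The plan is to prove this as a general ``many-to-two'' estimate for branching random walks, with no input from relative hyperbolicity, via a genealogical decomposition. Encode the branching random walk by its Galton--Watson tree $T$: the root is the initial particle, a vertex $v\in T$ with $|v|=n$ is a particle alive at time $n$ at position $X_v\in\Gamma$, where $X_v$ is the position of the parent of $v$ multiplied by an independent $\mu$-step, and each vertex of $T$ has an independent number of children distributed as $\nu$. Then $Z_x=\#\{v\in T:X_v=x\}$ and, all terms being non-negative,
$$\mathbf E[Z_xZ_y]=\sum_{v,w\in T}\mathbf P(X_v=x,\ X_w=y).$$
I would then split this double sum according to the most recent common ancestor $u=v\wedge w$, into three cases: (i) $v=w$; (ii) one of $v,w$ is an ancestor of the other (together with the symmetric case); (iii) $v\neq u\neq w$ with $v\neq w$, so that $v$ and $w$ descend from two distinct children of $u$.

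In each case, conditioning on the event $\{u\in T,\ X_u=z\}$ and using that distinct subtrees evolve independently, the contribution factors through sub-walks to which the many-to-one formula $\mathbf E[Z_\cdot]=G_r(e,\cdot)$ applies. Two elementary facts are used repeatedly: first, the one-step identity $G_r(z,x)=\delta_z(x)+r\sum_{a}\mu(z^{-1}a)G_r(a,x)$, which since $r\geq 1$ gives $\sum_a\mu(z^{-1}a)G_r(a,x)\leq G_r(z,x)$; second, summing the many-to-one formula over all generations, the expected number of vertices of $T$ ever located at a given $z$ equals exactly $G_r(e,z)$. Carrying this out, case (i) contributes $G_r(e,x)\mathbb{1}[x=y]$, case (ii) contributes at most $G_r(e,x)G_r(x,y)+G_r(e,y)G_r(y,x)$, and case (iii) contributes at most $\mathbf E[\nu(\nu-1)]\sum_{z\in\Gamma}G_r(e,z)G_r(z,x)G_r(z,y)$, where the combinatorial factor $\mathbf E[\nu(\nu-1)]<\infty$ (by the finite second moment hypothesis on $\nu$) counts ordered pairs of distinct children of $u$.

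It then remains to absorb the lower-order cases into the sum produced by case (iii). Since $G_r(x,x)=G_r(e,e)\geq p_0(e,e)=1$, the $z=x$ term of $\sum_z G_r(e,z)G_r(z,x)G_r(z,y)$ already dominates $G_r(e,x)G_r(x,y)$ and hence also $G_r(e,x)\mathbb{1}[x=y]$, while the $z=y$ term dominates $G_r(e,y)G_r(y,x)$; this yields the claimed inequality with a constant $C$ depending only on $\mathbf E[\nu(\nu-1)]$ (the inequality being trivial when the right-hand side is infinite). The computations are routine; the point needing the most care is the bookkeeping in case (iii) --- one must check that, conditionally on a vertex $u$ surviving with $X_u=z$, its offspring count is still distributed as $\nu$ (no size-biasing, since in a Galton--Watson tree the offspring numbers of surviving vertices are i.i.d.\ $\nu$), and that the sub-walks rooted at two distinct children of $u$ are conditionally independent with conditional means $\sum_a\mu(z^{-1}a)G_r(a,x)$ and $\sum_a\mu(z^{-1}a)G_r(a,y)$. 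Once this is in place, summing over $u$ via the many-to-one formula and then over $z$ gives the bound.
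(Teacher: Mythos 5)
Your argument is correct and is exactly the standard many-to-two computation that the paper invokes from \cite{SWX} (decomposition of $\sum_{v,w\in T}\mathbf P(X_v=x,X_w=y)$ by the most recent common ancestor, conditional independence of the two subtrees, many-to-one for each branch, and the finite second moment of $\nu$ controlling the number of ordered pairs of distinct children). The bookkeeping points you flag — no size-biasing of the offspring count of the branching vertex, $r\ge 1$ to absorb the one-step averages $\sum_a\mu(z^{-1}a)G_r(a,\cdot)$ into $G_r(z,\cdot)$, and $G_r(e,e)\ge 1$ to absorb the diagonal and ancestral cases into the $z=x$ and $z=y$ terms — are all handled correctly.
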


We deduce the following result.

\begin{prop}\label{Coro4.4L}
Assume that $\nu$ has finite second moment and that $r<\rho^{-1}$.
Then there exists $C_L$ such that
$$\mathbf{E}[M_{n,L}]\geq C_L \mathrm{e}^{n\omega_{\Gamma,L}(r)}.$$
\end{prop}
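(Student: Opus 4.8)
The plan is to run a standard second-moment (Paley–Zygmund) argument, exactly as in \cite{SWX}, but using the ``$L$-transitional'' versions of all the quantities involved. Recall that $M_{n,L}=\sum_{x\in S_{n,L}}\mathbb 1_{Z_x\geq 1}$. First I would compute the first moment: by the many-to-one formula $\mathbf E[Z_x]=G_r(e,x)$, so $\mathbf E[M_{n,L}]$ does not immediately help; instead one writes $M_{n,L}=\sum_{x\in S_{n,L}}\mathbb 1_{Z_x\geq 1}$ and applies the Paley–Zygmund inequality to the random variable $W_{n,L}:=\sum_{x\in S_{n,L}}Z_x$, whose first moment is $\mathbf E[W_{n,L}]=H_{r,L}(n)$. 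By Proposition~\ref{GreenL} (i.e.\ purely exponential growth of $H_{r,L}$), $\mathbf E[W_{n,L}]\asymp_L \mathrm e^{n\omega_{\Gamma,L}(r)}$.

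Next I would bound the second moment $\mathbf E[W_{n,L}^2]=\sum_{x,y\in S_{n,L}}\mathbf E[Z_xZ_y]$. By Lemma~\ref{4.3SWX}, $\mathbf E[Z_xZ_y]\leq C\sum_{z\in\Gamma}G_r(e,z)G_r(z,x)G_r(z,y)$, and then by Lemma~\ref{Correlationxy} (the key relative-hyperbolic correlation estimate, valid precisely because $r<\rho^{-1}$ and $[e,x],[e,y]$ are $L$-transitional) this is at most $c\,G_r(e,w)G_r(w,x)G_r(w,y)$ where $w=w(x,y)$ is the ``center''. Since $w$ lies within a bounded distance of transition points on $[e,x]$ and $[e,y]$, the relative Ancona inequalities give $G_r(e,x)\asymp G_r(e,w)G_r(w,x)$ and $G_r(e,y)\asymp G_r(e,w)G_r(w,y)$, whence
$$G_r(e,w)G_r(w,x)G_r(w,y)\asymp \frac{G_r(e,x)G_r(e,y)}{G_r(e,w)}.$$
Summing over $x,y\in S_{n,L}$ and grouping pairs according to the value $|w|=k$ and the element $w$ itself, one gets
$$\mathbf E[W_{n,L}^2]\leq C\sum_{k=0}^{n}\sum_{w\in S_k}\frac{1}{G_r(e,w)}\Big(\sum_{x\in S_{n,L}:\,w\prec[e,x]}G_r(e,x)\Big)^2
\leq C'\sum_{k=0}^n\sum_{w\in S_k}\frac{1}{G_r(e,w)}\big(G_r(e,w)H_{r,L}(n-k)\big)^2,$$
using that the sub-sum of $G_r(e,x)$ over $x\in S_{n,L}$ whose geodesic passes near $w$ is $\lesssim G_r(e,w)H_{r,L}(n-k)$ by relative Ancona plus Proposition~\ref{GreenL}. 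This collapses to $C'\sum_k H_r(k)H_{r,L}(n-k)^2$, which by $H_r(k)\lesssim\mathrm e^{k\omega_\Gamma(r)}$ (Lemma~\ref{UpperBDLem}) and $H_{r,L}(n-k)\asymp_L\mathrm e^{(n-k)\omega_{\Gamma,L}(r)}$ is $\lesssim_L \mathrm e^{2n\omega_{\Gamma,L}(r)}\sum_k\mathrm e^{k(\omega_\Gamma(r)-2\omega_{\Gamma,L}(r))}$. Since $\omega_{\Gamma,L}(r)$ can be taken as close as we wish to $\omega_\Gamma(r)$ by Lemma~\ref{omegaL} but we only need one fixed $L$, and since in the range $r<\rho^{-1}$ one has $\omega_\Gamma(r)<v/2$... actually more simply: the geometric series converges provided $2\omega_{\Gamma,L}(r)>\omega_\Gamma(r)$, which holds for all $L$ large once $\omega_{\Gamma,L}(r)\to\omega_\Gamma(r)>0$. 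Thus $\mathbf E[W_{n,L}^2]\leq C_L\mathrm e^{2n\omega_{\Gamma,L}(r)}\asymp C_L\mathbf E[W_{n,L}]^2$.

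Finally, by the Paley–Zygmund (second-moment) inequality, $\mathbf P(W_{n,L}>0)\geq \mathbf E[W_{n,L}]^2/\mathbf E[W_{n,L}^2]\geq c_L>0$, but more is needed: one wants $\mathbf E[M_{n,L}]$, not just positivity of $W_{n,L}$. Here I would use that $M_{n,L}\geq W_{n,L}^2/\big(\max_x Z_x\cdot W_{n,L}\big)$ is not clean; instead the cleanest route is Cauchy–Schwarz: $M_{n,L}=\sum_{x\in S_{n,L}}\mathbb 1_{Z_x\geq 1}\geq \big(\sum_x Z_x\big)^2/\sum_x Z_x^2 = W_{n,L}^2/\sum_{x\in S_{n,L}}Z_x^2$ on the event $W_{n,L}>0$, but controlling $\sum_x Z_x^2$ again needs a second-moment bound of the type just proved. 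I therefore expect the main obstacle to be the bookkeeping that converts the pairwise bound $\mathbf E[Z_xZ_y]\lesssim G_r(e,w)G_r(w,x)G_r(w,y)$ into the clean estimate $\mathbf E[M_{n,L}]\gtrsim_L \mathrm e^{n\omega_{\Gamma,L}(r)}$ — that is, organizing the sum over centers $w$ and invoking Proposition~\ref{GreenL} at the right scales, then applying Paley–Zygmund to $W_{n,L}$ and finally $\mathbf E[M_{n,L}]\geq \mathbf E[M_{n,L}\mathbb 1_{W_{n,L}>0}]\geq \mathbf P(W_{n,L}>0)\cdot(\text{something})$; in fact, since $\mathbf E[M_{n,L}]=\sum_x\mathbf P(Z_x\geq 1)\geq \sum_x\mathbf E[Z_x]^2/\mathbf E[Z_x^2]$ by Paley–Zygmund applied to each $Z_x$ individually, and $\mathbf E[Z_x^2]\leq C\sum_z G_r(e,z)G_r(z,x)^2\leq C'G_r(e,x)$ for $r<\rho^{-1}$ (again by Lemma~\ref{4.3SWX} and Lemma~\ref{Correlationxy} with $x=y$, noting $w=x$), we directly get $\mathbf P(Z_x\geq 1)\geq c\,G_r(e,x)$, hence $\mathbf E[M_{n,L}]\geq c\,H_{r,L}(n)\geq c\,c_L^{-1}\mathrm e^{n\omega_{\Gamma,L}(r)}$ by Proposition~\ref{GreenL}. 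This last route is the shortest, and avoids the full pairwise bound entirely; the pairwise estimate of Lemma~\ref{Correlationxy} will instead be needed for the almost-sure lower bound in the subsequent argument, not here.
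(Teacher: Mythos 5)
Your final route is exactly the paper's proof: apply the second-moment (Paley--Zygmund) inequality to each $Z_x$ individually, deduce $\mathbf P(Z_x\geq 1)\geq c\,G_r(e,x)$ from Lemma~\ref{4.3SWX} (as in \cite[Lemma~4.4]{SWX}), sum over $x\in S_{n,L}$, and invoke Proposition~\ref{GreenL}. The preliminary detour through $W_{n,L}=\sum_x Z_x$ is unnecessary, as you yourself conclude, and the pairwise estimate of Lemma~\ref{Correlationxy} is indeed reserved for the subsequent second-moment bound on $M_{n,L}$ rather than this proposition.
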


\begin{proof}
As in the proof of \cite[Lemma~4.4]{SWX}, we deduce from Lemma~\ref{4.3SWX} that
$$\mathbf P(Z_x\geq 1)\geq c G_r(e,x).$$
Since $\mathbf{E} [M_{n,L}]=\sum_{x\in S_{n,L}}\mathbf P(Z_x\geq 1)$, the result follows from Proposition~\ref{puregrowthtransitional}.
\end{proof}

\begin{cor}\label{GreenLmoment}
Assume that $\nu$ has finite second moment.  For $r \in (1, \rho^{-1})$ and $L$ sufficiently large, there exists a positive constant $c$ such that
$$\mathbf{E} \left[ M_{n, L}^2 \right] \leq c \left( \mathbf{E}[M_{n, L}] \right)^2.$$ 
\end{cor}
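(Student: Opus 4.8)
The plan is to push through the second-moment argument of \cite{SWX}, the decisive point being to reorganize the sum defining $\mathbf E[M_{n,L}^2]$ around the ``branch point'' of the geodesics involved. Since $M_{n,L}=\sum_{x\in S_{n,L}}\mathbb 1_{Z_x\ge1}$, we have $\mathbf E[M_{n,L}^2]=\sum_{x,y\in S_{n,L}}\mathbf P(Z_x\ge1,Z_y\ge1)\le\sum_{x,y\in S_{n,L}}\mathbf E[Z_xZ_y]$. Applying Lemma~\ref{4.3SWX} and then Lemma~\ref{Correlationxy} (valid because $r<\rho^{-1}$ and $\nu$ has finite second moment), there is $c>0$ with $\mathbf E[Z_xZ_y]\le c\,G_r(e,w)G_r(w,x)G_r(w,y)$, where $w=w(x,y)$ is within a fixed distance $K$ of an $L$-transition point both on $[e,x]$ and on $[e,y]$. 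Applying the relative Ancona inequalities at $w$ to the pairs $(e,x)$ and $(e,y)$ gives $G_r(e,w)G_r(w,x)\asymp G_r(e,x)$ and $G_r(e,w)G_r(w,y)\asymp G_r(e,y)$, whence
$$\mathbf E[M_{n,L}^2]\;\le\; c'\sum_{x,y\in S_{n,L}}\frac{G_r(e,x)\,G_r(e,y)}{G_r(e,w(x,y))}.$$

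Next I would group the double sum according to $w'=w(x,y)$. If $w(x,y)=w'$ then $x$ and $y$ both lie in $A(w'):=\{x\in S_{n,L}:d(w',u)\le K\text{ for some transition point }u\text{ of }[e,x]\}$, and necessarily $|w'|\le n+K$. Writing $k=|w'|$, this yields
$$\mathbf E[M_{n,L}^2]\;\lesssim\;\sum_{0\le k\le n}\sum_{w'\in S_k}\frac1{G_r(e,w')}\Bigl(\sum_{x\in A(w')}G_r(e,x)\Bigr)^2.$$
For a fixed $w'\in S_k$ and $x\in A(w')$, choose an $L$-transition point $u=u(x)$ of $[e,x]$ with $d(w',u)\le K$; the relative Ancona inequalities give $G_r(e,x)\le C G_r(e,u)G_r(u,x)$, and for fixed $u$ the element $u^{-1}x$ runs injectively over a subset of $S_{n-|u|,L}$, since $[u,x]$ is a sub-geodesic of the $L$-transitional geodesic $[e,x]$ and the collection $\mathbb P$ is $\Gamma$-invariant. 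Because there are boundedly many $u$ with $d(w',u)\le K$, each with $G_r(e,u)\asymp G_r(e,w')$ (a Harnack inequality, as in the first part of the proof of Lemma~\ref{FactsHrLem}) and $|u|=k+O(K)$, Proposition~\ref{puregrowthtransitional} gives $\sum_{x\in A(w')}G_r(e,x)\le C\,G_r(e,w')\,H_{r,L}(n-k)$. Substituting, the factors $G_r(e,w')$ cancel, and using $\sum_{w'\in S_k}G_r(e,w')=H_r(k)$ we obtain
$$\mathbf E[M_{n,L}^2]\;\lesssim\;\sum_{0\le k\le n}H_{r,L}(n-k)^2\,H_r(k).$$

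Finally, Proposition~\ref{puregrowthtransitional} and Lemma~\ref{UpperBDLem} give $H_{r,L}(n-k)\le C_L\,\mathrm e^{(n-k)\omega_{\Gamma,L}(r)}$ and $H_r(k)\le C'\,\mathrm e^{k\omega_\Gamma(r)}$, so
$$\mathbf E[M_{n,L}^2]\;\le\; C\,\mathrm e^{2n\omega_{\Gamma,L}(r)}\sum_{k\ge0}\mathrm e^{k(\omega_\Gamma(r)-2\omega_{\Gamma,L}(r))}.$$
Here is where the hypothesis that $L$ be large enters: since $r>1$ we have $\omega_\Gamma(r)>0$ by Lemma~\ref{FactsHrLem}, and $\omega_{\Gamma,L}(r)\to\omega_\Gamma(r)$ as $L\to\infty$ by Lemma~\ref{omegaL}, so for $L$ large enough $\omega_{\Gamma,L}(r)>\tfrac12\omega_\Gamma(r)$, making the geometric series converge; thus $\mathbf E[M_{n,L}^2]\le C''\mathrm e^{2n\omega_{\Gamma,L}(r)}$. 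Comparing with $\mathbf E[M_{n,L}]\ge C_L\,\mathrm e^{n\omega_{\Gamma,L}(r)}$ from Proposition~\ref{Coro4.4L} gives $\mathbf E[M_{n,L}^2]\le c\,(\mathbf E[M_{n,L}])^2$, as desired. The step I expect to be most delicate is the reorganization in the middle paragraph: one must route the Ancona factorization through a transition point lying \emph{on} $[e,x]$ rather than through $w'$ itself, so that the translated endpoint $u^{-1}x$ lands in $S_{n-|u|,L}$; this is exactly what keeps the ``long'' factors equal to $H_{r,L}$ rather than the coarser $H_r$, and hence keeps the relevant exponent at $2\omega_{\Gamma,L}(r)$ so that it can be dominated by $(\mathbf E[M_{n,L}])^2$.
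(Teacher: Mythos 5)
Your proof is correct and follows essentially the same route as the paper: bound $\mathbf{E}[M_{n,L}^2]$ by $\sum_{x,y}\mathbf{E}[Z_xZ_y]$, apply Lemma~\ref{4.3SWX} and Lemma~\ref{Correlationxy}, reorganize the double sum around the branch point $w(x,y)$ from Lemma~\ref{RelThinTriangleLem2}, sum a geometric series, and compare with the first-moment lower bound of Proposition~\ref{Coro4.4L}. The only (harmless) deviation is that you let $w'$ range over all of $S_k$ and so need $2\omega_{\Gamma,L}(r)>\omega_\Gamma(r)$ for convergence, whereas the paper keeps $w\in S_{k,L}$ and only needs $\omega_{\Gamma,L}(r)>0$; both are guaranteed by taking $L$ large via Lemma~\ref{omegaL}, and your explicit bookkeeping of the cone sums $\sum_{x\in A(w')}G_r(e,x)$ actually fills in a step the paper leaves implicit.
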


\begin{proof}
By Lemma~\ref{omegaL} and the fact that $\omega_{\Gamma}(r) > 1$, we have $\omega_{\Gamma, L}(r) > 1$ for sufficiently large $L$.  Applying Proposition~\ref{GreenL} and Lemma~\ref{Correlationxy},
  \begin{align*}
    \mathbf{E} \left[ M_{n, L}^2 \right]
    \leq&
          c_1 \sum_{x, y \in S_{n, L}} \sum_{z \in \Gamma} G_r(e, z) G_r(z, x) G_r(z, y) \\
    \leq&
          c_2 \sum_{k=0}^n \sum_{w \in S_{k, L}} \sum_{x, y \in S_{n, L}} G_r(e, w) G_r(w, x) G_r(w, y) \\
    \leq&
          c_3 \sum_{k=0}^n \mathrm{e}^{(2n - k) \omega_{\Gamma, L}(r)} \\
    \leq&
          c_4 \left( \mathbf{E}[M_{n, L}] \right)^2.   
  \end{align*}
This yields the desired bound.
\end{proof}

We can now prove the lower bound and finish the proof of Proposition~\ref{P:vgrBRW}.
\begin{proof}[Proof of Proposition~\ref{P:vgrBRW}]
We first fix $r<\rho^{-1}$ and assume that $\nu$ has finite second moment.
By Proposition~\ref{Coro4.4L},
$$\mathbf{P}\left (M_{n,L}\geq \frac{c_1}{2}\mathrm{e}^{\omega_{\Gamma,L}(r)n}\right)\geq \mathbf{P}\left (M_{n,L}\geq \frac{1}{2}\mathbf{E}[M_{n,L}]\right)$$
and so, by the Paley-Zygmund inequality and Corollary~\ref{GreenLmoment}, for some $c_2>0$,
$$\mathbf{P}\left (M_{n,L}\geq \frac{c_1}{2}\mathrm{e}^{\omega_{\Gamma,L}(r)n}\right)\geq \frac{\big(\mathbf{E}[M_{n,L}]\big)^2}{\mathbf{E}\left[M_n^2\right]}\geq c_2.$$
Thus, with positive probability,
the events $\left\{M_{n,L}^{1/n}\geq \left(\frac{c_1}{2}\right)^{1/n}\mathrm{e}^{\omega_{\Gamma,L}(r)}\right\}$
occur for infinitely many $n$ and so, with positive probability, $\limsup \frac{1}{n}\log M_{n,L}\geq \omega_{\Gamma,L}(r)$.
By definition, $M_n\geq M_{n,L}$,
hence for every large enough $L$, with positive probability (a priori depending on $L$), we have
$\limsup \frac{1}{n}\log M_n\geq \omega_{\Gamma,L}(r)$.
By \cite[Lemma~4.7]{SWX}, $\limsup M_n^{1/n}$ is almost surely a constant.
Thus, for every $L$, almost surely we have
$\limsup \frac{1}{n}\log M_n\geq \omega_{\Gamma,L}(r)$.
Letting $L$ tend to infinity along a sequence, it follows from Lemma~\ref{omegaL} that
$\limsup \frac{1}{n}\log M_n\geq \omega_{\Gamma}(r)$.
Thus by Proposition~\ref{upperboundvgrBRW},
$$\limsup \frac{1}{n} \log M_n=\omega_\Gamma(r).$$

We conclude as in \cite{SWX}.
For every $\epsilon>0$, we can construct a probability measure $\nu'$ with mean $r-\epsilon$ and with finite second moment so that $\nu$ stochastically dominates $\nu'$.
Denoting by $M_n'$ the number of vertices in $S_n$ ever visited by a branching random walk driven by $\mu$ and $\nu'$, we see that $M_n$ stochastically dominates $M_n'$ and so
$$\mathbf{P}\left(\limsup \frac{1}{n}\log M_n\geq \omega_\Gamma(r-\epsilon)\right)\geq \mathbf{P}\left(\limsup \frac{1}{n}\log M_n'\geq \omega_\Gamma(r-\epsilon)\right)=1.$$
Since $\omega_P(r)$ is continuous by Corollary~\ref{corolowersemicontinuous}, we deduce that
$$\limsup \frac{1}{n} \log M_n\geq \omega_\Gamma(r)$$ almost surely.
\end{proof}

\section{A lower bound for the Hausdorff dimension of the limit set}\label{SectionlowerboundHausdorff}
\subsection{The limit set in the Floyd and Bowditch boundaries}
For $r\leq \rho^{-1}$, we let $\Lambda_\mathcal{F}(r)$ and $\Lambda_\mathcal{B}(r)$ be the limit sets of the branching random walk whose offspring distribution has mean $r$ in the Floyd and Bowditch boundary respectively.
\begin{prop}\label{lowerboundHdim}
Let $r\leq \rho^{-1}$.
Almost surely,
$$\Hdim(\Lambda_\mathcal{F}(r),\delta_e)\geq \frac{-1}{\log \lambda}\omega_{\Gamma}(r)$$
and
$$\Hdim(\Lambda_\mathcal{B}(r),\overline{\delta}_e)\geq \frac{-1}{\log \lambda}\omega_{\Gamma}(r).$$
\end{prop}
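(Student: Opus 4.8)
The plan is to establish the two inequalities simultaneously via the energy form of Frostman's lemma: it suffices to produce, with positive probability, a finite nonzero Borel measure $\chi$ carried by $\Lambda_{\mathcal F}(r)$ whose $h$-energies $\iint\delta_e(\xi,\zeta)^{-h}\,d\chi(\xi)\,d\chi(\zeta)$ and $\iint\overline\delta_e(\phi\xi,\phi\zeta)^{-h}\,d\chi(\xi)\,d\chi(\zeta)$ are finite, for an arbitrary $h<\omega_\Gamma(r)/(-\log\lambda)$; then $\Hdim(\Lambda_{\mathcal F}(r),\delta_e)\ge h$ and $\Hdim(\Lambda_{\mathcal B}(r),\overline\delta_e)\ge h$ on the event $\{\chi\ne0\}$. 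To pass from positive probability to almost surely one invokes the branching structure: conditionally on survival (automatic for $r>1$; the case $r=1$ is vacuous since $\omega_\Gamma(1)=0$), the trace contains translated copies of the traces of countably many independent branching random walks rooted at the particles of a fixed generation, these translations act on the Floyd compactification by bi-Lipschitz homeomorphisms, and $\Hdim$ of a countable union is the supremum; hence $\{\Hdim(\Lambda_{\mathcal F}(r))\ge h\}$ obeys a zero--one law, and likewise for $\Lambda_{\mathcal B}(r)$. Throughout we may assume $r<\rho^{-1}$ and that $\nu$ has finite second moment: the general case, in particular the endpoint $r=\rho^{-1}$, follows by stochastic domination (couple with $\nu'$ of mean $r-\epsilon$ and finite second moment so that $\mathcal P\supseteq\mathcal P'$, hence $\Lambda(r)\supseteq\Lambda(r-\epsilon)$) together with the continuity of $\omega_\Gamma$ from Corollary~\ref{corolowersemicontinuous}, exactly as in the proof of Proposition~\ref{P:vgrBRW}.

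The measure $\chi$ is a random Patterson--Sullivan type measure built from the $L$-transitional spheres. Fix $L$ large and, for each $x\in S_{n,L}$, a fixed non-atomic reference probability measure $\vartheta_x$ supported on the shadow $\Pi_K(x)$. With $Z_x$ the number of particles ever visiting $x$, put
$$\chi_{n,L}=\frac{1}{H_{r,L}(n)}\sum_{x\in\mathcal P_n\cap S_{n,L}}Z_x\,\vartheta_x,$$
a random finite measure on $\overline{\Gamma}_{\mathcal F}$. By the many-to-one formula $\mathbf E[Z_x]=G_r(e,x)$ one has $\mathbf E[\chi_{n,L}(\overline{\Gamma}_{\mathcal F})]=1$, and, combining the second-moment bound of Lemma~\ref{4.3SWX}, the collapse of the $z$-sum to a center of $(e,x,y)$ via the relative Ancona inequalities and Lemma~\ref{existenceofcenter} (as in Lemma~\ref{Correlationxy}), and the two-sided bound $H_{r,L}(k)\asymp\mathrm e^{k\omega_{\Gamma,L}(r)}$ of Proposition~\ref{puregrowthtransitional}, one obtains $\mathbf E[\chi_{n,L}(\overline{\Gamma}_{\mathcal F})^2]\le C$ uniformly in $n$. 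Paley--Zygmund then gives $\mathbf P(\chi_{n,L}(\overline{\Gamma}_{\mathcal F})\ge\tfrac12)\ge c>0$ uniformly in $n$; since the shadows $\Pi_K(x)$, $x\in S_n$, shrink to the boundary, any weak-$*$ subsequential limit is carried by $\overline{\mathcal P}\cap\partial_{\mathcal F}\Gamma=\Lambda_{\mathcal F}(r)$. The delicate point — that the sequence $(\chi_{n,L})_n$ admits, after enlarging the probability space if necessary, a genuine random weak-$*$ limit $\chi=\chi_L$ that is nonzero with positive probability — is precisely the convergence theory for random finite measures carried out in the Appendix.

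The energy estimate is the technical heart. By lower semicontinuity of the energy under weak-$*$ convergence together with Fatou's lemma, it suffices to bound $\mathbf E\big[\iint\delta_e(\xi,\zeta)^{-h}\,d\chi_{n,L}\,d\chi_{n,L}\big]$ uniformly in $n$. One splits the double sum according to the location $w=w(x,y)$ of a center of the triple $(e,x,y)$: for $\xi\in\Pi_K(x)$, $\zeta\in\Pi_K(y)$ the geodesic $[\xi,\zeta]$ passes within a bounded distance of $w$, so $\delta_e(\xi,\zeta)\gtrsim\lambda^{|w|}$ by Lemma~\ref{TransLargeLem}, while $\mathrm{diam}\,\Pi_K(x)\asymp\lambda^{n}$ keeps the diagonal ($x=y$) contribution under control. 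Replacing $\mathbf E[Z_xZ_y]$ by $\sum_zG_r(e,z)G_r(z,x)G_r(z,y)$ and collapsing this to $G_r(e,x)G_r(e,y)/G_r(e,w)$ via relative Ancona as above, then summing over $w$ by spheres and using $H_{r,L}(k)\asymp\mathrm e^{k\omega_{\Gamma,L}(r)}$, reduces the bound to the convergence of a geometric series $\sum_k\mathrm e^{k(h(-\log\lambda)-\omega_{\Gamma,L}(r))}$, which holds once $L$ is large enough that $h(-\log\lambda)<\omega_{\Gamma,L}(r)$ — possible for every $h<\omega_\Gamma(r)/(-\log\lambda)$ by Lemma~\ref{omegaL}. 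Hence $\iint\delta_e^{-h}\,d\chi\,d\chi<\infty$ almost surely, and on $\{\chi\ne0\}$ Frostman's lemma yields the first displayed inequality. For the shortcut distance one argues identically: the shadow-diameter bound of Lemma~\ref{diametershadow} holds verbatim for $\overline\delta_e$, and the matching lower bound $\overline\delta_e(\phi\xi,\phi\zeta)\gtrsim\lambda^{|w|}$ at a transition point $w$ holds as well; alternatively, one checks that $\chi$ gives full mass to conical limit points, where $\phi$ is injective and bi-Lipschitz onto its image, which follows from the logarithmic tracking of transition points, Theorem~\ref{thmlogarithmictracking}.

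I expect the main obstacle to be exactly what is deferred to the Appendix: turning the uniform first- and second-moment control of the random measures $\chi_{n,L}$ into a bona fide random weak-$*$ limit measure that is nonzero with positive probability. The total mass is $1$ in expectation but random, and a priori it could dissipate in the limit; ruling this out needs the uniform $L^2$-bound together with a careful compactness argument for random measures on the compact space $\overline{\Gamma}_{\mathcal F}$ — this is the step where the classical argument of \cite{SWX} is flawed. A secondary, more routine, difficulty is the geometric bookkeeping with centers of triples of boundary points (Lemma~\ref{existenceofcenter}, Lemma~\ref{ThinTransitional2}) needed to make the energy sum telescope, and the verification that $\chi$ charges no parabolic point so that the Bowditch estimate reduces to the Floyd one.
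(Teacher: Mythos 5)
Your proposal follows essentially the same route as the paper: a random Patterson--Sullivan-type measure carried by the $L$-transitional trace, uniform first- and second-moment bounds via Lemma~\ref{4.3SWX} and Lemma~\ref{Correlationxy}, Paley--Zygmund, the Appendix compactness theory for random finite measures, a uniform energy bound collapsing to a geometric series that converges by the choice of $L$ from Lemma~\ref{omegaL}, Frostman's lemma, stochastic domination to reach $r=\rho^{-1}$, and the zero--one law of \cite[Lemma~4.7]{SWX}. The one point to repair is that your $\chi_{n,L}$ has total mass controlled only in $L^2$, whereas the compactness criterion actually proved in the Appendix (Corollary~\ref{coroProkhorovisolated}) requires the $\Omega$-marginal to be dominated by $C\mathbf P$, i.e.\ an almost sure bound on the mass; the paper secures this by multiplying by the indicator of the event $B_n=\{\tfrac12\mathbf E[M_{n,L}]\le M_{n,L}\le C\mathbf E[M_{n,L}]\}$, whose probability stays bounded below, and the same truncation works verbatim for your $Z_x$-weighted version.
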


By~(\ref{shortcutsmallerFloyd}), it is enough to prove the lower bound for the shortcut distance on the Bowditch boundary.
For simplicity, we write $\Lambda=\Lambda_\mathcal{B}(r)$ and $X=\Gamma\cup \partial_{\mathcal{B}}\Gamma$.

Let $h<\frac{-1}{\log \lambda}\omega_\Gamma(r)$.
We will prove that with positive probability, there exists a positive finite measure $\chi$ on the limit set $\Lambda$ such that
$$\int_{\Lambda}\int_{\Lambda}\overline{\delta}_e(x,y)^{-h}d\chi(x)d\chi(y)<+\infty.$$
Recall that $\mathcal{P}_{n,L}$ is the set of $L$-transitional points in $S_n$ that are ever visited by the branching random walk and $M_{n,L}$ is the cardinality of $\mathcal{P}_{n,L}$.
Using Lemma~\ref{omegaL}, we fix $L$ such that
\begin{equation}\label{choiceofL}
h<\frac{-1}{\log \lambda}\omega_{\Gamma,L}(r).
\end{equation}

Let $A_n$ be the event
$$\left \{M_{n,L}\geq \frac{1}{2}\mathbf E[M_{n,L}]\right \}.$$
By the Paley-Zygmund inequality and Corollary~\ref{GreenLmoment}, there exists $p>0$ such that
$\mathbf P(A_n)\geq p$.
Also, for any $C\geq 0$,
$\mathbf P(M_{n,L}\geq C\mathbf E[M_{n,L}])\leq 1/C$.
Therefore, for large enough $C$, the event $B_n$ defined by
$$\left \{\frac{1}{2}\mathbf E[M_{n,L}]\leq M_{n,L}\leq C\mathbf E[M_{n,L}]\right \}$$
satisfies $\mathbf P(B_n)\geq p/2$.
We define for every $n$ a random measure $\chi_n$ by
\begin{equation}\label{defchin}
  \chi_n = \mathbf 1_{B_n}\frac{1}{\mathbf{E} \left[ M_{n, L} \right]} \sum_{x \in \mathcal{P}_{n, L}}\mathbf D(x)
\end{equation}
where $\mathbf D(x)$ is the Dirac measure at $x$.

Our goal is to apply a compactness theorem to the sequence $\chi_n$ and find a limit random measure $\chi$.
In \cite{Crauel}, the author proves a compactness criterion for random probability measures.
Here the measure $\chi_n$ is not almost surely a probability measure, however some of the results of \cite{Crauel} still hold in our context.
References and proofs are postponed to the Appendix.

Note that
$$\mathbf E[\chi_n(X)]\leq C,$$
so $\chi_n$ is a random finite measure in the sense of Definition~\ref{defrandomfinitemeasure}.
We define the measure $\pi_\Omega(\chi_n)$ on $\Omega$ by setting, for every event $A$,
$$\pi_\Omega(\chi_n)(A)=\mathbf E\left [\mathbf 1_A\chi_n(X)\right]=\mathbf E\left [\mathbf 1_A \mathbf 1_{B_n}\frac{M_{n,L}}{\mathbf E[M_{n,L}]}\right ].$$
Then, 
$$\pi_\Omega(\chi_n)(A)\leq C\mathbf P(A).$$
Since $X$ is compact, any subset of $\mathcal{M}_\Omega(X)$ is tight in the sense of Definition~\ref{deftightrandom}.
Moreover, any point of $\Gamma$ is isolated in $X$.
By Corollary~\ref{coroProkhorovisolated}, the closure of $\left \{\chi_n\right \}$ is compact for the weak topology on random finite measures.
This is the smallest topology such that for every random bounded continuous function, the map $\mu\mapsto \mu(f)$ is continuous, where a random bounded continuous function is a function
$f:(x,\omega)\mapsto f(x,\omega)$ such that for every $\omega$, $f(\cdot,\omega)$ is bounded continuous, for every $x$, $f(x,\cdot)$ is measurable and the map $\omega \mapsto \|f(\cdot,\omega)\|_\infty$ is $\mathbf P$-essentially bounded.

Thus, there exists a sub-net $(\chi_\alpha)_{\alpha\in A}$ such that $\chi_{\alpha}$ converges to some random finite measure $\chi$.
This means that there exists a directed set $A$ and a monotone final function $h:A\to \mathbb N$ such that for every $\alpha\in A$,
$\chi_\alpha=\chi_{h(\alpha)}$ and such that $\chi_\alpha$ eventually lies in every neighborhood of $\chi$.
We refer to \cite[Definition~2.11, Definition~2.15, Theorem~2.31]{AliprantisBorder} for more details on nets and a characterization of compactness in terms of convergent sub-nets.
This implies that for every random bounded continuous function $f$, we have
\begin{equation}\label{equationconvergencechi}
\chi_{\alpha}(f)\underset{\alpha\to \infty}{\longrightarrow}\chi(f).
\end{equation}

Since
$$\frac{p}{4}\leq \mathbf E[\chi_{\alpha}(X)]\leq C,$$
the same holds for $\chi$, applying~(\ref{equationconvergencechi}) to the function $\mathbf 1_\Omega\mathbf 1_X$.
Therefore, with positive probability, $\chi$ is not the null-measure.

Note that $\chi_{\alpha}(\mathcal{P}\cup \Lambda)=\chi_{\alpha}(X)$ and that $\mathcal{P}\cup \Lambda$ is a random closed set in the sense of Definition~\ref{defrandomclosedset}.
Thus, by Proposition~\ref{randomPortmanteau}, which is analogous to the classical Portmanteau theorem, we have
$$\chi(\mathcal{P}\cup \Lambda)\geq \limsup_{\alpha}\chi_{\alpha}(\mathcal{P}\cup \Lambda)=\limsup_\alpha\chi_{\alpha}(X)=\chi(X).$$
Also, the topology on $\Gamma \cup \partial_{\mathcal{B}}\Gamma$ extends the discrete topology of $\Gamma$.
Thus, any compact $K\subset \Gamma$ is closed and open, so the function $\mathbf 1_K$ is continuous.
Applying convergence to this function and noting that for large enough $n$,we have $\chi_{n}(K)=0$, hence for large enough $\alpha$, $\chi_\alpha(K)=0$ we get that
$\chi(K)=0$.
Since $\Gamma$ can be written as a countable union of compact sets, we get that
$$\mathbf E[\chi(\mathcal{P})]\leq \mathbf E[\chi(\Gamma)]=0.$$
Thus, $\chi$ almost surely gives full measure to $\Lambda$.

We can now finish the proof of Proposition~\ref{lowerboundHdim}.
\begin{proof}[Proof of Proposition~\ref{lowerboundHdim}]
First, as in the proof of Proposition~\ref{P:vgrBRW}, we can construct a branching random walk $(\Gamma,\mu,\nu')$ such that $\nu'$ has mean $r-\epsilon$, has finite second momment and is stochastically dominated by $\nu$.
As a consequence, we can assume that $r<\rho^{-1}$ and that $\nu$ has finite second moment.

We slightly modify the distance $\overline{\delta}_e$ and set for $x,y\in \Gamma \cup \partial_{\mathcal{B}}\Gamma$
$$\hat{\delta}_e(x,y)=\begin{cases}
\overline{\delta}_e(x,y) &\text{ if }x\neq y \\
\lambda^{d(e,x)} &\text{ if }x=y
\end{cases}$$
where by definition $\lambda^{\infty}=0$.
Note that for $x,y\in \partial_{\mathcal{B}}\Gamma$, $\hat{\delta}_e(x,y)=\overline{\delta}_e(x,y)$.

\begin{claim}
The function $(x,y)\in (\Gamma\cup \partial_\mathcal{B}\Gamma)\times (\Gamma\cup \partial_\mathcal{B}\Gamma)\mapsto \hat{\delta}_e(x,y)$ is continuous.
\end{claim}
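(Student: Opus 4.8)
The plan is to isolate two ingredients. First, recall (see Section~\ref{SectionFloydboundary}) that $\overline{\delta}_e$ is a metric on $X:=\Gamma\cup\partial_{\mathcal{B}}\Gamma$ inducing its topology, so that $(x,y)\mapsto\overline{\delta}_e(x,y)$ is continuous on $X\times X$; recall also that $X$ is compact (hence metrizable, so sequences suffice to test continuity), that it is Hausdorff, and that every point of $\Gamma$ is isolated in $X$. Second, introduce the auxiliary function $\psi\colon X\to[0,1]$ defined by $\psi(g)=\lambda^{d(e,g)}$ for $g\in\Gamma$ and $\psi(\xi)=0$ for $\xi\in\partial_{\mathcal{B}}\Gamma$. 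Then $\hat{\delta}_e(x,x)=\psi(x)$ for every $x\in X$ (the two formulas agreeing on the boundary since $\lambda^{\infty}=0$), $\hat{\delta}_e(x,y)=\overline{\delta}_e(x,y)$ whenever $x\ne y$, and consequently
\[
  0\ \le\ \hat{\delta}_e(x,y)\ \le\ \overline{\delta}_e(x,y)+\psi(x)\qquad\text{for all }x,y\in X,
\]
with equality on the right when $x=y$.

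Next I would check that $\psi$ is continuous on $X$. For $\epsilon>0$ the set $\{\psi\ge\epsilon\}$ equals $\{g\in\Gamma : d(e,g)\le\log_\lambda\epsilon\}$, a finite and hence closed subset of the Hausdorff space $X$, while $\{\psi>\epsilon\}$ is a finite union of isolated points of $X$, hence open, so that $\{\psi\le\epsilon\}$ is closed. Since the preimages of $[0,\epsilon]$ and of $[\epsilon,1]$ are closed for every $\epsilon$, the function $\psi$ is continuous. (Concretely this encodes the elementary fact that a sequence in $\Gamma$ converging in $X$ to a point of $\partial_{\mathcal{B}}\Gamma$ must leave every ball $B(e,R)$: otherwise a subsequence would lie in the finite set $B(e,R)$, so a constant sub-subsequence would converge both to a point of $\Gamma$ and to a point of $\partial_{\mathcal{B}}\Gamma$, contradicting Hausdorffness.)

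With these in hand, I would verify continuity of $\hat{\delta}_e$ at an arbitrary point $(x_0,y_0)\in X\times X$ in three cases. If $x_0\ne y_0$, choose disjoint open sets $U\ni x_0$ and $V\ni y_0$; on the neighborhood $U\times V$ of $(x_0,y_0)$ one has $x\ne y$, hence $\hat{\delta}_e=\overline{\delta}_e$ there, which is continuous. If $x_0=y_0\in\Gamma$, then $(x_0,x_0)$ is an isolated point of $X\times X$ and there is nothing to prove. If $x_0=y_0=\xi\in\partial_{\mathcal{B}}\Gamma$, then $\hat{\delta}_e(\xi,\xi)=\psi(\xi)=0$, and for $(x,y)\to(\xi,\xi)$ the displayed bound together with continuity of $\overline{\delta}_e$ and of $\psi$ gives $0\le\hat{\delta}_e(x,y)\le\overline{\delta}_e(x,y)+\psi(x)\to\overline{\delta}_e(\xi,\xi)+\psi(\xi)=0$, so $\hat{\delta}_e(x,y)\to\hat{\delta}_e(\xi,\xi)$. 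This establishes the claim.

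I do not anticipate any genuine obstacle: the statement is a routine topological verification, and the only step requiring a moment's thought is the continuity of $\psi$ at boundary points — equivalently, the fact that a $\Gamma$-sequence tending to infinity in $X$ has word length tending to infinity — which follows immediately from local finiteness of $\Gamma$, from $\Gamma$ being discrete in $X$, and from Hausdorffness of $X$.
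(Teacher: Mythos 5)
Your proof is correct and follows essentially the same case analysis as the paper's (off-diagonal, diagonal in $\Gamma$, diagonal on the boundary). The only cosmetic difference is that for the boundary-diagonal case the paper extracts subsequences according to whether $x_n=y_n$, whereas you package both sub-cases into the single bound $\hat{\delta}_e(x,y)\le\overline{\delta}_e(x,y)+\psi(x)$ with $\psi$ continuous; both verifications are routine and equally valid.
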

\begin{proof}[Proof of the claim]
Let $x_n,y_n$ converge to $x,y$.
If $x\neq y$, then $x_n\neq y_n$, so $\hat{\delta}_e(x_n,y_n)=\overline{\delta}_e(x_n,y_n)$ for large enough $n$, which converges to $\overline{\delta}_e(x,y)=\hat{\delta}_e(x,y)$.

Now if $x=y$, there are two cases.
First, if $x\in \Gamma$, then $x_n=y_n=x$ and so $\hat{\delta}_e(x_n,y_n)=\hat{\delta}_e(x,y)$ for large enough $n$.

Second, assume that $x=y\in \partial_\mathcal{B}\Gamma$.
We have to prove that $\hat{\delta}_e(x_n,y_n)$ converges to 0 as $n$ tends to infinity.
Up to extracting sub-sequences, we can assume that either $x_n=y_n$ for every $n$ or that $x_n\neq y_n$ for every $n$.
In the former sub-case, $\hat{\delta}_e(x_n,y_n)=\lambda^{d(e,x_n)}$ which tends to 0 since $x_n$ tends to infinity.
In the latter, $\hat{\delta}_e(x_n,y_n)=\overline{\delta}_e(x_n,y_n)$, which concludes the proof.
\end{proof}
  Let
\begin{equation}\label{defWn}
\begin{split}
  W_n &= \int\int \hat{\delta}_e(x, y)^{-h} d \chi_n(x) d \chi_n(y)\\
  &= \mathbf 1_{B_n}\frac{1}{\mathbf{E} \left[ M_{n, L} \right]^2} \sum_{x, y \in \mathcal{P}_{n, L}} \hat{\delta}_e(x, y)^{-h}. 
  \end{split}
\end{equation}

\begin{claim}
  \label{boundWn}
The expectation $\mathbf{E}[W_n]$ is uniformly bounded.
\end{claim}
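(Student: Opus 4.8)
The plan is to bound $\sup_n\mathbf E[W_n]<\infty$. Since $\mathbf 1_{B_n}\le 1$ and $\{x\in\mathcal P_{n,L}\}=\{Z_x\ge 1\}$ for $x\in S_{n,L}$, using $\mathbf P(Z_x\ge 1,\,Z_y\ge 1)\le\mathbf E[Z_xZ_y]$ we get
$$\mathbf E[W_n]\le\frac{1}{\mathbf E[M_{n,L}]^{2}}\sum_{x,y\in S_{n,L}}\hat\delta_e(x,y)^{-h}\,\mathbf E[Z_xZ_y].$$
The diagonal terms $x=y$ are handled directly: there $\hat\delta_e(x,x)^{-h}=\lambda^{-hn}$ and $\sum_{x\in S_{n,L}}\mathbf P(Z_x\ge 1)=\mathbf E[M_{n,L}]$, so they contribute at most $\lambda^{-hn}/\mathbf E[M_{n,L}]$. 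By Proposition~\ref{Coro4.4L}, $\mathbf E[M_{n,L}]\ge C_L\mathrm e^{n\omega_{\Gamma,L}(r)}$, and the choice~(\ref{choiceofL}) of $L$ means precisely that $\lambda^{-h}\mathrm e^{-\omega_{\Gamma,L}(r)}<1$; hence the diagonal contributes at most $C_L^{-1}$, uniformly in $n$.

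For the off-diagonal part I would apply Lemma~\ref{4.3SWX} and then Lemma~\ref{Correlationxy} to obtain $\mathbf E[Z_xZ_y]\le c\,G_r(e,w)G_r(w,x)G_r(w,y)$, where $w=w(x,y)$ is a center of the triple $(e,x,y)$ as in Lemma~\ref{RelThinTriangleLem2}, lying within $K$ of transition points on $[e,x]$, $[e,y]$ and $[x,y]$. The key geometric input is the estimate $\hat\delta_e(x,y)\ge c'\lambda^{|w|}$: applying Lemma~\ref{TransLargeLem} at the transition point of $[x,y]$ near $w$ gives $\delta_w(x,y)\ge\delta$, and two applications of the change-of-basepoint inequality~(\ref{changeofbasepointFloyd}) yield $\delta_e(x,y)\ge\delta\lambda^{K}\lambda^{|w|}$; since $[e,x]$ and $[e,y]$ are $L$-transitional, the passage $\phi$ to the Bowditch boundary produces no genuine shortcut between $x$ and $y$, so $\hat\delta_e(x,y)=\overline\delta_e(x,y)\ge c'\lambda^{|w|}$. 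The off-diagonal sum is thus at most $c''\sum_{x\neq y}\lambda^{-h|w|}G_r(e,w)G_r(w,x)G_r(w,y)$.

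It remains to evaluate this sum by reorganizing it according to the level $k=|w|$ and the point $w$. Replacing $w$ by the point $w'$ of $[e,x]$ at distance $k$ from $e$ (an $L$-transition point within $2K$ of $w$, with $|w'|=k+O(1)$) changes the product $G_r(e,w)G_r(w,x)G_r(w,y)$ only by a bounded factor, since Green functions of points at bounded distance are comparable up to a multiplicative constant. Because $w'\in[e,x]$, the element $w'^{-1}x$ lies in $S_{n-k,L}$, so $\sum_{x\in S_{n,L}:\,w'\in[e,x]}G_r(w',x)\le H_{r,L}(n-k)$; and using the relative Ancona inequalities to pass from $w'$ to the nearest (transition) point of $[e,y]$, which sits at level $k+O(1)$, one gets $\sum_{y}G_r(w',y)\lesssim \mathrm e^{(n-k)\omega_{\Gamma,L}(r)}$ as well. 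Together with $\sum_{w'\in S_{k,L}}G_r(e,w')=H_{r,L}(k)$ and Proposition~\ref{puregrowthtransitional} ($H_{r,L}(j)\asymp\mathrm e^{j\omega_{\Gamma,L}(r)}$), the off-diagonal sum is
$$\lesssim\sum_{k=0}^{n}\lambda^{-hk}\,\mathrm e^{k\omega_{\Gamma,L}(r)}\,\mathrm e^{2(n-k)\omega_{\Gamma,L}(r)}=\mathrm e^{2n\omega_{\Gamma,L}(r)}\sum_{k=0}^{n}\bigl(\lambda^{-h}\mathrm e^{-\omega_{\Gamma,L}(r)}\bigr)^{k},$$
and by~(\ref{choiceofL}) the geometric series converges; dividing by $\mathbf E[M_{n,L}]^{2}\ge C_L^{2}\mathrm e^{2n\omega_{\Gamma,L}(r)}$ yields a bound independent of $n$.

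I expect the main obstacle to be the geometric estimate $\hat\delta_e(x,y)\ge c'\lambda^{|w(x,y)|}$: one must check that $\phi$ does not collapse enough near $x$ and $y$ to shortcut the Floyd distance between them, which should hold because $\phi$ is injective away from the parabolic limit points and those are approached only along geodesics going deep into a parabolic coset, whereas $[e,x]$ and $[e,y]$ are $L$-transitional. A secondary but essential point is the bookkeeping in the reorganization: one must at every step return, via the relative Ancona inequalities and the Harnack-type comparison, to points lying exactly on $L$-transitional geodesics, so that after translating they land in $S_{\cdot,L}$ with the \emph{same} constant $L$ (merely near such geodesics would degrade $L$, $\omega_{\Gamma,L}(r)$ would drift upward, and the final division would fail).
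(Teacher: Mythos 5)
Your proposal is correct and follows essentially the same route as the paper's proof: bound $\mathbf E[Z_xZ_y]$ by $G_r(e,w)G_r(w,x)G_r(w,y)$ via Lemma~\ref{4.3SWX} and Lemma~\ref{Correlationxy}, use the geometric estimate $\hat{\delta}_e(x,y)\gtrsim\lambda^{|w|}$, reorganize the off-diagonal sum by the level $k$ of the center $w$ (exactly as in Corollary~\ref{GreenLmoment}), and sum the resulting geometric series $\sum_k(\lambda^{-h}\mathrm e^{-\omega_{\Gamma,L}(r)})^k$ using the choice~(\ref{choiceofL}) of $L$. The one step you flag as the main obstacle is indeed the only place the paper does something you do not: the lower bound on the shortcut distance is obtained by adapting \cite[Proposition~5.13]{PY}, and the right mechanism is a quantitative control of chains in the definition of $\overline{\delta}_e$ along $L$-transitional geodesics, not merely injectivity of $\phi$ away from parabolic points.
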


\begin{proof}[Proof of the claim]
  By Lemma~\ref{RelThinTriangleLem2}, for every $x$, $y \in \mathcal{P}_{n, L}$ there exists a point $w_n$ which is within a bounded distance of transition point on $[e, x]$ and on $[e, y]$.  We denote by $d_n$ the supremum of $d(e,w_n)$ for such a point $w_n$.  Note that $[e, x]$ and $[e, y]$ are $L$-transitional.  A small adaptation of the proof of \cite[Proposition~5.13]{PY} yields that
  \[
    \hat{\delta}_e(x, \, y) \geq c_L \lambda^{d_n}.  
  \]
  There exists $C$ such that $d_n\leq n+C$.  Furthermore, if $d_n = k$, then
  $$2 n - k - C \leq d(x, y) \leq 2n - k + C.$$
  By Lemma~\ref{4.3SWX},
  \begin{align*}
    & \mathbf{E} \left[ \sharp \left\{ (x, y) \in \mathcal{P}_{n, L} \colon d_n = k \right\} \right] \\
    & \hspace{.3cm} \leq
          \mathbf{E} \left[ \sum_{\substack{x, y \in S_{n, L} \\ 2n - k - C \leq d(x, y) \leq 2n - k + C}} \mathbb{1}_{\left\{ x, y \text{ are visited by }\mathrm{BRW}(\Gamma,\nu,\mu) \right\}} \right] \\
    & \hspace{.3cm} \leq 
           C \sum_{z \in \Gamma} \sum_{\substack{x, y \in S_{n, L} \\ 2n - k - C \leq d(x, y) \leq 2n - k + C}} G_r(e, z) G_r(z, x) G_r(z, y). 
  \end{align*}
Recall that $r<\rho^{-1}$.
Applying Proposition~\ref{GreenL} and Lemma~\ref{Correlationxy}, we get
\begin{equation}\label{boundAnWn3}
  \mathbf{E} \left[ \sharp \left\{ (x, y) \in \mathcal{P}_{n, L} \colon d_n = k \right\} \right] 
  \leq
  C_L \mathrm{e}^{\omega_{\Gamma,L}(r)(2n-k)}.
\end{equation}
Combining~\eqref{defWn}, (\ref{boundAnWn3}) and Proposition~\ref{GreenL}, we get
$$\mathbf{E}[W_n]\leq C_L\mathrm{e}^{-2n\omega_{\Gamma,L}(r)}\sum_{k=0}^{n+C}\lambda^{-hk}\mathrm{e}^{\omega_{\Gamma,L}(r)(2n-k)}\leq C'_L\sum_{k=0}^{n+C}(\lambda^{-h}\mathrm{e}^{-\omega_{\Gamma,L}(r)})^k.$$
By our choice of $L$~(\ref{choiceofL}), this last quantity is uniformly bounded.
\end{proof}

For any $\kappa\geq 0$, the function
$\kappa\wedge \hat{\delta}_e(x,y)^{-h}$
is bounded continuous on $X\times X$.
Thus,
$$\mathbf E\left [\int \int \kappa \wedge \hat{\delta}_e(x,y)^{-h} d\chi_{\alpha}(x)d\chi_{\alpha}(y)\right ]\underset{\alpha\to \infty}{\longrightarrow}\mathbf E\left [\int \int \kappa \wedge \hat{\delta}_e(x,y)^{-h} d\chi(x)d\chi(y)\right ].$$
By what precedes, we have that for every $\alpha$,
$$\mathbf E\left [\int \int \hat{\delta}_e(x,y)^{-h} d\chi_{\alpha}(x)d\chi_{\alpha}(y)\right ]\leq C$$ for some uniform $C$.
By the Fatou Lemma applied to the measure $\pi_X(\chi)$ on $X$ defined for every Borelian subset $B$ of $X$ by
$$\pi_X(B)=\mathbf E[\chi(B)],$$ we have
$$\mathbf E\left [\int \int \hat{\delta}_e(x,y)^{-h} d\chi(x)d\chi(y)\right ]\leq \liminf_{\kappa \to \infty}\mathbf E\left [\int \int \kappa \wedge \hat{\delta}_e(x,y)^{-h} d\chi(x)d\chi(y)\right ].$$
For every $\kappa$, for every $\alpha$,
$$\mathbf E\left [\int \int \kappa \wedge \hat{\delta}_e(x,y)^{-h} d\chi_{\alpha}(x)d\chi_{\alpha}(y)\right ]\leq \mathbf E\left [\int \int \hat{\delta}_e(x,y)^{-h} d\chi_{\alpha}(x)d\chi_{\alpha}(y)\right ]\leq C$$
and letting $\alpha$ go to infinity, we get
$$\mathbf E\left [\int \int \kappa \wedge \hat{\delta}_e(x,y)^{-h} d\chi(x)d\chi(y)\right ]\leq C.$$
This bound being uniform in $\kappa$, we finally get that
$$\mathbf E\left [\int \int \hat{\delta}_e(x,y)^{-h} d\chi(x)d\chi(y)\right ]\leq C.$$
Consequently, $\mathbf P$-almost surely,
$$\int \int \hat{\delta}_e(x,y)^{-h} d\chi(x)d\chi(y)<+\infty.$$

Recall that $\hat{\delta}_e=\overline{\delta}_e$ on $\partial_\mathcal{B}\Gamma$.
Thus, with positive probability, there exists a finite measure $\chi$ on $\Lambda$ which is not the null-measure and such that
$$\int \int \overline{\delta}_e(x,y)^{-h} d\chi(x)d\chi(y)<+\infty.$$
By Frostman Lemma for metrizable spaces (see \cite[Theorem~2.6]{Shah}), this shows that with positive probability, $h<\Hdim(\Lambda,\overline{\delta}_e)$.
The same argument as in \cite[Lemma~4.7]{SWX} shows that $ \Hdim(\Lambda,\overline{\delta}_e)$ is almost surely a constant.
This concludes the proof.
\end{proof}

\subsection{The limit set in the ends boundary}\label{sectionends}
We prove here Theorem~\ref{coroaccessiblegroups}.
We briefly introduce infinitely ended groups and refer to \cite[Section~4]{DussauleYang} and references therein for more details on those groups and on the link between random walks and the end boundary.
Let $(V,E)$ be a locally finite graph and let $F$ be a finite set of $V$.
We denote by $\mathcal{C}(F)$ an infinite connected component of the complement of $F$ in $(V,E)$.
An end $\xi$ of $(V,E)$ is a collection of infinite connected components $\mathcal{C}(F)$, where $F$ is finite, such that for any two such $F,F'$, we have that the intersection of $\mathcal{C}(F)$ and $\mathcal{C}(F')$ is infinite.
We will also say for simplicity that $\xi$ lies in the connected component $\mathcal{C}(F)$ if $\mathcal{C}(F)$ is part of the collection defining $\xi$.
We denote by $\partial_{\mathcal{E}}(V,E)$ the set of ends.
We can endow the end compactification $V\cup \partial_{\mathcal{E}}(V,E)$ with a topology which extends the discrete topology on $V$ such that $V\cup \partial_{\mathcal{E}}(V,E)$ is compact and $V$ is dense in its ends compactification.
If $\Gamma$ is a finitely generated group, we define its end boundary as the set of ends of a Cayley graph with respect to a finite generating system.
Its topology does not depend on the choice of the finite generating system.
We denote by $\partial_{\mathcal{E}}\Gamma$ the end boundary of $\Gamma$.

Let $0<\lambda<1$.
We define the visual distance $\tilde{\delta}_e$ of parameter $\lambda$ on $\Gamma \cup \partial_{\mathcal{E}}\Gamma$ by setting
$\tilde{\delta}_e(x,y)=\lambda^n$, where $n$ is the minimal integer such that $x$ and $y$ lie in two distinct connected components of the complement of $B(e,n)$.
It is well known that the end boundary is covered by the Floyd boundary, see for instance \cite[Proposition~11.1]{GGPY} or \cite{Karlssonfreesubgroups}.
Moreover, we can be more precise and by \cite[Lemma~4.3]{DussauleYang}, the identity of $\Gamma$ extends to an 1-Lipschitz continuous and equivariant map $\psi$ from the Floyd compactification to the end compactification, so in particular
$$\delta_e(x,y)\geq \tilde{\delta}_e(\psi(x),\psi(y)).$$

If $\Gamma$ is a group with infinitely many ends, we denote by $\Lambda_{\mathcal{E}}(r)$ the limit set of a branching random walk $(\Gamma,\nu,\mu)$ with $\mathbf E[\nu]=r$.
We prove the following.
\begin{prop}\label{lowerboundends}
Let $r\leq \rho^{-1}$.
Almost surely,
$$\Hdim(\Lambda_\mathcal{E}(r),\tilde{\delta}_e)\geq \frac{-1}{\log \lambda}\omega_{\Gamma}(r).$$
\end{prop}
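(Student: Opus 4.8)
The plan is to transfer the lower bound already obtained for the Floyd boundary (Proposition~\ref{lowerboundHdim}) to the ends boundary via the 1-Lipschitz map $\psi$ from the Floyd compactification to the ends compactification. Since $\psi$ is 1-Lipschitz, one has $\tilde\delta_e(\psi(x),\psi(y)) \le \delta_e(x,y)$, which goes in the \emph{wrong} direction for pushing forward a Frostman measure. So instead of transferring the abstract dimension bound, I would rerun the argument of Proposition~\ref{lowerboundHdim} directly in the ends boundary, exploiting the simpler combinatorial structure of ends. The key point is that in an infinitely-ended group the ``shadows'' are actual clopen sets: for $x \in \Gamma$, the set of ends lying in the connected component of $\Gamma \setminus B(e, |x|-1)$ containing $x$ has $\tilde\delta_e$-diameter at most $\lambda^{|x|}$, and two such sets attached to $x, y$ either coincide or are disjoint. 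Moreover $\tilde\delta_e(\xi,\zeta) \asymp \lambda^{m}$ where $m$ is the distance from $e$ to where the geodesics to $\xi$ and $\zeta$ last share a common separating ball — which, because the group is relatively hyperbolic with finite parabolic subgroups being exactly where cut sets live, is comparable to the quantity $d(e, w)$ produced by the relative thin-triangle Lemma~\ref{RelThinTriangleLem2}.

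Concretely, I would fix $h < \frac{-1}{\log\lambda}\omega_\Gamma(r)$, use Lemma~\ref{omegaL} to choose $L$ with $h < \frac{-1}{\log\lambda}\omega_{\Gamma,L}(r)$, and build exactly the same random measures $\chi_n$ as in \eqref{defchin}, supported on $\mathcal P_{n,L}$ and renormalized by $\mathbf E[M_{n,L}]$ on the good event $B_n$. The compactness argument producing a non-null sub-net limit $\chi$ supported on $\Lambda_{\mathcal E}(r)$ is verbatim the same (the ends compactification is again compact, $\Gamma$ is again a countable union of clopen compact sets, and the Portmanteau/Prokhorov statements from the Appendix apply). The only genuinely new estimate is the analogue of Claim~\ref{boundWn}: I must show $\mathbf E[W_n]$ is uniformly bounded, where $W_n = \mathbf 1_{B_n}\,\mathbf E[M_{n,L}]^{-2}\sum_{x,y\in\mathcal P_{n,L}} \hat\delta_e(x,y)^{-h}$ with $\hat\delta_e$ the ends visual distance (modified on the diagonal as in the Bowditch case). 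For this I would invoke Lemma~\ref{RelThinTriangleLem2} to get, for $x,y \in S_{n,L}$, a point $w$ within bounded distance of transition points on $[e,x]$ and $[e,y]$; then $\tilde\delta_e(x,y) \gtrsim \lambda^{d(e,w)}$ because such a $w$ lies on a genuine separating set (the relevant ends split already at radius $\approx d(e,w)$), and $d(e,w) \le n + C$ with $2n - d(e,w) - C \le d(x,y) \le 2n - d(e,w) + C$. The second-moment control of $\sharp\{(x,y)\in\mathcal P_{n,L}: d(e,w)=k\}$ is then exactly \eqref{boundAnWn3}, obtained from Lemma~\ref{4.3SWX}, Proposition~\ref{GreenL} and Lemma~\ref{Correlationxy}, giving the bound $C_L \mathrm e^{\omega_{\Gamma,L}(r)(2n-k)}$; summing against $\lambda^{-hk}$ and using the choice of $L$ yields the uniform bound on $\mathbf E[W_n]$.

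With $\mathbf E[W_n]$ uniformly bounded, the same Fatou/weak-convergence argument as in the proof of Proposition~\ref{lowerboundHdim} produces, with positive probability, a non-null finite measure $\chi$ on $\Lambda_{\mathcal E}(r)$ with $\int\int \tilde\delta_e(x,y)^{-h}\,d\chi(x)d\chi(y) < \infty$; the Frostman lemma for metrizable spaces (\cite[Theorem~2.6]{Shah}) then gives $h < \Hdim(\Lambda_{\mathcal E}(r),\tilde\delta_e)$ with positive probability, and the $0$--$1$ argument of \cite[Lemma~4.7]{SWX} upgrades this to almost surely. Letting $h \uparrow \frac{-1}{\log\lambda}\omega_\Gamma(r)$ finishes the proof. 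I expect the main obstacle to be the comparison $\tilde\delta_e(x,y)\gtrsim\lambda^{d(e,w)}$: unlike the Bowditch case — where \cite[Proposition~5.13]{PY} is quoted — the ends distance sees only separating finite sets, so one has to check that the transition point $w$ supplied by Lemma~\ref{RelThinTriangleLem2} genuinely witnesses a separation of the two ends at the right radius. This should follow from accessibility of the group (finitely many orbits of cut sets, all contained in neighborhoods of parabolic cosets with finite parabolic subgroups in the infinitely-ended case) together with Lemma~\ref{TransLargeLem}, but it is the place where the ``ends'' argument genuinely differs from the Floyd/Bowditch argument and where a gap in \cite{CandelleroGilchMuller} originally occurred.
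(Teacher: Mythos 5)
Your overall strategy is exactly the paper's: the map $\psi$ is Lipschitz in the wrong direction, so one reruns the random-measure/Frostman argument of Proposition~\ref{lowerboundHdim} directly in the ends boundary, with the same $\chi_n$, the same compactness and Portmanteau machinery, and the same second-moment bound~\eqref{boundAnWn3}. You have also correctly isolated the single new ingredient required, namely a lower bound of the form $\hat{\delta}_e(x,y)\geq c_L\lambda^{d_n}$ for $x,y\in\mathcal{P}_{n,L}$. However, that estimate is precisely where your argument stops: you write that it ``should follow from accessibility of the group \dots together with Lemma~\ref{TransLargeLem}'', and the route you sketch rests on a misconception. In the Stallings splitting $A*_CB$ (or $A*_C$) the \emph{finite} subgroup is the edge group $C$; the parabolic subgroups of the relatively hyperbolic structure are the vertex groups $A,B$, which are in general infinite and one-ended. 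The separating finite sets that the ends distance detects are cosets of $C$, not neighborhoods of parabolic cosets, so ``finite parabolic subgroups being exactly where cut sets live'' is not the right picture, and accessibility in Dunwoody's sense is not what is needed.

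The paper fills this gap with the normal form in the splitting: every $x$ is uniquely $a_1b_1\cdots a_nb_nc$, every path from $e$ to $x$ passes within bounded distance of every prefix (via the tree-of-spaces model of \cite{ScottWall}, \cite{PapasogluWhyte}), and -- crucially -- the $L$-transitional hypothesis forces every syllable to satisfy $|a_i|,|b_i|\leq D_L$. Hence if $x,y\in S_{n,L}$ have maximal common prefix $w_n$ of length $d_n$, any path from $x$ to $y$ must pass near $w_nx_1$, so $x$ and $y$ lie in distinct components of the complement of $B(e,d_n+D_L+\mathrm{const})$, which gives $\hat{\delta}_e(x,y)\geq c_L\lambda^{d_n}$; and $d_n$ differs from your $d(e,w)$ by $O_L(1)$, so the moment bound carries over. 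Note that without the restriction to $L$-transitional points your asserted inequality $\tilde{\delta}_e(x,y)\gtrsim\lambda^{d(e,w)}$ is simply false: two points deep in the same coset $gA$ of a one-ended vertex group have branching point $w$ near the entry point of the coset, yet remain connected far outside $B(e,d(e,w))$ inside that coset, so their ends distance is far smaller than $\lambda^{d(e,w)}$. So the skeleton of your proof is right, but the one step you flag as ``the main obstacle'' is a genuine gap, and the normal-form argument (or, as the paper remarks, the bottleneck property of \cite{DussauleYang}) is what is needed to close it.
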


By a celebrated result of Stallings \cite{Stallings}, a group with infinitely many ends $\Gamma$ splits as an HNN extension $A_{*C}$ or an amalgamated product $A*_{C}B$, where $C$ is a finite group.
The action on the corresponding Bass-Serre tree satisfies the conditions of \cite[Definition~2]{Bowditch} and so $\Gamma$ is relatively hyperbolic.
However, there is no clear relation between the shortcut distance on the Bowditch boundary and the visual distance on the end boundary.
Indeed, the ends boundary is in general larger than the Bowditch boundary and even if they coincide, the shortcut distance is the largest distance on the Bowditch boundary satisfying~(\ref{shortcutsmallerFloyd}), so it is bounded from below by the visual distance.
Thus, we cannot deduce Proposition~\ref{lowerboundends} from Proposition~\ref{lowerboundHdim}.

\begin{proof}[Proof of Proposition~\ref{lowerboundends}]
We follow the same strategy as for the Bowditch boundary.
Since $\Gamma$ has infinitely many ends, it is relatively hyperbolic.
It is either an HNN extension $A*_{C}$ or an amalgamated product $A*_{C}B$, where $C$ is finite.
In the former case, the parabolic subgroups are the conjugates of $A$ and one can choose $\mathbb P_0=  \{A\}$, in the latter case, they are the conjugates of $A$ and $B$ and one can choose $\mathbb P_0=\{A,B\}$.
In both cases, every element of $\Gamma$ can be written uniquely in a normal form, see \cite[(9.2),(9.4)]{Woessends}.
For simplicity, we only give details of the proof when $\Gamma=A*_{C}B$.
The case of an HNN extension is treated similarly.
In this situation, the normal form is described as follows.
We choose a set of representatives $\mathfrak{A}$ of $A/C$ and a set of representatives $\mathfrak{B}$ of $B/C$.
Then, any element $x$ of $\Gamma$ can be uniquely written as
\begin{equation}\label{normalform}
x=a_1b_1...a_nb_nc,
\end{equation}
where $a_i\in \mathfrak{A}$, $b_i\in \mathfrak{B}$, $c\in C$.
Moreover, any path from $e$ to $x$ in the Cayley graph of $\Gamma$ has to pass within a bounded distance of every prefix of $x$ in the normal form~(\ref{normalform}).
This follows from the fact that $A*_CB$ is quasi-isometric to the space $X$ obtained by taking copies of $A$ and $B$ for each coset $gA$ and $hB$, $g,h\in \Gamma$ and connecting
\begin{itemize}
    \item $ga\in gA$ to $ga\in gaB$ by adding $\sharp C$ edges between the coset $gaC$ in $gA$ and the coset $gaC$ in $gaB$.
    \item $gb\in gB$ to $gb\in gbA$ by adding $\sharp C$ edges between the coset $gbC$ in $gB$ and the coset $gbC$ in $gbA$.
\end{itemize}
The construction of the space $X$ is performed in \cite{ScottWall}.
It is similar to the construction of a tree of spaces modeling the free product $A*B$ obtained by adding one single edge between every element $ga\in gA$ and $ga\in gaB$ and one single edge between every element $gb\in gB$ and $gb\in gbA$.
This space is also used in \cite{PapasogluWhyte} to prove that $A*B$ is quasi-isometric to $A*_CB$.

In particular, if a geodesic $[e,x]$ is $L$-transitional in the sense of Definition~\ref{deftransitional}, then $|e,x]$ cannot travel long in parabolic subgroups and thus every word $a_i$ and $b_i$ in the normal form of $x$~(\ref{normalform}) satisfies $|a_i|\leq D_L$ and $|b_i|\leq D_L$, where $D_L$ only depends on $L$.

Let $h<\frac{-1}{\log \lambda}\omega_\Gamma(r)$.
Using Lemma~\ref{omegaL}, we fix $L$ such that $h<\frac{-1}{\log \lambda}\omega_{\Gamma,L}(r)$.
The sequence of random finite measures $\chi_n$ on $\Gamma$ defined by~(\ref{defchin}) converges, up to a sub-net, to a random finite measure $\chi$ on the end boundary that gives full measure to $\Lambda_{\mathcal{E}}(r)$ and which is not the null measure with positive probability.
We slightly modify the distance $\tilde{\delta}_e$ by setting $\hat{\delta}_e(x,y)=\tilde{\delta}_e(x,y)$ if $x\neq y$ and $\hat{\delta}_e(x,y)=\lambda^{|x|}$ if $x=y$ and we define
\begin{align*}
  W_n &= \int\int \hat{\delta}_e(x, y)^{-h} d \chi_n(x) d \chi_n(y)\\
  &= \mathbf 1_{B_n}\frac{1}{\mathbf{E} \left[ M_{n, L} \right]^2} \sum_{x, y \in \mathcal{P}_{n, L}} \hat{\delta}_e(x, y)^{-h}. 
\end{align*}
For $x,y\in \mathcal{P}_{n,L}$, we set $d_n$ to be the maximal length of a common prefix of $x$ and $y$ in their normal form.
\begin{claim}
There exists $c_L$ only depending on $L$ such that $\hat{\delta}_e(x,y)\geq c_L\lambda^{d_n}$.
\end{claim}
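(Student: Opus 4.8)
The plan is to reformulate the claim as an upper bound on the ``separation level'' of $x$ and $y$. Write $\hat\delta_e(x,y)=\lambda^{m}$, where, for $x\neq y$, $m=m(x,y)$ is the least integer such that $x$ and $y$ lie in distinct connected components of the complement of $B(e,m)$; the claim is then equivalent to $m(x,y)\le d_n+C_L$ for some $C_L=C_L(L)$, since this yields $\hat\delta_e(x,y)\ge\lambda^{d_n+C_L}=c_L\lambda^{d_n}$ with $c_L=\lambda^{C_L}$. The case $x=y$ is immediate: the longest common prefix of the normal form of $x$ with itself is the whole of $x$, so $d_n=d(e,x)$ and $\hat\delta_e(x,y)=\lambda^{d(e,x)}=\lambda^{d_n}$. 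For $x\neq y$ (so $|x|=|y|=n$, as $x,y\in\mathcal P_{n,L}\subseteq S_n$), I first dispose of the ``surface'' case $n\le d_n+C_L$: since both $x$ and $y$ are deleted once $B(e,n)$ is removed, the level $m(x,y)$ is automatically at most $n-1\le d_n+C_L$, and nothing more is needed.

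The geometric heart is the remaining case $n>d_n+C_L$. Let $w$ be the longest common prefix of the normal forms~\eqref{normalform} of $x$ and $y$, so $x=wx'$, $y=wy'$; I will take $C_L$ large enough to absorb the quantity $|d(e,w)-d_n|$, which is $O_L(1)$ because the prefixes of a normal form occur coarsely monotonically along the $L$-transitional geodesic $[e,x]$, and argue with $d(e,w)\le d_n$. Since $x\neq y$ have equal length $n>d_n+C_L$, neither $x'$ nor $y'$ can lie in $C$ (otherwise $d(e,x)\le d(e,w)+\max_{c\in C}d(e,c)\le d_n+C_L$), so $x'$ and $y'$ begin with genuine and, by maximality of $w$, distinct syllables $s^x,s^y\in\mathfrak A\cup\mathfrak B$ with $s^xC\neq s^yC$. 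Now I invoke the graph-of-groups structure: by Stallings' theorem $\Gamma=A*_CB$ with $C$ finite, and, via the tree-of-spaces model $X$ of \cite{ScottWall} recalled above (quasi-isometric to the Cayley graph of $\Gamma$, with vertex/edge combinatorics that of the Bass--Serre tree $T$), the finite coset $ws^xC$ is an edge-cut: deleting it from the Cayley graph separates any two group elements whose images in $T$ are separated by the corresponding edge. The $T$-geodesics from the base vertex $v_0$ to $xv_0$ and to $yv_0$ share an initial segment through the vertex corresponding to $w$ and then split, $xv_0$ lying beyond the edge $ws^xC$ and $yv_0$ beyond a different edge; hence $ws^xC$ separates $x$ from $y$ in the Cayley graph. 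Finally $s^x$ is a syllable of the normal form of $x$, so $d(e,s^x)\le D_L$ because $[e,x]$ is $L$-transitional; consequently every element of $ws^xC$ lies in $B(e,d(e,w)+D_L+D_C)\subseteq B(e,d_n+C_L)$, where $D_C=\max_{c\in C}d(e,c)$ and $C_L$ has been taken $\ge D_L+D_C$ (plus the earlier $O_L(1)$). Since $n>d_n+C_L$, neither $x$ nor $y$ lies in $B(e,d_n+C_L)$, so removing this ball deletes $ws^xC$ and puts $x,y$ in distinct components of its complement, i.e.\ $m(x,y)\le d_n+C_L$, as wanted.

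The main obstacle will be making the separation statement ``removing the finite coset $ws^xC$ disconnects $x$ from $y$'' fully rigorous: it combines the normal-form bookkeeping (a path from $e$ to a group element passes within a bounded distance of each prefix of its normal form) with the Bass--Serre / graph-of-groups picture and the explicit model $X$ of \cite{ScottWall}, and one must track all the additive constants (the $O_L(1)$ gap between $d_n$ and $d(e,w)$, the syllable bound $D_L$, the diameter $D_C$ of $C$) to check they fit into a single $C_L=C_L(L)$. The role of $L$-transitionality is precisely to keep the cutting coset within bounded distance of $w$; without it the syllables could be arbitrarily long and the estimate would fail. The HNN case $\Gamma=A*_C$ is handled identically, replacing the cosets of $A$ and $B$ by the single family of cosets of $C$ and the corresponding edge-cuts of its Bass--Serre tree.
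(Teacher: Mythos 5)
Your argument is correct and is essentially the paper's own proof: both identify the first divergent syllable $w s^x$ (the paper's $w_n x_1$), bound its distance from $e$ by $d_n+D_L+O(1)$ using $L$-transitionality, and use the amalgamated-product/Bass--Serre structure to see that every path from $x$ to $y$ must pass within bounded distance of that point, so removing $B(e,d_n+C_L)$ separates $x$ from $y$. Your treatment is merely more explicit about the edge-cut mechanism and the degenerate cases ($x=y$, the "surface" case, and differing only in the final $C$-syllable), which the paper leaves implicit.
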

\begin{proof}[Proof of the Claim]
Let $w_n$ be a common prefix of $x$ and $y$ of length $d_n$ and write
$$x=w_nx_1...x_mc,\ y=w_ny_1...y_lc'$$
where $x_i,y_i$ are either in $\mathfrak{A}$ or $\mathfrak{B}$ and $c,c'\in C$.
Then, any path from $x$ to $y$ has to pass within a bounded distance of $w_nx_1$.
Since $|x_1|\leq D_L$ and $|w_n|=d_n$, we see that $x$ and $y$ lie in distinct components of $B(e,d_n+D_L+C)$.
Thus, $\hat{\delta}_e(x,y)\geq \lambda^{d_n+D_L+C}$.
\end{proof}
Using this claim, we prove as above that $\mathbf E[W_n]$ is uniformly bounded, which allows us to prove that
$$\int \int \tilde{\delta}_e(x,y)^{-h}d\chi(x)d\chi(y)$$
is almost surely finite.
We then use the Frostman Lemma to conclude.
\end{proof}
\begin{rem}
Without involving \cite{PapasogluWhyte}, an alternative proof uses the bottleneck property  introduced in \cite{DussauleYang}.
Indeed, with $r, F$ be given in \cite[Lemma~5.4]{DussauleYang} replacing Lemma~\ref{ExtensionLem}, any two elements $g, h$ can be concatenated via some $f\in F$ such that any path from $e$ to $gfh$ intersects $B(g,r)$.
Such points are referred to as bottleneck points on $[e, gfh]$.
If a path contains a sequence of bottleneck points with consecutive distance at most $L$, then it is said to have the $L$-bottleneck property.
A triangle with sides having the $L$-bottleneck property satisfies the conclusion of Lemma~\ref{RelThinTriangleLem2} and Lemma~\ref{existenceofcenter}.
We define similarly  $S_{n,L}$ to be the set of points $x \in S_n$ so that $[e, x]$ has the $L$-bottleneck property.
Then  Proposition~\ref{P:vgrBRW} follows verbatim the same argument  with transition point replaced with bottleneck points.
The remaining modification goes as explained above.
\end{rem}

A particular class of groups with infinitely many ends are free products of the form $\Gamma=\Gamma_0*\Gamma_1$ where at least one the free factors $\Gamma_i$ is not $\mathbb Z/2\mathbb Z$.
For such groups, the authors of \cite{CandelleroGilchMuller} prove that the Hausdorff dimension of the limit set in the end boundary endowed with a visual distance is exactly $\frac{-1}{\log \lambda}\omega_{\Gamma}(r)$.
Their proof for the upper bound applies to any group with infinitely many ends.
Indeed, it consists in saying that for any end $\xi\in \Lambda_{\mathcal{E}}(r)$, for every $n$, the branching random walk has to visit some point $x\in \bigcup_{0\leq l\leq l_0}S_{n+l}$ such that $\xi$ and $x$ are in the same infinite connected component of the complement of $B(e,n)$.
The constant $l_0$ only depends on the support of the measure $\mu$.
Thus, $\Lambda_{\mathcal{E}}(r)$ can be covered with $\mathcal{P}_n$ sets of diameter bounded by $C\lambda^n$.

For the lower bound, they first show that
$\Hdim (\Lambda_{\mathcal{E}}(r),\tilde{\delta}_e)$ is bounded from below by some number $z^*$ and then prove that $z^*=\omega_\Gamma(r)$, see \cite[Lemma~4.7]{CandelleroGilchMuller}.
However, in order to prove that $z^*=\omega_\Gamma(r)$, they use the same invalid argument as the one described in Section~\ref{sectionexamples}, namely that the quantity $H_r(n)$ is sub-multiplicative.
Proposition~\ref{lowerboundends} fills their gap and
combining it with the proof for the upper bound described above, we get Theorem~\ref{coroaccessiblegroups}.

\begin{rem}\label{CGMCor}
The proof of the upper bound described above also works for the parabolic cosets.
Namely, for every $P\in \mathbb P$, the Hausdorff dimension of $\Lambda_{\mathcal{E}}(r)\cap \partial_{\mathcal{E}}P$ is bounded from above by $\omega_{P}(r)$.
Thus, assuming further that the Green function has a parabolic gap, we recover \cite[Corollary~3.7]{CandelleroGilchMuller}, i.e.\
$$\Hdim(\Lambda_{\mathcal{E}}(r)\cap \partial_{\mathcal{E}}P)<\Hdim (\Lambda_{\mathcal{E}}(r)).$$
\end{rem}

\section{An upper bound for the Hausdorff dimension of the limit set}\label{SectionupperboundHausdorff}
Let $\Gamma$ be a  relatively hyperbolic group and let $\Lambda_\mathcal{F}(r)$ be the limit set of the branching random walk in the Floyd boundary of $\Gamma$, endowed with the Floyd distance $\delta_e$.
Recall that by Theorem~\ref{mapGerasimov}, there exists a map $\phi$ from the Floyd boundary to the Bowditch boundary such that the preimage of a conical limit point is reduced to a single point.
We can thus see the set of conical limit points $\partial_{\mathcal{B}}^{con}\Gamma$ in the Bowditch boundary as a subset of the Floyd boundary.
We set
$$\Lambda^{con}_{\mathcal{F}}(r)=\Lambda_{\mathcal{F}}(r)\cap \phi^{-1}(\partial_{\mathcal{B}}^{con}\Gamma).$$
We prove here the following proposition. 

\begin{prop}\label{upperboundHausdorff}
Let $r\leq \rho^{-1}$.
Almost surely, 
$$\Hdim(\Lambda^{con}_{\mathcal{F}}(r),\delta_e)\leq \frac{-1}{\log \lambda}\omega_{\Gamma}(r).$$
\end{prop}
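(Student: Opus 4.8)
The plan is to adapt the shadow-covering argument of \cite{SWX}, using the logarithmic tracking of transition points (Theorem~\ref{thmlogarithmictracking}) in place of the purely exponential growth of the Green function; the point is that both Theorem~\ref{thmlogarithmictracking} and Proposition~\ref{P:vgrBRW} remain valid up to $r=\rho^{-1}$, so the argument works uniformly on $[1,\rho^{-1}]$. First I would pass to a uniform tracking constant. Fix a geodesic ray $[e,\xi]$ for each $\xi\in\partial_{\mathcal B}\Gamma$; if $\xi$ is conical then $[e,\xi]$ is not eventually contained in any $N_\eta(P)$, so by Lemma~\ref{EntryTransLem} the $(\eta,L)$-transition points on $[e,\xi]$ have unbounded norm. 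Let $\kappa$ be the constant from Theorem~\ref{thmlogarithmictracking}, and for $N\ge1$ let $E_N$ be the set of $\xi\in\Lambda^{con}_{\mathcal F}(r)$ such that $d(x,\mathcal P)\le 2\kappa\log|x|$ for every $(\eta,L)$-transition point $x$ on $[e,\xi]$ with $|x|\ge N$. By Theorem~\ref{thmlogarithmictracking}, almost surely $\Lambda^{con}_{\mathcal F}(r)=\bigcup_{N\ge1}E_N$, an increasing union, so since Hausdorff dimension of a countable union is the supremum of the dimensions, it suffices to bound $\Hdim(E_N,\delta_e)$ for each fixed $N$.

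Fix $N$ and $s>\omega_\Gamma(r)/(-\log\lambda)$, and let $n\ge N$ be a large threshold. For each $\xi\in E_N$, let $x=x(\xi)$ be the transition point on $[e,\xi]$ of least norm among those with $|x|\ge n$ (it exists by the previous paragraph), choose $p=p(\xi)\in\mathcal P$ with $d(x,p)\le 2\kappa\log|x|$, and put $K=K(\xi)=\lceil 2\kappa\log|x|\rceil$. Since $[e,\xi]$ meets $B(p,K)$ at $x$, we have $\xi\in\Pi_K(p)$, and by Lemma~\ref{diametershadow} this shadow has $\delta_e$-diameter at most $CK\lambda^{|p|-K}$. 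Using $|p|\ge|x|-K$ and $K=O(\log|x|)$, this is bounded, up to a factor polynomial in $|x|$, by $\lambda^{|x|}$; since $|x|\ge n$ and $t\mapsto t^{\gamma}\lambda^{t}$ is eventually decreasing for any fixed $\gamma>0$, every such shadow has diameter at most some $\delta_n\to0$. Thus $\{\Pi_{K(\xi)}(p(\xi)):\xi\in E_N\}$ is a cover of $E_N$ by countably many sets of diameter $\le\delta_n$.

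It then remains to estimate $\sum(\operatorname{diam})^s$ over the \emph{distinct} shadows $\Pi_K(p)$ arising above. I would organize this sum according to the level $\ell=|p|$. From $|p|\ge|x|-K$ and $|x|\ge n$ one gets $\ell\ge n-O(\log n)$; moreover $|x|$ and $\ell$ are comparable, so $K$ is within $O(1)$ of $2\kappa\log\ell$, i.e.\ for each $p$ there are only $O(1)$ possible widths $K$. Hence the number of distinct shadows with $|p|=\ell$ is at most $O(1)\cdot M_\ell$, each of $\delta_e$-diameter at most $\ell^{\gamma}\lambda^{\ell}$ up to a logarithmic factor, for a constant $\gamma>0$. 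By Proposition~\ref{P:vgrBRW}, almost surely $M_\ell\le e^{(\omega_\Gamma(r)+\varepsilon)\ell}$ for all large $\ell$, so
$$\sum_{\Pi_K(p)\ \text{distinct}}\bigl(CK\lambda^{|p|-K}\bigr)^s\ \le\ \sum_{\ell\ge n-O(\log n)}C_1\,(\log\ell)^s\,\ell^{\gamma s}\,e^{(\omega_\Gamma(r)+\varepsilon)\ell}\,\lambda^{s\ell}.$$
Writing $s(-\log\lambda)=\omega_\Gamma(r)+\beta$ with $\beta>0$ and choosing $\varepsilon<\beta$, each summand is at most $C_1(\log\ell)^s\ell^{\gamma s}e^{-(\beta-\varepsilon)\ell}$, so the series converges and its tail starting at $\ell\approx n-O(\log n)$ tends to $0$ as $n\to\infty$. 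Therefore $\mathcal{H}^s(E_N,\delta_e)=0$ for the $s$-dimensional Hausdorff measure $\mathcal{H}^s$, whence $\Hdim(E_N,\delta_e)\le s$; letting $s\downarrow\omega_\Gamma(r)/(-\log\lambda)$ and $N\to\infty$ gives the claimed bound, and the dimension is almost surely a constant by the argument of \cite[Lemma~4.7]{SWX}.

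The main obstacle I anticipate is precisely the interaction between the fixed threshold $n$ and arbitrarily long parabolic excursions of $[e,\xi]$: the first transition point $x$ with $|x|\ge n$ may have $|x|$ much larger than $n$, so one cannot hope to cover $E_N$ by shadows based at a single sphere. The remedy is to allow the cover to contain shadows at all levels $\ell\ge n-O(\log n)$ and to rely on the fact that $M_\ell\lambda^{s\ell}$ is summable in $\ell$ once $s>\omega_\Gamma(r)/(-\log\lambda)$, so the far levels contribute negligibly; the polynomial corrections produced by the logarithmic widths $K\asymp\log|x|$ are harmless, being dominated by the exponential gain $\lambda^{s\ell}e^{-(\omega_\Gamma(r)+\varepsilon)\ell}$. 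Of course the substantive input, logarithmic tracking of transition points by the trace, is used here as a black box (Theorem~\ref{thmlogarithmictracking}).
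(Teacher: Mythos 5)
Your proposal is correct and follows essentially the same route as the paper's proof: cover $\Lambda^{con}_{\mathcal F}(r)$ by shadows $\Pi_K(z)$ based at trace points $z\in\mathcal P$ lying within $\kappa\log|x|$ of transition points $x$ on $[e,\xi]$ (Theorem~\ref{thmlogarithmictracking}), bound their diameters via Lemma~\ref{diametershadow}, and sum $(\mathrm{diam})^s$ over all levels $\ell\ge m$ using $M_\ell\le e^{(\omega_\Gamma(r)+\varepsilon)\ell}$. Your extra bookkeeping (the exhaustion by the sets $E_N$ and the counting of distinct shadows per level) is a slightly more careful organization of the same argument, which the paper streamlines by simply taking $K(z)=\kappa\log\frac{|z|}{1-\epsilon}$ as a function of the base point $z$ alone.
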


Under additional assumption on the volume growth of parabolic subgroup, we have the upper bound on the full limit set in Floyd boundary.
\begin{cor}\label{corowholeFloyd}
Let $r\leq \rho^{-1}$. Assume that $v_S(P)\le \omega_{\Gamma}(r)$ for every parabolic subgroup $P$. Then
almost surely,  
$$\Hdim(\Lambda_{\mathcal{F}}(r),\delta_e)\leq \frac{-1}{\log \lambda}\omega_{\Gamma}(r).$$
\end{cor}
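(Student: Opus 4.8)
The plan is to split the Floyd boundary into its conical part and the (countably many) fibers $\phi^{-1}(\xi)$ over parabolic points: the conical part is covered by Proposition~\ref{upperboundHausdorff}, while each parabolic fiber will be controlled \emph{deterministically} by the volume growth $v_S(P)$ of the associated parabolic subgroup, which by hypothesis is at most $\omega_\Gamma(r)$.

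First I would record the decomposition. Since the action of $\Gamma$ on $X$ is geometrically finite, every point of $\partial_\mathcal B\Gamma$ is conical or bounded parabolic, and the parabolic points are in bijection with the countable set $\mathbb P$ of parabolic cosets. As $\phi$ is a bijection over $\partial_\mathcal B^{con}\Gamma$, we get $\Lambda_\mathcal F(r)\subseteq \Lambda_\mathcal F^{con}(r)\cup\bigcup_\xi\phi^{-1}(\xi)$, the union over parabolic points $\xi$; hence, by countable stability of Hausdorff dimension, it suffices to bound $\Hdim(\Lambda_\mathcal F^{con}(r),\delta_e)$ (done almost surely by Proposition~\ref{upperboundHausdorff}) and each $\Hdim(\phi^{-1}(\xi),\delta_e)$ by $\omega_\Gamma(r)/(-\log\lambda)$.

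Next I would prove the deterministic bound $\Hdim(\phi^{-1}(\xi),\delta_e)\le v_S(P_0)/(-\log\lambda)$, where $P_0\in\mathbb P_0$ is such that $\mathrm{Stab}(\xi)$ is conjugate to $P_0$ and $P=gP_0$ is the coset of $\xi$; since $v_S(\mathrm{Stab}(\xi))=v_S(P_0)\le\omega_\Gamma(r)$ this gives exactly what is needed. Write $\phi^{-1}(\xi)=\bigcup_{N\ge0}A_N$, where $A_N$ consists of those $\zeta$ admitting a geodesic ray $[e,\zeta]$ in $\Gx$ with $[e,\zeta](t)\in N_\eta(P)$ for all $t\ge N$. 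This holds because a geodesic ray in $\Gx$ converges in the Floyd compactification, its image under $\phi$ then converges in the Bowditch compactification to $\phi(\zeta)=\xi$, and a ray converging to a bounded parabolic point is eventually deep in the single parabolic coset associated to that point, namely $P$ (its transition points being bounded by Lemma~\ref{EntryTransLem} and bounded intersection). For $n\ge N$ the witnessing ray meets $S_n\cap N_\eta(P)$, so the big shadows $\Pi_K(x)$, $x\in S_n\cap N_\eta(P)$ (with $K=1$, say), cover $A_N$; by Lemma~\ref{diametershadow} each has Floyd diameter $\le C'\lambda^n$, and since left multiplication by $g^{-1}$ is an isometry carrying $N_\eta(P)$ to $N_\eta(P_0)$ and $S_n$ into $B(e,n+|g|)$, we get $\sharp(S_n\cap N_\eta(P))\le C''\,\sharp\big(B(e,n+|g|+\eta)\cap P_0\big)\le e^{(v_S(P_0)+\varepsilon)n}$ for all large $n$ and any $\varepsilon>0$. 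Consequently, for $s>(v_S(P_0)+\varepsilon)/(-\log\lambda)$ the Hausdorff sum $\sharp(S_n\cap N_\eta(P))\cdot(C'\lambda^n)^s$ tends to $0$ as $n\to\infty$, so $\Hdim(A_N)\le s$; letting $\varepsilon\to0$, then $s$ decrease, and finally taking the supremum over $N$, yields the claimed bound.

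The main obstacle is the geometric input in the decomposition $\phi^{-1}(\xi)=\bigcup_NA_N$, i.e.\ that Cayley geodesic rays converging to a parabolic point eventually remain in a bounded neighborhood of the associated coset, with the non-uniform entry level $N$ absorbed by the countable union. The rest is bookkeeping: the shadow cover, the diameter estimate from Lemma~\ref{diametershadow}, the translation-invariance reduction of $\sharp(S_n\cap N_\eta(P))$ to the ball count of $P_0$, and countable stability of Hausdorff dimension. In particular the treatment of the parabolic fibers introduces no new probability, so the only probabilistic ingredient is Proposition~\ref{upperboundHausdorff}.
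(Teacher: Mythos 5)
Your proposal is correct and follows essentially the same route as the paper: split $\Lambda_{\mathcal F}(r)$ into the conical part (handled by Proposition~\ref{upperboundHausdorff}) and the countably many parabolic fibers, then bound each fiber deterministically by a shadow cover based at $S_n\cap N_\eta(P)$, using quasi-convexity of parabolic cosets and the volume count $v_S(P)\le\omega_\Gamma(r)$. The only difference is cosmetic: the paper cites quasi-convexity and the shadow-covering lemma from the literature, whereas you rederive them, absorbing the non-uniform entry level into a countable union, which is perfectly fine.
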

\begin{proof}
The Bowditch boundary consists of conical points and countably many parabolic points and the preimage of each parabolic point is exactly the limit set of a parabolic subgroup (see \cite[Theorem A]{GePoJEMS}).
So the limit set $\Lambda_{\mathcal{F}}(r)$   is contained in the  union of $\Lambda^{con}_{\mathcal{F}}(r)$ with countably many limit sets of parabolic cosets $P\in\mathbb P$.
As $P$ is quasi-convex  in  the Cayley graph of $\Gamma$, any geodesic from $e$ to the limit points of $P$ is contained in a fixed neighborhood of $P$, see for instance \cite[Lemma~4.3]{DrutuSapir}.
Using a suited covering by shadows based at $S_n\cap P$, we can see that $\Hdim(\Lambda_{\mathcal{F}} P)\le \frac{-1}{\log \lambda}v_S(P)$ (e.g. \cite[Lemma 4.1]{PY}).
The conclusion now follows from Proposition \ref{upperboundHausdorff}.   
\end{proof}

To prepare the proof for Proposition \ref{upperboundHausdorff}, we first need a few geometric lemmas.
Denote as usual by $\mathbb P=\{gP: g\in \Gamma, P\in\mathbb P_0\}$ the collection of all parabolic cosets, and by $\ell(\gamma)$ the length of a path $\gamma$.
Fix a geodesic $[x,z]$ and $\epsilon_1,\epsilon_2\in [0,1]$. An \textit{$[\epsilon_1,\epsilon_2]$-percentage} of $[x,z]$ consists of points $w\in [x,z]$ such that $\epsilon_1 \le d(x,w)/d(x,z)\le \epsilon_2$.

\begin{lem}\label{DeepContainedLem}
Let $\gamma$ be a geodesic segment such that $[\epsilon, 1-\epsilon]$-percentage of $\gamma$ contains no $(\eta, L)$-transition point. Then there exists a unique $P\in\mathbb P$  such that the entry and exit points of $\gamma$ in $N_\eta(P)$ have distance at most $\epsilon \ell(\gamma)$ to the corresponding endpoints of $\gamma$.  
\end{lem}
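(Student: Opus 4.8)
The plan is to deduce the statement from Lemma~\ref{EntryTransLem} together with the bounded intersection property of $\mathbb P$. Write $\ell=\ell(\gamma)$, let $\gamma_-,\gamma_+$ be the endpoints of $\gamma$, and parametrize $\gamma$ from $\gamma_-$ to $\gamma_+$. We may assume $0<\epsilon<1/2$ and that $\ell$ is large enough for the $[\epsilon,1-\epsilon]$-percentage of $\gamma$ to be non-empty; this is the only regime in which the lemma will be applied. \textbf{Existence.} Choose a point $w$ on $\gamma$ with $d(\gamma_-,w)$ as close as possible to $\ell/2$, so that $w$ lies in the $[\epsilon,1-\epsilon]$-percentage. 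By hypothesis $w$ is not an $(\eta,L)$-transition point, hence it is $(\eta,L)$-deep in some $P\in\mathbb P$, and by Lemma~\ref{EntryTransLem} this $P$ is the only coset in which $w$ is deep. Let $u$ and $v$ be the entry and exit points of $\gamma$ at $N_\eta(P)$; they exist since $w\in N_\eta(P)$, and $d(\gamma_-,u)\le d(\gamma_-,w)\le d(\gamma_-,v)$. By Lemma~\ref{EntryTransLem}, $u$ and $v$ are $(\eta,L)$-transitional, so by hypothesis neither lies in the $[\epsilon,1-\epsilon]$-percentage: $d(\gamma_-,u)/\ell\notin[\epsilon,1-\epsilon]$ and $d(\gamma_-,v)/\ell\notin[\epsilon,1-\epsilon]$. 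Since $w$ does lie in it, $d(\gamma_-,u)\le d(\gamma_-,w)\le(1-\epsilon)\ell$ and $d(\gamma_-,v)\ge d(\gamma_-,w)\ge\epsilon\ell$, which forces $d(\gamma_-,u)<\epsilon\ell$ and $d(\gamma_-,v)>(1-\epsilon)\ell$, i.e.\ $d(\gamma_-,u)\le\epsilon\ell(\gamma)$ and $d(v,\gamma_+)\le\epsilon\ell(\gamma)$; this is the required property of $P$.

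\textbf{Uniqueness.} Suppose $P'\in\mathbb P$ also satisfies the conclusion. Then, by the conclusion applied to $P$ and to $P'$, the entire $[\epsilon,1-\epsilon]$-percentage $\beta$ of $\gamma$ is contained in the sub-segment of $\gamma$ running from the first entry to the last exit of $\gamma$ into $N_\eta(P)$, and likewise into $N_\eta(P')$. Parabolic cosets are quasi-convex in the Cayley graph (cf.\ \cite[Lemma~4.3]{DrutuSapir}), hence so are the neighborhoods $N_\eta(Q)$, $Q\in\mathbb P$, and therefore a geodesic with both endpoints in $N_\eta(Q)$ remains in $N_{\eta'}(Q)$ for a uniform $\eta'=\eta'(\eta)$. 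Applied to the two sub-segments above this gives $\beta\subseteq N_{\eta'}(P)\cap N_{\eta'}(P')$. If $P\ne P'$, the bounded intersection property yields $\diam{N_{\eta'}(P)\cap N_{\eta'}(P')}\le L(\eta')$, whence $(1-2\epsilon)\ell=\ell(\beta)\le L(\eta')$. So once $\ell(\gamma)>L(\eta')/(1-2\epsilon)$ we must have $P=P'$, and the lemma will only be used in this long-geodesic range.

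I expect the \textbf{main difficulty} to be the uniqueness step. The existence half is essentially a formal unwinding of Lemma~\ref{EntryTransLem}, whereas uniqueness requires trapping the middle portion of $\gamma$ in a bounded neighborhood of a single parabolic coset, which needs the quasi-convexity (\emph{bounded penetration}) of the sets $N_\eta(P)$ in $X$ in addition to bounded intersection; and, strictly speaking, it holds only once $\ell(\gamma)$ exceeds a threshold depending on $\epsilon,\eta,L$, since a very short geodesic contained in $N_\eta(P)\cap N_\eta(P')$ satisfies the conclusion for both cosets. This caveat is harmless in view of the applications, where $\gamma$ ranges over long sub-segments of geodesic rays.
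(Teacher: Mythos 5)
Your proof is correct and the existence half is essentially the paper's own argument: take the midpoint, note it is $(\eta,L)$-deep in a unique $P$, and use Lemma~\ref{EntryTransLem} to place the (transitional) entry and exit points outside the $[\epsilon,1-\epsilon]$-percentage. The only divergence is in uniqueness, where the paper simply reads ``unique'' as the uniqueness of the coset in which the midpoint is deep (immediate from Lemma~\ref{EntryTransLem}), while you prove the literal uniqueness of the entry/exit property via quasi-convexity and bounded intersection at the cost of a harmless length threshold; both readings are adequate for the way the lemma is used.
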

\begin{proof}
By assumption, the middle point  $m\in \gamma$ is $(\eta, L)$-deep in a unique $P\in \mathbb P$. Let $u, v$ be the corresponding entry and exit point of $\gamma$ in $N_\eta(P)$. By Lemma \ref{EntryTransLem}, $u, v$ are $(\eta, L)$-transition points, so $d(x,u)\leq \epsilon d(x,y)$ and $d(v, y)\le \epsilon d(x,y)$, which concludes the proof.   
\end{proof}


Fix  $C>0$ and $x\in S_n$. The \textit{$C$-{\color{blue}}partial cone} $\Omega(x,C)$ consists  of points $z\in G$ such that $[e,z]$ contains an $(\eta, L)$-transition point $C$-close to $x$. 


Let $B([x,z])$ be the ball centered at the middle point of $[x,z]$  of radius ${d(x,z)/2}$. Define $U(x)$ to be the union of the partial cone $\Omega(x, C)$ and the balls $B([x,z])$ for all geodesics $[x,z]$ between $x$ and $z\in \Omega(x,C)$. That is,
$$
U(x):= \Omega(x,C) \cup \left( \bigcup\left\{B([x,z]): \forall [x,z], \forall z\in  \Omega(x,C)\right\} \right). 
$$
Note that any ball of centered at $w\in[x,z]$ of radius $\min\{d(w,x), d(w,z)\}$ is contained in $B([x,z])$.

 In what follows, let $C>0$ be given by Lemma \ref{ThinTransitional2}. 
\begin{lem}\label{AvoidBallLem}
Let $\alpha$ be a path starting from $e$ and first entering  at a point $z\in U(x)$. Let $ w\in [x,z]$ be a transition point.    Set $S:=\min\{d(w,x), d( w, z)\}$. Then $B(w, S-2C)$ is contained in $U(x)$ so $\alpha$ has distance at least $S-2C$ to   the point $w$.     
\end{lem}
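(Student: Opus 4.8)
Since $\alpha$ first meets $U(x)$ at $z$, the portion of $\alpha$ before $z$ stays in the complement of $U(x)$; so the distance statement will follow once we show $B(w,S-2C)\subseteq U(x)$, because then every vertex of $\alpha$ other than $z$ lies outside $B(w,S-2C)$, while $z$ itself satisfies $d(z,w)\ge S>S-2C$. Thus the whole content is the inclusion $B(w,S-2C)\subseteq U(x)$. The first reduction is to note that $z$ always lies in a ball of the form $B([x,z'])$ with $z'\in\Omega(x,C)$: if $z\in B([x,z''])$ for some $z''\in\Omega(x,C)$ we take $z'=z''$, and if $z\in\Omega(x,C)$ itself we take $z'=z$, since $z$ lies on the bounding sphere of $B([x,z])$ (its radius is $d(x,z)/2$ and its center is the midpoint of $[x,z]$). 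So from now on fix $z'\in\Omega(x,C)$ and a geodesic $[x,z']$ with midpoint $m'$, and set $\rho'=d(x,z')/2$; by hypothesis $d(z,m')\le\rho'$.

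The key geometric input is the crude bound $d(x,z)\le d(x,m')+d(m',z)\le 2\rho'=d(x,z')$. Since $B([x,z'])=B(m',\rho')\subseteq U(x)$, it suffices to prove $d(w,m')\le \rho'-S+2C$. I would produce, from the relative thin triangle property (Lemma~\ref{ThinTransitional2}) applied to the triple $(x,z,z')$, a transition point $w'$ on $[x,z']$ with $d(w,w')\le C$. Writing $t=d(x,w')$, one has the exact identity $d(w',m')+\min\{d(x,w'),d(w',z')\}=\rho'$ for any point of the geodesic $[x,z']$ (check the two cases $t\le\rho'$ and $t>\rho'$). On the other hand $d(x,w')\ge d(x,w)-C$ and $d(w',z')=d(x,z')-d(x,w')\ge d(x,z)-d(x,w)-C=d(w,z)-C$, using $d(x,z)\le d(x,z')$; hence $\min\{d(x,w'),d(w',z')\}\ge S-C$. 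Combining, $d(w,m')\le d(w',m')+C\le \rho'-(S-C)+C=\rho'-S+2C$, which gives $B(w,S-2C)\subseteq B(m',\rho')\subseteq U(x)$, as desired.

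The point I expect to need the most care is the alternative in Lemma~\ref{ThinTransitional2}, namely when the transition point $w$ on $[x,z]$ is only comparable to a transition point $w''$ on the third side $[z,z']$ rather than on $[x,z']$. Here I would apply Lemma~\ref{ThinTransitional2} a second time to $w''$ (or invoke the center-point Lemma~\ref{existenceofcenter}, together with Lemma~\ref{EntryTransLem} to keep entry/exit points transitional) to push $w''$ back onto $[x,z']$, and then show, again using $d(x,z)\le d(x,z')$, that the resulting transition point on $[x,z']$ still controls $d(w,m')$; the only danger is that a second application of the thin-triangle lemma costs another additive $C$, so one must either argue this sub-case does not actually occur for $S\ge 2C$ (when $d(w,z)<2C$ the statement is vacuous since $B(w,S-2C)$ is empty) or absorb the extra constant by taking $C$ in the statement to be large enough to dominate the thin-triangle and bounded-intersection constants fixed after Lemma~\ref{ThinTransitional2}. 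Once $B(w,S-2C)\subseteq U(x)$ is established, the distance bound on $\alpha$ is immediate from the first paragraph.
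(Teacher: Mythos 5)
Your main computation is sound, but you apply Lemma~\ref{ThinTransitional2} to the triangle $(x,z,z')$, and this leaves a genuine gap: the lemma only gives a transition point within $C$ of $w$ on $[x,z']$ \emph{or} on $[z,z']$, and your estimate only works in the first alternative. The side $[z,z']$ is not a radius of $B([x,z'])$, and a point on a geodesic between two points of a ball need not lie deep inside that ball in this setting, so the second alternative cannot be dismissed. Neither of your proposed repairs is established: the sub-case can occur with $S\ge 2C$ (already in a tree, whenever the center $c$ of the tripod on $(x,z,z')$ lies between $x$ and $w$, the point $w$ sits on $[z,z']$ rather than $[x,z']$, and this is compatible with $d(w,x),d(w,z)$ both large), and ``absorbing the extra constant by enlarging $C$'' changes the statement, since $C$ is the fixed constant of Lemma~\ref{ThinTransitional2} and the radius $S-2C$ is what is invoked later (Lemma~\ref{AvoidTransBall}).

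The paper avoids the dichotomy by applying Lemma~\ref{ThinTransitional2} to the triangle with vertices $x$, $\hat w$, $z$, where $\hat w=m'$ is the \emph{center} of the ball $B([x,\hat z])$. The comparison point $w'$ then lands on $[\hat w,x]$ or $[\hat w,z]$, and both are radii: in either case $d(\hat w,w')=d(\hat w,\cdot)-d(w',\cdot)\le \rho'-(S-C)$ using $d(w',x)\ge d(w,x)-C$ or $d(w',z)\ge d(w,z)-C$, so $B(w,S-2C)\subseteq B(w',S-C)\subseteq B(\hat w,\rho')\subseteq U(x)$ --- exactly your length computation, but valid in both alternatives. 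Replacing your triangle by this one closes the argument; as written, the case where the comparison point falls on $[z,z']$ is open.
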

\begin{proof}
We first consider the case $z\in \Omega(x,C)$. By definition of $U(x)$,  any ball centered at $w\in [x,z]$ of radius $S=\min\{d(x,w), d(z,w)\}$ is contained in $ U(x)$. The statement follows immediately.  

Assume now that $z$  lies in a ball $B([x,\hat z])$ where $[x,\hat z]$ is a geodesic between $x$ and  some $\hat z$ in $\Omega(x,C)$. Let  $\hat w\in [x,\hat z]$    be the middle point.

Consider the triangle with vertices $x,\hat w,z$. As $w$ is a transition point on $[x,z]$, Lemma \ref{ThinTransitional2}  shows that $d(w, w')\le C$ for some $w'\in [\hat w,x]\cup[\hat w, z]$. Thus, $B(w, \kappa-2C)\subset B(w', \kappa-C)$. As $w'$ is on the radius $[\hat w,x]$ or $[\hat w, z]$ of the ball $B([x,\hat z])$,   we have the ball $B(w', S-C)$ is contained in $B(\hat w, \eta)\subseteq U(x)$. 
\end{proof}

\begin{lem}\label{ConvConicalLem}
Let $\xi\in \partial_{\mathcal B} \Gamma$ be a conical point and consider a sequence of points $z_n\to \xi$.
Let $x\in [e,\xi]$ be an $(\eta, L)$-transition point. Then   for for all  but finitely many $z_n$,   there exists an $(\eta,L)$-transition point $x_n$ on $[e, z_n]$ such that $d(x_n, x)\le C$.  In particular,  $z_n\in \Omega(x, C)$ for large enough $n$.

\end{lem}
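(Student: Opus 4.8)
The plan is to combine the relative thin triangle property for transition points, Lemma~\ref{ThinTransitional2}, with the Floyd-geometric characterization of transition points, Lemma~\ref{TransLargeLem}. First I would transfer the convergence hypothesis to the Floyd boundary: since $\xi$ is conical, $\phi^{-1}(\xi)$ is a single point, so any accumulation point of $(z_n)$ in the compact Floyd compactification is sent to $\xi$ by $\phi$ and hence equals $\xi$; thus $z_n\to\xi$ in the Floyd compactification, and by the change of basepoint estimate~(\ref{changeofbasepointFloyd}) together with continuity of the Floyd metric on the completion we get $\delta_x(z_n,\xi)\to 0$ as $n\to\infty$.

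Next I would apply Lemma~\ref{ThinTransitional2} to the geodesic triangle with vertices $e$, $z_n$, $\xi$ and sides $[e,\xi]$, $[e,z_n]$, $[z_n,\xi]$. Since $x$ is an $(\eta,L)$-transition point on $[e,\xi]$, it lies within $C$ of an $(\eta,L)$-transition point on $[e,z_n]$ or on $[z_n,\xi]$. In the first case $[e,z_n]$ contains an $(\eta,L)$-transition point $x_n$ with $d(x_n,x)\le C$, which is exactly the assertion $z_n\in\Omega(x,C)$, so $x_n$ is the desired point. The bulk of the work is therefore to rule out the second alternative for all but finitely many $n$.

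So suppose $x$ is within $C$ of an $(\eta,L)$-transition point $y_n$ on $[z_n,\xi]$. Choosing $\xi'$ on $[z_n,\xi]$ at distance greater than $2L$ beyond $y_n$, the point $y_n$ is still an $(\eta,L)$-transition point on the finite geodesic $[z_n,\xi']$, because $B(y_n,2L)$ meets $[z_n,\xi']$ and $[z_n,\xi]$ in the same set; hence Lemma~\ref{TransLargeLem} gives $\delta_{y_n}(z_n,\xi')\ge\delta$ for a uniform $\delta=\delta(\eta)>0$, and letting $\xi'\to\xi$ along $[z_n,\xi]$ yields $\delta_{y_n}(z_n,\xi)\ge\delta$. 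Since $d(x,y_n)\le C$, inequality~(\ref{changeofbasepointFloyd}) gives $\delta_x(z_n,\xi)\ge\lambda^{C}\delta>0$, a constant independent of $n$. This contradicts $\delta_x(z_n,\xi)\to 0$ for large $n$, so the second alternative holds only finitely often, which together with the dichotomy from Lemma~\ref{ThinTransitional2} completes the proof.

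I expect the only delicate point to be the use of Lemma~\ref{ThinTransitional2} and Lemma~\ref{TransLargeLem} with one vertex at the boundary point $\xi$: the thin triangle lemma is already stated for conical limit points, while for Lemma~\ref{TransLargeLem} I would run the finite-truncation and limiting argument indicated above. The remaining ingredients are the compactness of the Floyd compactification and the bi-Lipschitz dependence of the Floyd metric on the basepoint, both recalled in Section~\ref{SectionFloydboundary}.
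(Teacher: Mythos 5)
Your proposal is correct and follows essentially the same route as the paper: apply Lemma~\ref{ThinTransitional2} to the triangle $(e,z_n,\xi)$ and exclude the alternative that the nearby transition point lies on $[z_n,\xi]$ by combining Lemma~\ref{TransLargeLem} with the basepoint-change estimate~(\ref{changeofbasepointFloyd}) and the fact that $\delta_x(z_n,\xi)$ becomes small. The extra details you supply (passing the convergence through the Floyd compactification using that a conical point has a single preimage, and truncating the ray to apply Lemma~\ref{TransLargeLem}) are sound elaborations of steps the paper leaves implicit.
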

\begin{proof}
If $z_n\to \xi$ then $\delta_x(z_n, \xi) < \lambda^C \delta$ for all large enough $n$, where $\delta=\delta(\eta,L)$ is given by Lemma~\ref{TransLargeLem}.
Applying Lemma~\ref{ThinTransitional2} to the triangle with vertices $e$, $\xi$, $z_n$, there is an $(\eta, L)$-transition point $x_n$ on $[z_n, \xi]$ or $[e, z_n]$ such that $d(x, x_n) \leq C$. 
It suffices to prove that $x_n \in [e, z_n]$. 
Let $y$ be any transition point on $[z_n, \xi]$. 
Then by Lemma~\ref{TransLargeLem}, $\delta_y(z_n, \xi) > \delta$.  It follows from~(\ref{changeofbasepointFloyd}) that $\lambda^C \delta > \delta_x(z_n, \xi) \geq \lambda^{d(x, y)} \delta_y(z_n, \xi)>\lambda^{d(x,y)}\delta$ and hence $d(x, y) > C$. The conclusion follows.
\end{proof}

Fix $\epsilon\in (0, 1/2)$.
Let $U_\epsilon(x)$ be the set of points $z\in U(x)$ such that $[x,z]$ contains a transition point $w$ being at distance at least $\epsilon d(x,z)$ to one of the endpoints:  $$ \max\{d( w, x), d(w,z)\}\ge \epsilon d(x,z).$$  

By  Lemma \ref{EntryTransLem}, the set $U(x)\setminus U_\epsilon(x)$
consists of points $z\in U(x)$ such that the $[\epsilon, 1-\epsilon]$-percentage of $[x,z]$ is contained in the $\eta$-neighborhood of a unique peripheral coset $P\in\mathbb P$. Explicitly, there exists a subsegment of $[x,z]$ with length at least $(1-2\epsilon)d(x,z)$ contained in $N_\eta(P)$.    

\begin{lem}\label{AvoidTransBall}
Let $\alpha$ be a path starting from $e$ and first entering $U(x)$ at a point $z\in U_\epsilon(x)$. 
Assume that $\epsilon d(x,z)>10C$. Then there exists a  transition point $y$   on $[e, z]$ such that $\alpha$ lies outside the ball around $y$ with radius $\epsilon d(x,z)-3C$.
\end{lem}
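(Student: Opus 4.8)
The plan is to upgrade Lemma~\ref{AvoidBallLem}, which controls the distance from $\alpha$ to a transition point lying on $[x,z]$, to a statement about a transition point on $[e,z]$, by means of the relative thin triangle property. Set $D:=\epsilon d(x,z)$, so by hypothesis $D>10C$. First, since $z\in U_\epsilon(x)$, the description of $U(x)\setminus U_\epsilon(x)$ recalled before the statement shows that the $[\epsilon,1-\epsilon]$-percentage of $[x,z]$ is not contained in the $\eta$-neighborhood of any single parabolic coset $P\in\mathbb P$; hence by Lemma~\ref{DeepContainedLem} that middle portion must contain an $(\eta,L)$-transition point $w$, which in particular satisfies $d(w,x)\ge D$ and $d(w,z)\ge D$. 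Applying Lemma~\ref{AvoidBallLem} to $w$ (with $S=\min\{d(w,x),d(w,z)\}\ge D$) gives $B(w,D-2C)\subseteq U(x)$, and since $\alpha$ first meets $U(x)$ at $z$, the path $\alpha$ stays at distance at least $D-2C$ from $w$.

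The second, and main, step is to produce an $(\eta,L)$-transition point $y$ on $[e,z]$ with $d(w,y)\le C$. When $z\in\Omega(x,C)$ there is a transition point $x'$ on $[e,z]$ with $d(x,x')\le C$, and I would apply Lemma~\ref{ThinTransitional2} to the geodesic triangle with vertices $x$, $x'$, $z$: since every point of the short side $[x,x']$ is at distance at least $D-C>C$ from $w$, the point $w$ cannot be $C$-close to that side, so it is $C$-close to a transition point $y$ lying on the subsegment $[x',z]\subseteq[e,z]$. When instead $z$ lies in one of the balls $B([x,\hat z])$ with $\hat z\in\Omega(x,C)$ but $z\notin\Omega(x,C)$, I would exploit that a geodesic $[e,\hat z]$ passes within $C$ of $x$ together with repeated use of the relative thin triangle property to relate $[e,z]$ to $[x,z]$ (in the spirit of the proof of Lemma~\ref{AvoidBallLem}), carefully tracking which side each transition point migrates to so that the accumulated error stays $C$ rather than a larger multiple of $C$, and thus again land a transition point $y$ on $[e,z]$ close to $w$. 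This fan case, and the bookkeeping of the error constant in it, is the main obstacle; the cone case $z\in\Omega(x,C)$ is clean.

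Finally, with such a $y$ in hand, $d(w,y)\le C$ gives $B(y,D-3C)\subseteq B(w,D-2C)\subseteq U(x)$, while $d(y,z)\ge d(w,z)-d(w,y)\ge D-C>D-3C$, so $z\notin B(y,D-3C)$. The portion of $\alpha$ before it first enters $U(x)$ lies in the complement of $U(x)$, hence outside $B(y,D-3C)$, and the point $z$ is outside this ball as well; therefore $\alpha$ avoids the ball of radius $D-3C=\epsilon d(x,z)-3C$ around $y$, which has positive radius since $D>10C$. This completes the plan.
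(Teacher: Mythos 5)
Your first step (locating a transition point $w$ in the $[\epsilon,1-\epsilon]$-percentage of $[x,z]$, applying Lemma~\ref{AvoidBallLem} to get $B(w,\epsilon d(x,z)-2C)\subseteq U(x)$, and then reducing everything to producing a transition point $y$ on $[e,z]$ with $d(w,y)\le C$) is exactly the paper's reduction, and your treatment of the sub-case $z\in\Omega(x,C)$ is a clean and correct way to carry it out there. The final bookkeeping ($B(y,D-3C)\subseteq B(w,D-2C)$ and $z\notin B(y,D-3C)$) is also fine.

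However, there is a genuine gap: the sub-case where $z$ lies only in one of the balls $B([x,\hat z])$, $\hat z\in\Omega(x,C)$, is the main content of the lemma, and you do not prove it — you only announce a strategy (``repeated use of the relative thin triangle property \ldots carefully tracking which side each transition point migrates to'') and yourself flag it as ``the main obstacle.'' Your worry about accumulating multiples of $C$ suggests you have not found the argument that actually closes this case. The paper avoids any accumulation by applying Lemma~\ref{ThinTransitional2} just once to the triangle $(e,x,z)$, which immediately yields a transition point $y$ with $d(w,y)\le C$ on \emph{either} $[e,z]$ or $[e,x]$; the real work is then to exclude $y\in[e,x]$. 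That exclusion is a metric argument: since $\hat z\in\Omega(x,C)$ gives $d(x,[e,\hat z])\le C$, the concatenation $[e,x][x,\hat z]$ is $(2C)$-taut, and combining tautness of its subpaths with the position of $z$ in the ball $B([x,\hat z])$ (via the auxiliary point $w'\in[\hat w,x]\cup[\hat w,z]$ produced by a second application of Lemma~\ref{ThinTransitional2} to the triangle $(x,\hat w,z)$) forces $d(x,z)\le 12C$, contradicting $\epsilon d(x,z)>10C$. Without this (or an equivalent) argument your proof establishes the lemma only for $z\in\Omega(x,C)$, which is not enough for its use in Lemmas~\ref{TraceAvoidTransBall} and~\ref{LogTrackingLem}.
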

 
\begin{proof}
By definition of $z\in U_\epsilon(x)$, $[x,z]$ contains a transition point $w$ such that
$$\epsilon d(x,z)\le \max\{d(w,x),  d( w, z)\}.$$
Setting $S=\epsilon d(x,z)$,   Lemma \ref{AvoidBallLem} implies that $B(w,\kappa-2C)\subset U(x)$, so $\alpha$ does not intersect $B(w,S-2C)$. To conclude the
proof, it remains to find a $(\eta,L)$-transition point $y\in [e,z]$ such that $d(w,y)\le C$. In particular,  $\alpha$ does not intersect $B(y,S-3C)$, completing the proof.

Indeed, as in the proof of Lemma \ref{AvoidBallLem}, $z$ lies on the ball $B([x,\hat z])$ centered at  the middle point  $\hat w$  of $[x,\hat z]$ for some $\hat z\in \Omega(x,C)$. 
By assumption,   $w$ is  an $(\eta,L)$-transition point on $[x,z]$.   Lemma \ref{ThinTransitional2} applied for   the  triangle with vertices $x,\hat w,z$ shows that $d(w,w')\le C$ for some $w'\in [\hat w,x]\cup[\hat w,z]$. A schematic figure is shown below. 

Similarly  for   the triangle with vertices $e,x,z$,    Lemma \ref{ThinTransitional2}  implies that either $[e,z]$ or $[e,x]$ contains an $(\eta,L)$-transition point $y$ such that $d(w,y)\le C$ and then $d(y,w')\le 2C$.   As $\hat z\in \Omega(x,C)$, $d(x,[e,\hat z])\le C$ holds, so the triangle inequality shows
$$
d(e,x)+d(x,\hat z) -2C \le d(e,\hat z). 
$$
In a different term, this implies that the  path $p:=[e,x][x,\hat z]$ is   \textit{$(2C)$-taut}: $\ell(p)\le d(p_-,p_+)+2C$.
It follows from triangle inequality  that any subpath of a $(2C)$-taut path $p$ is $(2C)$-taut.

We first claim that either $w'\notin [x,\hat w]$ or $y\notin [e,x]$. Otherwise, we have  $w'\in [x,\hat w]$ and $y\in [e,x]$. Since $[y,x][x,w']$ is a $(2C)$-taut subpath of $p$,  we obtain that $d(y,x)+d(x,w')\le d(y,w')+2C\le 4C$, so $d(x,w)\le d(x,w')+d(w',w)\le 5C$. This is a contradiction because  $d(x,w)\ge \epsilon d(x,z)> 5C$.
\begin{figure}[htb] 
 
\includegraphics[width=0.8\linewidth]{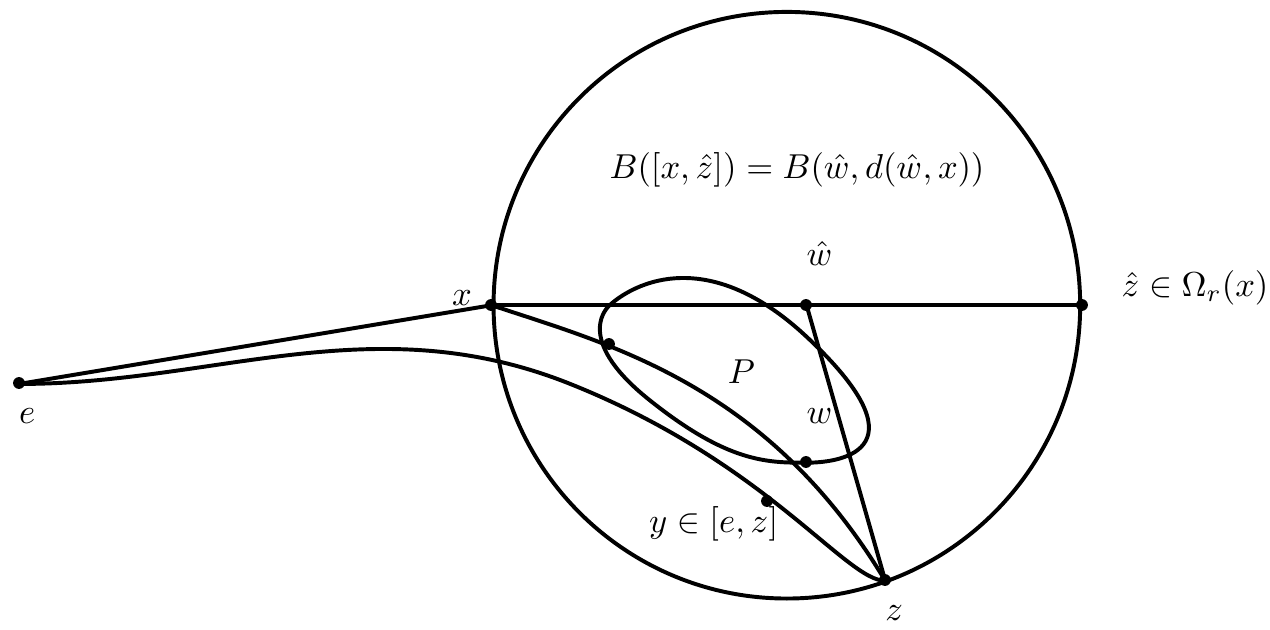} 
\label{figure1}
\end{figure}

Let us assume now  $y\in [e,x]$ to derive a contradiction.
The above claim shows   $w'\notin [x,\hat w]$ and then $w'\in [\hat w,z]$.  Using the $(2C)$-taut path   $[y,x][x,\hat w]$, we have $$d(y, \hat w) +2C \ge d(y,x)+d(x, \hat w).$$ which combined with          $d(x,\hat w)=d( z,\hat w)\ge d(w', \hat w)$ gives $$d(y, \hat w) +2C \ge d(y,x)+d(w', \hat w).$$
On the other hand,   $|d(y,\hat w)-d(w',\hat w)|\le d(y,w') \le 2C$.
These together show that $d(y, x)\le 4C$ and then $d(w',x)\le d(w',y)+d(y,x)\le 6C$.
Thus, $|d(x,\hat w)-d( w',\hat w)|\le 6C$.
Since $d(x,\hat w)=d( z,\hat w)$ and $w'$ is on $[\hat{w},z]$, we have $d(w',z)\le 6C$, so   $d(x,z)\le 12C$. This contradicts the assumption $\epsilon d(x,z)>10C$, hence we proved that $y\in [e,z]$ is impossible, so $y$ is  the desired transition point on $[e,z]$. 
\end{proof}

For any $m\ge 1$, let $U_{\epsilon}(x,m)$ be the set of elements  $z\in U_{\epsilon}(x)$ such that $d(x,z)\ge m$.

\begin{lem}\label{TraceAvoidTransBall}
For any $\epsilon\in (0,1/2)$, there exists $\kappa>0$ with the following property.
Almost  surely,   there exists $n_0>0$ such that  for all $n>n_0$ and all $x\in S_n$: if $\mathrm{BRW}(\Gamma,\nu,\mu)$ first enters $U(x)$ at a point $z$, then   $z\in U(x) \setminus U_{\epsilon}(x,\kappa \log |x|)$.  

\end{lem}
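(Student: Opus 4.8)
The plan is to run a first-moment (many-to-one) argument over the whole branching random walk, feeding the purely geometric output of Lemma~\ref{AvoidTransBall} into the super-exponential decay estimate of Lemma~\ref{SupExpDecayLem}, and then to conclude by Borel--Cantelli. Fix $\epsilon\in(0,1/2)$, and let $\delta>1$, $K_0>0$ be the constants of Lemma~\ref{SupExpDecayLem}. I would choose $\kappa>0$ once and for all so large that $\delta\epsilon\kappa>2$. For $n\ge 1$ and $x\in S_n$, let $\mathcal E_{n,x}$ be the event that $\mathrm{BRW}(\Gamma,\nu,\mu)$ first enters $U(x)$ at some point $z\in U_\epsilon(x,\kappa\log n)$. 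The goal is to show $\sum_{n}\sum_{x\in S_n}\mathbf P(\mathcal E_{n,x})<\infty$, from which the statement follows by Borel--Cantelli (note $\kappa\log n=\kappa\log|x|$ for $x\in S_n$).

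To bound a single probability: if $\mathcal E_{n,x}$ occurs, then some particle of the branching random walk has an ancestral trajectory $\alpha$ starting at $e$, staying outside $U(x)$, and ending at a point $z\in U_\epsilon(x,\kappa\log n)$. For $n$ larger than a threshold depending only on $\epsilon,\kappa,C,K_0$ we have $\epsilon\, d(x,z)\ge\epsilon\kappa\log n>10C$, so Lemma~\ref{AvoidTransBall} applies and produces an $(\eta,L)$-transition point $y_z$ on $[e,z]$ such that $\alpha$ avoids the ball $B(y_z,K_z)$ with $K_z:=\epsilon\, d(x,z)-3C$. By the many-to-one formula, the expected number of particles whose ancestral trajectory from $e$ reaches $z$ while keeping its interior outside $B(y_z,K_z)$ is at most $G_r(e,z;B(y_z,K_z)^c)\le G_R(e,z;B(y_z,K_z)^c)$, and since $y_z$ is a transition point on $[e,z]$ with $K_z\ge K_0$, Lemma~\ref{SupExpDecayLem} gives $G_R(e,z;B(y_z,K_z)^c)\le e^{-e^{\delta K_z}}$. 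Using $K_z\ge\tfrac12\epsilon\, d(x,z)$ for $n$ large, grouping the $z$'s by $m=d(x,z)\ge\kappa\log n$, and invoking $\sharp\{z:d(x,z)=m\}=\sharp S_m\le c\,\mathrm e^{vm}$ from Lemma~\ref{purelyexponentialgrowth}, I obtain
\[
\mathbf P(\mathcal E_{n,x})\ \le\ \sum_{m\ge\kappa\log n}c\,\mathrm e^{vm}\,e^{-e^{\delta\epsilon m/2}}\ \le\ c'\,n^{v\kappa}\,e^{-n^{\delta\epsilon\kappa/2}},
\]
where the last step uses that the summand decays super-exponentially, so the tail is comparable to its first term $m=\lceil\kappa\log n\rceil$, together with $e^{\delta\epsilon\kappa\log n/2}=n^{\delta\epsilon\kappa/2}$.

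Summing over $x\in S_n$ costs a factor $\sharp S_n\le c\,\mathrm e^{vn}$, hence
\[
\sum_{n}\sum_{x\in S_n}\mathbf P(\mathcal E_{n,x})\ \le\ \sum_{n}c''\,\mathrm e^{vn}\,n^{v\kappa}\,e^{-n^{\delta\epsilon\kappa/2}}\ <\ \infty,
\]
because $\delta\epsilon\kappa/2>1$ forces $vn+v\kappa\log n-n^{\delta\epsilon\kappa/2}\to-\infty$ super-linearly. By Borel--Cantelli, almost surely only finitely many of the events $\bigcup_{x\in S_n}\mathcal E_{n,x}$ occur, so there is a random $n_0$ such that for every $n>n_0$ and every $x\in S_n$ the branching random walk does not first enter $U(x)$ at any point of $U_\epsilon(x,\kappa\log n)$; equivalently, its first entry point $z$ into $U(x)$ lies in $U(x)\setminus U_\epsilon(x,\kappa\log|x|)$.

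The main obstacle I anticipate is the passage from the event ``the branching random walk first enters $U(x)$ at $z$'' to a Green-function estimate for a single trajectory: one must set up the many-to-one formula for trajectories constrained to avoid the region $U(x)$, so that Lemma~\ref{AvoidTransBall} can be used to replace this constraint by avoidance of a large ball around a transition point of $[e,z]$ (rather than of $[x,z]$), which is exactly the hypothesis under which Lemma~\ref{SupExpDecayLem} delivers the doubly-exponential bound. Everything else is bookkeeping with thresholds: verifying that $\kappa\log n$ eventually dominates the fixed quantities $10C/\epsilon$, $(K_0+3C)/\epsilon$ and $6C/\epsilon$, and that $\kappa$ was chosen with $\delta\epsilon\kappa>2$ so that the double series converges.
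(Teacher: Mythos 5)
Your argument is correct and follows essentially the same route as the paper: freeze particles at their first entry into $U(x)$, use Lemma~\ref{AvoidTransBall} to convert the constraint of avoiding $U(x)$ into avoidance of a ball of radius $\asymp\epsilon\,d(x,z)$ around a transition point of $[e,z]$, apply the many-to-one formula together with the doubly-exponential decay of Lemma~\ref{SupExpDecayLem}, sum over $z$ and $x$ using the purely exponential growth of spheres, and conclude by Borel--Cantelli. The only (immaterial) difference is in how the threshold for $\kappa$ is packaged ($\delta\epsilon\kappa>2$ versus the paper's $\kappa\epsilon_1>1$ with $\epsilon_1=\epsilon_1(\epsilon,\delta,v)$).
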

 
\begin{proof}
Let us freeze all particles of the branching random walk 
when they enter $U(x)$ at the first time.  Denote by $\mathcal{Z}(x, m)$ the collection of frozen particles $z \in U_{\varepsilon}(x, m)$.  Then for $z \in \mathcal{Z}(x, m)$ we have 
\begin{enumerate}
    \item 
     $d(x,z)\ge m$,  
    \item 
     $\max\{d(y,x), d(y,z)\}>\epsilon d(x,z)$ where $y$ is an $(\eta,L)$-transition point on $[e,z]$ given by Lemma~\ref{AvoidTransBall}.  
\end{enumerate}    
Note that  the genealogy path from $e$ to $z$ does not intersect $B(y,\epsilon d(x,z))$.  

Let $\delta=\delta(\eta, L)$ be given by Lemma \ref{SupExpDecayLem}. Then     the expected number of  particles frozen
at $z\in U_\epsilon(x)$   is upper bounded by
$$
G_{r}(e,z; [U_\epsilon(x)]^c) \le G_{r}(e,z; [B(y,\epsilon d(x,z))]^c) \le e^{-   e^{\delta \epsilon d(x,z)}}. 
$$  

By Lemma~\ref{purelyexponentialgrowth} there exists $c>0$ such that $\sharp S_n\le c e^{vn}$ for any $n\ge 1$. Thus,  there exist $\epsilon_1=\epsilon_1(\epsilon,\delta,v)$ and $m_0>0$ such that for any $m>m_0$, we have
$$
\mathbf E[\sharp \mathcal Z({x,m})]\le \sum_{z\in U_\epsilon(x,m)} G_{r}\bigl (e,z; [B(y,\epsilon d(x,z)]^c)\bigr) \le \sum_{k=m}^{\infty} ce^{kv} e^{-   e^{\delta\epsilon k}} \le e^{-e^{\epsilon_1 m}}.
$$
 

Choose $\kappa$ so that $ \kappa \epsilon_1 >1$ and let $m=\kappa \log n$.  Consider the event
$$A_n = \left\{ \mathcal{Z}(x, m) \ge 1 \, \text{for some } x \in S_n \right\},$$ i.e. \ $A_n$ is the   event such that if  the branching random walk visits $U_x$ for some $x\in S_n$, then the first frozen particle is in $U_{\epsilon}(x,m)$.  Then  
$$
\mathbf P(A_n) \le \sum_{x\in S_n}\mathbf P(\sharp  \mathcal Z({x,m})\ge 1) \le\sum_{x\in S_n} \mathbf E[\sharp \mathcal Z({x,m})] \le c e^{vn-e^{\epsilon_1 \kappa\log n}} \le c e^{vn-n^{\epsilon_1\kappa}}. 
$$ 
Therefore,  $\sum_{n=1}^{\infty} \mathbf P(A_n)  <\infty$ and so
the conclusion follows from the Borel-Cantelli Lemma.      
\end{proof}

Similarly, we prove the following.
\begin{lem}\label{TraceAvoidTransBall2}
For every $K\geq 0$ and $C\geq 0$, there exists $\kappa>0$ such that the following holds.
Almost surely, for all but finitely many $x$, if the branching random walk ever visits a point $y$ with $d(e,y)\leq Kd(e,x)$ and such that $x$ is within $C$ of a transition point on $[e,y]$, then it first enters $B(x,\kappa \log |x|)$.
\end{lem}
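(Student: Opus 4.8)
The plan is to re-run the first-moment-plus-Borel--Cantelli scheme of Lemma~\ref{TraceAvoidTransBall}, this time freezing each particle the first time its genealogy reaches a point $y$ with $d(e,y)\le Kd(e,x)$ for which $x$ lies within $C$ of an $(\eta,L)$-transition point of $[e,y]$, and bounding the chance that such a frozen particle reached $y$ while its genealogy avoided the ball $B(x,\kappa\log|x|)$.

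First I would isolate the basic deviation estimate. Fix $x$ with $n:=|x|$ and set $M=\kappa\log n$. Suppose the branching random walk visits some $y$ with $d(e,y)\le Kn$ such that some $(\eta,L)$-transition point $w$ of $[e,y]$ satisfies $d(x,w)\le C$, and suppose there is a particle at $y$ whose genealogy path from $e$ stays outside $B(x,M)$. Since $B(w,M-C)\subseteq B(x,M)$, that genealogy also stays outside $B(w,M-C)$, so by the many-to-one formula the expected number of such particles is at most $G_r(e,y;B(w,M-C)^c)$. Using $r\le R$ and that $w$ is an $(\eta,L)$-transition point on $[e,y]$ — so that Lemma~\ref{SupExpDecayLem} applies, $(\eta,L)$-transitionality only weakening as $L$ increases, hence we may assume $L\ge L(\eta)$ — this is at most $G_R(e,y;B(w,M-C)^c)\le \mathrm e^{-\mathrm e^{\delta(M-C)}}$ as soon as $M-C\ge K_0$, i.e. once $n$ is large.

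Next I would sum. By Lemma~\ref{purelyexponentialgrowth}, the number of pairs $(x,y)$ with $x\in S_n$ and $d(e,y)\le Kn$ is at most $\sharp S_n\cdot\sum_{m=0}^{\lfloor Kn\rfloor}\sharp S_m\le c\,\mathrm e^{v(K+1)n}$. Letting $A_n$ be the event that for some $x\in S_n$ and some $y$ as in the hypothesis with $d(e,y)\le Kn$ the branching random walk reaches $y$ along a genealogy avoiding $B(x,\kappa\log n)$, Markov's inequality gives
\[
\mathbf P(A_n)\ \le\ c\,\mathrm e^{v(K+1)n}\,\mathrm e^{-\mathrm e^{\delta(\kappa\log n-C)}}\ =\ c\,\mathrm e^{v(K+1)n}\,\mathrm e^{-\mathrm e^{-\delta C}\,n^{\delta\kappa}}.
\]
Choosing $\kappa>1/\delta$ makes the double-exponential term decay faster than $\mathrm e^{-v(K+1)n-1}$, so $\sum_n\mathbf P(A_n)<\infty$ and Borel--Cantelli yields that almost surely only finitely many $A_n$ occur. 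Unwinding this: almost surely, for all but finitely many $x$, whenever the branching random walk visits a point $y$ with $d(e,y)\le Kd(e,x)$ such that $x$ is within $C$ of a transition point on $[e,y]$, every particle at $y$ has a genealogy meeting $B(x,\kappa\log|x|)$; in particular the branching random walk first enters $B(x,\kappa\log|x|)$ before it can reach such a $y$, which is the claim.

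The step I expect to carry the real content is the first one: verifying that the transition point supplied by the hypothesis genuinely feeds Lemma~\ref{SupExpDecayLem} after the translation $B(w,M-C)\subseteq B(x,M)$, so that a true double exponential $\mathrm e^{-\mathrm e^{\delta(M-C)}}$ is produced. Once that super-exponential decay is in hand it trivially beats the merely exponential count $\mathrm e^{v(K+1)n}$ of pairs coming from Lemma~\ref{purelyexponentialgrowth}, and the Borel--Cantelli conclusion is routine; the constant $\kappa$ only needs $\delta\kappa>1$ together with $\kappa\log n-C\ge K_0$ for large $n$, so it can be taken independent of $K$ and $C$.
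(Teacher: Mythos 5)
Your proposal is correct and follows essentially the same route as the paper: freeze particles at their first visit to such a $y$ without having entered $B(x,\kappa\log|x|)$, bound the expected number of frozen particles by the restricted Green function, apply Lemma~\ref{SupExpDecayLem} to get the double-exponential decay $\mathrm e^{-\mathrm e^{\delta(\kappa\log n-C)}}$, sum over the at most $c\,\mathrm e^{v(K+1)n}$ pairs $(x,y)$ via Lemma~\ref{purelyexponentialgrowth}, and conclude by Borel--Cantelli. Your explicit reduction $B(w,M-C)\subseteq B(x,M)$ to center the ball at the actual transition point $w$ is a small but welcome precision over the paper's phrasing.
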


\begin{proof}
Given $x\in S_n$, we consider the set $U(x)$ of $y\in\Gamma$ so that $|y|\le K|x|$ and $x$ is within $C$ of a transition point on $[e,y]$. We freeze particles when they first reach a point $y\in U(x)$, without entering $B(x,\kappa \log |x|)$.
We denote by $\mathcal{Z}_n$ the set of frozen particles for all $x\in S_n$.
By Lemma \ref{SupExpDecayLem}, for $n$ large enough,    the expected number of  particles frozen at $y\in U(x)$   is upper bounded by
$$
G_{r}(e,y; [U(x)]^c) \le G_{r}(e,y; [B(x,\kappa \log |x|)]^c) \le e^{-   n^{\delta \kappa }}. 
$$  

By Lemma~\ref{purelyexponentialgrowth}, there exist $c>0$ such that $\sharp S_n\le c e^{vn}$ and $\sharp U(x)\le c e^{vKn}$ for any $n\ge 1$ and $x\in \Gamma$. Thus,   we have
$$
\mathbf E[\sharp \mathcal Z_n]\le \sum_{x\in S_n}  \sum_{y\in U(x)} G_{r}(e,y; [U(x)]^c) \le  c e^{v(K+1)n} e^{-   n^{\delta\kappa}}.   
$$
If $\kappa$ is chosen large enough, then $\sum_{n=1}^\infty \mathbf E[\sharp \mathcal Z_n]<\infty$, so the proof follows from the  Borel-Cantelli Lemma.
\end{proof}
Recall that if $P\in \mathbb P$ is a parabolic coset, $\eta\geq 0$ and $x\in \Gamma$, we denote by $$\pi_{N_\eta(P)}(x) :=\{y\in N_\eta(P): d(x,y)=d(x, N_\eta(P))\}$$ the set of its shortest projections on the $\eta$-neighborhood $N_\eta(P)$ of $P$. 
Also, for $x,y\in \Gamma$, we denote by $$d_{N_\eta(P)}(x,y):=\diam{\pi_{N_\eta(P)}(x)\cup\pi_{N_\eta(P)}(y)}$$
and \cite[Corollary~8.2]{Hruska} the shortest projection is coarsely Lipschitz,
$$d_{N_\eta(P)}(x,y)\le kd(x,y)+k$$ for a fixed $k\ge 1$ depending only on $\eta$. Thus, $\pi_{N_\eta(P)}(x)$ has  bounded diameter.

\begin{lem}\label{ProjectionLem}\cite[Lemma~1.15]{Sistoprojections}
For every large enough $\eta$, there exists $C=C(\eta)>0$ such that
for every $x\in \Gamma$, $P\in \mathbb P$ and for every geodesic $\gamma$ starting at $x$ and entering $N_\eta(P)$, we have
$$
\pi_{N_\eta(P)}(x) \subset B(y,C) 
$$
where $y$ is the entrance point of $\gamma$ in $N_\eta(P)$.
\end{lem}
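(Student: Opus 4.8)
The statement is that the nearest-point projection of $x$ onto $N_\eta(P)$ agrees, up to a uniform additive constant, with the point where any geodesic from $x$ first enters $N_\eta(P)$. Since this is \cite[Lemma~1.15]{Sistoprojections} one may simply quote it, but here is the argument I would give. It suffices to bound $d(p,y)$ uniformly, where $p$ is an arbitrary point of $\pi_{N_\eta(P)}(x)$ and $y$ is the first vertex of $\gamma$ lying in $N_\eta(P)$. If $x\in N_\eta(P)$ then $p=y=x$, so assume $x\notin N_\eta(P)$ and let $\hat y$ be the vertex of $\gamma$ immediately before $y$. By minimality of $y$ we have $\hat y\notin N_\eta(P)$, so $d(\hat y,N_\eta(P))=1$ with this distance realized by $y$, whence $y\in\pi_{N_\eta(P)}(\hat y)$; moreover the initial sub-segment $\sigma:=\gamma|_{[x,\hat y]}$ is a geodesic that avoids $N_\eta(P)$. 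Setting $\pi_{N_\eta(P)}(\sigma):=\bigcup_{w\in\sigma}\pi_{N_\eta(P)}(w)$, we have $p\in\pi_{N_\eta(P)}(x)\subseteq\pi_{N_\eta(P)}(\sigma)$ and $y\in\pi_{N_\eta(P)}(\hat y)\subseteq\pi_{N_\eta(P)}(\sigma)$, hence
\[
d(p,y)\ \le\ \diam{\pi_{N_\eta(P)}(\sigma)}.
\]

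Thus everything reduces to the \emph{bounded projection} property: for $\eta$ large there is $C=C(\eta)$ with $\diam{\pi_{N_\eta(P)}(\sigma)}\le C$ for every $P\in\mathbb P$ and every geodesic $\sigma$ disjoint from $N_\eta(P)$. This is a standard feature of parabolic cosets in relatively hyperbolic groups — it is implicit in \cite{DrutuSapir} and \cite[Section~8]{Hruska}, and is exactly the type of estimate proved in \cite{Sistoprojections} — and conceptually it says that in the hyperbolic cusped (or coned-off) space the neighborhoods $N_\eta(P)$ become uniformly quasiconvex, so that nearest-point projection onto them is coarsely constant along geodesics avoiding a uniform neighborhood. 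To prove it directly I would take the endpoints $a,b$ of $\sigma$, choose projections $p_a\in\pi_{N_{\eta_0}(P)}(a)$ and $p_b\in\pi_{N_{\eta_0}(P)}(b)$ for a suitable $\eta_0<\eta$, observe that $[a,p_a]$ and $[b,p_b]$ meet $N_{\eta_0}(P)$ only near $p_a$ and $p_b$ respectively (minimality), that $[p_a,p_b]\subseteq N_{\eta_0'}(P)$ by quasiconvexity of $P$ (\cite[Lemma~4.3]{DrutuSapir}), and — using Lemma~\ref{EntryTransLem} to split $[p_a,p_b]$ into a core of points $(\eta_0',L)$-deep in $P$ and two short transitional tips near $p_a,p_b$ — derive a contradiction when $d(p_a,p_b)$ is large, since a point in the deep core of $[p_a,p_b]$ cannot be shadowed by $\sigma$ (which avoids $N_\eta(P)$) nor by $[a,p_a]$, $[b,p_b]$, while the relative thin-triangle property (Lemma~\ref{ThinTransitional2}) forces the transitional tips, and hence the whole configuration, to be pinned together.

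The step I expect to be the main obstacle is making this last point uniform. Lemma~\ref{ThinTransitional2} compares only \emph{transition} points across the sides of a geodesic triangle, whereas the problematic point in the core of $[p_a,p_b]$ is deep in $P$, so one cannot apply the thin-triangle comparison at that point itself; instead one must run the comparison at the transitional tips of $[p_a,p_b]$ and along the adjacent sides, carefully tracking the three nested radii $\eta_0<\eta_0'<\eta$ so that ``close to $N_{\eta_0}(P)$'' stays incompatible with ``disjoint from $N_\eta(P)$'' throughout the estimate. For the write-up it is cleaner to invoke \cite[Lemma~1.15]{Sistoprojections} for the bounded projection property and to record only the short reduction displayed in the first paragraph.
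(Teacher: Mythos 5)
The paper offers no proof of this lemma at all: it is imported directly as \cite[Lemma~1.15]{Sistoprojections}, so your primary recommendation to simply quote Sisto is exactly what the authors do. Your reduction of the statement to the bounded-projection property for geodesics avoiding $N_\eta(P)$ (via the penultimate vertex $\hat y$, for which $y\in\pi_{N_\eta(P)}(\hat y)$) is correct and clean, and since you explicitly defer the uniform bounded-projection estimate itself back to \cite{Sistoprojections} rather than claiming your sketch as a complete proof, there is no gap to report.
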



\begin{lem}\label{exponentialmoments}
There exists $\eta_0\geq 0$ such that for $\eta\geq \eta_0$, the following holds.
Almost surely, for all but finitely many parabolic cosets $P\in \mathbb P$,  if the branching random walk ever visits a point $z$ satisfying 
both that $d_{N_\eta(P)}(e,z)\geq d(e,N_\eta(P))$ and $d(z,N_\eta(P))\leq  d_{N_\eta(P)}(e,z)$, then it first needs to enter $N_\eta(P)$ at a point $w$ such that $d_{N_\eta(P)}(e,w)\leq d(e,N_\eta(P))$.

In particular, if the branching random walk ever enters $N_\eta(P)$, then the first entrance point must be within $d(e,N_\eta(P))$ of the projection of $e$ on $N_\eta(P)$.
\end{lem}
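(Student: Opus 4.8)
The plan is to run a Borel--Cantelli argument fed by the exponential decay estimate of Lemma~\ref{lemmaexponentialmoments}, after grouping the parabolic cosets according to their distance to $e$.

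\textbf{Counting cosets.} First I would record that there is $c=c(\eta)>0$ with $\sharp\{P\in\mathbb P: d(e,N_\eta(P))\le n\}\le c\,\mathrm e^{vn}$. Indeed, write $P=gP_0$ with $P_0\in\mathbb P_0$ and choose $g=g_P$ a shortest element of the coset; then $|g_P|\le d(e,N_\eta(P))+\eta$ and $P\mapsto g_P$ is injective on the cosets of each fixed $P_0$, so the estimate follows from Lemma~\ref{purelyexponentialgrowth}, which gives $\sharp B(e,m)\le c\,\mathrm e^{vm}$, together with the finiteness of $\mathbb P_0$.

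\textbf{Freezing and first moments.} Fix $M>3v$, and let $\eta_0$ be chosen large enough for Lemma~\ref{lemmaexponentialmoments} (with this $M$; the constant there may be taken independent of $P$, as its proof shows) and for the coarse-projection estimates recalled just above the statement; assume $\eta\ge\eta_0$. For $P\in\mathbb P$ put $n_P:=d(e,N_\eta(P))$ and fix $p_P\in\pi_{N_\eta(P)}(e)$. Call $w\in N_\eta(P)$ a \emph{bad entrance} for $P$ if $d_{N_\eta(P)}(e,w)>n_P$, and call $z\notin N_\eta(P)$ a \emph{bad target} for $P$ if it satisfies both conditions in the statement. I freeze each lineage of $\mathrm{BRW}(\Gamma,\nu,\mu)$ the first time its genealogy path meets $N_\eta(P)$ (recording the hitting point $w$), and separately the first time it reaches a bad target $z$ while its genealogy path so far has avoided $N_\eta(P)$. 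By the many-to-one formula applied to the stopped process (as in the proof of Lemma~\ref{TraceAvoidTransBall}), the expected number of lineages frozen at a given $w$, respectively $z$, is at most $G_r(e,w;N_\eta(P)^c)\le G_R(e,w;N_\eta(P)^c)$, respectively $G_R(e,z;N_\eta(P)^c)$, and by Lemma~\ref{lemmaexponentialmoments} this is bounded by $\mathrm e^{-Md_{N_\eta(P)}(e,w)}$, respectively $\mathrm e^{-Md_{N_\eta(P)}(e,z)}$. Now, for $w\in N_\eta(P)$ one has $\pi_{N_\eta(P)}(w)=\{w\}$, hence $d(p_P,w)\le d_{N_\eta(P)}(e,w)$; and for a bad target $z$, picking $q\in\pi_{N_\eta(P)}(z)$, the second hypothesis $d(z,N_\eta(P))=d(z,q)\le d_{N_\eta(P)}(e,z)$ together with $d(p_P,q)\le d_{N_\eta(P)}(e,z)$ gives $d(p_P,z)\le 2\,d_{N_\eta(P)}(e,z)$. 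Therefore, by Lemma~\ref{purelyexponentialgrowth} (used at the point $p_P$) and the bounded diameter of $\pi_{N_\eta(P)}$, the number of bad entrances with $d_{N_\eta(P)}(e,w)\in[j,j+1)$ is at most $c\,\mathrm e^{vj}$, and the number of bad targets with $d_{N_\eta(P)}(e,z)\in[j,j+1)$ is at most $c\,\mathrm e^{2vj}$. Since a bad entrance or bad target has $d_{N_\eta(P)}(e,\cdot)\ge n_P$ by the first hypothesis, summing the geometric series over $j\ge n_P$ shows that the expected number of lineages frozen at a bad entrance or bad target of $P$ is at most $c'\,\mathrm e^{(2v-M)n_P}$.

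\textbf{Borel--Cantelli and conclusion.} Let $\mathcal Z_n$ be the number of lineages frozen at a bad entrance or bad target of some $P$ with $n_P\in[n,n+1)$. Combining the two previous steps, $\mathbf E[\sharp\mathcal Z_n]\le c\,\mathrm e^{vn}\cdot c'\,\mathrm e^{(2v-M)n}=c''\,\mathrm e^{(3v-M)n}$, which is summable because $M>3v$; hence, by the Borel--Cantelli lemma, almost surely $\sharp\mathcal Z_n=0$ for all but finitely many $n$, so only finitely many cosets $P$ ever have a lineage frozen at a bad entrance or bad target. For every other $P$: the first entrance point $w$ of $\mathrm{BRW}(\Gamma,\nu,\mu)$ into $N_\eta(P)$, if it ever enters, satisfies $d_{N_\eta(P)}(e,w)\le n_P$, which is the ``in particular'' clause; and no bad target is ever reached along a path avoiding $N_\eta(P)$. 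Consequently, if the branching random walk visits a $z$ as in the statement, its genealogy path cannot avoid $N_\eta(P)$ entirely (otherwise, since then $z\notin N_\eta(P)$, the vertex $z$ would be a bad target), and its first entrance into $N_\eta(P)$ cannot be bad, so the branching random walk first enters $N_\eta(P)$ at a point $w$ with $d_{N_\eta(P)}(e,w)\le n_P=d(e,N_\eta(P))$, as required. The main obstacle is the geometric bookkeeping of the middle step, namely turning the two hypotheses on $z$, via the coarse projection $\pi_{N_\eta(P)}$, into genuine exponential bounds on the number of admissible $w$'s and $z$'s at each level set of $d_{N_\eta(P)}(e,\cdot)$, so that after multiplying by the count $\mathrm e^{vn}$ of parabolic cosets everything remains summable; the freedom to take $M$ arbitrarily large in Lemma~\ref{lemmaexponentialmoments} is exactly what legitimizes the choice $M>3v$ and closes the argument.
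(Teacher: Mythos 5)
Your proof is correct and follows essentially the same route as the paper's: freeze lineages at bad first-entrance points and bad targets, bound the first moment via Lemma~\ref{lemmaexponentialmoments} applied to $G_R(e,\cdot\,;N_\eta(P)^c)$ together with the purely exponential volume growth, choose $M$ large enough to beat the count of cosets and of candidate points, and conclude by Borel--Cantelli. The only (harmless) cosmetic differences are that you treat bad entrances and bad targets as two separate freezing schemes, whereas the paper notes that a bad entrance point is itself a bad target, and that you apply the exponential-decay estimate directly at each $z$ rather than first transferring to its projection on $N_\eta(P)$.
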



\begin{proof}
Given $P\in\mathbb P$, we freeze particles when they first visit some point $z\in \Gamma$ with
$$d_{N_\eta(P)}(e,z)\geq d(e,N_\eta(P))$$ and
$$d(z,N_\eta(P))\leq  d_{N_\eta(P)}(e,z),$$ without having entered $\{w\in N_\eta(P): d_{N_\eta(P)}(e,w)\leq d(e,N_\eta(P))\}$. Denote  by $\mathbb P_k$  the collection  of   parabolic cosets $P$  with $d(e,N_\eta(P))=k$.
We denote by $\mathcal{Z}_k$ the set of such frozen particles for those $P\in \mathbb P_k$.
Then,
$$\mathbf E[\sharp \mathcal{Z}_k]\leq \sum_{P\in \mathbb P_k}\sum_{\underset{d_{N_\eta(P)}(x,e)\geq k}{x\in N_\eta(P)} }\sum_{\underset{d(x,y)\leq k}{y \colon \pi_{N_\eta(P)}(y)=x}}G_R(e,y;N_\eta(P)^c).$$
As $x\in \pi_{N_\eta(P)}(z)$,  there exists a trajectory for the $\mu$-random walk from $z$ to $x$ that stays outside $N_\eta(P)$ of linear length in $d(x,z)$, since the support of $\mu$ is finite.
In particular,
$$G_R(z,x;N_\eta(P)^c)\geq \mathrm{e}^{-\alpha d(x,z)}\ge \mathrm{e}^{-\alpha k}$$ for some positive $\alpha$.
Note that by Lemma~\ref{purelyexponentialgrowth}, for a fixed $x$, the set of such elements $z$ being contained in a ball of radius $k$ grows as an exponential function in $k$.  Thus,
\begin{equation}\label{proofexponentialmoments}
    \mathbf E[\sharp \mathcal{Z}_k]\leq \sum_{P\in\mathbb P_k}\sum_{\underset{d(x,\pi_{N_\eta(P)}(e))\geq k}{x\in N_\eta(P)} }G_R(e,x;N_\eta(P)^c)\mathrm{e}^{\beta k},
\end{equation}
where $\beta$ depends both on $\alpha$ and on the growth rate $v$ of the word distance.
By  Lemma~\ref{lemmaexponentialmoments}, for every $M\geq 0$, there exists $\eta_0$ such that for all $\eta\geq \eta_0$,
$$G_R(e,x;N_\eta(P)^c)\leq C \mathrm{e}^{-Md(x,\pi_{\eta,P}(e))}.$$
Choosing $M>v+\beta$, where $v$ is the growth rate of the word distance, we have by Lemma~\ref{purelyexponentialgrowth} and by~(\ref{proofexponentialmoments})
that for $\eta\geq \eta_0$,
$$\mathbf E[\sharp \mathcal{Z}_k]\leq C\mathrm{e}^{-k(M-v-\beta)}.$$
By the choice of $M$, the sum $\sum_{k\geq 0}\mathbf E[\sharp \mathcal{Z}_k]$is finite.
The result again follows from the Borel-Cantelli lemma.
\end{proof}


\begin{lem}\label{LogTrackingLem}
There exists $\kappa$ such that almost surely,   for any conical limit  point $\xi$ in $\Lambda$
and {for all but finitely many transition point $x$ on $[e,\xi]$, $\mathcal{P}$ intersects $B(x,\kappa \log |x|)$.}
\end{lem}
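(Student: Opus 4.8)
The plan is to prove the following localized statement and then conclude by exhausting over the countably many conical points that arise as accumulation points of $\mathcal P$: fix a conical point $\xi\in\Lambda$ and an $(\eta,L)$-transition point $x$ on $[e,\xi]$, and show that almost surely $\mathcal P\cap B(x,\kappa\log|x|)\neq\emptyset$ for all but finitely many such $x$, ordered by $|x|$. Every almost-sure ingredient below has the form already exploited in Lemmas~\ref{TraceAvoidTransBall}, \ref{TraceAvoidTransBall2} and \ref{exponentialmoments}: freeze the particles whose ancestral line first reaches a prescribed bad set without having met the relevant small ball around $x$, bound the expected number of frozen particles by a Green function restricted to trajectories avoiding that ball (via Lemma~\ref{SupExpDecayLem} or Lemma~\ref{lemmaexponentialmoments}), multiply by $\sharp S_n\le c\,\mathrm{e}^{vn}$ (Lemma~\ref{purelyexponentialgrowth}), sum over $n$, and apply Borel--Cantelli.

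Since $\xi\in\Lambda$ there is a sequence $z_n\in\mathcal P$ with $z_n\to\xi$; by Lemma~\ref{ConvConicalLem}, for each fixed $x$ one has $z_n\in\Omega(x,C)\subseteq U(x)$ for all large $n$, and in particular $x$ lies within $C$ of a transition point on $[e,z_n]$. Fix a constant $K_0$ (large, depending on $\epsilon$ below). If some such $z_n$ satisfies $|z_n|\le K_0|x|$, then Lemma~\ref{TraceAvoidTransBall2} applied with $y=z_n$ shows that the branching random walk first enters $B(x,\kappa\log|x|)$, and we are done. Otherwise the ancestral line of a particle sitting at such a $z_n$ still enters $U(x)$, so let $z\in\mathcal P$ be its first point inside $U(x)$. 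By Lemma~\ref{TraceAvoidTransBall}, applied with a fixed $\epsilon\in(0,1/3)$, almost surely for all but finitely many $x$ this $z$ lies outside $U_\epsilon(x,\kappa_1\log|x|)$. Hence either $d(x,z)<\kappa_1\log|x|$, in which case $z$ itself witnesses $\mathcal P\cap B(x,\kappa_1\log|x|)\neq\emptyset$; or $z\in U(x)\setminus U_\epsilon(x)$, and then Lemma~\ref{DeepContainedLem} gives a unique parabolic coset $P$ such that the $[\epsilon,1-\epsilon]$-percentage of $[x,z]$ lies in $N_\eta(P)$, with entry and exit points $u,v$ that are $(\eta,L)$-transition points (Lemma~\ref{EntryTransLem}) and $d(x,u)\le\epsilon\,d(x,z)$.

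The remaining deep parabolic case is where I expect the main difficulty. First I would dispose of the sub-case $d(x,z)\le K_0|x|$: then $|z|\le(1+K_0)|x|$, and since $z\in U(x)$ puts $x$ within a bounded distance of a transition point on an honest geodesic $[e,z]$ (Lemma~\ref{ThinTransitional2}), Lemma~\ref{TraceAvoidTransBall2} applied with $y=z$ shows the walk first enters $B(x,\kappa_2\log|x|)$. Now assume $d(x,z)>K_0|x|$. Choosing $K_0$ so that $(1-3\epsilon)K_0\ge1$ and using $d(e,N_\eta(P))\le d(e,u)\le|x|+d(x,u)\le|x|+\epsilon\,d(x,z)$ together with the fact that the sub-segment of $[x,z]$ inside $N_\eta(P)$ has length at least $(1-2\epsilon)d(x,z)$, one checks that $z$ meets the hypotheses of Lemma~\ref{exponentialmoments} for the coset $P$ (both $d_{N_\eta(P)}(e,z)\ge d(e,N_\eta(P))$ and $d(z,N_\eta(P))\le d_{N_\eta(P)}(e,z)$, the latter because $d(z,N_\eta(P))\le\epsilon\,d(x,z)\le(1-2\epsilon)d(x,z)$). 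Consequently, almost surely for all but finitely many $P$, the walk first enters $N_\eta(P)$ at a point $w\in\mathcal P$ with $d_{N_\eta(P)}(e,w)\le d(e,N_\eta(P))$, and by Lemma~\ref{ProjectionLem} together with the relative thin-triangle property, $\pi_{N_\eta(P)}(e)$ --- and hence $w$, up to the additive error $d(e,N_\eta(P))$ --- is within a bounded distance of the transition point $u$, which lies on $[e,z]$.

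It then remains to upgrade ``$\mathcal P$ meets a ball of radius $\asymp d(e,N_\eta(P))$ about $u$'' into ``$\mathcal P$ meets a ball of radius $\kappa\log|x|$ about $x$'', and this quantitative step is the real obstacle, since $w$ is only pinned down within a ball of radius comparable to $|x|+\epsilon\,d(x,z)$. The plan is to run a descent: having located $\mathcal P$ near the entry transition point $u$ of $N_\eta(P)$ on $[e,z]$, reapply the same dichotomy with $u$ in place of $x$ and the descendant of $w$ travelling towards $z$ in place of $z_n$, now with strictly more of the geodesic $[e,z]$ already consumed; since $[e,z]$ meets only finitely many parabolic cosets, the descent terminates, and a careful accounting keeps the accumulated logarithmic errors within a fixed multiple. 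Collecting the finitely many constants $\kappa_1,\kappa_2,\dots$ into a single $\kappa$ then yields $\mathcal P\cap B(x,\kappa\log|x|)\neq\emptyset$ for all but finitely many transition points $x$ on $[e,\xi]$, and the exhaustion over the countably many conical points in $\Lambda$ completes the proof of Lemma~\ref{LogTrackingLem}.
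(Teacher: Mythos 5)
Your argument tracks the paper's proof closely up to and including the application of Lemma~\ref{exponentialmoments}: enter $U(x)$ via Lemma~\ref{ConvConicalLem}, exclude $U_\epsilon(x,\kappa\log|x|)$ via Lemma~\ref{TraceAvoidTransBall}, dispose of the moderate-distance cases with Lemma~\ref{TraceAvoidTransBall2}, and in the remaining deep-parabolic case locate the first entrance point $w$ of the walk into $N_\eta(P)$ near $\pi_{N_\eta(P)}(e)$ up to an additive error $d(e,N_\eta(P))$. But the step you yourself flag as ``the real obstacle'' is exactly where the proposal stops being a proof, and the descent you sketch in its place cannot work. Re-running the dichotomy ``with $u$ in place of $x$'' only ever localizes $\mathcal{P}$ near points that march away from $x$ along $[x,z]$; since $d(x,u)$ is already of order $\epsilon\, d(x,z)$, which is unbounded relative to $|x|$ in the case $d(x,z)>K_0|x|$, no iteration toward $z$ can produce a point of $\mathcal{P}$ in $B(x,\kappa\log|x|)$. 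Moreover each descent step pins $\mathcal{P}$ down only within a ball of linear radius $\asymp d(e,N_\eta(P))\ge |x|-O(1)$, not a logarithmic one, and the number of parabolic cosets met by $[e,z]$ is not uniformly bounded, so the claimed ``careful accounting'' of accumulated errors has no basis.

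The paper closes this case without any iteration, by a second application of Lemma~\ref{TraceAvoidTransBall2} to the pair $(x,w)$. The point is that one does not need $w$ to be close to $x$; one needs (i) that $x$ is within a bounded distance of a transition point on $[e,w]$, which follows from thin triangles because $w$ is near $\pi_{N_\eta(P)}(e)$, hence near the entry point of $[e,z]$ into $N_\eta(P)$, and (ii) a multiplicative comparison: from $d_{N_\eta(P)}(e,w)\le d(e,N_\eta(P))$ and $d(e,N_\eta(P))\ge |x|+d(x,N_\eta(P))-2C$ one gets
$$d(x,w)\le d(x,N_\eta(P))+d_{N_\eta(P)}(e,w)\le 2d(e,N_\eta(P))+2C\le 2d(e,w)+2C\le 4\,d(e,w).$$
These two facts place $w$ in the scope of Lemma~\ref{TraceAvoidTransBall2} with $K\ge 4$, whose freezing/Borel--Cantelli mechanism then forces the ancestral trajectory from $e$ to the visited point $w$ to pass through $B(x,\kappa\log|x|)$. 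In other words, the additive error $d(e,N_\eta(P))$ in the location of $w$, which you correctly identify, is harmless precisely because it is comparable to $d(e,w)$ itself, and Lemma~\ref{TraceAvoidTransBall2} is the tool that converts such a multiplicative localization into a visit to the logarithmic ball around $x$. Without this closing step (or a genuinely worked-out substitute) the proposal does not establish the lemma.
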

\begin{proof}
Let  $\xi\in \Lambda$  be a conical point. Then $[e,\xi]$ contains infinitely many $(\eta, L)$-transition points (see  \cite[Lemma 2.20]{YangPS}). Consider any  $(\eta, L)$-transition point $x\in [e,\xi]$ so that $|x|>n_0$. According to  Lemma \ref{ConvConicalLem}, for $\Omega(x,C)\subset U(x)$, the branching random walk must enter $  U(x)$.  

Set $\epsilon\in (0,1/2)$ so that $K=\epsilon^{-1}\ge 4$.  Let $\kappa$ be given by Lemmas \ref{TraceAvoidTransBall} and \ref{TraceAvoidTransBall2}.  Up to enlarging $\eta$, we may assume that it is big enough to apply Lemma~\ref{exponentialmoments}.

Let $z\in U_\epsilon(x)\setminus U_{\epsilon}(x,\kappa \log |x|)$ be the first entrance point by Lemma \ref{TraceAvoidTransBall}, so one of  the following statements is true:
\begin{enumerate}
    \item 
      $d(z, x)\le  \kappa \log |x|$,  
    \item
    the $[\epsilon, 1-\epsilon]$-percentage  of $[x,z]$ does not contain any $(\eta,L)$-transition point.  
\end{enumerate}

If the case (1) happens, then  we are done. We now assume     $d(x,z)>\kappa \log |x|$. 

By Lemma \ref{DeepContainedLem}, there exist a unique   coset $P\in \mathbb P$ such that if $y_1,y_2$ are the entrance and exit points of  $[x,z]$ in $N_\eta(P)$, then
$$\max\{d(x,y_1), d(y_2,z)\}\le \epsilon d(x,z),$$
so
$$d(y_1,y_2)\geq (1-2\epsilon)d(x,z)\ge (1-2\epsilon) \kappa \log n_0.$$
By definition,   $z$ is contained in a ball $B([x,\hat z])$ centered at $\hat w$ for some $\hat z\in \Omega(x,C)$.

By definition of $\Omega(x,C)$, there exists a transition point $\hat{x}$ on $[e,\hat{z}]$ such that $d(x,\hat x)\le C$.
By Lemma~\ref{ThinTransitional2} for the triangle with vertices $e,z,\hat{z}$, we see that $\hat{x}$ is within $C$ and so $x$ is within $2C$ of a transition point on $[e,z]$.
According to Lemma~\ref{TraceAvoidTransBall2}, if the branching random walk does not enter $B(x, \kappa\log |x|)$, then we have $ |z|> K |x|=\epsilon^{-1}|x|$.
Noting as above that $x$ is within $2C$ of a transition point on $[e,z]$,     Lemma~\ref{ProjectionLem} implies that $\pi_{N_\eta(P)}(e)$ is within a bounded   distance   of the entry point $y_1$ of $[x,z]$ into $N_\eta(P)$,  which implies that
$d(y_1,y_2) \leq d_{N_\eta(P)}(e,z)+C'$ for some constant $C'$ depending on $C$.
Moreover, if $n_0$ is large enough, then $d(y_1,y_2)\geq C'/2$, so we get
$d(y_1,y_2)\leq 2d_{N_\eta(P)}(e,z)$, hence
$$d(z,N_\eta(P))\leq \frac{2\epsilon}{1-2\epsilon}d_{N_\eta(P)}(e,z).$$
Furthermore, $d(e,N_\eta(P))\leq |x|+d(x,y_1)$.
Since $|x|\leq \epsilon|z|$ and $d(x,y_1)\leq \epsilon d(x,z)$, we get
$$d(e,N_\eta(P))\leq \left (\frac{\epsilon}{1-\epsilon}+\epsilon\right )d(x,z)\leq \left (\frac{\epsilon}{1-\epsilon}+\epsilon\right )(1-2\epsilon)2d_{N_\eta(P)}(e,z).$$

Thus, if $\epsilon$ is small enough, the conditions of Lemma~\ref{exponentialmoments} holds, hence the branching random walk first enters $N_\eta(P)$ at a point $w$ such that $d(\pi_{N_\eta(P)}(e),w)\leq d(e,N_\eta(P))$.
%
Since $x$ is within $2C$ of a transition point on $[e,w]$, $d(e,N_\eta(P))\geq |x|+d(x,N_\eta(P))-2C$
and so
$$d(x,w)\leq d(x, N_\eta(P))+d_{N_\eta(P)}(e,w)\leq 2 d(e, {N_\eta(P)})   +2C\leq 2d(e,w)+2C.$$Thus, for $n_0$ large enough, we have $d(e,w)\ge d(e,x)-2C\ge  C$ so $d(x,w)\le 4d(e,w)$. Applying   Lemma~\ref{TraceAvoidTransBall2} with $K\ge 4$ again, we see that the branching random walk necessarily enters $B(x,\kappa \log |x|)$.
This concludes the proof.
\end{proof}  

We can now end the proof of the upper-bound. 
 
 \begin{proof}[Proof of Proposition~\ref{upperboundHausdorff}]
Let $r\leq \rho^{-1}$ and fix $h$ such that
 $$h>\frac{\omega_\Gamma(r)}{-\log \lambda}.$$
 Let $0<\epsilon<1/2$.
 Let $\xi\in \Lambda^{con}_{\mathcal{F}}(r)$ and let $x$ be a transition point on $[e,\xi]$.
 By Lemma~\ref{LogTrackingLem}, almost surely, there exists $n_0$ such that if $|x|\geq n_0$, we can find $z\in \mathcal{P}$ such that $z\in B(x,\kappa \log |x|)$.
 In particular, $|x|\leq |z|+\kappa \log |x|$ and if $|x|$ is large enough, then $|z|\geq (1-\epsilon)|x|$, hence
 $$x\in B\left (z,\kappa \log \frac{|z|}{1-\epsilon}\right ).$$
 Consequently, for every $m$,
 $$\Lambda^{con}_{\mathcal{F}}(r)\subset \bigcup_{n\geq m}\bigcup_{z\in \mathcal{P}_n}\Pi\left (z,\kappa \log \frac{|z|}{1-\epsilon}\right ).$$
By Lemma~\ref{diametershadow}, the diameter of $\Pi\left (z,\kappa \log \frac{|z|}{1-\epsilon}\right )$ is bounded by $C\lambda^{|z|}|z|^\alpha \log |z|$.
Thus,
$$\sum_{n\geq m}\sum_{z\in \mathcal{P}_n}\mathbf{diam}\left (\Pi\left (z,\kappa \log \frac{|z|}{1-\epsilon}\right )\right )^h\leq C \sum_{n\geq m}M_n\lambda^{hn}n^{h\alpha} (\log n)^h.$$
Since $\limsup \frac{1}{n}\log M_n\leq \omega_{\Gamma}(r)$, by the choice of $h$, this last quantity converges to 0 as $m$ tends to infinity.
This concludes the proof.
 \end{proof}

We deduce the following result.
We denote by $\Lambda_\mathcal{B}(r)$ the limit set of the branching random walk inside the Bowditch boundary, endowed with the shortcut distance $\overline{\delta}_e$.
\begin{cor}\label{coroupperboundHausdorff}
Let $r\leq \rho^{-1}$.
Almost surely,
$$\Hdim(\Lambda_\mathcal{B}(r),\overline{\delta}_e)\leq \frac{-1}{\log \lambda}\omega_{\Gamma}(r).$$
\end{cor}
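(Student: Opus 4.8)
The plan is to deduce the bound from Proposition~\ref{upperboundHausdorff} by transferring it along the map $\phi$ of Theorem~\ref{mapGerasimov}, after dealing with the parabolic limit points separately. Since the action of $\Gamma$ on $X$ is geometrically finite, $\partial_{\mathcal B}\Gamma$ is the disjoint union of the set $\partial_{\mathcal B}^{con}\Gamma$ of conical limit points and the set $\partial_{\mathcal B}^{par}\Gamma$ of bounded parabolic limit points, so $\Lambda_{\mathcal B}(r)$ splits as $\big(\Lambda_{\mathcal B}(r)\cap\partial_{\mathcal B}^{con}\Gamma\big)\cup\big(\Lambda_{\mathcal B}(r)\cap\partial_{\mathcal B}^{par}\Gamma\big)$. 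Because Hausdorff dimension is stable under countable unions, it suffices to bound the dimension of each of these two pieces by $\frac{-1}{\log\lambda}\omega_\Gamma(r)$.

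For the parabolic part, each coset $P\in\mathbb P$ has a single fixed point in $\partial_{\mathcal B}\Gamma$ and $\mathbb P$ is countable, so $\partial_{\mathcal B}^{par}\Gamma$ is countable; hence $\Lambda_{\mathcal B}(r)\cap\partial_{\mathcal B}^{par}\Gamma$ is countable and has Hausdorff dimension $0$, which is $\le\frac{-1}{\log\lambda}\omega_\Gamma(r)$ since $\omega_\Gamma(r)\ge 0$ by Lemma~\ref{FactsHrLem}. For the conical part, I would use that $\phi$ is continuous, equivariant and restricts to the identity on $\Gamma$, so that it carries the closure of the trace $\mathcal P$ in the Floyd compactification onto its closure in the Bowditch compactification; since $\phi^{-1}(\partial_{\mathcal B}\Gamma)=\partial_{\mathcal F}\Gamma$, this gives $\phi(\Lambda_{\mathcal F}(r))=\Lambda_{\mathcal B}(r)$. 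Combined with the fact (see \cite{GePoJEMS}) that the $\phi$-preimage of a conical limit point is a singleton, this shows that $\phi$ restricts to a bijection from $\Lambda^{con}_{\mathcal F}(r)=\Lambda_{\mathcal F}(r)\cap\phi^{-1}(\partial_{\mathcal B}^{con}\Gamma)$ onto $\Lambda_{\mathcal B}(r)\cap\partial_{\mathcal B}^{con}\Gamma$. By~(\ref{shortcutsmallerFloyd}) the map $\phi\colon(\partial_{\mathcal F}\Gamma,\delta_e)\to(\partial_{\mathcal B}\Gamma,\overline{\delta}_e)$ is $1$-Lipschitz, and $1$-Lipschitz maps do not increase Hausdorff dimension, so
\[
\Hdim\big(\Lambda_{\mathcal B}(r)\cap\partial_{\mathcal B}^{con}\Gamma,\overline{\delta}_e\big)\le\Hdim\big(\Lambda^{con}_{\mathcal F}(r),\delta_e\big)\le\frac{-1}{\log\lambda}\omega_\Gamma(r),
\]
where the last inequality is Proposition~\ref{upperboundHausdorff}, which holds almost surely. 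Taking the maximum of the two bounds over the pieces concludes the proof on the intersection of the almost-sure event of Proposition~\ref{upperboundHausdorff} with the deterministic structural facts, which still has full probability.

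I do not expect a genuine obstacle here: this is a corollary rather than a theorem with new content. The only points that require care are verifying that $\phi$ really restricts to a bijection between the conical parts of the two limit sets (which only uses the description of the fibers of $\phi$ and the fact that the trace $\mathcal P\subset\Gamma$ does not depend on the chosen compactification) and observing that intersecting with the countably many deterministic structural facts costs no probability.
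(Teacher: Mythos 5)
Your proposal is correct and follows essentially the same route as the paper: the paper likewise combines Proposition~\ref{upperboundHausdorff} with the $1$-Lipschitz property~(\ref{shortcutsmallerFloyd}) of $\phi$ to bound the dimension of the conical part, and then disposes of the parabolic part by countability. Your write-up merely makes explicit the (correct) verification that $\phi$ maps $\Lambda^{con}_{\mathcal F}(r)$ onto $\Lambda_{\mathcal B}(r)\cap\partial_{\mathcal B}^{con}\Gamma$, which the paper leaves implicit.
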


\begin{proof}
Combining Proposition~\ref{upperboundHausdorff} and~(\ref{shortcutsmallerFloyd}), we get that
$$\Hdim(\Lambda_\mathcal{B}(r)\cap \partial^{con}_{\mathcal{B}}\Gamma,\overline{\delta}_e)\leq \frac{-1}{\log \lambda}\omega_{\Gamma}(r).$$
The complement of the set of conical limit points in the Bowditch boundary is the set of parabolic limit points, which is countable.
This yields the desired upper-bound.
\end{proof}
Theorem~\ref{ThmHdimconst} is a consequence of Proposition~\ref{lowerboundHdim} and Corollary~\ref{coroupperboundHausdorff}.
Recall that if $\Gamma$ is hyperbolic, then it is also relatively hyperbolic and its Bowditch, Floyd and Gromov boundaries coincide.
Moreover, the shortcut distance and the visual distance are bi-Lipschitz by \cite[Proposition~6.1]{PY}.
Thus, Corollary~\ref{corohausdorffdimensionhyperbolic} follows from Theorem~\ref{ThmHdimconst}.

\appendix
\section{Convergence of random finite measures}

\subsection{Random finite measures}
Let $(X,\mathcal{B})$ be a Polish space endowed with its Borelian $\sigma$-algebra.
Let $(\Omega,\mathcal{F},\mathbf P)$ be a probability space.
\begin{defn}\label{defrandomfinitemeasure}
A random finite measure on $X$ is a map
$$\mu : (\omega,B)\in \Omega\times \mathcal{B}\mapsto \mu_\omega(B)\in \mathbb R$$
such that
\begin{enumerate}[(a)]
    \item for every $B\in \mathcal{B}$, the map $\omega\mapsto \mu_\omega(B)$ is measurable,
    \item for $\mathbf P$-almost every $\omega$, $\omega\mapsto \mu_\omega$ is a finite Borelian measure on $X$,
    \item the expectation $\mathbf E[\mu_\omega(X)]$ is finite,
\end{enumerate}
We identify $\mu$ with the family of maps $\mu_\omega:\mathcal{B}\to \mathbb R$ and we write $\mu=(\mu_\omega)_\omega$.
\end{defn}

We denote by $\mathcal{M}_\Omega(X)$ the set of random finite measure on $X$ and by $\mathcal{M}(X)$ the set of finite measures on $X$.
Given a random finite measure $\mu=(\mu_\omega)_\omega$, we define the measure $\pi_X(\mu)$ on $X$ by
$$\pi_X(\mu)(B)=\mathbf E\left [\mu_\omega(B) \right]$$
for every Borelian set $B\in \mathcal B$.
We also define the measure $\pi_\Omega(\mu)$ on $\Omega$ by
$$\pi_\Omega(\mu)(A)=\mathbf E[\mathbf 1_A\mu_\omega(X)].$$
We call $\pi_X(\mu)$, respectively $\pi_\Omega(\mu)$, the $X$-marginal, respectively the $\Omega$-marginal of $\mu$.

If $\mu=(\mu_\omega)_\omega$ is a random finite measure, then one can define a finite measure $\tilde{\mu}$ on $X\times \Omega$  by setting for every measurable set $A$ of $X\times \Omega$
$$\mu(A)=\mathbf E\left [\int \mathbf 1_{A}(x,\omega)d\mu_\omega(x)\right ].$$
Then, $\pi_X(\mu)$ and $\pi_\Omega(\mu)$ are the push-forward measures of $\mu$ by the canonical projections $\pi_X:X\times \Omega \to X$ and $\pi_\Omega:X\times \Omega\to \Omega$.

\begin{defn}
A random finite measure $\mu$ is called $\mathbf P$-regular if $\mathbf P$ and $\pi_\Omega(\mu)$ are absolutely continuous with respect to each other.
\end{defn}

\begin{lem}
Let $\mu=(\mu_\omega)_\omega$ be a random finite measure.
Then $\pi_\Omega(\mu)$ is absolutely continuous with respect to $\mathbf P$.
Moreover, $\mu$ is $\mathbf P$-regular if and only if for $\mathbf P$-almost every $\omega$, $\mu_\omega$ is not the null measure.
\end{lem}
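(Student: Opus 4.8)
The plan is to reduce both assertions to elementary facts about the single nonnegative random variable $Z(\omega):=\mu_\omega(X)$. By property~(a) applied to $B=X$, the map $Z$ is measurable, and by property~(c) it lies in $L^1(\Omega,\mathbf P)$; moreover, since each $\mu_\omega$ is a nonnegative measure, $\mu_\omega$ is the null measure if and only if $Z(\omega)=0$. Throughout I would use the identity $\pi_\Omega(\mu)(A)=\mathbf E[\mathbf 1_A Z]$, which is just the definition of $\pi_\Omega(\mu)$.

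First I would prove $\pi_\Omega(\mu)\ll\mathbf P$. If $A\in\mathcal F$ satisfies $\mathbf P(A)=0$, then $\mathbf 1_A=0$ $\mathbf P$-almost surely, hence $\mathbf 1_A Z=0$ $\mathbf P$-almost surely, and therefore $\pi_\Omega(\mu)(A)=\mathbf E[\mathbf 1_A Z]=0$. That is the entire content of the first statement.

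Next, set $N:=\{\omega:Z(\omega)=0\}=\{\omega:\mu_\omega\text{ is the null measure}\}$, which is measurable by the remark above. Observe that $\pi_\Omega(\mu)(N)=\mathbf E[\mathbf 1_N Z]=0$ in all cases, since $\mathbf 1_N Z\equiv 0$. By the first part, $\mu$ being $\mathbf P$-regular is equivalent to the single extra requirement $\mathbf P\ll\pi_\Omega(\mu)$. If this holds, then from $\pi_\Omega(\mu)(N)=0$ we get $\mathbf P(N)=0$, i.e.\ $\mu_\omega$ is not the null measure for $\mathbf P$-almost every $\omega$. Conversely, assume $\mathbf P(N)=0$ and let $A\in\mathcal F$ with $\pi_\Omega(\mu)(A)=\mathbf E[\mathbf 1_A Z]=0$; since $\mathbf 1_A Z\ge 0$, this forces $\mathbf 1_A Z=0$ $\mathbf P$-almost surely, so $A\subseteq N$ up to a $\mathbf P$-null set and hence $\mathbf P(A)\le\mathbf P(N)=0$. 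This yields $\mathbf P\ll\pi_\Omega(\mu)$ and completes the equivalence.

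The argument is essentially bookkeeping, so I do not expect a genuine obstacle; the only points that deserve a moment's care are the measurability of $N$ (which is exactly property~(a) with $B=X$) and the identification of ``null measure'' with ``vanishing total mass'' (which uses that each $\mu_\omega$ is nonnegative). Everything else follows from the positivity of the integrand in $\mathbf E[\mathbf 1_A Z]$.
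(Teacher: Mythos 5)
Your proof is correct and follows essentially the same route as the paper: both reduce everything to the random variable $\mu_\omega(X)$, use the identity $\pi_\Omega(\mu)(A)=\mathbf E[\mathbf 1_A\mu_\omega(X)]$ for the first claim, and handle the equivalence in the second claim via the set $\{\omega:\mu_\omega(X)=0\}$ exactly as the paper does. Your write-up is if anything slightly more careful about measurability and about identifying the null measure with vanishing total mass, but there is no substantive difference.
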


\begin{proof}
If $A$ is  such that $\mathbf P(A)=0$, then $\mathbf P$-almost surely, $\mathbf 1_A \mu_\omega(X)=0$, so $\pi_\Omega(\mu)(A)=\mathbf E[\mathbf 1_A\mu_\omega(X)]=0$.
This concludes the first part of the lemma.

For the second part, assume that the event $A=\{\omega,\mu_\omega(X)=0\}$ has positive probability.
Then, $\mathbf E[\mathbf 1_A \mu_\omega(X)]=0$.
So $\pi_\Omega(\mu)(A)=0$, but $\mathbf P(A)>0$.
Conversely, assume that $\mathbf{P}$-almost surely, $\mu_\omega(X)>0$ and let $A$ be such that
$\pi_\Omega(\mu)(A)=0$, hence $\mathbf P$-almost surely, $\mathbf 1_A \mu_\omega(X)=0$.
Then, we necessarily have $\mathbf 1_A=0$ $\mathbf P$-almost surely, i.e. $\mathbf P(A)=0$. 
\end{proof}

\begin{rem}
In fact, by definition of $\pi_\Omega(\mu)$, we have that the Radon-Nikodym derivative of $\pi_\Omega(\mu)$ with respect to $\mathbf P$ is given by
$$\frac{d\pi_\Omega(\mu)}{d\mathbf P}(\omega)=\mu_\omega(X).$$
Thus, $\mathbf P$ is absolutely continuous with respect to $\pi_\Omega(\mu)$ if and only if this Radon-Nikodym derivative is almost surely positive and then,
$$\frac{d\mathbf P}{d\pi_\Omega(\mu)}(\omega)=\frac{1}{\mu_\omega(X)}$$
$\mathbf P$-almost surely.
\end{rem}

If $\mu=(\mu_\omega)_\omega$ is a random probability measure, i.e.\ $\mathbf P$-almost surely, $\mu_\omega$ is a probability measure on $X$, then the $\Omega$-marginal of $\mu$ is $\mathbf P$.
Thus, $\mathbf P$-regularity is automatic in this context.
However, it is easy to construct an example where $\mathbf P$-regularity fails, since one only needs that $\mathbf P(\mu_\omega(X)=0)>0$.
Indeed, let $\mu$ be any random finite measure and let $A$ be an event such that $\mathbf P(A)\leq 1/2$, then $\mathbf 1_A\mu$ is also a random finite measure and
$\mathbf P(\mathbf 1_A\mu(X)=0)\geq 1/2$.
Restricting our attention to $\mathbf P$-regular measures will be important in the following, mainly because of the following result.

\begin{prop}\label{disintegrationrandom}
Every finite measure $\mu$ on $X\times \Omega$ such that $\pi_\Omega(\mu)$ and $\mathbf P$ are absolutely continuous with respect to each other is defined by a random finite measure.
\end{prop}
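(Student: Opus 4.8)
Every finite measure $\mu$ on $X\times\Omega$ such that $\pi_\Omega(\mu)$ and $\mathbf P$ are absolutely continuous with respect to each other arises from a random finite measure.

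The plan is to invoke the standard disintegration (regular conditional probability) theorem along the projection $\pi_\Omega : X\times\Omega \to \Omega$, and then correct for the fact that the fibre measures one gets are normalized conditional \emph{probabilities} rather than the finite measures we want. Concretely, first I would pass to a normalized version of $\mu$: since $\mu(X\times\Omega) = \mathbf E[\mu_\omega(X)]$ is finite and, by $\mathbf P$-regularity, positive (if $\mu$ is the null measure the statement is trivial, realized by the null random measure), set $\bar\mu = \mu/\mu(X\times\Omega)$, a Borel probability measure on the Polish space $X\times\Omega$. Because $X\times\Omega$ is Polish, the disintegration theorem applies to the push-forward $\pi_\Omega(\bar\mu)$ on $\Omega$: there is a $\pi_\Omega(\bar\mu)$-almost everywhere uniquely determined family $(\nu_\omega)_{\omega}$ of Borel probability measures on $X$ such that $\omega\mapsto \nu_\omega(B)$ is measurable for each Borel $B\subseteq X$ and $\bar\mu(B\times A) = \int_A \nu_\omega(B)\,d\pi_\Omega(\bar\mu)(\omega)$ for all Borel $A\subseteq\Omega$, $B\subseteq X$.

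Next I would undo the normalization fibrewise. Let $g = \frac{d\pi_\Omega(\mu)}{d\mathbf P}$, which exists as a nonnegative measurable function on $\Omega$ since $\pi_\Omega(\mu)\ll\mathbf P$ (this is exactly the first half of Proposition~\ref{disintegrationrandom}'s hypothesis, and the same Radon--Nikodym derivative already appeared in the Remark above). By the other absolute continuity, $\mathbf P\ll\pi_\Omega(\mu)$, so $g>0$ for $\mathbf P$-almost every $\omega$, and $\pi_\Omega(\mu)$ and $\pi_\Omega(\bar\mu)$ differ only by the constant $\mu(X\times\Omega)$, so $(\nu_\omega)$ is also a disintegration of $\pi_\Omega(\mu)$ up to that constant. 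Now define $\mu_\omega := g(\omega)\,\nu_\omega$ on the full-measure set where $g(\omega)\in(0,\infty)$, and $\mu_\omega := 0$ otherwise. For each Borel $B$, $\omega\mapsto \mu_\omega(B) = g(\omega)\nu_\omega(B)$ is measurable (product of measurable functions), giving property (a) of Definition~\ref{defrandomfinitemeasure}; for almost every $\omega$, $\mu_\omega$ is a finite Borel measure on $X$ with total mass $g(\omega)<\infty$, giving (b); and $\mathbf E[\mu_\omega(X)] = \int_\Omega g(\omega)\,d\mathbf P(\omega) = \pi_\Omega(\mu)(\Omega) = \mu(X\times\Omega) <\infty$, giving (c). Thus $(\mu_\omega)_\omega$ is a random finite measure.

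Finally I would check that the associated measure $\tilde\mu$ on $X\times\Omega$, namely $\tilde\mu(A) = \mathbf E\big[\int \mathbf 1_A(x,\omega)\,d\mu_\omega(x)\big]$, equals $\mu$. It suffices to verify equality on a generating $\pi$-system, the rectangles $B\times A$: we compute
\[
\tilde\mu(B\times A) = \int_A \mu_\omega(B)\,d\mathbf P(\omega) = \int_A g(\omega)\,\nu_\omega(B)\,d\mathbf P(\omega) = \int_A \nu_\omega(B)\,d\pi_\Omega(\mu)(\omega),
\]
and the right-hand side equals $\mu(B\times A)$ by the defining property of the disintegration (applied to $\pi_\Omega(\mu)$, which is $\mu(X\times\Omega)$ times $\pi_\Omega(\bar\mu)$, matching the scaling of $\bar\mu$ to $\mu$). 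By the uniqueness part of the monotone class / Dynkin $\pi$-$\lambda$ theorem, $\tilde\mu = \mu$ as measures on $X\times\Omega$, which is the claim. The only genuinely delicate point is the invocation of the disintegration theorem: it requires the target space $\Omega$ (or at least the fibres $X$) to be nice enough — Polishness of $X\times\Omega$, which holds since $X$ is Polish and $\Omega$ carries a probability measure that we may, without loss of generality, treat as Radon on a standard Borel space — and one must be careful that the family $(\nu_\omega)$ is only defined up to a $\pi_\Omega(\mu)$-null (equivalently $\mathbf P$-null) set, which is harmless since a random finite measure is itself only specified up to $\mathbf P$-null modifications.
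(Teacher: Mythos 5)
Your proof is correct and follows essentially the same route as the paper: normalize $\mu$ to a probability measure, disintegrate it over its $\Omega$-marginal via the classical disintegration theorem, and then rescale the fibre probabilities by the Radon--Nikodym derivative $d\pi_\Omega(\mu)/d\mathbf P$ (which in the paper appears as $\|\mu\|\,f$ with $f=d\mathbf Q/d\mathbf P$, identical to your $g$). The only cosmetic difference is that you verify the identity $\tilde\mu=\mu$ on rectangles plus a monotone class argument, whereas the paper checks it directly on nonnegative measurable functions; both are equivalent.
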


\begin{proof}
Let $\mu$ be a finite measure on $X\times \Omega$ and denote by $\|\mu\|=\mu(X\times \Omega)$ its mass.
Then, $\nu=\mu/\|\mu\|$ is a probability measure on $X\times \Omega$ with marginal $\mathbf Q=\pi_\Omega(\mu)/\|\mu\|$.
By the disintegration theorem \cite[Proposition~3.6]{Crauel}, there exists a map
$$(B,\omega)\in \mathcal{B}\times \Omega\mapsto \nu_\omega(B)$$
such that
$\omega\mapsto \nu_\omega$ is $\mathbf Q$-almost surely a probability measure on $X$ and for every Borelian $B\in \mathcal{B}$, $\omega \mapsto \nu_\omega(B)$ is measurable.

By assumption, the probability measure $\mathbf Q$ is absolutely continuous with respect to $\mathbf P$.
Let $f=d\mathbf Q/d\mathbf P$ be the corresponding Radon-Nikodym derivative.
Define then $\mu_\omega=\|\mu\|f(\omega)\nu_\omega$.
Then, $\mu_\omega$ is $\mathbf Q$-almost surely, hence $\mathbf P$-almost surely, a finite measure on $X$, for every Borelian set $B$, the map $\omega\mapsto \mu_\omega(B)$ is measurable and for every non-negative measurable function $\phi$ on $X\times \Omega$, we have
\begin{align*}
\mu(\phi)&=\|\mu\|\nu(\phi)\\&=\|\mu\|\int \int \phi(x,\omega) d\nu_\omega(x) d\mathbf Q(\omega)\\&=\int \int \phi(x,\omega)\|\mu\|d\nu_\omega(x) f(\omega)d\mathbf P(\omega)\\
&=\mathbf E \left [\int \phi(x,\omega)d\mu_\omega(x)\right ].
\end{align*}
Thus, $\mu$ is defined by the random finite measure $(\mu_\omega)_\omega$.
\end{proof}

In general, the disintegration theorem allows one to decompose a finite measure on $X\times \Omega$ along its $\Omega$-marginal, so without assuming $\mathbf P$-regularity, it cannot be seen as a random finite measure in the sense of Definition~\ref{defrandomfinitemeasure}.

We now introduce some definitions based on \cite{Crauel}.

\begin{defn}\label{defrandomclosedset}
A random closed set is a map $\omega \in \Omega \mapsto C(\omega)\in 2^X$ taking values in closed subsets of $X$ and such that the map $\omega\mapsto d(x,C(\omega))$ is measurable for every $x\in X$.
A random open set is a map $\omega \mapsto U(\omega)$ such that the complement map $\omega\mapsto U^c(\omega)$ is a random closed set.
\end{defn}

Let $C=\omega\mapsto C(\omega)$ be a random closed set, $U=\omega\mapsto U(\omega)$ a random open set and $\mu=(\mu_\omega)_\omega$ a random finite measure.
We set
$$\mu(C)=\mathbf E\left [\mu_\omega(C(\omega))\right]$$
and
$$\mu(U)=\mathbf E\left [\mu_\omega(U(\omega))\right].$$

\begin{defn}\label{defrandomcontinuousfunction}
A random bounded continuous function is a map
$f:X\times \Omega\to \mathbb R$ such that
\begin{enumerate}[(a)]
    \item for every $x\in X$, $\omega\mapsto f(x,\omega)$ is measurable,
    \item for every $\omega\in \Omega$, $x\mapsto f(x,\omega)$ is continuous and bounded,
    \item There exists $C\geq 0$ such that for $\mathbf P$-almost every $\omega$, $\|f(\cdot,\omega)\|_\infty\leq C$.
\end{enumerate}
\end{defn}

\begin{rem}
The third condition can be reformulated as $\|f(\cdot,\omega)\|_\infty$ is in $L_\infty(\Omega,\mathbf P)$.
In \cite{Crauel}, the author introduces several spaces of random functions, replacing the third condition by $\|f(\cdot,\omega)\|_\infty\in L_p(\Omega,\mathbf P)$.
For $p=1$ the corresponding space of function is called the space of random continuous functions there.
\end{rem}

We denote by $C_{\Omega,b}(X)$ the space of random bounded continuous functions and endow $C_{\Omega,b}(X)$ with the $L_\infty\times L_\infty$-norm $\|\cdot\|_\infty$,
defined by
$$\|f\|_\infty=\inf \left\{C\geq 0, \mathbf P\big (\omega,\|f(\cdot,\omega)\|_\infty>C\big)=0\right\},$$
for every $f\in C_{\Omega,b}(X)$.
If $f$ is a random bounded continuous function and $\mu$ is a random finite measure, then the integral
$$\mu(f)=\mathbf E\left [\int f(x,\omega)d\mu_\omega(x)\right]$$
is well defined.

Recall that a Lipschitz function on $X$ is a function $f:X\to \mathbb R$ such that
$$\|f\|_L=\sup_{x,y\in X}\frac{|f(x)-f(y)|}{d(x,y)}$$
is finite.
We then set
$$\|f\|_{BL}=\sup \{\|f\|_\infty,\|f\|_L\}$$
and say that $f$ is bounded Lipschitz if $\|f\|_{BL}$ is finite.
We denote by $BL(X)$ the set of bounded Lipschitz functions on $X$.

\begin{defn}\label{defrandomlipschitzfunction}
A random bounded Lipschitz function is a random bounded continuous function $f$ such that there exists $C\geq0$ such that for $\mathbf P$-almost every $\omega$, the map 
$x\mapsto f(x,\omega)$ is bounded Lipschitz and $\|f(\cdot,\omega)\|_{BL}\leq C$.
\end{defn}

\begin{rem}
In \cite{Crauel}, random bounded Lipschitz functions are called random Lipschitz functions.
We changed the terminology to insist on the fact that random functions we are considering here are $\mathbf P$-essentially bounded.
\end{rem}
We denote by $BL_\Omega(X)$ the set of random bounded Lipschitz functions on $X$.

\begin{lem}\label{measuredeterminedbylipschitz}
If two random finite measures $(\mu_\omega)_\omega$ and $(\nu_\omega)_\omega$ coincide on random bounded Lipschitz functions, i.e. for every $f\in BL_\Omega(X)$, $\mu(f)=\nu(f)$, then for $\mathbf P$-almost every $\omega$, $\mu_\omega=\nu_\omega$.
\end{lem}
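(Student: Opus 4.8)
The plan is to reduce the statement for general random finite measures to the classical fact that a finite Borel measure on a Polish space is determined by its integrals against bounded Lipschitz functions. First I would pass to the associated finite measures $\tilde\mu$ and $\tilde\nu$ on $X\times\Omega$ defined in the excerpt by $\tilde\mu(A)=\mathbf E[\int\mathbf 1_A(x,\omega)\,d\mu_\omega(x)]$, and similarly for $\nu$. The goal is to show $\tilde\mu=\tilde\nu$, since by the construction of $\tilde\mu$ together with Proposition~\ref{disintegrationrandom} (or directly by uniqueness in the disintegration theorem along the common $\Omega$-marginal), equality of $\tilde\mu$ and $\tilde\nu$ forces $\mu_\omega=\nu_\omega$ for $\mathbf P$-almost every $\omega$. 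Note both $\Omega$-marginals agree: they are both $\pi_\Omega(\mu)$, which has Radon--Nikodym derivative $\mu_\omega(X)=\nu_\omega(X)$ with respect to $\mathbf P$ — this last equality is itself a special case of the hypothesis, applied to the random bounded Lipschitz function $f\equiv 1$.

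Next I would argue that $\tilde\mu$ and $\tilde\nu$ agree on a generating $\pi$-system of the product $\sigma$-algebra $\mathcal B\otimes\mathcal F$, namely sets of the form $B\times A$ with $B\in\mathcal B$ and $A\in\mathcal F$. For such a set, $\tilde\mu(B\times A)=\mathbf E[\mathbf 1_A\,\mu_\omega(B)]$. To compute this, fix $A$ and approximate $\mathbf 1_B$ from outside by bounded Lipschitz functions on $X$: since $X$ is Polish (hence metrizable), for any closed set $F$ the functions $x\mapsto (1-n\,d(x,F))^+$ are bounded Lipschitz and decrease to $\mathbf 1_F$. The subtlety is that the test functions in the hypothesis are \emph{random} bounded Lipschitz functions $f(x,\omega)$, so I would take $f_n(x,\omega)=\mathbf 1_A(\omega)\cdot g_n(x)$ where $g_n\downarrow\mathbf 1_F$ as above; each $f_n$ lies in $BL_\Omega(X)$ because $\|f_n(\cdot,\omega)\|_{BL}\le\|g_n\|_{BL}$ uniformly and measurability in $\omega$ is clear. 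Applying the hypothesis $\mu(f_n)=\nu(f_n)$ and the monotone/dominated convergence theorem (dominating by the integrable function $\omega\mapsto\mu_\omega(X)$ on one side and $\nu_\omega(X)$ on the other, which are equal and integrable) yields $\mathbf E[\mathbf 1_A\,\mu_\omega(F)]=\mathbf E[\mathbf 1_A\,\nu_\omega(F)]$ for every closed $F$ and every $A\in\mathcal F$. A further application of the monotone class / $\pi$--$\lambda$ theorem in the $X$-variable then upgrades this from closed sets to all Borel sets $B$, so $\tilde\mu(B\times A)=\tilde\nu(B\times A)$ for all $B\in\mathcal B$, $A\in\mathcal F$.

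Finally, since $\{B\times A : B\in\mathcal B,\ A\in\mathcal F\}$ is a $\pi$-system generating $\mathcal B\otimes\mathcal F$ and both $\tilde\mu$, $\tilde\nu$ are finite measures with the same total mass $\mathbf E[\mu_\omega(X)]=\mathbf E[\nu_\omega(X)]$, the uniqueness part of the $\pi$--$\lambda$ theorem gives $\tilde\mu=\tilde\nu$ on all of $\mathcal B\otimes\mathcal F$. Disintegrating $\tilde\mu=\tilde\nu$ along the $\Omega$-marginal $\pi_\Omega(\mu)$ and invoking $\mathbf P$-a.e.\ uniqueness of conditional measures, we conclude that for $\pi_\Omega(\mu)$-almost every $\omega$, and hence (by absolute continuity of $\pi_\Omega(\mu)$ with respect to $\mathbf P$ shown in the excerpt, together with the fact that on the $\mathbf P$-null set where $\mu_\omega(X)=\nu_\omega(X)=0$ the measures trivially coincide) for $\mathbf P$-almost every $\omega$, we have $\mu_\omega=\nu_\omega$. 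The main obstacle I anticipate is bookkeeping the measure-theoretic approximation cleanly — in particular making sure the random Lipschitz approximants stay uniformly bounded in the $BL_\Omega$-norm and that the limiting interchange is justified by a genuine integrable dominator, rather than any deeper difficulty; the combinatorial heart of the argument is simply "Lipschitz functions separate finite Borel measures, applied fibrewise."
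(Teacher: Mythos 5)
Your proposal is correct and follows essentially the same route as the paper: approximate $\mathbf 1_F$ for closed $F$ by the bounded Lipschitz functions $x\mapsto 1-(1\wedge n\,d(x,F))$, tensor with $\mathbf 1_A(\omega)$ to get admissible random test functions, pass to the limit by monotone convergence, and conclude with the monotone class / $\pi$--$\lambda$ theorem. The paper's proof is just a terser version of the same argument (it leaves the final disintegration step implicit), so there is nothing to add.
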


\begin{proof}
For every closed set $C\in X$, the sequence of functions
$$f_n:x\mapsto 1-(1\wedge nd(x,C))$$ is non-increasing and converges to $\mathbf 1_C$.
Moreover, for every $n$, $f_n$ is bounded Lipschitz.
Therefore, for every event $A$, the maps $(x,\omega)\mapsto \mathbf{1}_A(\omega)f_n(x)$ are random bounded Lipschitz functions.
Thus by monotone convergence, it suffices to prove that if for all closed set $C$, for all event $A$, $\mu(C\times A)=\nu(C\times A)$, then $(\mu_\omega)_\omega$ and $(\nu_\omega)_\omega$ coincide $\mathbf P$-almost surely.
Since closed sets are closed under finite intersection and the measures we are considering are finite, this follows from the monotone class theorem \cite[Theorem~3.4]{Bill86}.
\end{proof}

\subsection{The weak topology on random finite measures}

Recall that the weak topology on $\mathcal{M}(X)$ is the smallest topology such that for all bounded continuous function $f:X\to \mathbb R$, the map $\mu\mapsto \mu(f)$ is continuous.

\begin{defn}\label{defweaktopology}
The weak topology on $\mathcal{M}_\Omega(X)$ is the topology generated by the maps $\mu \mapsto \mu(f)$, for every $f\in C_{\Omega,b}(X)$, i.e. it is the smallest topology on $\mathcal{M}_\Omega(X)$ such that for every $f\in C_{\Omega,b}(X)$, the map $\mu\in \mathcal{M}_\Omega(X)\mapsto \mu(f)\in \mathbb R$ is continuous. 
\end{defn}

\begin{rem}
In \cite{Crauel}, the author defines the narrow topology on the space of random probability measures as the topology generated by the maps $\mu\mapsto \mu(f)$ for every random continuous function $f$.
    Recall that a random continuous function as defined there is a function $f$ such that for all $\omega$, $f(\cdot,\omega)$ is bounded continuous and $\|f(\cdot,\omega)\|_\infty$ is in $L_1(\Omega,\mathbf P)$.
\begin{itemize}
    \item First, we preferred to use the terminology weak topology which is more common, although both exist in literature.
    \item Second, by \cite[Lemma~3.16]{Crauel}, for random probability measures, the induced weak topology on the set of measures is the same when choosing either random bounded continuous functions or random continuous functions.
    However, in our context, the proof of this lemma does not apply and it seems that choosing different spaces of functions can yield different notions of weak topologies.
\end{itemize}
\end{rem}

\begin{lem}\label{lipschitzgenerateweaktopology}
The weak topology is generated by the maps $\mu \mapsto \mu(f)$ for every $f\in BL_\Omega(X)$, i.e. it is the smallest topology on $\mathcal{M}_\Omega(X)$ such that for every random bounded Lipschitz function $f$, $\mu\mapsto \mu(f)$ is continuous.
\end{lem}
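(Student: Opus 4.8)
The plan is to establish the equality of the two topologies by comparing their convergent nets. One inclusion is immediate: since $BL_\Omega(X)\subseteq C_{\Omega,b}(X)$, the topology generated by the maps $\mu\mapsto\mu(g)$, $g\in BL_\Omega(X)$, is coarser than the weak topology of Definition~\ref{defweaktopology}. For the reverse inclusion I would use that both are initial topologies with respect to families of $\mathbb R$-valued maps, so a net converges in either one exactly when all the corresponding test maps converge; since two topologies with the same convergent nets (and the same limits) coincide, it suffices to prove: if $(\mu_\alpha)$ is a net with $\mu_\alpha(g)\to\mu(g)$ for every $g\in BL_\Omega(X)$, then $\mu_\alpha(f)\to\mu(f)$ for every $f\in C_{\Omega,b}(X)$.

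The engine of the proof would be a double Lipschitz regularization of the test function $f$. Assume $\|f(\cdot,\omega)\|_\infty\le C$ for $\mathbf P$-a.e.\ $\omega$. Set $f_N(x,\omega):=\inf_{y\in X}\bigl(f(y,\omega)+N\,d(x,y)\bigr)$; using a countable dense subset of $X$ together with the continuity of each $f(\cdot,\omega)$ one checks that $f_N$ is $N$-Lipschitz in $x$, uniformly bounded, and measurable in $\omega$, hence $f_N\in BL_\Omega(X)$, while $f_N\le f$ and $f_N\uparrow f$ pointwise. Then regularize the nonnegative error from above: $g_{N,M}(x,\omega):=\sup_{y\in X}\bigl((f-f_N)(y,\omega)-M\,d(x,y)\bigr)$, which likewise lies in $BL_\Omega(X)$, satisfies $g_{N,M}\ge f-f_N\ge 0$, and decreases to $f-f_N$ as $M\to\infty$ because $f-f_N$ is continuous (hence equal to its own upper semicontinuous envelope). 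From $f\ge f_N$ and positivity of each $\mu_{\alpha,\omega}$ one gets $\liminf_\alpha\mu_\alpha(f)\ge\lim_\alpha\mu_\alpha(f_N)=\mu(f_N)$ for every $N$, hence $\liminf_\alpha\mu_\alpha(f)\ge\sup_N\mu(f_N)=\mu(f)$ by monotone convergence against the finite measure $\tilde\mu$ on $X\times\Omega$ associated with $\mu$. From $\mu_\alpha(f)=\mu_\alpha(f_N)+\mu_\alpha(f-f_N)\le\mu_\alpha(f_N)+\mu_\alpha(g_{N,M})$ one gets $\limsup_\alpha\mu_\alpha(f)\le\mu(f_N)+\mu(g_{N,M})$ for all $N,M$, and letting $M\to\infty$ (dominated convergence, $0\le g_{N,M}\le 2C$) and then $N\to\infty$ gives $\limsup_\alpha\mu_\alpha(f)\le\mu(f)$. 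Combining the two estimates yields $\mu_\alpha(f)\to\mu(f)$, as required.

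The delicate point is the upper bound on $\limsup_\alpha\mu_\alpha(f)$. The naive route — approximating $f$ uniformly by a bounded Lipschitz function on a compact set chosen by tightness of $\pi_X(\mu)$ — runs into two obstacles: a general Polish space need not be locally compact, so there is no convenient Lipschitz cutoff adapted to a compact set, and, more seriously, the masses $\mu_{\alpha,\omega}(X)$ and the moduli of continuity of $f(\cdot,\omega)$ are not controlled uniformly in $\omega$, so the $\Omega$-marginals of the $\mu_\alpha$ need not be uniformly integrable and $\mu_\alpha$ of a ``small'' continuous function cannot be dominated using tightness alone. Squeezing $f$ between $f_N\in BL_\Omega(X)$ and $f_N+g_{N,M}$ with $g_{N,M}\in BL_\Omega(X)$ removes this difficulty: the only analytic inputs are monotone and dominated convergence against the single finite measure $\tilde\mu$, and no tightness, compactness, or $\mathbf P$-regularity assumption is needed. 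One could alternatively streamline the bookkeeping by first restricting to the $\tau$-closed set $\{\mu:\mu(X)\le M\}$, on which $\mu\mapsto\mu(f)$ is bounded, but the net argument above already handles unbounded masses directly.
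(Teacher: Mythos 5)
Your proof is correct and implements exactly the idea the paper invokes: the paper's one-line proof defers to the fact that bounded continuous functions can be approximated by bounded Lipschitz functions (citing Crauel, Proposition~4.9), and your inf-/sup-convolution sandwich $f_N\le f\le f_N+g_{N,M}$ together with monotone and dominated convergence against the finite measure $\tilde\mu$ is a sound, self-contained execution of that approximation. The auxiliary steps (reduction to convergent nets for initial topologies, measurability of $f_N$ and $g_{N,M}$ in $\omega$ via a countable dense subset of the Polish space $X$) are all handled correctly.
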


\begin{proof}
This follows from the fact that bounded continuous functions can be approximated by bounded Lipschitz functions, see \cite[Proposition~4.9]{Crauel} for more details.
\end{proof}

We now prove the following generalization of the classical Portmanteau theorem in terms of convergent nets.
We refer to \cite[Definition~2.11, Definition~2.15]{AliprantisBorder} for more details on nets.

\begin{prop}\label{randomPortmanteau}
Let $\mu_{\alpha}=((\mu_\alpha)_\omega)_\omega$ be a net of random finite measures and let $\mu=(\mu_\omega)_\omega$ be a random finite measure.
Then, the following assertions are equivalent.
\begin{enumerate}
    \item The net $\mu_\alpha$ converges to $\mu$ for the weak topology.
    \item For all random closed set $C=\omega\mapsto C(\omega)$, $\limsup_\alpha\mu_\alpha(C)\leq \mu(C)$ and $\mu_\alpha(X\times \Omega)$ converges to $\mu(X\times \Omega)$.
    \item For all random open set $U=\omega\mapsto U(\omega)$, $\liminf_\alpha\mu_n(U)\geq \mu(U)$ and $\mu_\alpha(X\times \Omega)$ converges to $\mu(X\times \Omega)$.
\end{enumerate}
\end{prop}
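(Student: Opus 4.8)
The plan is to adapt the classical proof of the Portmanteau theorem to the setting of random finite measures, using the fact (Lemma~\ref{lipschitzgenerateweaktopology}) that the weak topology is generated by integration against random bounded Lipschitz functions. The main point is that the usual approximation of indicator functions of closed sets by Lipschitz functions goes through ``uniformly in $\omega$'', because the approximants $f_n\colon x\mapsto 1-(1\wedge n\,d(x,C(\omega)))$ have $\|f_n(\cdot,\omega)\|_{BL}\le 1+n$, a bound independent of $\omega$, so that $(x,\omega)\mapsto f_n(x,\omega)$ is a genuine random bounded Lipschitz function. Measurability of $\omega\mapsto f_n(x,\omega)$ follows from Definition~\ref{defrandomclosedset}, which guarantees $\omega\mapsto d(x,C(\omega))$ is measurable.

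First I would prove $(1)\Rightarrow(2)$. Assume $\mu_\alpha\to\mu$ weakly. Taking $f\equiv 1$ (the constant random function) gives $\mu_\alpha(X\times\Omega)=\mu_\alpha(1)\to\mu(1)=\mu(X\times\Omega)$. For a random closed set $C$, let $f_n$ be as above and set $C_n=\omega\mapsto\{x: d(x,C(\omega))\le 1/n\}$; each $f_n$ is a random bounded Lipschitz function with $\mathbf 1_{C(\omega)}\le f_n(\cdot,\omega)\le \mathbf 1_{C_n(\omega)}$. Then for each fixed $n$,
\[
\limsup_\alpha \mu_\alpha(C)\le \limsup_\alpha \mu_\alpha(f_n)=\mu(f_n)\le \mu(C_n)=\mathbf E[\mu_\omega(C_n(\omega))].
\]
Since $C(\omega)$ is closed, $C_n(\omega)\downarrow C(\omega)$ as $n\to\infty$, so by dominated convergence (using $\mu_\omega(X)\in L^1(\mathbf P)$) we have $\mathbf E[\mu_\omega(C_n(\omega))]\to\mathbf E[\mu_\omega(C(\omega))]=\mu(C)$. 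Letting $n\to\infty$ yields $\limsup_\alpha\mu_\alpha(C)\le\mu(C)$.

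The implication $(2)\Leftrightarrow(3)$ is the standard complementation argument: if $U=\omega\mapsto U(\omega)$ is a random open set then $C=\omega\mapsto U(\omega)^c$ is a random closed set, and $\mu_\alpha(U)=\mu_\alpha(X\times\Omega)-\mu_\alpha(C)$, so $\liminf_\alpha\mu_\alpha(U)=\lim_\alpha\mu_\alpha(X\times\Omega)-\limsup_\alpha\mu_\alpha(C)\ge\mu(X\times\Omega)-\mu(C)=\mu(U)$, and conversely. Finally, for $(2)\Rightarrow(1)$, since the weak topology is generated by the maps $\mu\mapsto\mu(f)$ for $f\in BL_\Omega(X)$ (Lemma~\ref{lipschitzgenerateweaktopology}), and a net converges iff all these maps converge, it suffices to show $\mu_\alpha(f)\to\mu(f)$ for every random bounded Lipschitz $f$. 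Writing $f=f^+-f^-$ and rescaling, one reduces to $0\le f\le 1$; then the layer-cake formula $f(x,\omega)=\int_0^1\mathbf 1_{\{f(\cdot,\omega)>t\}}(x)\,dt$ expresses $\mu_\alpha(f)=\int_0^1\mu_\alpha(\{f>t\})\,dt$, where $\{f>t\}=\omega\mapsto\{x:f(x,\omega)>t\}$ is a random open set (here one checks measurability using continuity of $f(\cdot,\omega)$ and measurability of $\omega\mapsto f(x,\omega)$). Applying $(3)$ and Fatou's lemma gives $\liminf_\alpha\mu_\alpha(f)\ge\mu(f)$; applying the same to $1-f$ and using convergence of total masses gives $\limsup_\alpha\mu_\alpha(f)\le\mu(f)$, hence convergence.

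I expect the main obstacle to be the measurability bookkeeping rather than the analytic content: one must verify carefully that the sets $\{f>t\}$ and $C_n$ really are random open/closed sets in the sense of Definition~\ref{defrandomclosedset}, i.e.\ that $\omega\mapsto d(x,\cdot)$ of these sets is measurable, and that the interchange of $\mathbf E$ with the $dt$-integral in the layer-cake step is justified (Tonelli, using $\mu_\omega(X)\in L^1$). A secondary subtlety is that we work with nets, not sequences, so one cannot pass to subsequences; this is why the argument is organized so that every limiting step is a genuine $\limsup$/$\liminf$ estimate valid along the net, together with the monotone/dominated convergence in the auxiliary index $n$ (which \emph{is} a sequence and causes no trouble).
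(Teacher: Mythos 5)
Your overall architecture matches the paper's: the Lipschitz approximants $1-(1\wedge n\,d(\cdot,C(\omega)))$ for $(1)\Rightarrow(2)$, complementation for $(2)\Leftrightarrow(3)$, and a layer-cake decomposition of $f$ into level sets for $(2)\Rightarrow(1)$. The first two parts are fine. The problem is the step ``Applying $(3)$ and Fatou's lemma gives $\liminf_\alpha\mu_\alpha(f)\ge\mu(f)$''. What you need there is
$\liminf_\alpha\int_0^1\mu_\alpha(\{f>t\})\,dt\ \ge\ \int_0^1\liminf_\alpha\mu_\alpha(\{f>t\})\,dt$,
i.e.\ Fatou's lemma where the limit is taken along the \emph{net} $\alpha$ while the integration is in $t$. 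Fatou's lemma is false for general nets: take the directed set of finite subsets $F\subset[0,1]$ ordered by inclusion and $g_F=\mathbf 1_F$; then $\int_0^1 g_F\,dt=0$ for every $F$ while $\liminf_F g_F\equiv 1$, so $\int\liminf_F g_F=1>0=\liminf_F\int g_F$. The usual proof of Fatou rests on monotone convergence along a countable index, which is unavailable here. So this is exactly the place where the net-versus-sequence issue you flag at the end actually bites, and it is not covered by your closing remark that the auxiliary limits are sequential.

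The gap is repairable, and the repair is essentially the paper's proof: exploit the monotonicity of $t\mapsto\mu_\alpha(\{f>t\})$ and discretize. For each $m$, $\int_0^1\mu_\alpha(\{f>t\})\,dt\ge\frac{1}{m}\sum_{k=1}^m\mu_\alpha(\{f>k/m\})$, a \emph{finite} sum, so $\liminf_\alpha$ passes inside and $(3)$ gives $\liminf_\alpha\mu_\alpha(f)\ge\frac{1}{m}\sum_{k=1}^m\mu(\{f>k/m\})$; letting $m\to\infty$, the lower Riemann sums of the bounded monotone function $t\mapsto\mu(\{f>t\})$ converge to $\mu(f)$. The paper does precisely this from the outset, working with the finite sandwich $\frac{1}{m}\sum_{k=1}^m\nu(C_k)\le\nu(f)\le\frac{1}{m}\sum_{k=0}^m\nu(C_k)$ for the closed level sets $C_k(\omega)=\{x: f(x,\omega)\ge\frac{k}{m}\|f(\cdot,\omega)\|_\infty\}$ and condition $(2)$, then applying the resulting one-sided bound to $\|f\|_\infty-f$ together with convergence of total masses, just as you do. (The remaining measurability checks you mention, that the level sets are random closed or open sets and that Tonelli applies, are needed in either version and are treated at the same level of detail as in the paper.)
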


\begin{proof}
Taking complements, the second and third assertions are equivalent.
Also, if $\mu_\alpha$ converges to $\mu$, applying the definition of the weak topology to the constant function $\mathbf 1$, we see that $\mu_\alpha(X\times \Omega)$ converges to $\mu(X\times \Omega)$.

Let us assume that $\mu_\alpha$ converges to $\mu$ and let $C$ be a closed random set.
For every $k\in \mathbb N$, set
$$f_k(x,\omega)=1-(1\wedge kd(x,C(\omega))).$$
Then, for every $k$, $f_k$ is a random bounded Lipschitz function and the sequence $f_k$ is non-increasing and converges to $\mathbf 1_{C(\omega)}(x)$.
Thus, for every $k$,
$$\limsup_\alpha \mu_\alpha(C)\leq \lim_\alpha \mu_\alpha(f_k)=\mu(f_k),$$
so
$$\limsup_\alpha \mu_\alpha(C)\leq \inf_k(\mu(f))=\mu(C).$$
Consequently, the first assertion implies the second one.

Assume now that for all closed random set $C$, $\limsup \mu_\alpha(C)\leq \mu(C)$ and that $\mu_\alpha(X\times \Omega)$ converges to $\mu(X\times \Omega)$.
We show that for every non-negative random bounded continuous function $f$,
\begin{equation}\label{equationPortmanteau}
\limsup \mu_\alpha(f)\leq \mu(f).
\end{equation}
Fix $m\in \mathbb N$ and set for every $0\leq k\leq m$
$$C_k(\omega)=\left \{x\in X,f(x,\omega)\geq \frac{k}{m}\|f(\cdot,\omega)\|_\infty\right \}.$$
Then, $C_k=\omega\mapsto C_k(\omega)$ is a random closed set.
By \cite[Lemma~1.4]{Crauel}, for every random finite measure $(\nu_\omega)_\omega$,
$$\frac{1}{m}\sum_{k=1}^m\nu(C_k)\leq \nu(f)\leq \frac{1}{m}\sum_{k=0}^m\nu(C_k).$$
Applying this both to $\mu_\alpha$ and $\mu$, we get

\begin{align*}
\mu(f)\geq \frac{1}{m}\sum_{k=1}^m\mu(C_k)&\geq \limsup_\alpha\frac{1}{m}\sum_{k=1}^m\mu_\alpha(C_k)\\
&=\limsup_\alpha\left (\frac{1}{m}\sum_{k=0}^m\mu_\alpha(C_k)-\frac{\mu_\alpha(X\times \Omega)}{m}\right )\\
&\geq \limsup_\alpha\mu_\alpha(f)-\frac{\mu(X\times\Omega)}{m}.
\end{align*}
Since $m$ is arbitrary, this proves~(\ref{equationPortmanteau}).
Using that $\mu_\alpha(\mathbf 1)=\mu_\alpha(X\times \Omega)$ converges to $\mu(\mathbf 1)=\mu(X\times \Omega)$ and applying~(\ref{equationPortmanteau}) to the function $\Vert f\Vert_{\infty}-f$, we get that
$\mu_\alpha(f)$ converges to $\mu(f)$.
This is true for all non-negative random bounded continuous function, so $\mu_\alpha$ converges to $\mu$ for the weak topology.
\end{proof}

\begin{rem}
By Lemma~\ref{measuredeterminedbylipschitz}, the weak topology is Hausdorff.
We do not attempt to study metrizability of the weak topology in here to avoid lengthily arguments,
but in \cite[Theorem~4.16]{Crauel}, the author proves that the weak topology on the space of random probability measures is metrizable, provided that the probability space $(\Omega,\mathcal{F},\mathbf P)$ is countably generated (mod. $\mathbf P$).
This might also holds in our situation.
\end{rem}

\subsection{A compactness criterion for random finite measures}
In all this section, $\mathcal{M}(X)$ and $\mathcal{M}_\Omega(X)$ are endowed with the weak topology.
Let us recall the following definition.
\begin{defn}\label{deftightnonrandom}
A subset $M$ of $\mathcal{M}(X)$ is tight if for every $\epsilon>0$, there exists a compact subset of $X$ such that for every $\mu\in M$, we have
$$\mu(K^c)\leq \epsilon.$$
\end{defn}

Following \cite{Crauel}, we define tightness for random finite measures as tightness under the projection map $\pi_X$.
\begin{defn}\label{deftightrandom}
A subset $M_\Omega$ of $\mathcal{M}_\Omega(X)$ is tight if $\pi_X(M_\Omega)$ is tight, i.e. for every $\epsilon>0$, there exists a compact subset of $X$ such that for all $(\mu_\omega)_\omega\in M_\Omega$, we have
$$\mathbf E[\mu_\omega(K^c)]\leq \epsilon.$$
\end{defn}

The classical Prokhorov theorem states that a set $M$ of probability measures on $X$ is relatively compact if and only if it is tight.
The following generalizes this result to finite measures.

\begin{thm}\label{classicalProkhorov}\textbf{Prokhorov Theorem for finite measures} \cite[Lemma~4.4]{K17}.
A subset $M$ of $\mathcal{M}(X)$ is relatively compact if and only if $M$ is tight and uniformly bounded, in the sense that $\sup_{\mu\in M} \mu(X)$ is finite.
\end{thm}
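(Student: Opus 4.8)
The plan is to reduce everything to the classical Prokhorov compactness theorem for probability measures, using the normalization $\mu\mapsto\mu/\mu(X)$ to pass back and forth. Recall first that since $X$ is Polish, $\mathcal{M}(X)$ with the weak topology is metrizable; hence relative compactness of $M$ is the same as sequential relative compactness, and it suffices to work with sequences throughout. One subtlety specific to \emph{finite} (rather than probability) measures is that total masses may vary and degenerate to $0$, and this is handled by a case split on the limit of the masses.

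For the implication ``tight and uniformly bounded $\Rightarrow$ relatively compact'', let $C=\sup_{\mu\in M}\mu(X)<\infty$ and take a sequence $(\mu_n)$ in $M$. The masses $\mu_n(X)$ lie in the compact interval $[0,C]$, so after passing to a subsequence we may assume $\mu_n(X)\to a\in[0,C]$. If $a=0$, then $|\mu_n(f)|\leq\|f\|_\infty\,\mu_n(X)\to 0$ for every bounded continuous $f$, so $\mu_n$ converges weakly to the zero measure, which belongs to $\mathcal{M}(X)$. If $a>0$, discard finitely many terms so that $\mu_n(X)\geq a/2>0$ for all $n$ and set $\hat\mu_n=\mu_n/\mu_n(X)$, a probability measure; tightness of $M$ together with the bound $\mu_n(X)\geq a/2$ shows $(\hat\mu_n)$ is tight, since a compact $K$ with $\mu(X\setminus K)\leq\varepsilon$ for all $\mu\in M$ gives $\hat\mu_n(X\setminus K)\leq 2\varepsilon/a$. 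By the classical Prokhorov theorem a subsequence $\hat\mu_{n_k}$ converges weakly to a probability measure $\nu$, and then $\mu_{n_k}(f)=\mu_{n_k}(X)\,\hat\mu_{n_k}(f)\to a\,\nu(f)$ for every bounded continuous $f$, i.e. $\mu_{n_k}\to a\nu\in\mathcal{M}(X)$ weakly.

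For the converse, assume $M$ is relatively compact. Uniform boundedness is immediate, since $\mu\mapsto\mu(X)=\mu(\mathbf 1)$ is weakly continuous and hence bounded on the compact closure $\overline{M}$. For tightness, fix a compatible complete metric on $X$ and a countable dense set $\{x_k\}$, and fix $n\geq 1$ and $\varepsilon>0$. I claim there is $N$ with $\sup_{\mu\in M}\mu\bigl(X\setminus\bigcup_{k\leq N}B(x_k,1/(2n))\bigr)<\varepsilon$: if not, choose $\mu_j\in M$ with $\mu_j\bigl(X\setminus\bigcup_{k\leq j}B(x_k,1/(2n))\bigr)\geq\varepsilon$, pass to a weakly convergent subsequence $\mu_{j_i}\to\mu$, and note that for $j_i\geq N_0$ the closed set $F_{N_0}:=X\setminus\bigcup_{k\leq N_0}B(x_k,1/(2n))$ contains $X\setminus\bigcup_{k\leq j_i}B(x_k,1/(2n))$, so by the Portmanteau theorem $\mu(F_{N_0})\geq\limsup_i\mu_{j_i}(F_{N_0})\geq\varepsilon$ for every $N_0$; but $F_{N_0}\downarrow\emptyset$ and $\mu$ is finite, a contradiction. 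Granting the claim, pick for each $n$ an index $N_n$ with $\sup_{\mu\in M}\mu\bigl(X\setminus\bigcup_{k\leq N_n}B(x_k,1/(2n))\bigr)<\varepsilon 2^{-n}$ and set $K=\bigcap_n\bigcup_{k\leq N_n}\overline{B}(x_k,1/n)$; then $K$ is closed and totally bounded, hence compact since $X$ is complete, and $\mu(X\setminus K)\leq\sum_n\varepsilon 2^{-n}=\varepsilon$ for every $\mu\in M$, so $M$ is tight. I expect this last ``only if'' argument (the ball-covering plus Portmanteau computation in the Polish space) to be the main technical point; the remainder is bookkeeping around the normalization map.
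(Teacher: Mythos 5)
Your proof is correct. Note that the paper does not actually prove this statement: it is quoted verbatim from the literature (the citation \cite[Lemma~4.4]{K17}), so there is no internal argument to compare against. What you have written is the standard self-contained derivation: the forward direction by normalizing to probability measures after extracting a subsequence along which the total masses converge (with the degenerate case $a=0$ handled separately, where the sequence collapses to the zero measure), and the converse by the classical ball-covering construction of a totally bounded set combined with the Portmanteau inequality. Both halves check out. The one step a careful reader might pause on is your use of $\mu(F)\geq\limsup_i\mu_{j_i}(F)$ for closed $F$ when the $\mu_{j_i}$ are merely finite measures rather than probability measures; this is indeed valid for weak convergence in the sense of testing against all bounded continuous functions (approximate $\mathbf 1_F$ from above by the Lipschitz functions $x\mapsto\max(0,1-kd(x,F))$ and use finiteness of the limit measure to pass to the infimum), and it is consistent with the version of the Portmanteau theorem the authors themselves prove in Proposition~\ref{randomPortmanteau}, where the convergence of total masses is carried along as an explicit hypothesis. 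Your appeal to metrizability of the weak topology on $\mathcal{M}(X)$ for Polish $X$ (to reduce compactness to sequential compactness) is likewise standard and legitimate.
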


Our goal is to generalize this to random finite measures.
Unfortunately, we will not get a necessary and sufficient condition for compactness as in the Prokhorov Theorem but only a sufficient condition.

We follow the strategy of \cite[Theorem~4.4]{Crauel} and first prove the following representation result.
Recall that $BL_\Omega(X)$ denotes the set of random bounded Lipschitz functions and $BL(X)$ denotes the set of bounded Lipschitz functions on $X$.
Then, if $f\in BL(X)$, $f$ can be viewed as an element of $BL_\Omega(X)$ by setting $f(x,\omega)=f(x)$.
Similarly, if $f$ is $\mathbf P$ -essentially bounded, i.e.\ $f\in L_\infty(\Omega,\mathbf P)$, then $f$ can be viewed as an element of $BL_\Omega(X)$ by setting $f(x,\omega)=f(\omega)$.
If $L:BL_\Omega(X)\to \mathbb R$ is a function, we denote by $\pi_X(L)$, respectively $\pi_\Omega(L)$ its restriction to bounded Lipschitz functions, respectively to $\mathbf P$-essentially bounded functions.

\begin{lem}\label{StoneDaniell}
Let $L:BL_\Omega(X)\to \mathbb R$. be a function.
Assume that the following conditions hold
\begin{enumerate}[(a)]
    \item $L$ is linear,
    \item $L$ is non-negative, i.e.\ for every non-negative function $f$ in $BL_\Omega(X)$, we have $L(f)\geq 0$,
    \item there exists $\kappa\in \mathcal{M}(X)$ such that for every $f\in BL(X)$,
    we have
    $$\pi_X(L)(f)=\kappa(f)=\int fd\kappa,$$
    \item there exists a constant $C>0$ such that for every $f\in L_\infty(\Omega,\mathbf P)$, we have
    $$\frac{1}{C}\pi_\Omega(L)(f)\leq \mathbf E [f]\leq C\pi_\Omega(L)(f).$$
\end{enumerate}
Then, there exists a random finite measure $\mu=(\mu_\omega)_\omega$ such that $L(f)=\mu(f)$ for every $f\in BL_\Omega(X)$.
\end{lem}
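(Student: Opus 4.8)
The plan is to follow the Daniell--Stone/disintegration scheme behind \cite[Theorem~4.4]{Crauel}: represent $L$ tensor-by-tensor by an honest finite measure on a product space, and then invoke Proposition~\ref{disintegrationrandom} to turn it into a random finite measure. Throughout, for $g\in L_\infty(\Omega,\mathbf P)$ and $h\in BL(X)$ write $g\otimes h$ for $(x,\omega)\mapsto g(\omega)h(x)$, which lies in $BL_\Omega(X)$. First I record the a priori bounds following from (a), (b), (d): for $f\ge 0$ in $BL_\Omega(X)$ the random constant $\omega\mapsto\|f(\cdot,\omega)\|_\infty$ belongs to $L_\infty(\Omega,\mathbf P)\subset BL_\Omega(X)$ and dominates $f$, so $0\le L(f)\le C\,\mathbf E\bigl[\|f(\cdot,\omega)\|_\infty\bigr]\le C\|f\|_\infty$, and likewise $g\otimes h\le\|h\|_\infty\,(g\otimes 1)$ gives $0\le L(g\otimes h)\le C\|h\|_\infty\,\mathbf E[g]$ for $g,h\ge 0$. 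Thus $L$ is a bounded positive linear functional on $BL_\Omega(X)$.

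\emph{Compactification and the product measure.} Embed $X$ as a dense subset of a compact metric space $\hat X$ (e.g. in the Hilbert cube). Every $f\in BL_\Omega(X)$ extends, by McShane's formula applied slice-by-slice, to $\hat f\in BL_\Omega(\hat X)$ with the same random Lipschitz constants, and $\hat L(\hat g):=L(\hat g|_X)$ defines a bounded positive linear functional on $BL_\Omega(\hat X)$ with $\pi_{\hat X}(\hat L)=\hat\kappa:=\iota_*\kappa$, a finite Borel measure on $\hat X$ carried by $X$ (this uses (c) and finiteness of $\kappa$), and $\pi_\Omega(\hat L)=\pi_\Omega(L)$, mutually absolutely continuous with $\mathbf P$ by (d). Fix a countable $\mathcal D\subset\mathrm{Lip}(\hat X)$ that is dense in $C(\hat X)$, contains the constants and is closed under rational affine combinations and $\max$. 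For $h\in\mathcal D$, $h\ge 0$, the positive functional $g\mapsto\hat L(g\otimes h)$ on $L_\infty(\Omega,\mathbf P)$ is dominated by $C\|h\|_\infty\mathbf E[\cdot]$, hence equals $\mathbf E[\,\cdot\,\varphi_h]$ for some $\varphi_h\in L_1(\Omega,\mathbf P)$ with $0\le\varphi_h\le C\|h\|_\infty$ and $\mathbf E[\varphi_h]=\hat L(1\otimes h)=\hat\kappa(h)$; linearity of $\hat L$ makes $h\mapsto\varphi_h$ additive, monotone and $C\|\cdot\|_\infty$-Lipschitz $\mathbf P$-a.e. on $\mathcal D$. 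Discarding the single (countable $\mathcal D$, countably many relations) $\mathbf P$-null set on which these fail, for each remaining $\omega$ the map $h\mapsto\varphi_h(\omega)$ extends by density to a positive linear functional $\nu_\omega$ on $C(\hat X)$ with $\nu_\omega(1)\le C$; the Riesz theorem on the \emph{compact} space $\hat X$ produces a finite Borel measure $\hat\mu_\omega$ with $\nu_\omega=\int(\cdot)\,d\hat\mu_\omega$, and a monotone class argument gives measurability of $\omega\mapsto\hat\mu_\omega(B)$. Since $\int\hat\mu_\omega\,d\mathbf P=\hat\kappa$ is carried by $X$, we get $\hat\mu_\omega(\hat X\setminus X)=0$ $\mathbf P$-a.e., so $\tilde\mu(\phi):=\mathbf E\bigl[\int_{\hat X}\phi\,d\hat\mu_\omega\bigr]$ is a finite measure on $(X\times\Omega,\mathcal B(X)\otimes\mathcal F)$ with $\pi_\Omega(\tilde\mu)=\pi_\Omega(L)$.

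\emph{Disintegration and identification.} As $\pi_\Omega(\tilde\mu)$ and $\mathbf P$ are mutually absolutely continuous, Proposition~\ref{disintegrationrandom} yields a random finite measure $\mu=(\mu_\omega)_\omega$ with $\tilde\mu(\phi)=\mathbf E[\int\phi(x,\omega)\,d\mu_\omega(x)]$; here $\mu_\omega=\hat\mu_\omega|_X$ off a null set and $\mathbf E[\mu_\omega(X)]=\mathbf E[\varphi_1]=L(1\otimes1)<\infty$. For the identity $L(f)=\mu(f)$ on $BL_\Omega(X)$: by construction $\tilde\mu(g\otimes h)=\mathbf E[g\,\nu_\omega(h)]=\mathbf E[g\varphi_h]=\hat L(g\otimes h)$ for $g\in L_\infty(\Omega,\mathbf P)$ and $h\in\mathcal D$, hence for $h\in\mathrm{Lip}(\hat X)$ by continuity and for all finite tensor combinations. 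Because $\hat X$ is compact and the extensions $\hat f$ have uniformly bounded Lipschitz constants, a finite Lipschitz partition of unity of small mesh shows that such tensor combinations are $\|\cdot\|_\infty$-dense in the image of $BL_\Omega(X)$ inside $BL_\Omega(\hat X)$; since $\hat L$ and $\tilde\mu(\cdot)$ are both $\|\cdot\|_\infty$-continuous there, they agree on every $\hat f$, and using that $\hat\mu_\omega$ is carried by $X$ we conclude $L(f)=\hat L(\hat f)=\tilde\mu(\hat f)=\mathbf E\bigl[\int_X f\,d\mu_\omega\bigr]=\mu(f)$. (Essential uniqueness of $(\mu_\omega)_\omega$ then follows from Lemma~\ref{measuredeterminedbylipschitz}, although it is not asserted.)

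The main obstacle is producing countable additivity: on a merely Polish $X$ there is neither a Riesz representation theorem nor a Dini argument available, so the construction is forced through a metric compactification $\hat X$, and it is exactly the hypothesis that $\kappa=\pi_X(L)$ be a \emph{finite} measure that makes its image $\hat\kappa$ live on $X$ and lets the measures obtained by Riesz on $\hat X$ descend to $X$, while the two-sided domination (d) is what pins the $\Omega$-marginal to the measure class of $\mathbf P$ and keeps the Radon--Nikodym densities $\varphi_h$ bounded. The surrounding bookkeeping — a single null set handled through the countable dense lattice $\mathcal D$, measurability of $\omega\mapsto\hat\mu_\omega(B)$, and $\|\cdot\|_\infty$-density of the tensor span on the compact $\hat X$ — is routine but must be carried out with some care.
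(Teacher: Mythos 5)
Your overall architecture (positive linear functional $\to$ finite measure on $X\times\Omega$ $\to$ disintegration via Proposition~\ref{disintegrationrandom}) matches the paper's, but where the paper verifies the Daniell continuity condition ($f_n\downarrow 0$ in $BL_\Omega(X)$ implies $L(f_n)\downarrow 0$) and then quotes the Daniell--Stone representation theorem from \cite{Crauel}, you try to manufacture countable additivity by compactifying $X$ and applying Riesz slice-by-slice. This route has a genuine gap at its two load-bearing points. First, the extension step fails: a metric compactification $\hat X$ of a Polish space $(X,d)$ is obtained by passing to a totally bounded compatible metric $\rho$, and a $d$-bounded-Lipschitz function on $X$ is in general neither $\rho$-Lipschitz nor even $\rho$-uniformly continuous (take $X=\mathbb R$ and $f=\sin$), so McShane's formula does not produce $\hat f\in BL_\Omega(\hat X)$ ``with the same random Lipschitz constants,'' and for many $f$ no continuous extension to $\hat X$ exists at all. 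Second, and consequently, your density claim fails: finite tensor combinations $\sum_i g_i\otimes\psi_i$ built from a finite partition of unity of small $\rho$-mesh are $\|\cdot\|_\infty$-dense only among functions that are slice-wise uniformly continuous for $\rho$; when $(X,d)$ is not totally bounded this is a proper subspace of $BL_\Omega(X)$, so you only verify $L(f)=\mu(f)$ on that subspace. Agreement on a proper sup-norm-closed subspace together with positivity does not by itself force agreement on all of $BL_\Omega(X)$ --- positive linear functionals on $BL$ of a non-compact space can carry purely finitely additive mass at infinity --- so the identification must be ruled out by a continuity property along monotone sequences, which is exactly the Daniell condition you never establish.

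What is missing is the tightness argument that hypothesis (c) exists to supply and that constitutes essentially the whole of the paper's proof: for $f_n\downarrow 0$ in $BL_\Omega(X)$, choose a compact $K\subset X$ with $\kappa(K^c)\le\epsilon$, split $f_n=g_n+f_n(1-\chi_n)$ using a $d$-Lipschitz cutoff $\chi_n$ supported near $K$, bound $L(f_n(1-\chi_n))$ by $M\kappa(K^c)$ via (c), use (d) to discard the $\mathbf P$-null sets where the uniform bounds fail, control $L(g_n)$ by $\sup_{K}f_n(\cdot,\omega)$ plus an error proportional to the Lipschitz constant times the cutoff width, and finish with Dini plus monotone convergence on the compact $K$. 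Your construction of the candidate family $(\hat\mu_\omega)_\omega$, the Radon--Nikodym step for the densities $\varphi_h$, and the verification that the measures are carried by $X$ are all sound, but without the tightness step the identity $L(f)=\mu(f)$ is not proved for a general $f\in BL_\Omega(X)$, so the argument is incomplete as written.
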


\begin{proof}
The proof relies on the general Stone-Daniell representation theorem.
We claim that if $f_n$ is a non-increasing sequence of functions of $BL_\Omega(X)$ converging to 0, then $L(f_n)$ is non-increasing and converges to 0.
By \cite[Theorem~4.11]{Crauel}, we deduce that there exists a measure $\mu$ on $X\times \Omega$ such that $L(f)=\mu(f)$ for every $f\in BL_\Omega(X)$.
Conditions~(c) and~(d) ensure that $\mu$ is finite.
Moreover, the $\Omega$-marginal $\pi_\Omega(\mu)$ of $\mu$ is $\pi_\Omega(L)$ and so the fourth condition shows $\pi_\Omega(\mu)$ and $\mathbf P$ are absolutely continuous with respect to each other.
Therefore, we can apply Proposition~\ref{disintegrationrandom}, so that $\mu$ is defined by a random finite measure.

We just need to prove the claim to conclude the proof.
Let $f_n$ be a non-increasing sequence of random bounded Lipschitz functions converging to 0.
Since $L$ is linear and non-negative, we get that $L(f_n)$ is non-inscreasing.
We need to prove that $L(f_n)$ converges to 0.
Fix $\epsilon>0$.
Let $K$ be a compact such that $\kappa(K^c)\leq \epsilon$.
For every $n$, there exists $C_n$ such that $\|f_n(\cdot,\omega)\|_{BL}\leq C_n$, for $\mathbf P$-almost every $\omega$.
Set $\delta_n=\epsilon/C_n$ and consider the function
$$\chi_n:x\mapsto \chi_n(x)=1-\big(1\wedge \delta_n^{-1}d(x,K)\big).$$
Then, $\chi_n$ vanishes outside the $\delta_n$-neighborhood of $K$.
Moreover, the function
$g_n:(x,\omega)\mapsto f_n(x,\omega)\chi_n(x)$ is in $BL_\Omega(X)$.
Now, $f_n=g_n+f_n(1-\chi_n)$ and so
\begin{equation}\label{equationStoneDaniell1}
    L(f_n)=L(g_n)+L(f_n(1-\chi_n)).
\end{equation}

We first deal with the second term in the right-hand side of~(\ref{equationStoneDaniell1}).
Since $f_n$ is non-increasing, we have that $f_n(x,\omega)\leq \|f_1(\cdot,\omega)\|_\infty$ Moreover, there exists $M\geq 0$ such that the event $A=\{\|f_1(\cdot,\omega)\|_\infty\leq M\}$ satisfies $\mathbf P(A^c)=0$.
Note that we can write $f_n(1-\chi_n)\leq M(1-\chi_n)\mathbf 1_A+\|f_1(\cdot,\omega)\|\mathbf 1_{A^c}$.
By Condition~(d), $L(\|f_1(\cdot,\omega)\|\mathbf 1_{A^c})=0$.
Thus,
$$L(f_n(1-\chi_n))\leq ML(1-\chi_n)=M\int (1-\chi_n)d\kappa.$$
Note that $1-\chi_n$ vanishes in $K$, so
\begin{equation}\label{equationStoneDaniell2}
L(f_n(1-\chi_n))\leq M\kappa (K^c)\leq M\epsilon.
\end{equation}

We now deal with the first term in the right-hand side of~(\ref{equationStoneDaniell1}).
For every $\omega$ such that $\|f_n(\cdot,\omega)\|_{BL}\leq C_n$ and for every $x,y\in X$, we have
$$|g_n(x,\omega)-g_n(y,\omega)|\leq C_nd(x,y)+2f_n(y,\omega).$$
If $x$ is in the $\delta_n$-neighborhood of $K$, choose $y\in K$ such that
$d(x,y)\leq \delta_n$.
Then, by what precedes,
$$g_n(x,\omega)\leq 3\sup_{y\in K}f_n(y,\omega)+C_n\delta_n\leq 3\sup_{y\in K}f_n(y,\omega)+\epsilon.$$
Since $\chi_n$ vanishes outside the $\delta_n$-neighborhood of $K$, this yields
$$\sup_{x\in X}g_n(x,\omega)\leq 3\sup_{y\in K}f_n(y,\omega)+\epsilon.$$
We write $h_n(\omega)=\sup_{y\in K}f_n(y,\omega)$.
The event $A_n= \{\omega$, $\|f_n(\cdot,\omega)\|_{BL}>C_n\}$ satisfies $\mathbf P(A_n)=0$, hence the same manipulation as above shows that
\begin{equation}\label{equationStoneDaniell3}
L(g_n)\leq 3L(h_n)+\epsilon L(\mathbf 1)\leq 3C\mathbf E(h_n)+\kappa(X)\epsilon,
\end{equation}
using Conditions~(c) and~(d).

Combining~(\ref{equationStoneDaniell1}),~(\ref{equationStoneDaniell2}) and~(\ref{equationStoneDaniell3}), we get
$$L(f_n)\leq 3C\mathbf E(h_n)+(\kappa(X)+M)\epsilon.$$
Using that $f_n$ is non-increasing and converges to 0, that for all $\omega$, $f_n(\cdot,\omega)$ is continuous and that $K$ is compact, we get that $h_n$ is non-increasing and converges to 0.
By monotone convergence, $\mathbf E(h_n)$ converges to 0,
so for large enough $n$, $\mathbf E(h_n)\leq \epsilon$.
Thus, for large enough $n$,
$$L(f_n)\leq (3C+\kappa(X)+M)\epsilon.$$
Since $\epsilon$ is arbitrary, this concludes the proof of the claim.
\end{proof}

The main ingredient in proving a compactness criterion for random finite measures is the following proposition.

\begin{prop}
Let $C>0$.
If $K$ is compact in $\mathcal{M}(X)$, then
$$\pi_X^{-1}(K)\cap \left \{\mu \in \mathcal{M}_\Omega(X), \frac{1}{C}\pi_\Omega(\mu)\leq \mathbf P\leq C\pi_\Omega(\mu)\right \}$$ is compact in $\mathcal{M}_\Omega(X)$.
\end{prop}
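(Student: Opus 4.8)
The plan is to exhibit this set as a closed subset of a compact space. Write
\[
S=\pi_X^{-1}(K)\cap \Big\{\mu\in\mathcal{M}_\Omega(X):\ \tfrac1C\pi_\Omega(\mu)\le\mathbf P\le C\pi_\Omega(\mu)\Big\}.
\]
Since $K$ is compact in $\mathcal{M}(X)$, Theorem~\ref{classicalProkhorov} gives a uniform bound $B:=\sup_{\nu\in K}\nu(X)<\infty$, so every $\mu\in S$ satisfies $\mathbf E[\mu_\omega(X)]=\pi_X(\mu)(X)\le B$ and hence $|\mu(f)|\le B\|f\|_\infty$ for all $f\in BL_\Omega(X)$. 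Consider the map $\Phi\colon S\to P$, $\Phi(\mu)=(\mu(f))_{f\in BL_\Omega(X)}$, where $P=\prod_{f\in BL_\Omega(X)}[-B\|f\|_\infty,B\|f\|_\infty]$ is compact by Tychonoff's theorem. The map $\Phi$ is continuous, it is injective by Lemma~\ref{measuredeterminedbylipschitz}, and since the weak topology on $\mathcal{M}_\Omega(X)$ is the initial topology induced by the functionals $\mu\mapsto\mu(f)$ with $f\in BL_\Omega(X)$ (Lemma~\ref{lipschitzgenerateweaktopology}), $\Phi$ is a homeomorphism onto its image. It therefore suffices to prove that $\Phi(S)$ is closed in $P$.

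Let $(\mu_\alpha)_\alpha$ be a net in $S$ with $\Phi(\mu_\alpha)\to L\in P$; we must find $\mu\in S$ with $\Phi(\mu)=L$. As a coordinatewise limit, $L\colon BL_\Omega(X)\to\mathbb R$ is linear and non-negative, which is conditions (a) and (b) of Lemma~\ref{StoneDaniell}. The net $(\pi_X(\mu_\alpha))$ lies in the compact set $K$, so after passing to a sub-net (which does not disturb $\Phi(\mu_\alpha)\to L$) we may assume $\pi_X(\mu_\alpha)$ converges weakly to some $\kappa\in K$; then for $f\in BL(X)$, regarded as an element of $BL_\Omega(X)$ constant in $\omega$, one has $\pi_X(L)(f)=\lim_\alpha\mu_\alpha(f)=\lim_\alpha\pi_X(\mu_\alpha)(f)=\kappa(f)$, which is condition (c). For non-negative $f\in L_\infty(\Omega,\mathbf P)$, regarded as an element of $BL_\Omega(X)$ constant in $x$, one has $\mu_\alpha(f)=\int f\,d\pi_\Omega(\mu_\alpha)$, and the uniform comparison $\tfrac1C\pi_\Omega(\mu_\alpha)\le\mathbf P\le C\pi_\Omega(\mu_\alpha)$ gives $\tfrac1C\mu_\alpha(f)\le\mathbf E[f]\le C\mu_\alpha(f)$; letting $\alpha\to\infty$ yields $\tfrac1C\pi_\Omega(L)(f)\le\mathbf E[f]\le C\pi_\Omega(L)(f)$, which is condition (d). By Lemma~\ref{StoneDaniell} there is a random finite measure $\mu$ with $\mu(f)=L(f)$ for all $f\in BL_\Omega(X)$, i.e.\ $\Phi(\mu)=L$.

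It remains to check $\mu\in S$. Testing against $f\in BL(X)$ gives $\pi_X(\mu)(f)=\mu(f)=L(f)=\kappa(f)$, and since bounded Lipschitz functions separate finite measures on the Polish space $X$ we get $\pi_X(\mu)=\kappa\in K$, so $\mu\in\pi_X^{-1}(K)$. Testing against $\mathbf 1_A$ for a measurable $A\subset\Omega$ gives $\pi_\Omega(\mu)(A)=\mu(\mathbf 1_A)=L(\mathbf 1_A)=\pi_\Omega(L)(\mathbf 1_A)$, so the inequalities of condition (d) applied to $f=\mathbf 1_A$ read $\tfrac1C\pi_\Omega(\mu)(A)\le\mathbf P(A)\le C\pi_\Omega(\mu)(A)$; that is, $\tfrac1C\pi_\Omega(\mu)\le\mathbf P\le C\pi_\Omega(\mu)$. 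Hence $\mu\in S$, so $\Phi(S)$ is closed in the compact space $P$ and is therefore compact; since $\Phi$ is a homeomorphism onto its image, $S$ is compact.

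The substantive point, and the only real obstacle, is verifying that the limiting functional $L$ still satisfies hypotheses (c) and (d) of Lemma~\ref{StoneDaniell}: (c) uses the sub-net extraction from the compact set $K$ together with the fact that $BL(X)$ determines finite measures on $X$, while (d) relies crucially on the comparison constant $C$ being uniform over the whole net so that the two-sided inequality survives the limit. The rest is Tychonoff's theorem and the description of the weak topology as an initial topology.
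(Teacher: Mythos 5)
Your proof is correct and follows essentially the same route as the paper: embed the set into a compact product of intervals indexed by $BL_\Omega(X)$ via Tychonoff, observe the evaluation map is a homeomorphism onto its image, and use Lemma~\ref{StoneDaniell} to identify the limits as random finite measures satisfying the marginal constraints. The only cosmetic differences are that the paper writes the image as an intersection of four closed subsets of the product (with intervals $[M_f^-,M_f^+]$) whereas you verify closedness directly by a net argument; the content, including the role of the compactness of $K$ for condition~(c) and the uniformity of $C$ for condition~(d), is the same.
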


\begin{proof}
Let $f\in BL_\Omega(X)$ and set
$$M_f^+=\sup_{\mu\in \pi_X^{-1}(K)}\mu(f),$$
$$M_f^-=\inf_{\mu\in \pi_X^{-1}(K)}\mu(f).$$
Recall that if $f\in BL_\Omega(X)$, then in particular, $f\in C_{\Omega,b}(X)$ and so for $\mathbf P$-almost every $\omega$, $|f(\cdot,\omega)|$ is bounded and
$\mathbf E[\|f(\cdot,\omega)\|_\infty]$ is bounded.
Thus, $\pi_X(\mu)(f)$ is finite.
Since $K$ is compact, this proves that $M_f^+$ and $M_f^-$ are finite.

We set
$$\mathbf C=\prod_{f\in BL_\Omega(X)}[M_f^-,M_f^+]$$
and we endow $\mathbf C$ with the product topology.
Then $\mathbf C$ is compact by the Tychonoff Theorem \cite[Theorem~2.61]{AliprantisBorder}.
We can identify $\mathbf C$ with the set of all functions $L:BL_\Omega(X)\to \mathbb R$ such that $L(f)\in [M_f^-,M_f^+]$.
A neighborhood basis of an element $L_0$ of $\mathbf C$ is given by
$$U_{\delta}(f_1,...f_n)(L_0)=\{L\in \mathbf C,|L(f_k)-L_0(f_k)|<\delta,1\leq k\leq n\},$$
where $\delta>0$ and $f_1,...,f_n\in BL_\Omega(X)$.
We define the map
$$I:\mu\in \pi_X^{-1}(K)\mapsto (\mu(f))_{f\in BL_\Omega(X)}\in \mathbf C.$$
Then, $I$ is one-to-one and continuous.
Moreover, by Lemma~\ref{lipschitzgenerateweaktopology}, a neighborhood basis of an element $\mu_0\in \pi_X^{-1}(K)$ for the weak topology is given by
$$V_\delta(f_1,...,f_n)(\mu_0)=\{\mu \in \pi_X^{-1}(K), |\mu(f_k)-\mu_0(f_k)|<\delta,1\leq k\leq n\},$$
where $\delta>0$ and $f_1,...,f_n\in BL_\Omega(X)$.
Since
$$I\big (V_\delta(f_1,...,f_n)(\mu_0)\big)=U_\delta(f_1,...,f_n)(I(\mu_0)),$$
we see that $I$ is an open map and thus a homeomorphism onto its image in $\mathbf C$.

We make a similar construction for non-random finite measures.
Recall that $BL(X)$ is the set of (non-random) bounded Lipschitz functions on $X$ and that $BL(X)$ can be seen as a subset of $BL_\Omega(X)$ by setting $f(x,\omega)=f(x)$ for any $f\in BL(X)$.
We define
$$\mathbf c=\prod_{f\in BL(X)}[M_f^-,M_f^+]$$
and
$$i(\kappa)=(\kappa(f))_{f\in BL(X)}.$$
Then, $i$ is a homeomorphism onto its image in $\mathbf c$.
If $L\in \mathbf C$, denote by $\pi(L)\in \mathbf c$ its restriction to non-random bounded Lipschitz functions.

For every $f\in BL(X)$, seen as an element of $BL_\Omega(x)$ and for every random finite measure $\mu$, we have
$$\mu(f)=\mathbf E\left [\int f(x)d\mu_\omega(x)\right ]=\pi_X(\mu)(f).$$
Thus, $\pi \circ I=i\circ \pi_X$.
Now, fix $C>0$.
By Lemma~\ref{StoneDaniell}, we have
\begin{align*}
&I\left (\pi_X^{-1}(K)\cap \left \{\mu, \frac{1}{C}\leq \pi_\Omega(\mu)\leq \mathbf P\leq C\pi_\Omega(\mu)\right\}\right)\\
=&\hspace{.2cm}\pi^{-1}(i(K))\cap \{L \text{ linear}\}\cap \{ L\text{ non-negative}\}\\
&\hspace{.3cm}\cap \left \{L, \frac{1}{C}\pi_\Omega(L)\leq \mathbf E[\cdot]\leq C\pi_\Omega(L)\right \}.
\end{align*}
The four sets on the right-hand side are closed.
Since $\mathbf C$ is compact and $I$ is a homeomorphism onto its image, we get that
$$ \pi_X^{-1}(K)\cap \left \{\mu, \frac{1}{C}\leq \pi_\Omega(\mu)\leq \mathbf P\leq C\pi_\Omega(\mu)\right\}$$
is compact.
\end{proof}


\begin{cor}\label{coroProkhorov}
Consider a subset $M_\Omega$ of $\mathcal{M}_\Omega(X)$.
Assume that $M_\Omega$ is tight.
Also assume that there exists $C\geq 0$ such that for every $\mu\in M_\Omega$,
$$\frac{1}{C}\pi_\Omega(\mu)\leq \mathbf P \leq C \pi_\Omega(\mu).$$
Then, $M_\Omega$ is relatively compact.
\end{cor}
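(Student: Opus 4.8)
The plan is to deduce Corollary~\ref{coroProkhorov} directly from the Prokhorov-type compactness proposition proved just above, together with the classical Prokhorov theorem for (non-random) finite measures stated as Theorem~\ref{classicalProkhorov}. The strategy is to exhibit a single compact set of $\mathcal{M}(X)$ whose preimage under $\pi_X$ (intersected with the $\mathbf P$-regularity constraint) contains $M_\Omega$, and then invoke that this preimage is compact; since $M_\Omega$ sits inside a compact set, its closure is compact, i.e.\ $M_\Omega$ is relatively compact.

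First I would verify that $\pi_X(M_\Omega)$ is relatively compact in $\mathcal{M}(X)$. By hypothesis $M_\Omega$ is tight, which by Definition~\ref{deftightrandom} means precisely that $\pi_X(M_\Omega)$ is tight in the sense of Definition~\ref{deftightnonrandom}. It remains to check uniform boundedness of $\pi_X(M_\Omega)$, i.e.\ that $\sup_{\mu\in M_\Omega}\pi_X(\mu)(X)<\infty$. For this, note $\pi_X(\mu)(X)=\mathbf E[\mu_\omega(X)]=\pi_\Omega(\mu)(\Omega)$, and the assumption $\mathbf P\leq C\pi_\Omega(\mu)$ applied to the whole space $\Omega$ gives $1=\mathbf P(\Omega)\leq C\pi_\Omega(\mu)(\Omega)$; while the other inequality $\frac{1}{C}\pi_\Omega(\mu)\leq \mathbf P$ gives $\pi_\Omega(\mu)(\Omega)\leq C$. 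Hence $\pi_X(\mu)(X)\leq C$ for all $\mu\in M_\Omega$, so $\pi_X(M_\Omega)$ is uniformly bounded. By Theorem~\ref{classicalProkhorov}, $\overline{\pi_X(M_\Omega)}=:K$ is compact in $\mathcal{M}(X)$.

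Next I would apply the preceding Proposition with this compact set $K$ and the constant $C$ furnished by the hypothesis: the set
$$\pi_X^{-1}(K)\cap \left\{\mu\in\mathcal{M}_\Omega(X):\ \tfrac1C\pi_\Omega(\mu)\leq \mathbf P\leq C\pi_\Omega(\mu)\right\}$$
is compact in $\mathcal{M}_\Omega(X)$ for the weak topology. Every $\mu\in M_\Omega$ satisfies $\pi_X(\mu)\in K$ by construction of $K$, and satisfies the two-sided bound by hypothesis, so $M_\Omega$ is contained in this compact set. Since a subset of a compact set in a Hausdorff space (the weak topology is Hausdorff by the remark after Proposition~\ref{randomPortmanteau}, via Lemma~\ref{measuredeterminedbylipschitz}) has compact closure, we conclude that $M_\Omega$ is relatively compact, which is the assertion of the corollary.

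I do not anticipate a genuine obstacle here: the corollary is essentially a packaging of the Proposition plus the classical Prokhorov theorem, and the only mildly delicate point is making sure the uniform bound $\pi_X(\mu)(X)\leq C$ is extracted correctly from the $\mathbf P$-regularity-with-uniform-constant hypothesis, which is the short computation above. One should also note that Theorem~\ref{classicalProkhorov} is stated for $X$ Polish, which holds in our setting, and that no measurability subtleties arise because we only use it for the deterministic marginal $\pi_X(M_\Omega)$.
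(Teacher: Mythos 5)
Your argument is correct and is essentially identical to the paper's proof: both establish tightness and uniform boundedness of $\pi_X(M_\Omega)$ from the two-sided condition on $\pi_\Omega(\mu)$, apply Theorem~\ref{classicalProkhorov} to get a compact set $K=\mathbf{cl}(\pi_X(M_\Omega))$, and then observe that $M_\Omega$ sits inside the compact set $\pi_X^{-1}(K)\cap\{\mu:\frac1C\pi_\Omega(\mu)\leq\mathbf P\leq C\pi_\Omega(\mu)\}$ furnished by the preceding Proposition. The only cosmetic difference is that you extract the bound $\pi_X(\mu)(X)\leq C$ by evaluating $\frac1C\pi_\Omega(\mu)\leq\mathbf P$ on $\Omega$, whereas the paper phrases it via the Radon--Nikodym derivative $\mu_\omega(X)\leq C$; these are the same estimate.
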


\begin{proof}
If $M_\Omega$ satisfies the assumptions of the corollary, then $\pi_X(M_\Omega)\subset \mathcal{M}(X)$ is tight.
Also, the condition $\pi_\Omega(\mu)\leq C \mathbf P$ can be reformulated by
$$\frac{d\pi_\omega(\mu)}{d\mathbf P}(\omega)=\mu_\omega(X)\leq C$$
$\mathbf P$-almost surely.
In particular,
$$\pi_X\mu(X)=\mathbf E[\mu_\omega(X)]\leq C,$$
so $\pi_X(\mu)(X)$ is uniformly bounded.
By Theorem~\ref{classicalProkhorov}, $\pi_X(M_\Omega)$ is relatively compact, i.e.\ $\mathbf{cl}(\pi_X(M_\Omega))$ is compact.
Moreover,
$$M_\Omega\subset \pi_X^{-1}\big(\mathbf{cl}(\pi_X(M_\Omega))\big )\cap \left \{\mu,\frac{1}{C}\pi_\Omega(\mu)\leq \mathbf P\leq C\pi_\Omega(\mu)\right \}.$$
Thus, $\mathbf{cl}(M_\Omega)$ is compact.
\end{proof}

This corollary does not give a necessary and sufficient condition for compactness, but only a sufficient one.
On the contrary, \cite[Theorem~4.4]{Crauel} gives a necessary and sufficient condition for compactness in the context of random probability measures : a set is relatively compact if and only if it is tight.
However, assuming that for $\mathbf P$-almost every $\omega$, $\mu_\omega$ is a probability measure, we get that $\pi_X(\mu)$ is a probability measures and that the $\Omega$-marginal of $\mu$ is exactly $\mathbf P$.
This ensures that the others conditions of Corollary~\ref{coroProkhorov} are automatically satisfied and so tightness is sufficient to get compactness.

Recall that the Radon-Nikodym derivative of $\pi_\Omega(\mu)$ with respect to $\mathbf P$ is given by $\mu_\omega(X)$.
In our context, on the one hand, the assumption
$$\frac{1}{C}\pi_\Omega(\mu)\leq \mathbf P\leq C\pi_\Omega(\mu)$$
which is equivalent to
$$\frac{1}{C}\leq \mu_\omega(X)\leq C$$
for $\mathbf P$-almost every $\omega$
seems strong.
Assuming only $\mathbf P$-regularity, there is no reason for this Radon-Nikodym derivative to be $\mathbf P$-essentially bounded from above and below.
Even when restricting our attention to random finite measures satisfying this property, the maps
$$\mu\mapsto \mathbf P-\textbf{ess}\sup \mu_\omega(X)$$
and
$$\mu\mapsto \mathbf P-\textbf{ess}\inf \mu_\omega(X)$$
have no reason to be continuous.
Thus, compactness for the weak topology does not seem to imply the existence of a uniform $C$ such that the condition 
$$\frac{1}{C}\pi_\Omega(\mu)\leq \mathbf P\leq C\pi_\Omega(\mu)$$
holds.
On the other hand, it is not realistic to expect a sufficient condition for compactness without assuming anything on the $\Omega$-marginals.

\medskip
In the particular case where $X$ has an isolated point $x_0$, then the following trick allows us to weaken a bit this assumption.
For any subset of $M_\Omega$ of $\mathcal{M}_\Omega(X)$, define
$$\widetilde{M}_\Omega=\left \{\nu\in \mathcal{M}_\Omega(X), \nu=\mathbf D(x_0)+\mu,\mu\in M_\Omega\right \},$$
where $\mathbf D$ is the Dirac measure on $x_0$.
Assume that for every $\mu\in M_\Omega$, we have $\mu_\omega(X)\leq C$ for $\mathbf P$-almost every $\omega$, for some constant $C$.
Then for every $\nu\in \widetilde{M}_\Omega$,
\begin{equation}\label{coboundedMtilde}
1\leq \nu_\omega(X)\leq 1+C
\end{equation}
for $\mathbf P$-almost every $\omega$.

\begin{cor}\label{coroProkhorovisolated}
Assume that $X$ has an isolated point $x_0$.
Consider a subset $M_\Omega$ of $\mathcal{M}_\Omega(X)$.
Assume that $M_\Omega$ is tight and that there exists $C\geq 0$ such that for every $\mu\in M_\Omega$,
$\pi_\Omega(\mu)\leq C \mathbf P$.
Then, $M_\Omega$ is relatively compact.
\end{cor}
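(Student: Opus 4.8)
The plan is to deduce the statement from Corollary~\ref{coroProkhorov} by passing to the shifted family $\widetilde{M}_\Omega=\{\mathbf D(x_0)+\mu:\mu\in M_\Omega\}$ defined just above, where $\mathbf D(x_0)$ is regarded as the constant random finite measure $\omega\mapsto\mathbf D(x_0)$. First I would check that $\widetilde M_\Omega$ satisfies the two hypotheses of Corollary~\ref{coroProkhorov}. Tightness concerns only the $X$-marginal: given $\epsilon>0$, choose a compact $K'\subseteq X$ with $\mathbf E[\mu_\omega(K'^c)]\leq\epsilon$ for all $\mu\in M_\Omega$; then $K=K'\cup\{x_0\}$ is compact and $\mathbf E[(\mathbf D(x_0)+\mu)_\omega(K^c)]=\mathbf E[\mu_\omega(K^c)]\leq\epsilon$, so $\widetilde M_\Omega$ is tight. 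For the marginal bound, recall that the Radon--Nikodym derivative of $\pi_\Omega(\nu)$ with respect to $\mathbf P$ equals $\nu_\omega(X)$; by~(\ref{coboundedMtilde}) we have $1\leq\nu_\omega(X)\leq 1+C$ for $\mathbf P$-almost every $\omega$ and every $\nu\in\widetilde M_\Omega$, whence $\mathbf P\leq\pi_\Omega(\nu)\leq(1+C)\mathbf P$, and therefore $\tfrac{1}{1+C}\pi_\Omega(\nu)\leq\mathbf P\leq(1+C)\pi_\Omega(\nu)$. Corollary~\ref{coroProkhorov} then yields that $K:=\mathbf{cl}(\widetilde M_\Omega)$ is compact.

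Next I would transfer compactness back to $M_\Omega$ via the translation $T:\mu\mapsto\mathbf D(x_0)+\mu$. For every random bounded continuous function $f$ one has $T(\mu)(f)=\mu(f)+\mathbf E[f(x_0,\cdot)]$, so $T$ is continuous, and wherever the partial inverse $T^{-1}:\nu\mapsto\nu-\mathbf D(x_0)$ is defined it satisfies $T^{-1}(\nu)(f)=\nu(f)-\mathbf E[f(x_0,\cdot)]$, hence is continuous for the weak topology. The delicate point is that $T^{-1}$ be defined on all of $K$, i.e. that every $\nu\in K$ has $\nu_\omega(\{x_0\})\geq 1$ for $\mathbf P$-almost every $\omega$, so that $\nu_\omega-\mathbf D(x_0)$ is again a positive finite measure and $\nu-\mathbf D(x_0)$ a random finite measure (the other conditions of Definition~\ref{defrandomfinitemeasure} being immediate). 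Here the hypothesis that $x_0$ is isolated is used decisively: $\{x_0\}$ is then clopen, so for each event $A$ the map $(x,\omega)\mapsto\mathbf 1_{\{x_0\}}(x)\mathbf 1_A(\omega)$ lies in $C_{\Omega,b}(X)$, and $\nu\mapsto\mathbf E[\mathbf 1_A\,\nu_\omega(\{x_0\})]$ is continuous. On $\widetilde M_\Omega$ this functional is $\geq\mathbf P(A)$, so it remains $\geq\mathbf P(A)$ on $K$; as this holds for all $A$, we get $\nu_\omega(\{x_0\})\geq 1$ almost surely for every $\nu\in K$.

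Finally, $T^{-1}$ being continuous on the compact set $K$, its image $T^{-1}(K)$ is compact; since $M_\Omega=T^{-1}(\widetilde M_\Omega)\subseteq T^{-1}(K)$, we conclude $\mathbf{cl}(M_\Omega)\subseteq T^{-1}(K)$ is compact, i.e. $M_\Omega$ is relatively compact. I expect the main obstacle to be this last step — ensuring that the formal shift $T^{-1}$ is genuinely well defined and continuous on the entire closure $K$ and not merely on $\widetilde M_\Omega$; isolatedness of $x_0$ enters precisely through the clopenness of $\{x_0\}$, which lets the lower bound on the atom at $x_0$ pass to the weak limit.
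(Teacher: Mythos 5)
Your proof is correct and follows essentially the same route as the paper: shift by $\mathbf D(x_0)$, apply Corollary~\ref{coroProkhorov} to $\widetilde M_\Omega$, use the clopenness of $\{x_0\}$ to show the atom at $x_0$ has mass at least $1$ for every measure in the closure, and pull back by the continuous translation. The only detail left implicit is that concluding $\mathbf{cl}(M_\Omega)\subseteq T^{-1}(K)$ from $M_\Omega\subseteq T^{-1}(K)$ uses that $T^{-1}(K)$ is closed, which holds because the weak topology is Hausdorff (Lemma~\ref{measuredeterminedbylipschitz}).
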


\begin{proof}
The set $M_\Omega$ is tight, so there is a compact $K_0$ such that for every $\mu\in M_\Omega$,
$\pi_X(\mu)(K_0^c)\leq \epsilon$.
Then, the set $K=K_0\cup \{x_0\}$ is also compact and for every $\nu\in \widetilde{M}_\Omega$,
$\pi_X(\nu)(K^c)\leq \epsilon$.
Thus, $\widetilde{M}_\Omega$ is also tight.
By~(\ref{coboundedMtilde}), $\widetilde{M}_\Omega$ satisfies the assumptions of Corollary~\ref{coroProkhorov},
so it is relatively compact.

Let $\nu\in \mathbf{cl}(\widetilde{M}_\Omega)$.
By \cite[Theorem~2.14]{AliprantisBorder}, there exists a net $(\nu_\alpha)_{\alpha\in A}$ converging to $\nu$.
Assume by contradiction that $\mathbf P(\nu_\omega(\{x_0\})<1)>0$.
Then, there exists $c<1$ such that
the event $A=\{\nu_\omega(\{x_0\})\leq c\}$ satisfies $\mathbf P(A)>0$.
Since $x_0$ is isolated and $X$ is Hausdorff, $\{x_0\}$ is both closed and open, hence the function $\mathbf 1_{x_0}$ is bounded continuous
and $(x,\omega)\mapsto \mathbf{1}_A(\omega)\mathbf{1}_{x_0}(x)$ is a random bounded continuous function.
Since for every $\alpha$, $\nu_\alpha(\{x_0\})\geq 1$, applying convergence to this function, we get that
$$\mathbf P(A)\leq \mathbf E[\mathbf 1_A\nu_\alpha(\{x_0\})]\underset{\alpha\to \infty}{\longrightarrow}\mathbf E[\mathbf 1_A\nu(\{x_0\})]\leq c\mathbf P(A),$$
which is a contradiction.
Thus,
$$\mu_\omega=\nu_\omega-\mathbf D(x_0)$$
is a well-defined random finite measure on $X$.
In other words, the map
$$F:\nu\in \mathbf{cl}(\widetilde{M}_\Omega)\mapsto \nu-\mathbf{D}(x_0)\in \mathcal{M}_\Omega(X)$$
is well defined.
Moreover, it is continuous, therefore $F(\mathbf{cl}(\widetilde{M}_\Omega))$ is compact and by Lemma~\ref{measuredeterminedbylipschitz}, the weak topology is Hausdorff, so $F(\mathbf{cl}(\widetilde{M}_\Omega))$ is closed.
Now, $M_\Omega\subset F(\mathbf{cl}(\widetilde{M}_\Omega))$ and so  $\mathbf{cl}(M_\Omega)\subset F(\mathbf{cl}(\widetilde{M}_\Omega))$.
Thus, $\mathbf{cl}(M_\Omega)$ is compact, i.e.\ $M_\Omega$ is relatively compact.
\end{proof}

\bibliographystyle{alpha}
\bibliography{main}
\end{document}